\newtheorem{theorem}{Theorem}[section]
\newtheorem{lm}[theorem]{Lemma}
\newtheorem{tr}[theorem]{Theorem}
\newtheorem{cor}[theorem]{Corollary}
\newtheorem{rem}[theorem]{Remark}
\newtheorem{pr}[theorem]{Proposition}
\newtheorem{hyp}[theorem]{Hypothesis}
\begin{document}
	
	\title{Notes on the Duflo-Serganova functor in positive characteristic}
	\author{}
	\address{}
	\email{}
	\author{A. N. Zubkov}
	\address{Sobolev Institute of Mathematics, Omsk Branch, Pevtzova 13, 644043 Omsk, Russian Federation}
	\email{a.zubkov@yahoo.com}
	\begin{abstract}
	We develop a fragment of the theory of Duflo-Serganova functor over a field of odd characteristic. We elaborate a method of computing the symmetry supergroup $\widetilde{\mathbb{G}_x}$ of this functor, recently introduced by A.Sherman, for a wide class of supergroups $\mathbb{G}$, and apply it to the case when $\mathbb{G}$ is $\mathrm{GL}(m|n)$ or $\mathrm{Q}(n)$, and a square zero odd element $x\in \mathrm{Lie}(\mathbb{G})$ has minimal or maximal rank. For any quasi-reductive supergroup $\mathbb{G}$, which has a pair of specific parabolic supersubgroups, we prove a criterion of injectivity of a $\mathbb{G}$-supermodule, involving vanishing of Duflo-Serganova functor on it.    
	\end{abstract}
	\maketitle
	
	\section*{introduction}
	
These notes were inspired by the recent publication \cite{asher} and by the survey \cite{DSfunctor}. Namely, in \cite{asher} the symmetry supergroup of Duflo-Serganova functor was
introduced over $\mathbb{C}$. Let us briefly recall that if $M$ is a supermodule over Lie superalgebra $\mathfrak{g}$ and $x\in\mathfrak{g}_1$ satisfies $[x, x]=0$, then 
$x$ induces a square zero odd endomorphism $x_M$ of superspace $M$. The cohomology $M_x=\ker(x_M)/\mathrm{Im}(x_M)$ has the natural supermodule structure over the Lie superalgebra 
$\mathfrak{g}_x=\mathrm{Cent}_{\mathfrak{g}}(x)/[\mathfrak{g}, x]$. In other words, we have the functor $M\mapsto M_x$ from the category of $\mathfrak{g}$-supermodules to the category of
$\mathfrak{g}_x$-supermodules, called the \emph{Duflo-Serganova} functor (shortly, DS-functor). 

If $\mathfrak{g}$ is the Lie superalgebra of an algebraic supergroup $\mathbb{G}$, then $\mathfrak{g}_x$-supermodule structure of $M_x$ can be naturally associated with the $\mathbb{G}_x$-supermodule structure, where  $\mathbb{G}_x=\mathrm{Cent}_{\mathbb{G}}(x)/\mathbb{H}$ and $\mathbb{H}$ is the smallest normal supersubgroup of $\mathrm{Cent}_{\mathbb{G}}(x)$, such that $[\mathfrak{g}, x]\subseteq \mathrm{Lie}(\mathbb{H})$. The existence of $\mathbb{H}$ is easily proved using the Harish-Chandra pair method.
Moreover, $[\mathfrak{g}, x]_1=[\mathfrak{g}_0, x]=\mathrm{Lie}(\mathbb{H})_1$, but
$\mathrm{Lie}(\mathbb{H})_0$ is not always equal to $[\mathfrak{g}, x]_0=[\mathfrak{g}_1, x]$. Nevertheless, in many cases $\mathrm{Lie}(\mathbb{G}_x)=\mathfrak{g}_x$ and $\mathbb{G}_x$ can be regarded as a \emph{minimal} symmetry supergroup of DS-functor. 

A.Sherman found that the minimal symmetry supergroup can be expanded by some new odd symmetries (see \cite[Theorem 1.2]{asher}). This larger symmetry supergroup is denoted by $\widetilde{\mathbb{G}_x}$. Recall that the properties of DS-functor have been studied for a long time and its importance for representation theory is beyond doubt (see \cite{DSfunctor} for better understanding of the present state of affairs). The emergence of new symmetries may reveal more about these properties. In addition, a natural question arises whether the already developed theory of DS-functor in zero characteristic is transferred to the case of a field of positive characteristic? In these notes we initiate the study of some properties of DS-functor in arbitrary characteristic not equal to two. 

First, over a field of positive characteristic we develop a method of computing $\widetilde{\mathbb{G}_x}$ for a wide class of supergroups $\mathbb{G}$, such that $\Bbbk[\mathbb{G}]$ is a localization of its supersubbialgebra $B$ at a group like element, and $B$ is freely generated by a finite dimensional supersubcoalgebra. More precisely, the computing of Hopf superalgebra $\Bbbk[\widetilde{\mathbb{G}_x}]$ can be reduced to calculating the cohomology of the de Rham complex related to $B$, which is done using a kind of \emph{Cartier isomorphism} (see Lemma \ref{exactness of p-restricted de Rham} and Remark \ref{Cartier isomorphism}). Then we describe the coproduct of $\Bbbk[\widetilde{\mathbb{G}_x}]$ on its specific generators and apply the obtained formulas to the same supergroups as in \cite[Theorem 1.2]{asher} and particular $x$. It is long known that the cohomology of the de Rham complex in positive characteristic is much larger than in the case of zero characteristic. The same phenomenon occurs here. For example, if $\mathbb{G}\simeq\mathrm{GL}(m|n)$ and $x$ has rank one, then  $\widetilde{\mathbb{G}_x}\simeq \mathrm{GL}(m-1|n-1)\times \mathbb{G}^-_a$, provided $char\Bbbk =0$. But if $char\Bbbk>0$, then $\widetilde{\mathbb{G}_x}\simeq\mathrm{GL}(m-1|n-1)\ltimes\mathbb{R}$, where $\mathbb{R}\simeq \mathbb{G}_m\ltimes\mathbb{R}_u$ and the unipotent radical $\mathbb{R}_u$ has a central series with quotients $\mathbb{G}_a^-\times (\mathbb{G}_a)^{m+n-2}$ and $(\mathbb{G}_a^-)^{m+n-2}$. Furthermore, the new symmetries from $\mathbb{R}$ are relevant. In the matter of fact, we show that there is a finite dimensional $\mathbb{G}$-supermodule $W$, such that the induced supergroup morphism $\widetilde{\mathbb{G}_x}\to\mathrm{GL}(W_x)$ is injective. 	 

Another topic we consider here is finding out, using the DS-functor, whether a given supermodule $M$ is injective. Unlike the case of zero characteristic, the condition that $M$ is injective being restricted to any purely odd \emph{root supersubgroup} (this is equivalent to the condition $M_x=0$ for any square zero $x\in\mathfrak{g}_1$), is necessary but not sufficient even if $\mathbb{G}$ is purely odd. Also, injective $\mathbb{G}$-supermodules are almost never finite dimensional. Nevertheless, if $\mathbb{G}$ is \emph{quasi-reductive} and has so-called \emph{distinguished parabolic} supersubgroup $\mathbb{P}^-$ and $\mathbb{P}^+$,
then some criterion of injectivity, involving vanishing of DS-functor on $M$, is obtained (see Theorem \ref{char free Theorem 10.4} and Corollary \ref{one more about Thorem 10.4} for more detail). For example, the supergroups $\mathrm{GL}(m|n), \mathrm{SL}(m|n), m\neq n,$ and $\mathrm{P}(n)$ satisfy these conditions.

\section{Definitions and notations}

For the content of this section we refer to \cite{mas1, maszub, zub5}.

Let $\Bbbk$ be a field of odd or zero characteristic. A $\mathbb{Z}_2$-graded vector space $V=V_0\oplus V_1$ is called a vector \emph{superspace}. We have the \emph{parity function}
$V_0\sqcup V_1\setminus 0\to\mathbb{Z}_2$, determined as $v\mapsto |v|$, if $v\in V_{|v|}$. The category of vector superspaces $\mathsf{SV}_{\Bbbk}$ is a full subcategory of the category of vector spaces. Note that $\mathsf{SV}_{\Bbbk}$ is not abelian, but the subcategory of vector superspaces with only graded linear maps between them, is. The latter is denoted by $\mathsf{Sv}_{\Bbbk}$. Both are \emph{symmetric monoidal} categories with respect to the \emph{braiding} :
\[V\otimes W\to W\otimes V, v\otimes w\mapsto (-1)^{|v||w|}w\otimes v, v\in V, w\in W,\]
and $\mathsf{SV}_{\Bbbk}$ admits an endofunctor $\Pi$, that is called the \emph{parity change functor}, such that $(\Pi V)_{a}=V_{a+1\pmod{2}}$ for any superspace $V$.

Algebra, coalgebra, bialgebra and Hopf algebra objects in $\mathsf{Sv}_{\Bbbk}$ are called \emph{superalgebra, supercoalgebra, superbialgebra and Hopf superalgebra} respectively.
If $A$ is a superalgebra, then the full subcategory of the category of left $A$-modules , consisting of $\mathbb{Z}_2$-graded $A$-modules, is denoted by
$_{A}\mathsf{SMod}$. This category is not abelian, but the category $_{A}\mathsf{Smod}$ 
consisting of the same objects with graded morphisms only, is. Finally, let $_{A}\underline{\mathsf{SMod}}$ denote the category of left
$A$-supermodule with ungraded morphisms $\phi : M\to N$, such that 
\[a\phi(m)=(-1)^{|a||\phi|}\phi(am), a\in A, m\in M. \] The right hand side versions of these definitions are obvious.

Similarly, if $A$ is a supercoalgebra, then the full subcategory of the category of right $A$-comodules, consisting of $\mathbb{Z}_2$-graded $A$-comodules, is denoted by $\mathsf{SMod}^A$. This category is not abelian again, but the category $\mathsf{Smod}^A$,
consisting of the same objects with graded morphisms only, is. 

A superalgebra $A$ is said to be \emph{super-commutative}, if the homogeneous elements of $A$ satisfy the identity $ab=(-1)^{|a||b|}ba$. The category of super-commutative superalgebras with graded morphisms is denoted by $\mathsf{SAlg}_{\Bbbk}$, and it is anti-equivalent to the category $\mathsf{SSch}_{\Bbbk}$ of \emph{affine superschemes} over $\Bbbk$. More precisely, each $A\in\mathsf{SAlg}_{\Bbbk}$ determines the superscheme $\mathbb{X}=\mathrm{SSp}(A)$, such that  
\[\mathbb{X}(B)=\mathrm{Hom}_{\mathsf{SAlg}_{\Bbbk}}(A, B), B\in\mathsf{SAlg}_{\Bbbk}.\]
An affine superscheme 
$\mathrm{SSp}(A)$ is called \emph{algebraic}, if its \emph{coordinate superalgebra} $A$ is finitely generated. A \emph{closed supersubscheme} $\mathbb{Y}$ of $\mathbb{X}$ is a subfunctor
of $\mathbb{X}$, such that
\[\mathbb{Y}(B)=\{x\in\mathbb{X}(B)\mid x(I)=0 \}, \ B\in\mathsf{SAlg}_{\Bbbk}, \]
where $I$ is a superideal of $A$. The superideal $I$ is called the \emph{defining} superideal of $\mathbb{Y}$. Note that $\mathbb{Y}\simeq \mathrm{SSp}(A/I)$. For example, set $I=I_A=AA_1$. Then $I_A$ determines the largest
\emph{purely even} supersubscheme $\mathbb{X}_{ev}\simeq\mathrm{SSp}(A/I_A)\simeq\mathrm{SSp}(A_0/A_1^2)$, that can be regarded as an ordinary affine scheme. The latter is denoted by $\mathbb{X}_{res}$.

An \emph{algebraic supergroup} $\mathbb{G}$ is a group functor, that is an algebraic superscheme as well. Equivalently, $\mathbb{G}\simeq \mathrm{SSp}(A)$, where $A$ is a finitely generated super-commutative Hopf superalgebra. From now on the Hopf superalgebra $A$ is denoted by $\Bbbk[\mathbb{G}]$. A group subfunctor $\mathbb{H}$ of $\mathbb{G}$
is a closed supersubgroup if and only if its defining superideal is a \emph{Hopf superideal}. For example, $I_{\Bbbk[\mathbb{G}]}$ determines the largest \emph{purely even} supersubgroup
$\mathbb{G}_{ev}$.  

The category of left $\mathbb{G}$-supermodules with ungraded morphisms is identified with $\mathsf{SMod}^{\Bbbk[\mathbb{G}]}$. It is denoted by $_{\mathbb{G}}\mathsf{SMod}$. As above, we also set $_{\mathbb{G}}\mathsf{Smod}\simeq \mathsf{Smod}^{\Bbbk[\mathbb{G}]}$.  Recall that if a left $\mathbb{G}$-supermodule is injective in $\mathsf{Smod}^{\Bbbk[\mathbb{G}]}$, then it is injective in $\mathsf{SMod}^{\Bbbk[\mathbb{G}]}$ (see \cite[Proposition 3.1]{zub5}). The right derived functors of $\mathrm{Hom}_{_{\mathbb{G}}\mathsf{SMod}}(M, \ )$ and
$\mathrm{Hom}_{_{\mathbb{G}}\mathsf{Smod}}(M, \ )$ are denoted by $\mathrm{Ext}^{\bullet}_{_{\mathbb{G}}\mathsf{SMod}}(M, \ )$ and $\mathrm{Ext}^{\bullet}_{_{\mathbb{G}}\mathsf{Smod}}(M, \ )$ respectively. As it has been observed in \cite{zub5}, there is
\[\mathrm{Ext}^{\bullet}_{_{\mathbb{G}}\mathsf{SMod}}(\Pi^a M, \Pi^b N)\simeq \Pi^{a+b}\mathrm{Ext}^{\bullet}_{_{\mathbb{G}}\mathsf{SMod}}(M, N)\]
for any $\mathbb{G}$-supermodules $M$ and $N$, and any $a, b=0, 1$. In particular, we have  
\[\mathrm{Ext}^{\bullet}_{_{\mathbb{G}}\mathsf{SMod}}(M, N)_0\simeq \mathrm{Ext}^{\bullet}_{_{\mathbb{G}}\mathsf{Smod}}(M, N)\]
and
\[\mathrm{Ext}^{\bullet}_{_{\mathbb{G}}\mathsf{SMod}}(M, N)_1\simeq \mathrm{Ext}^{\bullet}_{_{\mathbb{G}}\mathsf{Smod}}(\Pi M, N)\simeq \mathrm{Ext}^{\bullet}_{_{\mathbb{G}}\mathsf{Smod}}(M, \Pi N).\]

In what follows, a morphism of super-objects is always understood as graded morphism, unless stated otherwise.

	\section{Purely odd supergroups}
	An algebraic supergroup $\mathbb{G}$ is said to be \emph{purely odd}, if $\mathbb{G}_{ev}=1$. This condition is equivalent to $\mathbb{G}\simeq (\mathbb{G}_a^-)^r$, where
	$\mathbb{G}^-_a$ is the $0|1$-dimensional purely odd unipotent supergroup (cf. \cite[Proposition 11.1]{maszub}). More precisely, the Hopf superalgebra $\Bbbk[\mathbb{G}^-_a]$ is isomorphic to $\Bbbk[z]$, where the (free) generator $z$ is odd and primitive. Therefore, if $\mathbb{G}$ is a purely odd supergroup, then $\Bbbk[\mathbb{G}]\simeq\Lambda(\mathfrak{g}^*)$, where all elements of $\mathfrak{g}^*$ are primitive.
	
	Let $\mathbb{G}$ be a purely odd supergroup. The distribution superalgebra $\mathrm{Dist}(\mathbb{G})$ of $\mathbb{G}$ is isomorphic to $\Lambda(\mathfrak{g})$. By \cite[Lemma 1.3]{zub3} the category $_{\mathbb{G}}\mathsf{Smod}$ is equivalent to the category $_{\mathrm{Dist}(\mathbb{G})}\mathsf{Smod}$. 
	\begin{rem}
	Note that the formulation of \cite[Lemma 1.3]{zub3} is not entirely correct. More precisely, if $R$ is a finite dimensional Hopf superalgebra, then it is true that $\mathsf{Smod}^R\simeq _{R^*}\mathsf{Smod}$. But it does not take place for larger categories. The point is that the functor, that is proposed there as an equivalence, maps 
	$\mathsf{SMod}^R$ to $_{R^*}\underline{\mathsf{SMod}}$, not to $_{R^*}\mathsf{SMod}$.
	\end{rem}
	By \cite[Lemma 4.1 and Lemma 4.4]{zub3} a $\mathrm{Dist}(\mathbb{G})$-supermodule is injective if and only if it is projective if and only if it is free.
	\begin{lm}\label{free basis}
		Let $\mathbb{G}$ be a purely odd supergroup. Then any $\mathbb{G}$-supermodule $M$ contains a maximal free supersubmodule $F$, that is a direct summand of $M$. 	
	\end{lm}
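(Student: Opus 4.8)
The plan is to pass through the equivalence $_{\mathbb{G}}\mathsf{Smod}\simeq {}_{\mathrm{Dist}(\mathbb{G})}\mathsf{Smod}$ and exploit the coincidence of free, projective and injective objects over $R:=\mathrm{Dist}(\mathbb{G})\simeq\Lambda(\mathfrak{g})$ recorded just above. The structural facts I would use are that $R$ is finite dimensional (hence Noetherian) and that, by \cite[Lemma 4.1 and Lemma 4.4]{zub3}, an $R$-supermodule is free if and only if it is injective. The first consequence is the ``direct summand'' clause, which is essentially free of charge: once a free supersubmodule $F\subseteq M$ is produced, freeness makes it injective, so the inclusion $F\hookrightarrow M$ splits and $F$ is a direct summand. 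Thus the whole problem reduces to producing a \emph{maximal} free supersubmodule.

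For existence I would apply Zorn's lemma to the poset $\mathcal{F}$ of free supersubmodules of $M$, ordered by inclusion (it is nonempty, containing $0$). The only point that needs checking is that the union $F=\bigcup_\alpha F_\alpha$ of a chain in $\mathcal{F}$ again lies in $\mathcal{F}$, i.e.\ is free. I would establish this by first showing $F$ is injective through the graded form of Baer's criterion: a homogeneous superideal $I\subseteq R$ is finitely generated since $R$ is Noetherian, so any graded map $\phi\colon I\to F$ has image inside a single member of the chain (the finitely many values of $\phi$ on generators of $I$ all lie in a common $F_\alpha$ by directedness), and as $F_\alpha$ is free, hence injective, $\phi$ extends to $R\to F_\alpha\subseteq F$. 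Baer's criterion then yields that $F$ is injective, and the dictionary injective $\Leftrightarrow$ free turns this back into freeness of $F$. Consequently every chain in $\mathcal{F}$ admits an upper bound, and a maximal free supersubmodule $F$ exists.

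Finally, $F$ is a direct summand by the opening observation, and the maximality has the expected meaning: writing $M=F\oplus M'$, the complement $M'$ can contain no nonzero free supersubmodule $F'$, since otherwise $F\oplus F'$ would be a strictly larger free supersubmodule of $M$. The only delicate point in this scheme is the stability of $\mathcal{F}$ under unions of chains; everything else is formal once the free/injective dictionary and the Noetherianity of $R$ are in hand. I expect that step, handled above via the graded Baer criterion, to be the main obstacle, precisely because the naive hope that a directed union of free modules stays free fails in general and genuinely relies here on $R$ being Artinian and self-injective.
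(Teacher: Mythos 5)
Your argument is correct, but it takes a genuinely different route from the paper. The paper's proof is constructive: fixing a homogeneous basis $z_1,\ldots,z_r$ of $\mathfrak{g}$, it chooses elements $m_i$ whose images $(z_1\cdots z_r)m_i$ form a basis of $(z_1\cdots z_r)M$, checks that the $m_i$ generate a free supersubmodule $F$ (the standard filtration argument, using that $z_1\cdots z_r$ kills the augmentation ideal), and splits $F$ off by injectivity; the complement $G$ then satisfies $(z_1\cdots z_r)G=0$, which is exactly why it contains no nontrivial free supersubmodule. Your Zorn-plus-Baer argument replaces this by an abstract existence proof: the only real content is that a directed union of free supersubmodules is free, which you correctly reduce, via the finite-dimensionality (hence Noetherianity) of $\mathrm{Dist}(\mathbb{G})$, to the fact that a directed union of injectives is injective, and then back to freeness via the free $\Leftrightarrow$ injective dictionary of \cite[Lemmas 4.1, 4.4]{zub3}. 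This is sound, with one small technical point worth flagging: Baer's criterion must be applied in the category $_{R}\mathsf{Smod}$ of graded morphisms, so you should either also test maps $I\to\Pi F$ (harmless, since $\Pi F_\alpha$ is again free, hence injective) or pass to the bosonization $R\rtimes\Bbbk\mathbb{Z}_2$, which is again finite dimensional, and invoke the ordinary criterion there. What each approach buys: yours is more formal and would work verbatim over any finite-dimensional self-injective local superalgebra; the paper's explicit construction additionally identifies $F$ and its complement $G=\{m\mid (z_1\cdots z_r)m=0\}$ concretely, and this description (e.g.\ the decomposition $M=M_x\oplus U\oplus xU$ and the rank count for $F$) is reused later in Lemma \ref{elementary prop-s} and Remark \ref{graded complement}, so the constructive proof carries extra information that the Zorn argument does not provide.
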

	\begin{proof}
		Choose a homogeneous basis $z_1, \ldots, z_r$ of $\mathfrak{g}$. If $M$ is a $\mathrm{Dist}(\mathbb{G})$-supermodule, choose a homogeneous basis $\{(z_1\ldots z_r )m_i\}_{i\in I}$
		of the superspace $(z_1\ldots z_r)M$. The elements $m_i$ generate a free $\mathrm{Dist}(\mathbb{G})$-supersubmodule $F$ of $M$. Since $F$ is also injective, we have
		$M=F\oplus G$, where $(z_1\ldots z_r)G=0$. In other words, $G$ or $M/F$ does not contain nontrivial free supersubmodules.
	\end{proof}	
	\begin{rem}\label{extension invariant}
		A $\mathbb{G}$-supermodule $M$ is free if and only if $M\otimes L$ is free as a $\mathrm{Dist}(\mathbb{G})\otimes L$-supermodule for any field extension $\Bbbk\subseteq L$.	
	\end{rem}
	\section{Harish-Chandra pairs}
	
	A pair $(G, V)$, where $G$ is an algebraic group, and $V$ is a $G$-module, is a \emph{Harish-Chandra pair} if the following conditions hold:
	\begin{enumerate}
		\item There is a symmetric bilinear map $V\times V\to\mathrm{Lie}(G)$, denoted by $[ \ , \ ]$;
		\item This map is $G$-equivariant w.r.t. the diagonal action of $G$ on $V\times V$ and the adjoint action of $G$ on $\mathrm{Lie}(G)$;
		\item The induced action of $\mathrm{Lie}(G)$ on $V$, denoted by the same symbol $[ \ , \ ]$,  satisfies $[[v, v], v]=0$. 
	\end{enumerate}
	The morphism of Harish-Candra pairs $(G, V)\to (H, W)$ is a couple of morphisms $\mathsf{f} : G\to H$ and $\mathsf{u} : V\to W$ such that 
	\begin{enumerate}
		\item $\mathsf{u} (gv)= \mathsf{f}(g)\mathsf{u}(v), g\in G, v\in V$;
		\item $[\mathsf{u}(v), \mathsf{u}(v')]=\mathrm{d}_e(\mathsf{f})([v, v']), v, v'\in V$. 
	\end{enumerate}  
	We have a natural functor from the category of algebraic supergroups to the category of Harish-Chandra pairs, determined as
	\[\Phi(\mathbb{G})=(G, \mathrm{Lie}(\mathbb{G})_1), G=\mathbb{G}_{res}.\]	
	Here $\mathfrak{g}=\mathrm{Lie}(\mathbb{G})$ is regarded as a $\mathbb{G}$-supermodule with respect to the adjoint action. 
	\begin{theorem}\label{equivalence}(cf. \cite[Theorem 12.10]{maszub})
		The functor $\Phi$ is an equivalence.	
	\end{theorem}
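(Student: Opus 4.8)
The plan is to produce an explicit quasi-inverse $\Psi$ to $\Phi$ and then verify the two natural isomorphisms $\Phi\Psi\simeq\mathrm{id}$ and $\Psi\Phi\simeq\mathrm{id}$. Given a Harish-Chandra pair $(G,V)$, I would first assemble the Lie superalgebra $\mathfrak{g}=\mathrm{Lie}(G)\oplus V$, where the bracket on $\mathrm{Lie}(G)$ is the given one, the bracket $[\mathrm{Lie}(G),V]$ is the differential of the $G$-action, and the odd-odd bracket $V\times V\to\mathrm{Lie}(G)$ is the prescribed symmetric map $[\ ,\ ]$. Conditions (1) and (2) make $\mathfrak{g}$ a $\mathbb{Z}_2$-graded algebra with super-antisymmetric bracket, and the only surviving instance of the super-Jacobi identity is $[[v,v],v]=0$, which is exactly condition (3); thus $\mathfrak{g}$ is a Lie superalgebra. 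Here the hypothesis $\mathrm{char}\,\Bbbk\neq 2$ is essential, since it is what lets one pass between the symmetric form $[v,v']$ and the ``square'' $\tfrac12[v,v]$ in the Clifford-type relations below.

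Next I would define $\Psi(G,V)=\mathrm{SSp}(A)$ with $A=\Bbbk[G]\otimes\wedge(V^*)$. As a superalgebra $A$ is the supertensor product, hence supercommutative and an object of $\mathsf{SAlg}_{\Bbbk}$; its superscheme is $G\times\mathbb{A}^{0|\dim V}$. All the group-theoretic content is placed in the comultiplication $\Delta\colon A\to A\otimes A$, the counit, and the antipode: on $\Bbbk[G]$ these are $\Delta_G,\varepsilon_G,S_G$, while on the odd generators $V^*$ they are built from the coaction of $G$ on $V^*$ dual to the action on $V$, together with a quadratic correction term carrying the bracket $[\ ,\ ]$. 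Equivalently, one may first form the smash/crossed product $\mathcal{U}=\mathrm{Dist}(G)\#_{[\ ,\ ]}\,T(V)$ with cross relations $vv'+v'v=[v,v']$ in $\mathrm{Lie}(G)\subseteq\mathrm{Dist}(G)$ (a well-defined cocommutative Hopf superalgebra precisely because $\mathfrak{g}$ is a Lie superalgebra), and take $A$ to be the corresponding coordinate Hopf superalgebra; I would present $A$ by the first, more concrete description.

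I would then check that $A$ is a Hopf superalgebra. Coassociativity and the counit axiom on $\Bbbk[G]$ are inherited from $G$; on the odd generators they reduce, after expanding $\Delta$, to the $G$-equivariance of $[\ ,\ ]$ (condition (2)) and to coassociativity of the coaction, while the antipode axiom uses $S_G$ and the symmetry of $[\ ,\ ]$ (condition (1)). This bookkeeping is the main obstacle: the comultiplication on the odd generators is skew-primitive relative to the $G$-coaction rather than primitive, so verifying that $\Delta$ is an algebra map and that the three Hopf axioms hold is where all three Harish-Chandra conditions are consumed, and in positive characteristic one cannot shortcut it by exponentiating, so the identities must be matched term by term against the distribution-algebra model $\mathcal{U}$. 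Functoriality of $\Psi$ is then routine: a Harish-Chandra morphism $(\mathsf{f},\mathsf{u})$ induces the algebra map $\Bbbk[\mathsf{f}]\otimes\wedge(\mathsf{u}^*)$, which is a Hopf morphism exactly because $(\mathsf{f},\mathsf{u})$ satisfies the two compatibility conditions.

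Finally I would establish the two natural isomorphisms. For $\Phi\Psi\simeq\mathrm{id}$, from $A=\Bbbk[G]\otimes\wedge(V^*)$ one computes $A/AA_1\simeq\Bbbk[G]$, so the even part recovers $G=\mathbb{G}_{res}$, and the odd cotangent space at the identity recovers $V^*$, with the induced symmetric bracket on $V$ matching the input $[\ ,\ ]$ by construction. For $\Psi\Phi\simeq\mathrm{id}$, given an algebraic supergroup $\mathbb{G}$ I would use a graded splitting $\Bbbk[\mathbb{G}]\simeq\Bbbk[G]\otimes\wedge(\mathfrak{g}_1^*)$ of superalgebras (with $G=\mathbb{G}_{res}$), which identifies the underlying superscheme with that of $\Psi\Phi(\mathbb{G})$; comparing comultiplications then reduces to checking agreement on $\Bbbk[G]$ (automatic) and on the odd generators, where the coaction and the quadratic term are forced by the adjoint action and the Lie bracket of $\mathbb{G}$. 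Independence of the chosen splitting, together with this description of $\Delta$ on odd generators, also yields that $\Phi$ is faithful and full: any morphism $\mathbb{G}\to\mathbb{G}'$ is determined by its restriction to even parts and to $\mathfrak{g}_1^*$, i.e. by $\Phi$ applied to it, and every Harish-Chandra morphism arises in this way.
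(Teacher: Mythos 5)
The paper does not actually prove this theorem: it is imported wholesale from \cite[Theorem 12.10]{maszub}, so there is no internal argument to compare against. Your outline follows the strategy of that reference — build a quasi-inverse $\Psi(G,V)=\mathrm{SSp}(A)$ with $A=\Bbbk[G]\otimes\Lambda(V^*)$ and a coproduct twisted by the $G$-coaction and the bracket, and recover a supergroup from its Harish-Chandra pair via a tensor-product splitting of $\Bbbk[\mathbb{G}]$ — so the blueprint is the right one.

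As a proof, however, the proposal has genuine gaps, and they sit exactly at the points you label as bookkeeping. First, the superalgebra splitting $\Bbbk[\mathbb{G}]\simeq\Bbbk[G]\otimes\Lambda(\mathfrak{g}_1^*)$ (compatible with the counit and the left $\Bbbk[G]$-coaction) is not a formal consequence of anything you set up: it is Masuoka's splitting theorem for super-commutative Hopf superalgebras over a field of characteristic not $2$, and it carries the entire weight of $\Psi\Phi\simeq\mathrm{id}$ and of essential surjectivity; invoking it as a given leaves the hardest half of the equivalence unestablished. Second, you never actually write down the coproduct of $A$ on the odd generators; the ``quadratic correction term'' must be specified explicitly (most cleanly by dualizing the smash product of $\mathrm{Dist}(G)$ with $\Lambda(V)$ subject to the Clifford-type relations $vv'+v'v=[v,v']$, as you gesture at), and the check that the resulting $\Delta$ is a superalgebra map, is coassociative, and admits an antipode is precisely where conditions (1)--(3) of the definition are consumed — saying the identities ``must be matched term by term'' is not the same as matching them. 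Third, full faithfulness needs the splitting to be natural enough that a supergroup morphism is determined by and reconstructible from its Harish-Chandra data; you assert independence of the chosen splitting without argument. In short: correct architecture, same as the cited source, but all three load-bearing verifications are deferred rather than performed.
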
	
	A sequence $(R, W) \to (G, V) \to (H, U)$ in the category of Harish-Chandra pairs is called \emph{exact} whenever the following conditions hold:	
	\begin{enumerate}
		\item[(s1)] The sequences $R \to G \to H$ and $W \to  V \to U$ are exact in the categories of algebraic groups  and vector spaces respectively;
		\item[(s2)] $W$ is a $G$-submodule of $V$;
		\item[(s3)] $R$ acts trivially on $V/W\simeq U$;
		\item[(s4)] $[V, W]\subseteq\mathrm{Lie}(R)$.
	\end{enumerate}	
	\begin{theorem}\label{short sequences}(cf. \cite[Theorem 12.11]{maszub})
		The short sequence $\mathbb{R}\to\mathbb{G}\to\mathbb{H}$ is exact in the category of algebraic supergroups if and only if the sequence
		$\Phi(\mathbb{R})\to\Phi(\mathbb{G})\to\Phi(\mathbb{H})$ is exact in the category of Harish-Chandra pairs. 		
	\end{theorem}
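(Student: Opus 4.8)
The plan is to exploit the equivalence $\Phi$ from Theorem \ref{equivalence} and to decompose the exactness of a supergroup sequence into its even-group part and its odd-Lie part, matching these against the conditions (s1)--(s4). Recall that a short sequence $\mathbb{R}\to\mathbb{G}\xrightarrow{\pi}\mathbb{H}$ of algebraic supergroups is exact precisely when $\mathbb{R}$ is a normal closed supersubgroup equal to $\ker\pi$ and $\pi$ is faithfully flat. I would treat the two implications separately.

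For the forward direction, assume the supergroup sequence is exact. Applying the even-part functor $(-)_{ev}$ to a faithfully flat short exact sequence yields the exact sequence $R\to G\to H$ of algebraic groups, while faithful flatness of $\pi$ makes $\Bbbk[\mathbb{G}]$ faithfully flat over $\Bbbk[\mathbb{H}]$, from which one extracts exactness of the odd tangent spaces $W\to V\to U$; together these give (s1). Normality of $\mathbb{R}$ means that the adjoint action of $\mathbb{G}$ stabilizes $\mathbb{R}$, hence stabilizes $\mathrm{Lie}(\mathbb{R})$; restricting to $\mathbb{G}_{ev}=G$ and to odd degree shows that $W=\mathrm{Lie}(\mathbb{R})_1$ is a $G$-submodule of $V$, which is (s2). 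Since $\mathbb{R}=\ker\pi$ and $\pi$ induces the isomorphism $V/W\simeq U$, the subgroup $R$ acts trivially on $V/W$, giving (s3). Finally, the Harish-Chandra bracket is induced by the super-commutator in $\mathfrak{g}$, so for $v\in V$ and $w\in W$ the element $[v,w]$ lies in $[\mathfrak{g}_1,\mathrm{Lie}(\mathbb{R})_1]\subseteq\mathrm{Lie}(\mathbb{R})_0=\mathrm{Lie}(R)$ by normality, which is (s4).

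For the backward direction, assume (s1)--(s4). Using $\Phi^{-1}$, I would reconstruct supergroup morphisms $\mathbb{R}\to\mathbb{G}\to\mathbb{H}$ realizing the given Harish-Chandra data. Conditions (s2) and (s4) say precisely that $\Phi(\mathbb{R})$ is stable under the adjoint action carried by $\Phi(\mathbb{G})$, so by functoriality of $\Phi$ the supersubgroup $\mathbb{R}$ is normal in $\mathbb{G}$; checking on even parts and on odd tangent spaces, together with (s3), identifies $\mathbb{R}$ with $\ker\pi$. It then remains to prove that $\pi$ is faithfully flat.

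The hard part will be this last flatness assertion in positive characteristic, where $\mathrm{Lie}$ is no longer exact and one cannot argue by naive differentiation. I would instead use the explicit Harish-Chandra description of the coordinate Hopf superalgebra, under which $\Bbbk[\mathbb{G}]$ is, as a $\Bbbk[G]$-supermodule, isomorphic to $\Bbbk[G]\otimes\Lambda(\mathfrak{g}_1^*)$, and similarly for $\mathbb{H}$. Faithful flatness of $\pi$ then factors as the faithful flatness of $\Bbbk[G]$ over $\Bbbk[H]$, supplied by the even part of (s1), combined with the exactness of the exterior algebras coming from the vector-space part of (s1); conditions (s3) and (s4) are exactly what guarantee that these two tensor factors assemble into a coherent comodule structure, so that the multiplication map is the required faithfully flat extension. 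Verifying this compatibility at the level of Hopf superalgebras, rather than merely on points, is the technical crux of the argument.
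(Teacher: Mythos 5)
First, a point of orientation: the paper does not actually prove Theorem \ref{short sequences}; it is imported from \cite[Theorem 12.11]{maszub} with only the citation, so there is no in-paper argument to measure yours against. Your outline does follow the strategy of that source: split exactness into the even-group part and the odd tangent part via the equivalence $\Phi$ of Theorem \ref{equivalence}, and control everything at the level of the coordinate Hopf superalgebra through the decomposition $\Bbbk[\mathbb{G}]\simeq\Bbbk[G]\otimes\Lambda(\mathfrak{g}_1^*)$.

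As a proof, however, your text has a genuine gap, and it is the one you yourself flag. In the backward direction everything reduces to showing that $\Bbbk[\mathbb{H}]\to\Bbbk[\mathbb{G}]$ is a faithfully flat injective extension, and you stop at the remark that verifying the compatibility of the two tensor factors ``is the technical crux.'' That compatibility is the entire content of the theorem: the isomorphism $\Bbbk[\mathbb{G}]\simeq\Bbbk[G]\otimes\Lambda(\mathfrak{g}_1^*)$ is an isomorphism of $\Bbbk[G]$-comodule superalgebras, not of Hopf superalgebras, and it is not a priori natural in $\mathbb{G}$; one must show that the map induced by $\pi$ can be aligned with a choice of such decompositions (or argue via the cotensor-product formalism of \cite{maszub}) before the faithful flatness of $\Bbbk[G]$ over $\Bbbk[H]$ and the split injectivity of $\Lambda(U^*)\to\Lambda(V^*)$ can be combined. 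There are also two smaller soft spots. In the forward direction, the surjectivity of $G\to H$ and of $V\to U$ is not a formal consequence of ``applying the even-part functor'': in positive characteristic $\mathrm{Lie}$ is not right exact even for ordinary group schemes, and the surjectivity on the odd part is itself a consequence of the same structure theorem you only invoke later, so it should be cited up front. And normality of the subpair corresponds to (s2)--(s4) jointly, as the paper notes after the theorem, not to (s2) and (s4) alone; condition (s3) is what makes conjugation by the odd exponentials of $\mathbb{G}$ preserve $\mathbb{R}$, so it cannot be reserved solely for the kernel identification. None of this is unfixable, but as written the argument defers exactly the step that makes the statement nontrivial.
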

	In particular, if an algebraic supergroup $\mathbb{G}$ is represented by a Harish-Chandra pair $(G, V)$, then a subpair $(R, W)$ corresponds to a normal supersubgroup if and only if the latter satisfies the conditions (s2)-(s4).  
	
	Let $\mathbb{G}$ be an algebraic supergroup, represented by a Harish-Chandra pair $(G, \mathfrak{g}_1)$. 
	\begin{lm}\label{smallest associated with superideal}
		Let $\mathfrak{h}$ be a superideal of $\mathfrak{g}$, such that $\mathfrak{h}_1$ is a $G$-submodule of $\mathfrak{g}_1$. Set $H=\ker(G\to\mathrm{GL}(\mathfrak{g}_1/\mathfrak{h}_1))$.
		Then the Harish-Chandra subpair $(H, \mathfrak{h}_1)$ represents a normal supersubgroup $\mathbb{H}$ of $\mathbb{G}$, such that $\mathrm{Lie}(\mathbb{H})$ contains $\mathfrak{h}$ and $\mathrm{Lie}(\mathbb{H})_1=\mathfrak{h}_1$. In particular, if $\mathbb{R}$ is the smallest normal supersubgroup of $\mathbb{G}$, such that  $\mathfrak{h}\subseteq \mathrm{Lie}(\mathbb{R})$, then
		$\mathfrak{h}_1= \mathrm{Lie}(\mathbb{R})_1$.	
	\end{lm}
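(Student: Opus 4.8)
The plan is to prove everything by transporting the question through the equivalence $\Phi$ of Theorem~\ref{equivalence} and checking the normality criterion (s2)--(s4) that follows Theorem~\ref{short sequences}. First I would confirm that $(H,\mathfrak{h}_1)$ is a genuine Harish-Chandra subpair of $(G,\mathfrak{g}_1)$. Since $H$ is by definition the kernel of the homomorphism $\phi\colon G\to\mathrm{GL}(\mathfrak{g}_1/\mathfrak{h}_1)$, it is a closed subgroup of $G$, and $\mathfrak{h}_1$ is automatically an $H$-submodule because it is already a $G$-submodule by hypothesis. The one axiom that uses the ideal property is that the restricted bracket $\mathfrak{h}_1\times\mathfrak{h}_1\to\mathrm{Lie}(G)$ lands in $\mathrm{Lie}(H)$. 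I would compute $\mathrm{Lie}(H)=\ker(\mathrm{d}_e\phi)=\{X\in\mathfrak{g}_0\mid [X,\mathfrak{g}_1]\subseteq\mathfrak{h}_1\}$, using that $\mathrm{Lie}$ commutes with kernels, and then note that for $v,w\in\mathfrak{h}_1$ we have $[v,w]\in\mathfrak{h}_0$ and $[[v,w],\mathfrak{g}_1]\subseteq[\mathfrak{h},\mathfrak{g}]\cap\mathfrak{g}_1\subseteq\mathfrak{h}_1$, so $[v,w]\in\mathrm{Lie}(H)$. The remaining axioms (equivariance and the cubic relation) are inherited from $(G,\mathfrak{g}_1)$, so $\Phi^{-1}$ produces a supersubgroup $\mathbb{H}$.

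Next I would verify that $\mathbb{H}$ is normal by checking (s2)--(s4) for the subpair $(H,\mathfrak{h}_1)$. Condition (s2) is exactly the hypothesis that $\mathfrak{h}_1$ is a $G$-submodule. Condition (s3), that $H$ acts trivially on $\mathfrak{g}_1/\mathfrak{h}_1$, holds tautologically, since $H$ was defined as the kernel of the action on that quotient. Condition (s4) demands $[\mathfrak{g}_1,\mathfrak{h}_1]\subseteq\mathrm{Lie}(H)$: for $v\in\mathfrak{g}_1$ and $w\in\mathfrak{h}_1$ the ideal property gives $[v,w]\in\mathfrak{h}_0$, and the same bracket computation as above yields $[[v,w],\mathfrak{g}_1]\subseteq\mathfrak{h}_1$, i.e. $[v,w]\in\mathrm{Lie}(H)$. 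Thus $\mathbb{H}$ is normal in $\mathbb{G}$.

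For the Lie algebra assertions I would use that under $\Phi$ one has $\mathrm{Lie}(\mathbb{H})_0=\mathrm{Lie}(H)$ and $\mathrm{Lie}(\mathbb{H})_1=\mathfrak{h}_1$; the latter is exactly the claimed equality on the odd part. The containment $\mathfrak{h}\subseteq\mathrm{Lie}(\mathbb{H})$ then reduces to $\mathfrak{h}_0\subseteq\mathrm{Lie}(H)$, and this is where the ideal hypothesis is invoked once more: for $X\in\mathfrak{h}_0$ one has $[X,\mathfrak{g}_1]\subseteq[\mathfrak{h},\mathfrak{g}]\cap\mathfrak{g}_1\subseteq\mathfrak{h}_1$, hence $X\in\mathrm{Lie}(H)$. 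I would emphasize that on the even part only a containment holds, since $\mathrm{Lie}(H)$ may strictly exceed $\mathfrak{h}_0$; this is consistent with the introductory remark that $\mathrm{Lie}(\mathbb{H})_0$ need not equal $[\mathfrak{g},x]_0$. For the final clause, minimality of $\mathbb{R}$ together with the fact that $\mathbb{H}$ is a normal supersubgroup with $\mathfrak{h}\subseteq\mathrm{Lie}(\mathbb{H})$ forces $\mathbb{R}\subseteq\mathbb{H}$, so $\mathrm{Lie}(\mathbb{R})_1\subseteq\mathrm{Lie}(\mathbb{H})_1=\mathfrak{h}_1$; combined with $\mathfrak{h}_1\subseteq\mathrm{Lie}(\mathbb{R})_1$ coming from $\mathfrak{h}\subseteq\mathrm{Lie}(\mathbb{R})$, equality follows.

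I expect the only genuinely delicate point to be the identification $\mathrm{Lie}(H)=\ker(\mathrm{d}_e\phi)$ and the care needed to keep the bracket and ideal computations compatible with the $\mathbb{Z}_2$-grading over a field of positive characteristic; once that is in place, the rest is bookkeeping through the equivalence $\Phi$ and the normality criterion.
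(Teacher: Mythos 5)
Your proposal is correct and follows essentially the same route as the paper: identify $\mathrm{Lie}(H)=\{y\in\mathfrak{g}_0\mid [y,\mathfrak{g}_1]\subseteq\mathfrak{h}_1\}$ and then use the superideal property of $\mathfrak{h}$ to get $[\mathfrak{h}_1,\mathfrak{g}_1]\subseteq\mathrm{Lie}(H)$ (normality via (s2)--(s4)) and $\mathfrak{h}_0\subseteq\mathrm{Lie}(H)$ (the containment $\mathfrak{h}\subseteq\mathrm{Lie}(\mathbb{H})$). The paper's proof is just a terser version of yours; your added verifications (the Harish-Chandra subpair axioms, the minimality argument for $\mathbb{R}$) are the details the paper leaves implicit.
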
	
	\begin{proof}
		We have $\mathrm{Lie}(H)=\{y\in\mathfrak{g}_0 \mid [y, \mathfrak{g}_1]\subseteq \mathfrak{h}_1 \}$. Since $\mathfrak{h}$ is a superideal, 
		thus immediately follows $[\mathfrak{h}_1, \mathfrak{g}_1]\subseteq \mathrm{Lie}(H)$ and $\mathfrak{h}_0\subseteq \mathrm{Lie}(H)$. 	
	\end{proof}
	
	\section{Duflo-Serganova functor for supergroups}
	
	Let $V$ be a vector space and $x$ be a square-zero endomorphism of $V$. Then $\mathrm{Im}(x)\subseteq\ker(x)$ and the factorspace $\ker(x)/\mathrm{Im}(x)$ is denoted by $V_x$. 
	
	Let $\mathbb{G}$ be an algebraic supergroup. Set $\mathfrak{g}=\mathrm{Lie}(\mathbb{G})$.
	
	Let $x\in \mathfrak{g}_1$, such that $[x, x]=0$. If $M$ is a $\mathfrak{g}$-supermodule, then the element $x$ induces a square-zero odd endomorphism $x_M$ of the superspace $M$, hence one can define a superspace $M_x=\ker(x_M)/\mathrm{Im}(x_M)$. In particular, regarding $\mathfrak{g}$ as a $\mathfrak{g}$-supermodule with respect to the adjoint action, one sees that $\mathfrak{g}_x=\mathrm{Cent}_{\mathfrak{g}}(x)/[\mathfrak{g}, x]$ is a Lie superalgebra. Indeed, it is obvious that  $\mathrm{Cent}_{\mathfrak{g}}(x)$ is a Lie supersubalgebra of $\mathfrak{g}$ and 	$[\mathfrak{g}, x]$ is its Lie superideal. Furthermore, $M_x$ has a natural structure of $\mathfrak{g}_x$-supermodule, so that we obtain a functor from the category
	of $\mathfrak{g}$-supermodules to the category of $\mathfrak{g}_x$-supermodules.  This functor is said to be the \emph{Duflo-Serganova functor} (cf. \cite{DSfunctor}).
	
	It is clear that the closed supersubgroup $\mathrm{Cent}_{\mathbb{G}}(x)$ is represeneted by the Harish-Chandra subpair $(\mathrm{Cent}_G(x), \mathrm{Cent}_{\mathfrak{g}}(x)_1)$. 
	Note that $[\mathfrak{g}_0, x]$ is an $\mathrm{Cent}_G(x)$-submodule of $\mathrm{Cent}_{\mathfrak{g}}(x)_1$. Then Lemma \ref{smallest associated with superideal} implies that 
	there is a smallest normal supersubgroup $\mathbb{R}$ of $\mathrm{Cent}_{\mathbb{G}}(x)$, such that $\mathrm{Lie}(\mathbb{R})$ contains $[\mathfrak{g}, x]$. Let $\mathbb{G}_x$ denote the supergroup $\mathrm{Cent}_{\mathbb{G}}(x)/\mathbb{R}$.	
	\begin{lm}\label{DS functor for supergroups}
		Let $M$ be a $\mathbb{G}$-supermodule. Then $M_x$ has a natural structure of $\mathbb{G}_x$-supermodule. Moreover, $M\mapsto M_x$ is a functor from
		$_{\mathbb{G}}\mathsf{Smod}$ to $_{\mathbb{G}_x}\mathsf{Smod}$.	
	\end{lm}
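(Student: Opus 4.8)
The plan is to build the $\mathbb{G}_x$-structure in two stages: first produce a $\mathrm{Cent}_{\mathbb{G}}(x)$-supermodule structure on $M_x$, and then show that it factors through the quotient $\mathbb{G}_x=\mathrm{Cent}_{\mathbb{G}}(x)/\mathbb{R}$. Write $\mathbb{C}=\mathrm{Cent}_{\mathbb{G}}(x)$, with Harish-Chandra pair $(C,\mathfrak{c}_1)$, where $C=\mathrm{Cent}_G(x)$ and $\mathfrak{c}=\mathrm{Cent}_{\mathfrak{g}}(x)$. Restricting $M$ to $\mathbb{C}$ and invoking the equivalence of Theorem \ref{equivalence}, I regard $M$ as a superspace equipped with compatible actions of $C$ and of $\mathfrak{c}_1$. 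First I would check that $\ker(x_M)$ and $\mathrm{Im}(x_M)$ are $\mathbb{C}$-supersubmodules. For the $C$-action this follows from $C$-equivariance together with $\mathrm{Ad}(g)x=x$ for $g\in C$: one gets $g\cdot(x_M m)=x_M(g\cdot m)$, so $g$ commutes with $x_M$ and preserves both subspaces. For the action of $v\in\mathfrak{c}_1$ one uses $[v,x]=0$, which forces the odd operators $v_M$ and $x_M$ to anticommute, so again $\ker(x_M)$ and $\mathrm{Im}(x_M)$ are stable. Since $_{\mathbb{C}}\mathsf{Smod}$ is abelian, the quotient $M_x=\ker(x_M)/\mathrm{Im}(x_M)$ is then a $\mathbb{C}$-supermodule.

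The heart of the argument, and the step I expect to be the main obstacle, is to show that $\mathbb{R}$ acts trivially on $M_x$, so that the $\mathbb{C}$-action descends to $\mathbb{G}_x$. In positive characteristic one cannot reduce this to a computation with Lie superalgebras alone, so instead I would introduce the kernel $\mathbb{K}$ of the action homomorphism $\mathbb{C}\to\mathrm{GL}(M_x)$, that is $\mathbb{K}(A)=\{g\in\mathbb{C}(A)\mid g \text{ acts as the identity on } M_x\otimes A\}$; this is a closed normal supersubgroup of $\mathbb{C}$, and $\mathrm{Lie}(\mathbb{K})$ is exactly the set of elements of $\mathfrak{c}$ annihilating $M_x$. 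I would then verify that $[\mathfrak{g},x]\subseteq\mathrm{Lie}(\mathbb{K})$. Both the odd part $[\mathfrak{g}_0,x]$ and the even part $[\mathfrak{g}_1,x]$ lie in $\mathfrak{c}$ (a super-Jacobi computation, using $\mathrm{char}\,\Bbbk\neq 2$, gives $[[y,x],x]=0$ for $y\in\mathfrak{g}$), and they act as zero on $M_x$: for $m\in\ker(x_M)$ a direct super-bracket computation inside the $\mathbb{G}$-supermodule $M$ yields $[y,x]_M\,m=\pm\,x_M(y_M m)\in\mathrm{Im}(x_M)$ in either parity. Hence every element of $[\mathfrak{g},x]$ annihilates $M_x$. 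Since $\mathbb{R}$ is by definition the smallest normal supersubgroup of $\mathbb{C}$ whose Lie superalgebra contains $[\mathfrak{g},x]$, minimality gives $\mathbb{R}\subseteq\mathbb{K}$, whence $\mathbb{R}$ acts trivially on $M_x$.

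With $\mathbb{R}$ acting trivially, Theorem \ref{short sequences} applied to the exact sequence $\mathbb{R}\to\mathbb{C}\to\mathbb{G}_x$ lets me transport the $\mathbb{C}$-supermodule structure to a genuine $\mathbb{G}_x$-supermodule structure on $M_x$, whose $\mathrm{Lie}(\mathbb{G}_x)_1=\mathfrak{c}_1/[\mathfrak{g}_0,x]$-action agrees with the $\mathfrak{g}_x$-action already constructed. Finally, for functoriality, a graded morphism $f\colon M\to N$ of $\mathbb{G}$-supermodules commutes with $x_M$ and $x_N$, hence carries $\ker(x_M)$ into $\ker(x_N)$ and $\mathrm{Im}(x_M)$ into $\mathrm{Im}(x_N)$, inducing $f_x\colon M_x\to N_x$; since $f$ is $\mathbb{C}$-linear, so is $f_x$, and therefore $f_x$ is $\mathbb{G}_x$-linear. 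Compatibility with identities and composition is immediate, so $M\mapsto M_x$ is a functor $_{\mathbb{G}}\mathsf{Smod}\to{}_{\mathbb{G}_x}\mathsf{Smod}$. The one point deserving care throughout is that $M_x$ may be infinite-dimensional, so $\mathbb{K}$ and the kernel of the action must be understood functorially rather than as the kernel of a map into an algebraic $\mathrm{GL}$.
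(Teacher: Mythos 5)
Your proposal is correct and follows essentially the same route as the paper: the paper likewise notes that $\mathrm{Cent}_{\mathbb{G}}(x)$ stabilizes $\ker(x_M)$ and $\mathrm{Im}(x_M)$, introduces the normal supersubgroup $\mathbb{H}=\mathrm{Cent}_{\mathrm{Cent}_{\mathbb{G}}(x)}(M_x)$ (your $\mathbb{K}$), observes that $\mathrm{Lie}(\mathbb{H})$ contains $[\mathfrak{g},x]$, and concludes $\mathbb{R}\leq\mathbb{H}$ by minimality. You merely supply the details (the Harish-Chandra-pair check of stability and the computation $[y,x]_M m=\pm x_M(y_M m)$ on $\ker(x_M)$) that the paper leaves implicit.
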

	\begin{proof}
		It is clear that $\mathrm{Cent}_{\mathbb{G}}(x)$ stabilizes both $\ker(x_M)$ and $\mathrm{Im}(x_M)$, hence $M_x$ has a natural structure of $\mathrm{Cent}_{\mathbb{G}}(x)$-supermodule.
		Let $\mathbb{H}$ denote the closed normal supersubgroup $\mathrm{Cent}_{\mathrm{Cent}_{\mathbb{G}}(x)}(M_x)$. Then \[\mathrm{Lie}(\mathbb{H})=\{y\in\mathrm{Cent}_{\mathfrak{g}}(x)\mid y(\ker(x_M))\subseteq\mathrm{Im}(x_M) \}\]
		contains $[\mathfrak{g}, x]$, whence $\mathbb{R}\leq\mathbb{H}$.  The functoriality is obvious. 	
	\end{proof}
	Following \cite{DSfunctor}, we call again this functor a \emph{Duflo-Serganova functor} or shortly, \emph{DS-functor}. The proof of the following elementary properties of DS-functor can be copied from \cite[Lemma 2.4]{DSfunctor}. Nevertheless, we give some details below.
	\begin{lm}\label{elementary prop-s}
		Let $M, N$ be $\mathbb{G}$-supermodules.	
		\begin{enumerate}
			\item There is a natural isomorphism $M_x\otimes N_x\simeq (M\otimes N)_x$ of $\mathbb{G}_x$-supermodules.
			\item There is a natural isomorphism $(M^*)_x\simeq (M_x)^*$ of $\mathbb{G}_x$-supermodules, provided $M$ is finite dimensional.  
			\item There is a natural isomorphism $(M\oplus N)_x\simeq M_x\oplus N_x$ of $\mathbb{G}_x$-supermodules.
		\end{enumerate}	
	\end{lm}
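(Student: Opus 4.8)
The plan is to establish the three natural isomorphisms by working at the level of the underlying square-zero endomorphisms and then checking compatibility with the $\mathbb{G}_x$-action, exactly as in the classical case but keeping track of signs in characteristic $\neq 2$. The key observation throughout is that $x$ acts as an odd derivation: on the tensor product $M\otimes N$ the endomorphism $x_{M\otimes N}$ is given by $x_M\otimes\mathrm{id}_N+\mathrm{id}_M\otimes x_N$ with the Koszul sign convention, i.e. $x_{M\otimes N}(m\otimes n)=x_M(m)\otimes n+(-1)^{|m|}m\otimes x_N(n)$, and this is precisely a square-zero differential on the tensor product complex. For part (3) the statement is essentially formal: $x_{M\oplus N}=x_M\oplus x_N$, so $\ker$ and $\mathrm{Im}$ decompose as direct sums, giving $(M\oplus N)_x\simeq M_x\oplus N_x$, and this decomposition is manifestly $\mathrm{Cent}_{\mathbb{G}}(x)$-equivariant, hence $\mathbb{G}_x$-equivariant.

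For part (1), the plan is to view $(M,x_M)$ and $(N,x_N)$ as two-term-graded complexes (the grading being the $\mathbb{Z}_2$-parity together with the differential $x$) and invoke the algebraic Künneth formula. Since we work over a field $\Bbbk$, there are no $\mathrm{Tor}$ obstructions, so the natural map $H(M)\otimes H(N)\to H(M\otimes N)$ induced by $\ker(x_M)\otimes\ker(x_N)\to\ker(x_{M\otimes N})$ is an isomorphism; here $H(M)=M_x$ by definition. Concretely, I would define the map $M_x\otimes N_x\to(M\otimes N)_x$ by $\overline{m}\otimes\overline{n}\mapsto\overline{m\otimes n}$ on cocycle representatives, check it is well-defined (the image of $x_M$ tensored with a cocycle lands in $\mathrm{Im}(x_{M\otimes N})$ by the Leibniz rule above), and then verify bijectivity by the Künneth argument. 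The only subtlety is to confirm that the isomorphism intertwines the $\mathbb{G}_x$-actions: one checks that the coaction of $\mathbb{G}_x$ on $(M\otimes N)_x$ is the tensor coaction, which follows because $\mathrm{Cent}_{\mathbb{G}}(x)$ acts diagonally on $M\otimes N$ and the passage to cohomology is functorial, so the identification descends from $\mathrm{Cent}_{\mathbb{G}}(x)$-equivariance through the quotient by $\mathbb{R}$.

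For part (2), I would use the identification of the dual differential. If $M$ is finite dimensional, then $x_{M^*}$ is (up to sign, dictated by the pairing convention $\langle x_{M^*}\phi,m\rangle=-(-1)^{|\phi|}\langle\phi,x_M m\rangle$) the transpose of $x_M$. Dualizing the short exact sequences $0\to\ker(x_M)\to M\to\mathrm{Im}(x_M)\to0$ and $0\to\mathrm{Im}(x_M)\to\ker(x_M)\to M_x\to0$ (which is exact because $\Bbbk$ is a field, so $\mathrm{Hom}(-,\Bbbk)$ is exact) yields $\ker(x_{M^*})\simeq(M/\mathrm{Im}(x_M))^*$ and the analogous identity for the image, and combining these gives $(M^*)_x\simeq(M_x)^*$ canonically. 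Equivariance again follows from functoriality of the dual, noting that the $\mathbb{G}_x$-action on a dual supermodule is the contragredient action, which is compatible with the transpose of $x_M$.

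The main obstacle I anticipate is not the existence of the isomorphisms, which are standard homological algebra over a field, but the careful bookkeeping of the Koszul signs and the verification that each map is $\mathbb{G}_x$-equivariant rather than merely $\mathrm{Cent}_{\mathbb{G}}(x)$-equivariant. Since $\mathbb{G}_x=\mathrm{Cent}_{\mathbb{G}}(x)/\mathbb{R}$ and the action of $\mathbb{R}$ on each $M_x$ is already trivial by Lemma~\ref{DS functor for supergroups}, equivariance over $\mathbb{G}_x$ reduces to equivariance over $\mathrm{Cent}_{\mathbb{G}}(x)$, and the latter is immediate from the fact that $\mathrm{Cent}_{\mathbb{G}}(x)$ preserves kernels and images of $x$ on every supermodule in sight; the real work is confirming that the structural maps (tensor coaction, contragredient coaction) survive the passage to cohomology with the correct signs.
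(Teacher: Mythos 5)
Your proposal is correct, but it is packaged differently from the paper's proof. The paper's argument runs entirely through Lemma~\ref{free basis}: it regards $M$ as a module over $\mathrm{Dist}(\mathbb{U})\simeq\Bbbk[x]$ for the purely odd supersubgroup $\mathbb{U}$ attached to $x$, splits off a maximal free $\Bbbk[x]$-supersubmodule so that $M=M_x\oplus F$ with $x$ acting by zero on the first summand, and then for (1) observes that anything tensored with a free $\Bbbk[x]$-supermodule is free, so only $M_x\otimes N_x$ survives; for (2) it refines the splitting to $M=M_x\oplus U\oplus xU$ and computes $\ker(x_{M^*})=M_x^*\oplus U^*$ and $\mathrm{Im}(x_{M^*})\subseteq U^*$ directly. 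You instead invoke the K\"unneth formula for (1) and dualize the two short exact sequences for (2). Over a field these are the same reduction (both amount to choosing splittings), but two points are worth flagging. First, in the paper the K\"unneth statement (Corollary~\ref{for complexes}) is \emph{deduced from} this lemma, so to avoid circularity you must appeal, as you implicitly do, to the general K\"unneth theorem for $\mathbb{Z}_2$-graded complexes of vector spaces rather than to the paper's corollary; since the textbook statement is for $\mathbb{Z}$-graded complexes, it would be cleaner to prove the $\mathbb{Z}_2$-graded version by exactly the splitting the paper uses. Second, the paper's explicit decomposition buys slightly more than the isomorphism: it identifies the maximal free $\Bbbk[x]$-supersubmodule of $M^*$ as the one freely generated by $(xU)^*$, a structural fact reused later (e.g.\ in the proof of Proposition~\ref{G_x ?}); your dualization of exact sequences yields $(M^*)_x\simeq (M_x)^*$ but not this extra information. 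Your handling of equivariance --- reducing from $\mathbb{G}_x$ to $\mathrm{Cent}_{\mathbb{G}}(x)$ because $\mathbb{R}$ already acts trivially on the cohomology --- coincides with the paper's setup and is fine.
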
 
	\begin{proof}
		Let $\mathbb{U}\simeq \mathbb{G}_a^-$ denote the purely odd unipotent subgroup of $\mathbb{G}$, that corresponds to the Harish-Chandra subpair $(1, \Bbbk x)$. Thus 
		$M$ can be regarded as a $\mathbb{U}$-supermodule, hence $\mathrm{Dist}(\mathbb{U})\simeq\Bbbk[x]$-supermodule via the natural superalgebra morphism $\mathrm{Dist}(\mathbb{U})
		\to\mathrm{Dist}(\mathbb{G})\to\mathrm{End}_{\Bbbk}(M)$, that takes $x$ to $x_M$.
		Lemma \ref{free basis} implies that there is a free $\Bbbk[x]$-supersubmodule $F$, such that $M=S\oplus F$, where $S\simeq M_x$. 
		The concluding arguments are the same as in \cite[Lemma 2.4(1)]{DSfunctor}.
	
	Note that if $M$ is finite dimensional, then the rank of free supersubmodule $F$ is equal to the codimension of the subspace $\ker(x_M)$ in $M$ (as well as to the dimension of $x_MM$).
		
		Recall that the element $x$ acts on $M^*$ as $(x\phi)(m)=(-1)^{|\phi|}\phi(xm), \phi\in M^*, m\in M$. Following Lemma \ref{free basis}, we fix a decomposition
		$M=M_x\oplus U\oplus xU$, where a basis of $U$ is the basis of the largest free $\Bbbk[x]$-supersubmodule $F$ of $M$. 
		It is easy to see that $x_{M^*}M^*\subseteq U^*$ and $\ker(x_{M^*})=M_x^*\oplus U^*$. The above remark implies that
		the largest free $\Bbbk[x]$-supersubmodule of $M^*$ is freely generated by $(xU)^*$, hence $(M^*)_x\simeq M_x^*$ and $x_{M^*}(xU)^*=U^*$.  
		
		The statement (3) is trivial.
	\end{proof}
	\begin{cor}\label{for complexes}
		Let $\mathrm{K}=\{\mathrm{K}_p\}$ and $\mathrm{L}=\{\mathrm{L}_q\}$ be finite (cochain) complexes. Then $\mathrm{H}^{\bullet}(\mathrm{K}\otimes\mathrm{L})\simeq \mathrm{H}^{\bullet}(\mathrm{K})\otimes \mathrm{H}^{\bullet}(\mathrm{L})$. 	
	\end{cor}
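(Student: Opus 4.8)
The plan is to recognize $\mathrm{H}^{\bullet}(-)$ as the Duflo-Serganova functor $(-)_x$ attached to the purely odd supergroup $\mathbb{G}_a^-$, and then to invoke Lemma \ref{elementary prop-s}(1). First I would regard a finite cochain complex $\mathrm{K}=\{\mathrm{K}_p\}$ with differential $d_{\mathrm{K}}$ as a single superspace $\mathrm{K}=\bigoplus_p \mathrm{K}_p$, equipped with the $\mathbb{Z}_2$-grading $|v|=p\pmod 2$ for $v\in\mathrm{K}_p$. Since $d_{\mathrm{K}}^2=0$ and $d_{\mathrm{K}}$ is odd for this grading, letting the odd generator $x$ of $\mathrm{Dist}(\mathbb{G}_a^-)\simeq\Bbbk[x]$ act by $x\cdot v=d_{\mathrm{K}}(v)$ turns $\mathrm{K}$ into a $\Bbbk[x]$-supermodule, that is, a $\mathbb{G}_a^-$-supermodule. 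By construction $\mathrm{K}_x=\ker(x)/\mathrm{Im}(x)=\mathrm{H}^{\bullet}(\mathrm{K})$, and likewise for $\mathrm{L}$.

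The key compatibility to check is that the tensor product of the two complexes agrees, as a $\Bbbk[x]$-supermodule, with the tensor product of the corresponding $\mathbb{G}_a^-$-supermodules. Here I would use that $x$ is primitive in $\mathrm{Dist}(\mathbb{G}_a^-)$, so that it acts on $\mathrm{K}\otimes\mathrm{L}$ through $\Delta(x)=x\otimes 1+1\otimes x$; the braiding of $\mathsf{Sv}_{\Bbbk}$ then supplies the Koszul sign, giving
\[x\cdot(k\otimes l)=d_{\mathrm{K}}(k)\otimes l+(-1)^{|k|}k\otimes d_{\mathrm{L}}(l),\]
which is exactly the differential $d_{\mathrm{K}\otimes\mathrm{L}}$ of the tensor product complex. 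Hence $(\mathrm{K}\otimes\mathrm{L})_x=\mathrm{H}^{\bullet}(\mathrm{K}\otimes\mathrm{L})$ for the same element $x$.

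With these identifications in place, Lemma \ref{elementary prop-s}(1), applied to $\mathbb{G}=\mathbb{G}_a^-$ (whose DS-functor is computed purely from the $\Bbbk[x]$-action, exactly as in the proof of that lemma via Lemma \ref{free basis}), yields the natural isomorphism $(\mathrm{K}\otimes\mathrm{L})_x\simeq\mathrm{K}_x\otimes\mathrm{L}_x$. Translating back through the identifications above gives $\mathrm{H}^{\bullet}(\mathrm{K}\otimes\mathrm{L})\simeq\mathrm{H}^{\bullet}(\mathrm{K})\otimes\mathrm{H}^{\bullet}(\mathrm{L})$, as desired.

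The step I expect to require the most care is the bookkeeping of gradings: Lemma \ref{elementary prop-s}(1) produces only a $\mathbb{Z}_2$-graded isomorphism, whereas the statement is naturally refined by the cohomological degree. I would address this by noting that all maps involved (the decomposition $M=M_x\oplus U\oplus xU$ from the proof of Lemma \ref{elementary prop-s}, together with Lemma \ref{free basis}, the K\"unneth map, and the braiding) can be chosen to respect the internal $\mathbb{Z}$-grading by $p$, so that the resulting isomorphism decomposes as $\mathrm{H}^{n}(\mathrm{K}\otimes\mathrm{L})\simeq\bigoplus_{p+q=n}\mathrm{H}^{p}(\mathrm{K})\otimes\mathrm{H}^{q}(\mathrm{L})$; summing over $n$ recovers the stated total isomorphism. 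No $\mathrm{Tor}$-terms appear since $\Bbbk$ is a field.
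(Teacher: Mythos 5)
Your proposal is correct and follows essentially the same route as the paper: regard each finite complex as a superspace graded by cohomological degree mod~$2$, let the differential play the role of the square-zero odd element $x$, identify $\mathrm{H}^{\bullet}$ with $(-)_x$, and invoke Lemma \ref{elementary prop-s}(1). Your closing remark on refining the isomorphism to the $\mathbb{N}$-grading is exactly the content the paper isolates in Remark \ref{graded complement}.
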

	\begin{proof}
		Indeed, let $x$ denote the differentials of both complexes. Then $x$ acts as an odd endomorphism on the superspaces ${\bf K}=\oplus_{p}\mathrm{K}_p$ and
		${\bf L}=\oplus_q\mathrm{L}_q$, where ${\bf K}_i=\oplus_{p\equiv i\pmod{2}}\mathrm{K}_p, {\bf L}_i=\oplus_{p\equiv i\pmod{2}}\mathrm{L}_q, i=0, 1$. It remains to note that
		${\bf K}_x=\mathrm{H}^{\bullet}(\mathrm{K}), {\bf L}_x=\mathrm{H}^{\bullet}(\mathrm{L})$ and apply Lemma \ref{elementary prop-s}. 	
	\end{proof}
	\begin{rem}\label{graded complement}
		Note that $\mathrm{H}^{\bullet}(\mathrm{K}\otimes\mathrm{L})\simeq \mathrm{H}^{\bullet}(\mathrm{K})\otimes \mathrm{H}^{\bullet}(\mathrm{L})$ is not necessary an isomorphism of $\mathbb{N}$-graded spaces. But if there are $\mathbb{N}$-graded complements to the free $\Bbbk[x]$-supersubmodules in ${\bf K}$ and ${\bf L}$ respectively, then it is.  	
	\end{rem}
	\begin{rem}\label{identification}
		In what follows we identify a finite complex $\mathrm{K}$ with the associated $\mathbb{N}$-graded space ${\bf K}$, if it does not lead to confusion.	
	\end{rem}
	From now on, if $f : M\to N$ is a morphism of $\mathbb{G}$-supermodules, then $f_x$ denotes the induced morphism $M_x\to N_x$ of $\mathbb{G}_x$-supermodules. 
	Following \cite{DSfunctor}, we also denote $\{x\in\mathfrak{g}_1\mid [x, x]=0 \}$ by $X$, and $\{x\in X\mid M_x\neq 0\}$ by $X_M$ for any $\mathbb{G}$-supemodule $M$.
	
	\section{$\mathbb{G}$-superschemes and DS-functor}
	
	Let $\mathbb{X}\simeq\mathrm{SSp}(A)$ be an affine superscheme on which $\mathbb{G}$ acts on the right. We call it a \emph{right $\mathbb{G}$-superscheme}. This is equivalent to say that $A$ is a right $\Bbbk[\mathbb{G}]$-supercomodule, hence a left $\mathbb{G}$-supermodule, such that the comodule map $\tau_A : A\to A\otimes\Bbbk[\mathbb{G}]$ is a superalgebra morphism. We have the induced left action of superalgebra $\mathrm{Dist}(\mathbb{G})$ on $A$, determined as
	\[\phi\cdot a=\sum (-1)^{|\phi||a_{(1)}|}a_{(1)}\phi(a_{(2)}), \phi\in\mathrm{Dist}(\mathbb{G}), a\in A , \tau_A(a)=\sum a_{(1)}\otimes a_{(2)}. \]
	This action satisfies the identity :
	\[\phi\cdot (ab)=\sum (-1)^{|\phi_{(2)}||a|}(\phi_{(1)}\cdot a)(\phi_{(2)}\cdot b), a, b\in A, \ \Delta_{\mathrm{Dist}(\mathbb{G})}(\phi)=\sum\phi_{(1)}\otimes\phi_{(2)}. \]
	In particular, any (homogeneous) element of $\mathfrak{g}=\mathrm{Lie}(\mathbb{G})$ acts on $A$ as a superderivation. 
	
	We also need the following fact : if $M$ and $N$ are $\mathbb{G}$-supermodules, then the $\mathrm{Dist}(\mathbb{G})$-supermodule structure of $M\otimes N$ is determined as
	\[\phi\cdot(m\otimes n)=\sum (-1)^{|\phi_{(2)}||m|}\phi_{(1)}\cdot m\otimes \phi_{(2)}\cdot n, m\in M, n\in N, \phi\in\mathrm{Dist}(\mathbb{G}).\]
	The following is a folklore.
	\begin{rem}\label{a folklore fact}
		Let $V$ be a right $B$-supercomodule over a Hopf superalgebra $B$. Let $I$ be a Hopf superideal of $B$. Set $C=B/I$. If $V$ is a trivial $C$-supercomodule, then 
		$\tau_V(V)\subseteq V\otimes B^C$. Just note that the comodule map $\tau_V$ is a supercomodule morphism $V\to V_{triv}\otimes B=B^{\oplus \dim V_0}\oplus \Pi B^{\oplus \dim V_1}$, where $B$ is regarded as a right $B$-supercomodule with respect to its comultiplication map.  	
	\end{rem} 
Let $x\in\mathfrak{g}_1$ and $[x, x]=0$.
	\begin{lm}\label{trivial}
		The superspace $A_x$ has the natural structure of a superalgebra. Moreover, the comodule map $A_x\to A_x\otimes\Bbbk[\mathbb{G}_x]$ is a superalgebra morphism. In other words,
		$\mathrm{SSp}(A_x)$ is a right $\mathbb{G}_x$-superscheme, which is denoted by $\mathbb{X}_x$.
	\end{lm}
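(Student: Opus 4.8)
The plan is to treat the two assertions separately: first that $A_x$ carries a superalgebra structure, and then that the associated $\mathbb{G}_x$-comodule map is multiplicative.

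For the superalgebra structure I would start from the observation, recorded just before Remark \ref{a folklore fact}, that $x\in\mathfrak{g}_1$ acts on $A$ as an odd superderivation, so that $x_A(ab)=x_A(a)b+(-1)^{|a|}a\,x_A(b)$ for homogeneous $a,b\in A$. From this identity it is immediate that $\ker(x_A)$ is a subsuperalgebra of $A$: if $x_Aa=x_Ab=0$ then $x_A(ab)=0$, and $x_A(1)=0$. Likewise $\mathrm{Im}(x_A)$ is a superideal of $\ker(x_A)$, since for $a\in\ker(x_A)$ and $c\in A$ one computes $a\,x_A(c)=(-1)^{|a|}x_A(ac)\in\mathrm{Im}(x_A)$. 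As $x_A^2=0$ forces $\mathrm{Im}(x_A)\subseteq\ker(x_A)$, the quotient $A_x=\ker(x_A)/\mathrm{Im}(x_A)$ inherits a super-commutative superalgebra structure from $\ker(x_A)$, which settles assertion (1).

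For the comodule map I would first descend to $\mathrm{Cent}_{\mathbb{G}}(x)$. Composing $\tau_A$ with the quotient map $A\otimes\Bbbk[\mathbb{G}]\to A\otimes\Bbbk[\mathrm{Cent}_{\mathbb{G}}(x)]$ gives a superalgebra comodule map $\tau'_A:A\to A\otimes\Bbbk[\mathrm{Cent}_{\mathbb{G}}(x)]$, being a composite of superalgebra morphisms. As noted in the proof of Lemma \ref{DS functor for supergroups}, $\mathrm{Cent}_{\mathbb{G}}(x)$ stabilizes both $\ker(x_A)$ and $\mathrm{Im}(x_A)$, so $\tau'_A$ carries $\ker(x_A)$ into $\ker(x_A)\otimes\Bbbk[\mathrm{Cent}_{\mathbb{G}}(x)]$ and $\mathrm{Im}(x_A)$ into $\mathrm{Im}(x_A)\otimes\Bbbk[\mathrm{Cent}_{\mathbb{G}}(x)]$. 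Passing to the quotient, and using that $\ker(x_A)$ is a subalgebra and $\mathrm{Im}(x_A)$ an ideal therein, I obtain an induced comodule map $\bar{\tau}:A_x\to A_x\otimes\Bbbk[\mathrm{Cent}_{\mathbb{G}}(x)]$ which is again a superalgebra morphism. It then remains to corestrict $\bar\tau$ along $\Bbbk[\mathbb{G}_x]\hookrightarrow\Bbbk[\mathrm{Cent}_{\mathbb{G}}(x)]$: by Lemma \ref{DS functor for supergroups} the normal supersubgroup $\mathbb{R}$ acts trivially on $A_x$, so $A_x$ is a trivial $\mathbb{R}$-supercomodule, and Remark \ref{a folklore fact} applied with $B=\Bbbk[\mathrm{Cent}_{\mathbb{G}}(x)]$ and $C=\Bbbk[\mathbb{R}]$ yields $\bar\tau(A_x)\subseteq A_x\otimes B^C$. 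Since $B^C=\Bbbk[\mathbb{G}_x]$ is a subsuperalgebra of $\Bbbk[\mathrm{Cent}_{\mathbb{G}}(x)]$, the corestricted map $A_x\to A_x\otimes\Bbbk[\mathbb{G}_x]$ is still a superalgebra morphism, giving assertion (2); assertion (3) is then the reformulation that $\mathrm{SSp}(A_x)$ is a right $\mathbb{G}_x$-superscheme.

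The hard part will be the identification $B^C=\Bbbk[\mathbb{G}_x]$ and the compatibility of structures: one must check that the coinvariant subalgebra furnished by Remark \ref{a folklore fact} coincides with the Hopf subalgebra $\Bbbk[\mathbb{G}_x]$ of $\Bbbk[\mathrm{Cent}_{\mathbb{G}}(x)]$ describing the quotient $\mathrm{Cent}_{\mathbb{G}}(x)\to\mathbb{G}_x$, which rests on faithful flatness of that quotient, and that the $\mathbb{G}_x$-comodule structure produced by Lemma \ref{DS functor for supergroups} agrees with the one obtained here by restriction and corestriction. The remaining superalgebra and sign bookkeeping is routine once the superderivation identity is in hand.
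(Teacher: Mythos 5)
Your proposal is correct and follows essentially the same route as the paper: the superderivation identity gives the superalgebra structure on $A_x$, the action of $\mathrm{Cent}_{\mathbb{G}}(x)$ descends to a superalgebra comodule map on $A_x$, and Remark \ref{a folklore fact} corestricts it to $\Bbbk[\mathbb{G}_x]$. The identification $\Bbbk[\mathrm{Cent}_{\mathbb{G}}(x)]^{\Bbbk[\mathbb{R}]}\simeq\Bbbk[\mathrm{Cent}_{\mathbb{G}}(x)/\mathbb{R}]$ that you flag as the hard part is taken for granted in the paper as part of the standard theory of affine quotients of supergroups (cf. \cite{zub4}), so no further work is needed there.
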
 
	\begin{proof}
		Since $x_A$ is a superderivation, then $\ker(x_A)$ is a supersubalgebra in $A$ and $\mathrm{Im}(x_A)$ is a superideal of $\ker(x_A)$. Moreover, the induced action of $\mathrm{Cent}_{\mathbb{G}}(x)$ on $A_x$ corresponds to the comodule map $A_x\to A_x\otimes\Bbbk[\mathrm{Cent}_{\mathbb{G}}(x)]$, that is obviously a superalgebra map. 
		Finally, if $\mathbb{R}\unlhd \mathrm{Cent}_{\mathbb{G}}(x)$ acts trivially on $A_x$, then by Remark \ref{a folklore fact}, $\tau_{A_x}(A_x)\subseteq A_x\otimes\Bbbk[\mathrm{Cent}_{\mathbb{G}}(x)]^{\Bbbk[\mathbb{R}]}\simeq A_x\otimes\Bbbk[\mathrm{Cent}_{\mathbb{G}}(x)/\mathbb{R}]$.  
	\end{proof}
	So, we have a version of Duflo-Serganova functor, that acts from the category of affine right $\mathbb{G}$-superschemes (with $\mathbb{G}$-equivariant morphisms between them) to the categroy 
	of affine right $\mathbb{G}_x$-superschemes.  The symmetric statement is valid for the category of affine left $\mathbb{G}$-superschemes. 
	\begin{rem}\label{DS for supermodules as a particular case}
		The above DS-functor for supermodules is a particular case of the DS-functor for superschemes. In the matter of fact, a superspace $M$ is a $\mathbb{G}$-supermodule if and only if
		$\mathrm{SSp}(\mathrm{Sym}(M))$ is a right $\mathbb{G}$-superscheme, such that the (super)comodule map $\tau_{\mathrm{Sym}(M)}$ preserves the natural $\mathbb{N}$-grading of
		$\mathrm{Sym}(M)$.  
	\end{rem}
	\begin{lm}
		Assume that $\mathrm{SSp}(A)$ is a left $\mathbb{H}$-superscheme and a right $\mathbb{G}$-superscheme, such that these two actions commute each to other. Then the superscheme $\mathrm{SSp}(A_x)$ is still a left $\mathbb{H}$-superscheme and a right $\mathbb{G}_x$-superscheme. Besides, these two actions commute each to other as well.	
	\end{lm}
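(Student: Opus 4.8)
The plan is to carry over the argument of Lemma \ref{trivial}, exploiting that the odd operator $x_A$ is produced \emph{entirely} by the right $\mathbb{G}$-action, so that it commutes with the left $\mathbb{H}$-action and is left untouched by the passage $\mathbb{G}\rightsquigarrow\mathbb{G}_x$ (note that $x\in\mathfrak{g}=\mathrm{Lie}(\mathbb{G})$, not in $\mathrm{Lie}(\mathbb{H})$, which is why $\mathbb{H}$ itself survives unchanged). I would describe the left action by its coaction $\rho\colon A\to\Bbbk[\mathbb{H}]\otimes A$, a superalgebra morphism, and the right action by $\tau_A\colon A\to A\otimes\Bbbk[\mathbb{G}]$. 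The hypothesis that the two actions commute translates into the statement that $\rho$ is a morphism of right $\mathbb{G}$-supercomodules, where $\Bbbk[\mathbb{H}]\otimes A$ carries the right $\mathbb{G}$-comodule structure $\mathrm{id}\otimes\tau_A$ supported on the second tensor factor (and the trivial one on $\Bbbk[\mathbb{H}]$); concretely, $(\rho\otimes\mathrm{id})\tau_A=(\mathrm{id}\otimes\tau_A)\rho$.

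The key step is to push $\rho$ through the operator $x$. Being a graded morphism of right $\mathbb{G}$-supercomodules, $\rho$ is $\mathrm{Dist}(\mathbb{G})$-linear, hence commutes with the action of $x\in\mathfrak{g}_1\subseteq\mathrm{Dist}(\mathbb{G})$, i.e.\ $\rho(x_A a)=x\cdot\rho(a)$. Since $x$ is primitive and $\Bbbk[\mathbb{H}]$ is a trivial $\mathbb{G}$-supermodule, the formula $\phi\cdot(m\otimes n)=\sum(-1)^{|\phi_{(2)}||m|}\phi_{(1)}\cdot m\otimes\phi_{(2)}\cdot n$ gives $x\cdot(h\otimes a)=(-1)^{|h|}h\otimes x_A(a)$; that is, $x$ acts on $\Bbbk[\mathbb{H}]\otimes A$ as the odd endomorphism $1\otimes x_A$ of the tensor superspace. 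Writing $\rho(a)=\sum_i h_i\otimes a_i$ with the $h_i$ linearly independent, one reads off from $\rho(x_A a)=\sum_i(-1)^{|h_i|}h_i\otimes x_A(a_i)$ that $a\in\ker(x_A)$ forces every $a_i\in\ker(x_A)$, and that $a\in\mathrm{Im}(x_A)$ forces $\rho(a)\in\Bbbk[\mathbb{H}]\otimes\mathrm{Im}(x_A)$. Hence $\ker(x_A)$ and $\mathrm{Im}(x_A)$ are $\mathbb{H}$-subcomodules and $\rho$ descends to a coaction $\rho_x\colon A_x\to\Bbbk[\mathbb{H}]\otimes A_x$. Here I use that $\Bbbk$ is a field, so $\Bbbk[\mathbb{H}]$ is flat and cohomology commutes with tensoring by it; equivalently this is Lemma \ref{elementary prop-s}(1) applied to the trivial module $\Bbbk[\mathbb{H}]$, for which $x$ acts as zero and $(\Bbbk[\mathbb{H}])_x=\Bbbk[\mathbb{H}]$.

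Next I would verify that $\rho_x$ makes $\mathrm{SSp}(A_x)$ a left $\mathbb{H}$-superscheme. The counit and coassociativity axioms for $\rho_x$ follow from those for $\rho$ by functoriality of the operation $\ker/\mathrm{Im}$. That $\rho_x$ is a superalgebra morphism follows from Lemma \ref{trivial}: $\ker(x_A)$ is a supersubalgebra on which $\rho$ restricts to an algebra map into the supersubalgebra $\Bbbk[\mathbb{H}]\otimes\ker(x_A)$, and $\mathrm{Im}(x_A)$ is an ideal of $\ker(x_A)$, so modding out yields an algebra morphism $A_x\to(\Bbbk[\mathbb{H}]\otimes\ker x_A)/(\Bbbk[\mathbb{H}]\otimes\mathrm{Im}\,x_A)\simeq\Bbbk[\mathbb{H}]\otimes A_x$, the last isomorphism being one of superalgebras by flatness of $\Bbbk[\mathbb{H}]$.

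Finally, the commutativity of the two descended actions is inherited from $A$: both $\rho_x$ and the right $\mathbb{G}_x$-coaction $\tau_{A_x}$ of Lemma \ref{trivial} are obtained by applying the functor $(-)_x$ to $\rho$ and $\tau_A$, so the relation $(\rho\otimes\mathrm{id})\tau_A=(\mathrm{id}\otimes\tau_A)\rho$ descends to the corresponding identity for $\rho_x$ and $\tau_{A_x}$ after the natural identifications of the relevant cohomology spaces with the appropriate tensor products. I expect the only delicate points to be the bookkeeping of the odd signs in the $x$-action on the tensor products and the linear-independence argument establishing that $\ker(x_A)$ and $\mathrm{Im}(x_A)$ are genuine $\mathbb{H}$-subcomodules; once those are in place, the remaining verifications are routine functoriality.
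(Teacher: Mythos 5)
Your proposal is correct and follows essentially the same route as the paper: both use the bicomodule compatibility $(\rho\otimes\mathrm{id})\tau_A=(\mathrm{id}\otimes\tau_A)\rho$ to show that the left coaction intertwines $x_A$ with $1\otimes x_A$ on $\Bbbk[\mathbb{H}]\otimes A$, then invoke linear independence of the $\Bbbk[\mathbb{H}]$-legs to conclude that $\ker(x_A)$ and $\mathrm{Im}(x_A)$ are $\mathbb{H}$-subcomodules, with the right $\mathbb{G}_x$-structure supplied by Lemma \ref{trivial}. Your extra care with the sign $(-1)^{|h|}$, the flatness identification $(\Bbbk[\mathbb{H}]\otimes A)_x\simeq\Bbbk[\mathbb{H}]\otimes A_x$, and the explicit descent of the commutation relation only fills in details the paper leaves implicit.
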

	\begin{proof}
		We denote the left and right comodule maps of $\Bbbk[A]$ by $\tau_l$ and $\tau_r$ respectively. Then the identity
		\[(\tau_l\otimes\mathrm{id}_A)\tau_r=(\mathrm{id}_A\otimes\tau_r)\tau_l\]
		implies
		\[\tau_l(\phi\cdot f)=\sum (-1)^{|\phi||f_{(1)}|} f_{(1)}\otimes \phi\cdot f_{(2)}, \phi\in\mathrm{Dist}(\mathbb{G}), f\in A, \tau_l(f)=\sum f_{(1)}\otimes f_{(2)}.\]
		Without loss of generality, one can assume that the factors $f_{(1)}$ are linearly independent. Thus 
		\[\tau_l(\ker(x_A))\subseteq \Bbbk[\mathbb{H}]\otimes \ker(x_A) \ \mbox{and} \ \tau_l(\mathrm{Im}(x_A))\subseteq \Bbbk[\mathbb{H}]\otimes \mathrm{Im}(x_A).\]	
		Therefore, $A_x$ is a left $\Bbbk[\mathbb{H}]$-supercomodule and its comodule map is obviously a superalgebra map. The second statement is due Lemma \ref{trivial}.
	\end{proof}
	The supergroup $\mathbb{G}$ has a natural structure of a right $\mathbb{G}$-superscheme with respect to the \emph{right conjugation action} 
	\[(g, h)\mapsto h^{-1}gh, g, h\in\mathbb{G}(A), A\in\mathsf{SAlg}_{\Bbbk}. \]
	The corresponding (super)comodule map $\tau_{r-con}$ is given by
	\[f\mapsto \sum (-1)^{|f_{(1)}||f_{(2)}|} f_{(2)}\otimes S_{\mathbb{G}}(f_{(1)})f_{(3)}, f\in\Bbbk[\mathbb{G}], \]
	and in turn, $x$ acts as
	\[x\cdot f=\sum (-1)^{|f_{(1)}||f_{(2)}|+|f_{(2)}|} f_{(2)} x(S_{\mathbb{G}}(f_{(1)})f_{(3)})=\]
	\[\sum (-1)^{|f_{(1)}||f_{(2)}|+|f_{(2)}|} f_{(2)} x(S_{\mathbb{G}}(f_{(1)})) \epsilon_{\mathbb{G} }(f_{(3)})+\]\[\sum (-1)^{|f_{(1)}||f_{(2)}|+|f_{(2)}|+|f_{(1)}|} f_{(2)} \epsilon_{\mathbb{G}}(S_{\mathbb{G}}(f_{(1)})) x(f_{(3)})= \]
	\[-\sum x(f_{(1)})f_{(2)}+\sum (-1)^{|f_{(1)}|}f_{(1)}x(f_{(2)}),\]
	i.e. it is just the action from \cite[Section 4]{asher}. 
	
	Let $\mathbb{X}$ and $\mathbb{Y}$ be right $\mathbb{G}$-superschemes. Then $\mathbb{X}\times\mathbb{Y}$ can be regarded as a right $\mathbb{G}$-superscheme with respect to the diagonal action of $\mathbb{G}$. In particular, if $\mathbb{X}\times\mathbb{Y}\to\mathbb{Z}$ is a morphism of right $\mathbb{G}$-superschemes, then Lemma \ref{elementary prop-s} implies that there is a natural morphism $\mathbb{X}_x\times\mathbb{Y}_x\simeq(\mathbb{X}\times\mathbb{Y})_x\to\mathbb{Z}_x$ of $\mathbb{G}_x$-superschemes. Using this elementary remark we obtain the following.
	\begin{lm}\label{one more G_x}
		Let $\mathbb{G}$ be considered as a right $\mathbb{G}$-superscheme with respect to the right conjugation action. Then $\Bbbk[\mathbb{G}]_x$ has a natural structure of a Hopf superalgebra. In other words, $\mathrm{SSp}(\Bbbk[\mathbb{G}]_x)$ is an affine supergroup on which $\mathbb{G}_x$ acts by group automorphisms.
	\end{lm}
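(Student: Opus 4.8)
The plan is to realize $\mathbb{G}$, equipped with the right conjugation action, as a group object in the category of right $\mathbb{G}$-superschemes, and then to transport this group structure through the Duflo--Serganova functor $\mathbb{X}\mapsto\mathbb{X}_x$, using the fact (established just before the lemma) that this functor is monoidal and sends equivariant morphisms of right $\mathbb{G}$-superschemes to equivariant morphisms of right $\mathbb{G}_x$-superschemes.

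First I would observe that for every $B\in\mathsf{SAlg}_{\Bbbk}$ and every point $h$ the conjugation $g\mapsto h^{-1}gh$ is a \emph{group} automorphism of $\mathbb{G}$. Consequently the structure morphisms of $\mathbb{G}$ --- the multiplication $\mu\colon\mathbb{G}\times\mathbb{G}\to\mathbb{G}$, the unit $e\colon\mathrm{SSp}(\Bbbk)\to\mathbb{G}$ and the inverse $\iota\colon\mathbb{G}\to\mathbb{G}$ --- are all $\mathbb{G}$-equivariant, where $\mathbb{G}\times\mathbb{G}$ carries the diagonal conjugation action and $\mathrm{SSp}(\Bbbk)$ the trivial one. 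On coordinate superalgebras this says precisely that the comultiplication $\Delta$, the counit $\epsilon$ and the antipode $S$ of $\Bbbk[\mathbb{G}]$ are morphisms of $\mathbb{G}$-supermodules with respect to the conjugation action; in particular each of them is a morphism of $\mathrm{Dist}(\mathbb{G})$-supermodules and hence commutes with the odd square-zero operator induced by $x$ (computed in the formula above). Therefore each sends $\ker x$ into $\ker x$ and $\mathrm{Im}\,x$ into $\mathrm{Im}\,x$, and so descends to $\Bbbk[\mathbb{G}]_x$.

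Next I would invoke the monoidality of the DS-functor. By Lemma \ref{elementary prop-s}(1) there is a natural isomorphism $(M\otimes N)_x\simeq M_x\otimes N_x$ of $\mathbb{G}_x$-supermodules, and $\Bbbk_x=\Bbbk$ since $x$ acts as zero on the trivial module. As noted in the paragraph preceding the lemma, this identifies $(\mathbb{G}\times\mathbb{G})_x\simeq\mathbb{G}_x\times\mathbb{G}_x$ and turns $\mathbb{X}\mapsto\mathbb{X}_x$ into a symmetric monoidal functor. Applying it to $\mu, e, \iota$ yields morphisms of right $\mathbb{G}_x$-superschemes
\[\mu_x\colon\mathrm{SSp}(\Bbbk[\mathbb{G}]_x)\times\mathrm{SSp}(\Bbbk[\mathbb{G}]_x)\to\mathrm{SSp}(\Bbbk[\mathbb{G}]_x),\quad e_x,\quad \iota_x,\]
dually the superalgebra maps $\Delta_x,\epsilon_x,S_x$ on $\Bbbk[\mathbb{G}]_x$ obtained by the descent of the previous paragraph composed with the Künneth identification.

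Finally, the group axioms (associativity, the unit identities and the antipode identity) are commutative diagrams built from $\mu, e, \iota$ together with the product functor; since a monoidal functor preserves such diagrams, the images $\mu_x, e_x, \iota_x$ satisfy the same axioms, so $\mathrm{SSp}(\Bbbk[\mathbb{G}]_x)$ is an affine supergroup and $\Bbbk[\mathbb{G}]_x$ is a Hopf superalgebra. The residual right $\mathbb{G}_x$-superscheme structure supplied by Lemma \ref{trivial} is by construction compatible with $\mu_x, e_x, \iota_x$, i.e.\ $\mathbb{G}_x$ acts by group automorphisms. I expect the only delicate point to be checking that the Künneth isomorphism of Lemma \ref{elementary prop-s}(1) is coherent --- compatible with the associativity constraint and the braiding --- so that the functor is genuinely symmetric monoidal and the group axioms really do transport; once that is granted, the rest is formal.
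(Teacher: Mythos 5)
Your proposal is correct and follows essentially the same route as the paper: the paper's proof is exactly the observation that multiplication, inverse and unit are morphisms of right $\mathbb{G}$-superschemes for the conjugation action, combined with the remark (stated just before the lemma) that a morphism $\mathbb{X}\times\mathbb{Y}\to\mathbb{Z}$ of right $\mathbb{G}$-superschemes induces $\mathbb{X}_x\times\mathbb{Y}_x\simeq(\mathbb{X}\times\mathbb{Y})_x\to\mathbb{Z}_x$ via Lemma \ref{elementary prop-s}(1). Your additional care about the coherence of the K\"unneth isomorphism is a reasonable point the paper leaves implicit, but it does not change the argument.
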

	\begin{proof}
		Just observe that all structural morphisms of $\mathbb{G}$, i.e. multiplication, inverse and unit, are morphisms of $\mathbb{G}$-superschemes with respect to the right conjugation action of $\mathbb{G}$ on itself.  	
	\end{proof}
	Following \cite{asher}, we denote this supergroup by $\widetilde{\mathbb{G}_x}$.
	\begin{lm}\label{M_x is again supermodule w.r.t another G_x}
		If $\mathbb{X}$ is a right $\mathbb{G}$-superscheme, then $\mathbb{X}_x$ is a right $\widetilde{\mathbb{G}_x}$-superscheme, so that we obtain another version of Duflo-Serganova functor, that acts	from the category of right $\mathbb{G}$-superschemes to the category of right $\widetilde{\mathbb{G}_x}$-superschemes.	In particular, if $M$ is a $\mathbb{G}$-supermodule, then $M_x$ has a natural structure of $\widetilde{\mathbb{G}_x}$-supermodule. 	
	\end{lm}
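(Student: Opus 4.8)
The plan is to produce the desired $\widetilde{\mathbb{G}_x}$-action on $\mathbb{X}_x$ by applying the superscheme version of the DS-functor (Lemma \ref{trivial}) directly to the action morphism of $\mathbb{X}$. Write $a : \mathbb{X}\times\mathbb{G}\to\mathbb{X}$ for the right action. First I would equip $\mathbb{X}\times\mathbb{G}$ with the diagonal right $\mathbb{G}$-action, i.e. the given action on the first factor and the right conjugation action on the second, and check that $a$ is $\mathbb{G}$-equivariant for this structure and the given structure on the target $\mathbb{X}$: on points, $a(ph,\,h^{-1}gh)=p\,(gh)=a(p,g)\,h$, so $a$ is a morphism of right $\mathbb{G}$-superschemes. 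Since the conjugation action on $\mathbb{G}$ is exactly the one used to define $\widetilde{\mathbb{G}_x}$ in Lemma \ref{one more G_x}, applying the DS-functor together with the monoidal identification $\mathbb{X}_x\times\widetilde{\mathbb{G}_x}\simeq(\mathbb{X}\times\mathbb{G})_x$ coming from Lemma \ref{elementary prop-s} yields
\[\mathbb{X}_x\times\widetilde{\mathbb{G}_x}\simeq(\mathbb{X}\times\mathbb{G})_x\xrightarrow{a_x}\mathbb{X}_x,\]
which is the candidate action map. Being the DS-image of a superscheme morphism, $a_x$ is itself a morphism of superschemes, so the dual comodule map $A_x\to A_x\otimes\Bbbk[\mathbb{G}]_x$ is automatically a superalgebra morphism.

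Next I would verify the action axioms. The unit and associativity of $a$ are encoded by commutative diagrams of right $\mathbb{G}$-superschemes involving the unit $e$ and the multiplication $m:\mathbb{G}\times\mathbb{G}\to\mathbb{G}$; both $e$ and $m$ are equivariant for the conjugation action (on points $m(h^{-1}g_1h,\,h^{-1}g_2h)=h^{-1}m(g_1,g_2)h$), and by Lemma \ref{one more G_x} their DS-images are precisely the unit and multiplication of $\widetilde{\mathbb{G}_x}$. Applying the DS-functor to these diagrams and inserting the identifications $(\mathbb{X}\times\mathbb{G}\times\mathbb{G})_x\simeq\mathbb{X}_x\times\widetilde{\mathbb{G}_x}\times\widetilde{\mathbb{G}_x}$ via the naturality of the monoidal isomorphism of Lemma \ref{elementary prop-s}, one recovers exactly the associativity and unit axioms for $a_x$. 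This shows that $\mathbb{X}_x$ is a right $\widetilde{\mathbb{G}_x}$-superscheme, and the functoriality of DS supplies the action on morphisms, giving the asserted functor.

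For the final assertion about supermodules I would invoke Remark \ref{DS for supermodules as a particular case}: a $\mathbb{G}$-supermodule $M$ is the same datum as the right $\mathbb{G}$-superscheme $\mathrm{SSp}(\mathrm{Sym}(M))$ whose comodule map preserves the $\mathbb{N}$-grading. The superscheme result just proved makes $\mathrm{SSp}(\mathrm{Sym}(M)_x)$ a right $\widetilde{\mathbb{G}_x}$-superscheme; the monoidal property together with Remark \ref{graded complement} identifies $\mathrm{Sym}(M)_x\simeq\mathrm{Sym}(M_x)$ as $\mathbb{N}$-graded superalgebras, so the $\widetilde{\mathbb{G}_x}$-comodule map preserves this grading and restricts on the degree-one component to endow $M_x$ with the structure of a $\widetilde{\mathbb{G}_x}$-supermodule.

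The step I expect to be the main obstacle is the second paragraph: transporting the associativity and unit axioms through the DS-functor requires that the isomorphism of Lemma \ref{elementary prop-s} be natural and coherent, i.e. a genuine monoidal structure on the functor rather than a mere object-wise isomorphism, so that the DS-images of the defining diagrams of an action stay commutative once all the identifications are inserted. Pinning down this coherence—together with the compatibility of the $\mathbb{N}$-grading in the identification $\mathrm{Sym}(M)_x\simeq\mathrm{Sym}(M_x)$—is where the genuine care lies; once it is secured, every axiom follows formally from functoriality.
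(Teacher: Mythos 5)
Your proposal follows exactly the paper's argument: the paper also introduces the diagonal action $((x,g),h)\mapsto(xh,h^{-1}gh)$ on $\mathbb{X}\times\mathbb{G}$, observes that the action morphism is $\mathbb{G}$-equivariant, applies the DS-functor with the identification $(\mathbb{X}\times\mathbb{G})_x\simeq\mathbb{X}_x\times\widetilde{\mathbb{G}_x}$, and deduces the supermodule statement from Remark \ref{DS for supermodules as a particular case}. Your version merely spells out the axiom-checking and the monoidal-coherence point that the paper leaves implicit, so it is correct and essentially identical in approach.
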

	\begin{proof}
		Consider the diagonal action of $\mathbb{G}$ on $\mathbb{X}\times\mathbb{G}$ by the rule :
		\[((x, g), h)\mapsto (xh, h^{-1}gh), x\in\mathbb{X}(A), g, h\in\mathbb{G}(A).\]
		Then the action morphism $\mathbb{X}\times\mathbb{G}\to\mathbb{X}$ is obviously a morphism of $\mathbb{G}$-superschemes, that induces the required action morphism
		$\mathbb{X}_x\times\widetilde{\mathbb{G}_x}\to\mathbb{X}_x$. The second statement follows by Remark \ref{DS for supermodules as a particular case}.
	\end{proof}
	\begin{rem}\label{Lemma 3.2} The statements of Lemma \ref{elementary prop-s} are still valid even if we replace $\mathbb{G}_x$ by $\widetilde{\mathbb{G}_x}$.
		
	\end{rem}
	The identity $(x\cdot f) (x\cdot g)=x\cdot (f (x\cdot g)), f, g\in\Bbbk[\mathbb{G}],$ implies that $A=\mathrm{Im}(x_{\Bbbk[\mathbb{G}] })$ is a supersubalgebra of $\Bbbk[\mathbb{G}]$
	such that $\epsilon_{\mathbb{G}}(A)=0$. We have $S_{\mathbb{G}}(x\cdot f)= x\cdot (S_{\mathbb{G}}(f))$ and 
	\[ \Delta_{\mathbb{G}}(x\cdot f)=\sum x\cdot f_{(1)}\otimes f_{(2)}+\sum (-1)^{|f_{(1)}|} f_{(1)}\otimes x\cdot f_{(2)}=x\cdot\Delta_{\mathbb{G}}(f),
	\]
	hence $I=\Bbbk[\mathbb{G}]A$ is a Hopf superideal, that defines a supersubgroup $\mathbb{L}=\mathrm{SSp}(\Bbbk[\mathbb{G}]/I)$ of $\mathbb{G}$.
	\begin{lm}\label{L is equal to Cent}
		There holds $\mathrm{Cent}_{\mathbb{G}}(x)=\mathbb{L}$.	
	\end{lm}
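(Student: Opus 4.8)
The plan is to compare the two supersubgroups on the level of functors of points, working inside the convolution superalgebra of $\Bbbk[\mathbb{G}]$. Fix $B\in\mathsf{SAlg}_{\Bbbk}$. Since $A=\mathrm{Im}(x_{\Bbbk[\mathbb{G}]})$ generates $I$ as a (right) ideal and every $g\in\mathbb{G}(B)=\mathrm{Hom}_{\mathsf{SAlg}_{\Bbbk}}(\Bbbk[\mathbb{G}],B)$ is a superalgebra morphism, one has $g(I)=0$ if and only if $g(A)=0$, that is,
\[
\mathbb{L}(B)=\{\, g\in\mathbb{G}(B)\mid g(x\cdot f)=0 \text{ for all } f\in\Bbbk[\mathbb{G}]\,\}.
\]
On the other hand, regarding $x\in\mathfrak{g}_1$ and $g$ as elements of the convolution superalgebra $\mathrm{Hom}_{\Bbbk}(\Bbbk[\mathbb{G}],B)$, the centralizer is characterized by $\mathrm{Cent}_{\mathbb{G}}(x)(B)=\{\, g\in\mathbb{G}(B)\mid g\ast x=x\ast g\,\}$, the adjoint action of $g$ on $x$ being conjugation $g\ast x\ast g^{-1}$ with $g^{-1}=g\circ S_{\mathbb{G}}$. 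Thus it suffices to identify the two conditions $g(x\cdot f)=0$ and $g\ast x=x\ast g$.

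The key step is the identity $g(x\cdot f)=(g\ast x-x\ast g)(f)$ for all $f$. To see it, apply $g$ to the explicit formula
\[
x\cdot f=-\sum x(f_{(1)})\,f_{(2)}+\sum (-1)^{|f_{(1)}|}f_{(1)}\,x(f_{(2)})
\]
established above, and use that $x$ is odd, so that $x(f_{(i)})$ vanishes unless $f_{(i)}$ is odd. A short sign check then shows that the first summand equals $-(x\ast g)(f)$ and the second equals $(g\ast x)(f)$, giving $g(x\cdot f)=(g\ast x-x\ast g)(f)$. Since $g$ is even and convolution-invertible, $g(x\cdot f)=0$ for all $f$ is equivalent to $g\ast x=x\ast g$, hence to $g\in\mathrm{Cent}_{\mathbb{G}}(x)(B)$. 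As $B$ was arbitrary, $\mathbb{L}=\mathrm{Cent}_{\mathbb{G}}(x)$.

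Because the paper introduces $\mathrm{Cent}_{\mathbb{G}}(x)$ through its Harish-Chandra pair $(\mathrm{Cent}_G(x),\mathrm{Cent}_{\mathfrak{g}}(x)_1)$, I would also confirm the equality by computing $\Phi(\mathbb{L})$ and invoking Theorem \ref{equivalence}. Evaluating on purely even $B$ specializes the displayed functor-of-points description to $\mathbb{L}_{res}=\mathrm{Cent}_G(x)$, while for an $\epsilon$-derivation $y\in\mathfrak{g}_1$ the relation $y(I)=0$ reduces, via $\epsilon_{\mathbb{G}}(A)=0$, to $y(A)=0$, and the same computation as above gives $y(x\cdot f)=\pm\langle[x,y],f\rangle$; hence $\mathrm{Lie}(\mathbb{L})_1=\mathrm{Cent}_{\mathfrak{g}}(x)_1$. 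Matching the two Harish-Chandra subpairs inside $\Phi(\mathbb{G})$ yields $\mathbb{L}=\mathrm{Cent}_{\mathbb{G}}(x)$ once more.

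The main obstacle I anticipate is precisely the bookkeeping in the convolution identity: getting the signs right so that the two summands of $g(x\cdot f)$ assemble into the supercommutator $g\ast x-x\ast g$, together with making rigorous the identification of $\mathrm{Cent}_{\mathbb{G}}(x)$ as the functor $\{g\mid g\ast x=x\ast g\}$ (equivalently, reconciling this description with its given Harish-Chandra pair). Both are routine but delicate; the conceptual content---that $\mathbb{L}$ is exactly the scheme-theoretic fixed locus of right conjugation by the odd one-parameter subgroup $\mathbb{U}\simeq\mathbb{G}_a^-$ with $\mathrm{Lie}(\mathbb{U})=\Bbbk x$, and that this locus is the centralizer of $x$ because $x^2=0$ leaves $\mathbb{U}$ with no higher infinitesimal jets---is what drives the proof.
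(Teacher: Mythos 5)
Your proof is correct and is essentially the paper's argument recast in functor-of-points form: the identity $g(x\cdot f)=(g\ast x-x\ast g)(f)$ is exactly the pointwise version of the paper's computation, which uses the dual-supernumbers description of the adjoint action to write down the defining ideal $J$ of $\mathrm{Cent}_{\mathbb{G}}(x)$ and then matches its generators against the generators $x\cdot f$ of $I$. The only difference is presentational --- the paper proves the two inclusions $J\subseteq I$ and $I\subseteq J$ separately inside $\Bbbk[\mathbb{G}]$, while your single convolution identity yields both at once, and your supplementary Harish-Chandra-pair verification is then redundant.
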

	\begin{proof}
		For any superalgebra $A$, let $A[\epsilon_0, \epsilon_1]$ denote the $A$-superalgebra of \emph{dual supernumbers} (cf. \cite{maszub}). Recall that $x$ can be identified with the superalgebra morphism $\Bbbk[\mathbb{G}]\to \Bbbk[\epsilon_0, \epsilon_1]$, that sends $f\in\Bbbk[\mathbb{G}]$ to $\epsilon_{\mathbb{G}}(f)+\epsilon_1 x(f)$. Slightly abusing notations, 
		we denote this morphism by the same symbol $x$.
		The adjoint action is determined as
		\[x^g(f)=\epsilon_{\mathbb{G}}(f)+\epsilon_1 g(\sum (-1)^{|f_{(1)}|}S_{\mathbb{G}}(f_{(1)})x(f_{(2)})f_{(3)}), f\in\Bbbk[\mathbb{G}], g\in\mathbb{G}(A),\]
		hence the Hopf superalgebra $\Bbbk[\mathrm{Cent}_{\mathbb{G}}(x)]$ is isomorphic to $\Bbbk [\mathbb{G}]/J$, where the Hopf superideal $J$ is generated by the elements
		\[x(f)-\sum (-1)^{|f_{(1)}|}S_{\mathbb{G}}(f_{(1)})x(f_{(2)})f_{(3)}, f\in\Bbbk[\mathbb{G}].\] 	
		Further, we have
		\[x(f)-\sum (-1)^{|f_{(1)}|}S_{\mathbb{G}}(f_{(1)})x(f_{(2)})f_{(3)}=x(f)-\sum (-1)^{|f_{(1)}|}S_{\mathbb{G}}(f_{(1)}) (x\cdot f_{(2)})\]
		\[-\sum (-1)^{|f_{(1)}|+|f_{(2)}|}S_{\mathbb{G}}(f_{(1)})f_{(2)}x(f_{(3)})=\sum (-1)^{|f_{(1)}|}S_{\mathbb{G}}(f_{(1)}) (x\cdot f_{(2)}),\]
		that is $J\subseteq I$. 
		
		Conversely, let $f=x\cdot h=-\sum x(h_{(1)})h_{(2)}+\sum (-1)^{|h_{(1)}|}h_{(1)}x(h_{(2)})\in \mathrm{Im} (x_{\Bbbk[\mathbb{G}]}), h\in\Bbbk[\mathbb{G}]$. Then, considering all modulo $J$, we have
		\[\sum (-1)^{|h_{(1)}|} h_{(1)}x(h_{(2)})\equiv \sum(-1)^{|h_{(1)}|+|h_{(2)}|}h_{(1)}S_{\mathbb{G}}(h_{(2)})x(h_{(3)})h_{(4)}= \]
		\[\sum\epsilon_{\mathbb{G}}(h_{(1)})x(h_{(2)})h_{(3)}=\sum x(h_{(1)})h_{(2)},\]
		that is $I\subseteq J$. Lemma is proved. 	
	\end{proof}
	\begin{pr}\label{C_G(x) to another G_x}
		There is a natural supergroup morphism $\mathrm{Cent}_{\mathbb{G}}(x)\to \widetilde{\mathbb{G}_x}$, that factors through $\mathbb{G}_x$. Moreover, for any
		$\mathbb{G}$-supermodule $M$, the action of $\mathbb{G}_x$ on $M_x$ factors through the action of $\widetilde{\mathbb{G}_x}$.	
	\end{pr}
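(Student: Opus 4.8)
The plan is to produce the morphism at the level of coordinate Hopf superalgebras and then read off both assertions. Write $x_{\Bbbk[\mathbb{G}]}$ for the odd superderivation $f\mapsto x\cdot f$ of $\Bbbk[\mathbb{G}]$ coming from the right conjugation action, so that $\Bbbk[\widetilde{\mathbb{G}_x}]=\Bbbk[\mathbb{G}]_x=\ker(x_{\Bbbk[\mathbb{G}]})/\mathrm{Im}(x_{\Bbbk[\mathbb{G}]})$. By Lemma \ref{L is equal to Cent} we have $\Bbbk[\mathrm{Cent}_{\mathbb{G}}(x)]=\Bbbk[\mathbb{G}]/I$ with $I=\Bbbk[\mathbb{G}]\,\mathrm{Im}(x_{\Bbbk[\mathbb{G}]})$, and in particular $\mathrm{Im}(x_{\Bbbk[\mathbb{G}]})\subseteq I$. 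Hence the composite $\ker(x_{\Bbbk[\mathbb{G}]})\hookrightarrow\Bbbk[\mathbb{G}]\twoheadrightarrow\Bbbk[\mathbb{G}]/I$ annihilates $\mathrm{Im}(x_{\Bbbk[\mathbb{G}]})$ and descends to a superalgebra morphism $\phi:\Bbbk[\mathbb{G}]_x\to\Bbbk[\mathrm{Cent}_{\mathbb{G}}(x)]$, $\overline{f}\mapsto f+I$. On the level of functors of points this is $\theta:\mathrm{Cent}_{\mathbb{G}}(x)\to\widetilde{\mathbb{G}_x}$, $g\mapsto g\circ\phi$; concretely $\theta(g)(\overline f)=g(f)$ for $f\in\ker(x_{\Bbbk[\mathbb{G}]})$, which is legitimate because $g\in\mathrm{Cent}_{\mathbb{G}}(x)(B)$ kills $I\supseteq\mathrm{Im}(x_{\Bbbk[\mathbb{G}]})$.

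To check that $\theta$ is a morphism of supergroups, equivalently that $\phi$ is a morphism of Hopf superalgebras, I would argue on points, thereby sidestepping the explicit identification $(\Bbbk[\mathbb{G}]\otimes\Bbbk[\mathbb{G}])_x\simeq\Bbbk[\mathbb{G}]_x\otimes\Bbbk[\mathbb{G}]_x$ of Lemma \ref{elementary prop-s}. By the proof of Lemma \ref{one more G_x} the multiplication of $\widetilde{\mathbb{G}_x}$ is the DS-image of the multiplication of $\mathbb{G}$, so on coordinates it is induced by $\Delta_{\mathbb{G}}$ restricted to $\ker(x_{\Bbbk[\mathbb{G}]})$ modulo images. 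Given $g,g'\in\mathrm{Cent}_{\mathbb{G}}(x)(B)$, the map $g\otimes g':\Bbbk[\mathbb{G}]\otimes\Bbbk[\mathbb{G}]\to B$ kills $\mathrm{Im}(x_{\Bbbk[\mathbb{G}]})\otimes\Bbbk[\mathbb{G}]+\Bbbk[\mathbb{G}]\otimes\mathrm{Im}(x_{\Bbbk[\mathbb{G}]})$, hence factors through $(\Bbbk[\mathbb{G}]\otimes\Bbbk[\mathbb{G}])_x$; evaluating the product $\theta(g)\theta(g')$ on $\overline f$ then gives $(g\otimes g')(\Delta_{\mathbb{G}}(f))=(gg')(f)=\theta(gg')(\overline f)$. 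The unit and antipode are handled the same way, using $S_{\mathbb{G}}(x\cdot f)=x\cdot S_{\mathbb{G}}(f)$ and $\Delta_{\mathbb{G}}(x\cdot f)=x\cdot\Delta_{\mathbb{G}}(f)$ from the identities preceding Lemma \ref{L is equal to Cent}. Thus $\theta$ is a supergroup morphism.

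For the factorization through $\mathbb{G}_x=\mathrm{Cent}_{\mathbb{G}}(x)/\mathbb{R}$, recall that $\mathbb{R}$ is the smallest normal supersubgroup of $\mathrm{Cent}_{\mathbb{G}}(x)$ with $\mathrm{Lie}(\mathbb{R})\supseteq[\mathfrak{g},x]$. Since $\ker\theta$ is normal and $\mathrm{Lie}(\ker\theta)=\ker(\mathrm{d}\theta)$, it suffices by minimality to show $[\mathfrak{g},x]\subseteq\ker(\mathrm{d}\theta)$. Identifying $y\in\mathrm{Cent}_{\mathfrak{g}}(x)$ with the point-derivation $f\mapsto y(f)$ at the identity, one has $(\mathrm{d}\theta(y))(\overline f)=y(f)$ for $f\in\ker(x_{\Bbbk[\mathbb{G}]})$, so I must verify $y(f)=0$ for every $y=[w,x]$ and every $f\in\ker(x_{\Bbbk[\mathbb{G}]})$. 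The relation $x\cdot f=0$ reads $\sum x(f_{(1)})f_{(2)}=\sum(-1)^{|f_{(1)}|}f_{(1)}x(f_{(2)})$; applying the point-derivation $w$ and the Leibniz rule gives $\sum x(f_{(1)})w(f_{(2)})=(-1)^{|w|}\sum w(f_{(1)})x(f_{(2)})$, whereas a standard computation with left-invariant derivations yields $[w,x](f)=\sum w(f_{(1)})x(f_{(2)})-(-1)^{|w||x|}\sum x(f_{(1)})w(f_{(2)})$. As $|x|=1$ the two signs coincide and these expressions cancel, so $[w,x](f)=0$. Hence $[\mathfrak{g},x]\subseteq\ker(\mathrm{d}\theta)$, whence $\mathbb{R}\subseteq\ker\theta$ and $\theta$ factors as $\mathrm{Cent}_{\mathbb{G}}(x)\twoheadrightarrow\mathbb{G}_x\to\widetilde{\mathbb{G}_x}$. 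I expect this super-sign bookkeeping, together with keeping the two meanings of $x$ (the point-derivation and the operator $x_{\Bbbk[\mathbb{G}]}$) strictly apart, to be the only genuinely delicate point.

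Finally, for the module statement, let $M$ be a $\mathbb{G}$-supermodule; by Remark \ref{DS for supermodules as a particular case} the $\widetilde{\mathbb{G}_x}$-action on $M_x$ of Lemma \ref{M_x is again supermodule w.r.t another G_x} is the DS-image of the $\mathbb{G}$-coaction on $M$. Pulling this coaction back along $\phi$ and comparing comodule maps shows that the resulting $\mathrm{Cent}_{\mathbb{G}}(x)$-action on $M_x$ is exactly the one obtained by restricting the $\mathbb{G}$-action to $\mathrm{Cent}_{\mathbb{G}}(x)$ and passing to cohomology, that is, the action underlying the $\mathbb{G}_x$-structure of Lemma \ref{DS functor for supergroups}. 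Since that action already factors through $\mathbb{G}_x$ and $\theta$ factors through $\mathbb{G}_x$, the $\mathbb{G}_x$-action on $M_x$ is the pullback along $\mathbb{G}_x\to\widetilde{\mathbb{G}_x}$ of the $\widetilde{\mathbb{G}_x}$-action, as claimed.
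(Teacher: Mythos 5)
Your argument is correct and follows essentially the same route as the paper's proof: you build the morphism from the fact that $\ker(x_{\Bbbk[\mathbb{G}]})\to\Bbbk[\mathrm{Cent}_{\mathbb{G}}(x)]$ kills $\mathrm{Im}(x_{\Bbbk[\mathbb{G}]})$, note that all Hopf structures are induced from $\Bbbk[\mathbb{G}]$, and obtain the factorization through $\mathbb{G}_x$ by showing $[\mathfrak{g},x]\subseteq\mathrm{Lie}(\ker\theta)$ (your relation derived from $x\cdot f=0$ is exactly the paper's identity $[y,x](h)=y(x\cdot h)$ evaluated on $h\in\ker(x_{\Bbbk[\mathbb{G}]})$) and then invoking the minimality of $\mathbb{R}$. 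The only cosmetic discrepancy is that your displayed formula for $[w,x](f)$ carries a convention-dependent overall sign relative to the paper's, which does not affect the vanishing you need.
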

	\begin{proof}
		The canonical superalgebra morphism $\ker(x_{\Bbbk[\mathbb{G}]})\to \Bbbk[\mathrm{Cent}_{\mathbb{G}}(x)]$ vanishes on $\mathrm{Im} (x_{\Bbbk[\mathbb{G}]})$, and since the Hopf superalgebra structures on both  $\Bbbk[\mathrm{Cent}_{\mathbb{G}}(x)]$ and $\Bbbk[\widetilde{\mathbb{G}_x}]$ are induced by the Hopf superalgebra structure on $\Bbbk[\mathbb{G}]$, the first statement follows.  
		
		Next, let $\mathbb{H}$ denote the kernel of the above morphism $\mathrm{Cent}_{\mathbb{G}}(x)\to\widetilde{\mathbb{G}_x}$. Then $\Bbbk[\mathrm{Cent}_{\mathbb{G}}(x)/\mathbb{H}]$ is the Hopf supersubalgebra of $\Bbbk[\mathrm{Cent}_{\mathbb{G}}(x)]$, that coincides with the image of $\ker(x_{\Bbbk[\mathbb{G}]})$ therein. Slightly abusing notations, we identify the elements of  $\ker(x_{\Bbbk[\mathbb{G}]})$ with their images in $\Bbbk[\mathrm{Cent}_{\mathbb{G}}(x)]$. Then the defining superideal of $\mathbb{H}$ 
		is generated by $\ker(x_{\Bbbk[\mathbb{G}]})^+=\ker(x_{\Bbbk[\mathbb{G}]})\cap\ker\epsilon_{\mathbb{G}}$ (cf. \cite{zub4}). In particular, an element $y\in\mathrm{Lie}(\mathrm{Cent}_{\mathbb{G}}(x))$ belongs to $\mathrm{Lie}(\mathbb{H})$ if and only if $y(\ker(x_{\Bbbk[\mathbb{G}]})^+)=0$. 
		
		Consider an element $[y, x]\in [\mathfrak{g}, x]$. For any $h\in \Bbbk[\mathbb{G}]$ we have
		\[ [y, x](h) =\sum (-1)^{|h_{(1)}|}y(h_{(1)})x(h_{(2)})-\sum (-1)^{|y|+|y||h_{(1)}|} x(h_{(1)})y(h_{(2)})=\]
		\[y(\sum (-1)^{|h_{(1)}|}h_{(1)}x(h_{(2)})-\sum x(h_{(1)})h_{(2)})=y(x\cdot h).\]
		Thus $[\mathfrak{g}, x]\subseteq \mathrm{Lie}(\mathbb{H})$. Arguing as in Lemma \ref{DS functor for supergroups}, we complete the proof of the second statement. 
		The third statement is obvious.
	\end{proof}
	\begin{cor}\label{when G_x is a subgroup}
		If there is a $\mathbb{G}$-supermodule $M$ such that $M_x$ is a faithful $\mathbb{G}_x$-supermodule, then the supergroup morphism $\mathbb{G}_x\to\widetilde{\mathbb{G}_x}$ is an embedding.	
	\end{cor}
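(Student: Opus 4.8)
The plan is to deduce the statement directly from Proposition \ref{C_G(x) to another G_x} by a kernel argument, avoiding any finiteness assumption on $M$. Write $\phi : \mathbb{G}_x \to \widetilde{\mathbb{G}_x}$ for the natural supergroup morphism constructed there. Since an embedding (closed immersion) of affine supergroups is the same thing as a morphism with trivial kernel — every such $\phi$ factors as $\mathbb{G}_x \to \mathbb{G}_x/\ker\phi \hookrightarrow \widetilde{\mathbb{G}_x}$, where the second arrow is a closed immersion onto the image, so that triviality of $\ker\phi$ forces $\phi$ itself to be a closed immersion — it suffices to prove that $\ker\phi = 1$.

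To establish $\ker\phi = 1$ I would invoke the second assertion of Proposition \ref{C_G(x) to another G_x}: for the given $M$, the action of $\mathbb{G}_x$ on $M_x$ (the one attached to $M_x$ by Lemma \ref{DS functor for supergroups}) factors as $\mathbb{G}_x \xrightarrow{\phi} \widetilde{\mathbb{G}_x} \to \mathrm{GL}(M_x)$, the second map being the $\widetilde{\mathbb{G}_x}$-action furnished by Lemma \ref{M_x is again supermodule w.r.t another G_x}. Hence every element of $\ker\phi$ is sent to the identity of $\widetilde{\mathbb{G}_x}$ and therefore acts trivially on $M_x$; that is, $\ker\phi$ is contained in the kernel of the $\mathbb{G}_x$-action on $M_x$. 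By hypothesis $M_x$ is a \emph{faithful} $\mathbb{G}_x$-supermodule, so this latter kernel is trivial, and consequently $\ker\phi = 1$ and $\phi$ is an embedding.

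The only point requiring genuine care, rather than a real obstacle, is the equivalence "trivial kernel $\Leftrightarrow$ closed immersion" in the category of affine supergroups. I would reduce it to the classical statement for affine algebraic groups through the Harish-Chandra pair description of Section 3: under the equivalence $\Phi$ of Theorem \ref{equivalence}, triviality of $\ker\phi$ amounts to injectivity of $\phi$ on the even algebraic group together with injectivity on the odd part $\mathrm{Lie}(\mathbb{G}_x)_1$, and these two conditions are precisely what it means for $\phi$ to be a closed immersion. One should also be explicit that "faithful" is being read as triviality of the kernel of the action morphism, which is what makes the argument work without assuming $M$, and hence $M_x$, to be finite dimensional.
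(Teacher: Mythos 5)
Your argument is correct and is exactly the deduction the paper intends (the corollary is stated without proof as an immediate consequence of Proposition \ref{C_G(x) to another G_x}): the factorization $\mathbb{G}_x\to\widetilde{\mathbb{G}_x}\to\mathrm{GL}(M_x)$ forces $\ker\phi$ into the kernel of a faithful action, hence $\ker\phi=1$, and a supergroup morphism with trivial kernel is a closed immersion. Your care about reading ``faithful'' as triviality of the kernel of the action morphism, and the reduction of ``trivial kernel $\Rightarrow$ closed immersion'' to the Harish-Chandra pair level, are both appropriate and consistent with the paper's framework.
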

	\begin{rem}\label{trivial remark about I}
		The superideal $I$ is generated by the elements $x\cdot f$, where $f$ runs over a set of generators of the superalgebra $\Bbbk[\mathbb{G}]$. 	
	\end{rem}
	
	\section{Auxiliary computations}
	
	Let $x$ be a square zero odd endomorphism of a superspace $V$. Let $\mathrm{Sym}(V)$ denote the symmetric superalgebra, freely generated by $V$. Then the action of $x$ on $V$ can be naturally extended to an odd superderivation of $\mathrm{Sym}(V)$. 
	
	Arguing as in Lemma \ref{elementary prop-s}, we choose the homogeneous basis 
	\[y_1, \ldots, y_s, y_{s+1}, \ldots, y_{s+t}\] of a free $\Bbbk[x]$-supersubmodule $F$ of $V$, such that
	$V=V_x\oplus F$. Besides, $|y_k|=0$ if $1\leq k\leq s$, otherwise $|y_k|=1$. 
	
	Note that $\mathrm{Sym}(V)\simeq \mathrm{Sym}(V_x)\otimes\mathrm{Sym}(F)$. Moreover, the superderivation $x$ acts as
	\[x\cdot (f\otimes g)=(-1)^{|f|} f\otimes x\cdot g, f\in\mathrm{Sym}(V_x), g\in \mathrm{Sym}(F),  \]
	hence $\mathrm{Sym}(V)_x\simeq \mathrm{Sym}(V_x)\otimes \mathrm{Sym}(F)_x$. In other words, without loss of generality, one can assume that $V_x=0$.	
	Let $U$ denote the superspace $\sum_{1\leq k\leq s+t}\Bbbk y_k$. 
	
	If $char\Bbbk=0$, then $\mathrm{Sym}(F)$, regarded as a complex with respect to differential $x$, is the tensor product of the  \emph{Koszul complex} \[\mathrm{K}(U_1)=\{\mathrm{Sym}(x\cdot U_1)\otimes \Lambda^q(U_1) \}_{0\leq q\leq t}\] (of free $\mathrm{Sym}(x\cdot U_1)$-modules) and the \emph{de Rham complex}
	\[\mathrm{R}(U_0)=\{\mathrm{Sym}(U_0)\otimes \Lambda^q((x\cdot U_0)) \}_{0\leq q\leq s}.\] They are extended to the exact complexes
	\[0\to \mathrm{Sym}(x\cdot U_1)\otimes \Lambda^t(U_1)\to\ldots\to \mathrm{Sym}(x\cdot U_1)\otimes U_1\to\mathrm{Sym}(x\cdot U_1)\to \Bbbk\to 0 \]
	and
	\[0\to\Bbbk\to \mathrm{Sym}(U_0)\to \mathrm{Sym}(U_0)\otimes x\cdot U_0\to\ldots\to \mathrm{Sym}(U_0)\otimes \Lambda^s(x\cdot U_0)\to 0. \]
	In other words, $\mathrm{H}^{\bullet}(\mathrm{K}(U_1))\simeq\Bbbk$ and $\mathrm{H}^{\bullet}(\mathrm{R}(U_0))\simeq\Bbbk$. Applying Corollary \ref{for complexes} and Remark \ref{graded complement}, we obtain
	$\Bbbk= \mathrm{Sym}(F)_x$. More generally, $\mathrm{Sym}(V)_x\simeq \mathrm{Sym}(V_x)$.
	
	If $char\Bbbk =p>0$, then let $\mathrm{Sym}(U_0^p)$ denote the subalgebra of $\mathrm{Sym}(U_0)$, generated by $p$-powers of the elements from $U_0$. Since $x$ vanishes on  $\mathrm{Sym}(U_0^p)$, $\mathrm{Sym}(F)$ is isomorphic (as a complex) to the tensor product of the same Koszul complex $\mathrm{K}(U_1)$ and the \emph{$p$-restricted} de Rham complex
	$\mathrm{R}_{(p)}(U_0)=\{\mathrm{Sym}_{(p)}(U_0)\otimes \Lambda^q(x\cdot U_0) \}_{0\leq q\leq s}$, where $\mathrm{Sym}_{(p)}(U_0)=\sum_{0\leq k_1, \ldots, k_s< p}\Bbbk y_1^{k_1}\ldots y_s^{k_s}$, then tensored by $\mathrm{Sym}(U_0^p)$. 
	
	Moreover, we have a basis of $\mathrm{Sym}_{(p)}(U_0)$ consisting of \emph{divided power monomials} $y_1^{(k_1)}\ldots y_s^{(k_s)}=\frac{y^{k_1}}{k_1!}
	\ldots \frac{y^{k_s}}{k_s!}$, such that \[x\cdot (y_1^{(k_1)}\ldots y_s^{(k_s)})=y_1^{(k_1-1)}\ldots y_s^{(k_s)}\otimes x\cdot y_1+\ldots +y_1^{(k_1)}\ldots y_s^{(k_s-1)}\otimes x\cdot y_s.\]
	\begin{lm}\label{exactness of p-restricted de Rham}
		The cohomology group of $p$-restricted de Rham complex is isomorphic to $\Lambda(\sum_{1\leq i\leq s} \Bbbk y_{i}^{(p-1)}\otimes  x\cdot y_{i})$ as $\mathbb{N}$-graded (super)space.
	\end{lm}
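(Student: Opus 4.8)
The plan is to factor the $p$-restricted de Rham complex into a tensor product of one-variable complexes and to apply the Künneth isomorphism of Corollary \ref{for complexes}. Write $\xi_i=x\cdot y_i$ for $1\leq i\leq s$, so that $\xi_1, \ldots, \xi_s$ form a homogeneous (odd) basis of $x\cdot U_0$ and $\mathrm{R}_{(p)}(U_0)\simeq \mathrm{Sym}_{(p)}(U_0)\otimes\Lambda(\xi_1, \ldots, \xi_s)$. Using the divided power basis together with the displayed formula for the action of $x$, the complex decomposes as $\bigotimes_{i=1}^s C_i$, where $C_i$ is the two-term complex whose even part is $\sum_{0\leq k<p}\Bbbk\, y_i^{(k)}$, whose odd part is $\sum_{0\leq k<p}\Bbbk\, y_i^{(k)}\xi_i$, and whose differential is $x\cdot y_i^{(k)}=y_i^{(k-1)}\xi_i$ (with the convention $y_i^{(-1)}=0$). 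Here the differential of $\mathrm{R}_{(p)}(U_0)$ is exactly the odd superderivation induced by the $C_i$ through the braiding, i.e. the tensor-product differential in the sense of Corollary \ref{for complexes}.

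Next I would compute the cohomology of a single factor $C_i$ directly. Since $x\cdot 1=0$ and $x\cdot y_i^{(k)}=y_i^{(k-1)}\xi_i$ for $1\leq k\leq p-1$, the kernel of $x$ on the even part is $\Bbbk\cdot 1$, while its image in the odd part is spanned by $y_i^{(0)}\xi_i, \ldots, y_i^{(p-2)}\xi_i$. Because $x$ vanishes on the odd part (the exterior degree is already maximal), this gives
\[ \mathrm{H}^{\bullet}(C_i)\simeq \Bbbk\cdot 1\ \oplus\ \Bbbk\cdot y_i^{(p-1)}\xi_i\simeq \Lambda(\Bbbk\, y_i^{(p-1)}\otimes x\cdot y_i). \]
Moreover, the largest free $\Bbbk[x]$-supersubmodule of $C_i$ is generated by $y_i^{(1)}, \ldots, y_i^{(p-1)}$, and the two displayed representatives $1$ and $y_i^{(p-1)}\xi_i$ span an $\mathbb{N}$-graded complement to it, sitting in internal degrees $0$ and $p$ respectively.

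Finally I would assemble the factors. Applying Corollary \ref{for complexes} iteratively, together with Remark \ref{graded complement}, whose hypothesis on the existence of $\mathbb{N}$-graded complements to the free parts is guaranteed by the explicit one-variable description above, yields
\[ \mathrm{H}^{\bullet}(\mathrm{R}_{(p)}(U_0))\simeq\bigotimes_{i=1}^s \mathrm{H}^{\bullet}(C_i)\simeq \Lambda\Big(\sum_{1\leq i\leq s}\Bbbk\, y_i^{(p-1)}\otimes x\cdot y_i\Big), \]
which is the assertion.

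The main obstacle I anticipate is bookkeeping rather than conceptual difficulty: one must verify that the differential on $\mathrm{R}_{(p)}(U_0)$ genuinely agrees with the braided tensor-product differential of the $C_i$, so that Corollary \ref{for complexes} applies with the correct Koszul signs, and one must check that the one-variable complements glue to an $\mathbb{N}$-graded complement of the total free $\Bbbk[x]$-supersubmodule. The latter is precisely what upgrades the Künneth identification from an isomorphism of ungraded superspaces to an isomorphism of $\mathbb{N}$-graded superspaces, which is what the statement claims.
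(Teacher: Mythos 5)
Your argument is correct and is essentially the paper's proof unrolled: the paper's one-line proof ("induction on $s$, Corollary \ref{for complexes} and Remark \ref{graded complement}") amounts to exactly your tensor decomposition into one-variable complexes $C_i$, the base-case computation $\mathrm{H}^{\bullet}(C_i)\simeq\Bbbk\cdot 1\oplus\Bbbk\, y_i^{(p-1)}(x\cdot y_i)$, and the iterated K\"unneth isomorphism with the $\mathbb{N}$-graded complement check. Your explicit verification that $1$ and $y_i^{(p-1)}(x\cdot y_i)$ span an $\mathbb{N}$-graded complement to the free $\Bbbk[x]$-part is precisely the hypothesis of Remark \ref{graded complement} that the paper leaves implicit, so nothing is missing.
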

	\begin{proof}
		Use the induction on $s$, Corollary \ref{for complexes} and Remark \ref{graded complement}. 
	\end{proof}
	\begin{rem}\label{subcomplex of p-restricted de Rham}
		Let $\mathrm{R}'_{(p)}(U_0)$ denote the subcomplex of $\mathrm{R}_{(p)}(U_0)$, spanned by the elements $y_{i_1}^{(k_1)}\ldots y_{i_r}^{(k_r)}\otimes  x\cdot y_{j_1}\wedge \ldots\wedge x\cdot y_{j_l}$, such that either some exponent $k_i$ is strictly less than $p-1$, or $\{i_1, \ldots, i_r\}\neq \{j_1, \ldots, j_l\}$. Then
		\[\mathrm{R}_{(p)}(U_0)=\Lambda(\sum_{1\leq i\leq s} \Bbbk y_i^{(p-1)}\otimes x\cdot y_i)\oplus\mathrm{R}'_{(p)}(U_0), \ x\cdot\Lambda(\sum_{1\leq i\leq s} \Bbbk y_i^{(p-1)}\otimes x\cdot y_i)=0, \]
		and $H^{\bullet}(\mathrm{R}'_{(p)}(U_0))=0$.	
	\end{rem}
	\begin{cor}\label{Sym(V)_x in positive char}
		If $char\Bbbk=p>0$, then \[\mathrm{Sym}(V)_x\simeq\mathrm{Sym}(V_x)\otimes \mathrm{Sym}(U_0^p)\otimes\Lambda(\sum_{1\leq i\leq s}\Bbbk y_i^{(p-1)}\otimes x\cdot y_i).\]
	\end{cor}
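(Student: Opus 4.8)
The plan is to reduce the claim to a Künneth computation for the differential $x$ on $\mathrm{Sym}(F)$. The preceding discussion already gives $\mathrm{Sym}(V)\simeq\mathrm{Sym}(V_x)\otimes\mathrm{Sym}(F)$ together with the formula $x\cdot(f\otimes g)=(-1)^{|f|}f\otimes x\cdot g$, and hence $\mathrm{Sym}(V)_x\simeq\mathrm{Sym}(V_x)\otimes\mathrm{Sym}(F)_x$. So I would assume $V_x=0$ and focus entirely on computing the cohomology of the complex $(\mathrm{Sym}(F),x)$; the general formula then follows by tensoring the answer with $\mathrm{Sym}(V_x)$.

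First I would make explicit the factorization of $\mathrm{Sym}(F)$ as a tensor product of complexes. Using $\mathrm{Sym}(U_0)\simeq\mathrm{Sym}_{(p)}(U_0)\otimes\mathrm{Sym}(U_0^p)$ and regrouping the elements $y_k$ and $x\cdot y_k$ by parity, one gets an isomorphism of complexes
\[\mathrm{Sym}(F)\simeq\mathrm{K}(U_1)\otimes\mathrm{R}_{(p)}(U_0)\otimes\mathrm{Sym}(U_0^p),\]
on which $x$ acts as the Koszul differential on the first factor, as the $p$-restricted de Rham differential on the second, and trivially on the third. I would then compute cohomology factorwise: $\mathrm{H}^\bullet(\mathrm{K}(U_1))\simeq\Bbbk$ just as in characteristic zero; Lemma \ref{exactness of p-restricted de Rham} gives $\mathrm{H}^\bullet(\mathrm{R}_{(p)}(U_0))\simeq\Lambda(\sum_{1\le i\le s}\Bbbk y_i^{(p-1)}\otimes x\cdot y_i)$; and $\mathrm{Sym}(U_0^p)_x=\mathrm{Sym}(U_0^p)$ since $x$ vanishes there. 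Iterating the Künneth isomorphism of Corollary \ref{for complexes} over the three factors then produces
\[\mathrm{Sym}(F)_x\simeq\Bbbk\otimes\Lambda(\sum_{1\le i\le s}\Bbbk y_i^{(p-1)}\otimes x\cdot y_i)\otimes\mathrm{Sym}(U_0^p),\]
and tensoring with $\mathrm{Sym}(V_x)$ gives the asserted formula.

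The only genuine subtlety, and the step I would treat most carefully, is that Corollary \ref{for complexes} is stated for \emph{finite} complexes whereas $\mathrm{Sym}(F)$ is infinite-dimensional, and that Remark \ref{graded complement} warns the resulting isomorphism need not respect the ambient $\mathbb{N}$-grading. Both points are resolved by the fact that $x$ carries each $y_k$ into $V\subseteq\mathrm{Sym}^1(V)$ and so preserves the polynomial $\mathbb{N}$-grading, under which every homogeneous component $\mathrm{Sym}^n(F)$ is finite-dimensional; the Künneth isomorphism may thus be applied in each fixed degree $n$. To make it $\mathbb{N}$-graded I would point to $\mathbb{N}$-graded complements of the free $\Bbbk[x]$-parts in each factor: for the de Rham factor this is exactly the splitting $\mathrm{R}_{(p)}(U_0)=\Lambda(\dots)\oplus\mathrm{R}'_{(p)}(U_0)$ with $\mathrm{H}^\bullet(\mathrm{R}'_{(p)}(U_0))=0$ recorded in Remark \ref{subcomplex of p-restricted de Rham}, for the Koszul factor it is the standard degreewise contraction onto $\Bbbk$, and for $\mathrm{Sym}(U_0^p)$ the differential is already zero. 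With these complements in hand, Remark \ref{graded complement} upgrades the computation to an $\mathbb{N}$-graded isomorphism and finishes the argument.
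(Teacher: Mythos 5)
Your proposal is correct and follows essentially the same route as the paper: reduce to $\mathrm{Sym}(F)$, factor it as $\mathrm{K}(U_1)\otimes\mathrm{R}_{(p)}(U_0)\otimes\mathrm{Sym}(U_0^p)$, and combine Lemma \ref{exactness of p-restricted de Rham} with the K\"unneth isomorphism of Corollary \ref{for complexes} and Remark \ref{graded complement}. The paper's proof is a one-line version of this; your additional care about the degreewise application of K\"unneth and the $\mathbb{N}$-graded complements (via Remark \ref{subcomplex of p-restricted de Rham}) only makes explicit what the paper leaves implicit.
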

	\begin{proof}
		By Corollary \ref{for complexes} and Remark \ref{graded complement}, there is \[\mathrm{H}^{\bullet}(\mathrm{K}(U_1)\otimes \mathrm{R}_{(p)}(U_0))\simeq \Bbbk\otimes\Lambda(\sum_{1\leq i\leq s} \Bbbk y_i^{(p-1)}\otimes x\cdot y_i).\] 
	\end{proof}
\begin{rem}\label{Cartier isomorphism}
The statement of this corollary is well known as {\bf Cartier isomorphism} $\mathrm{R}(U_0)\to \mathrm{H}^{\bullet}(\mathrm{R}(U_0))$, defined by
\[y_1^{k_1}\ldots y_s^{k_s}\otimes x\cdot y_{i_1}\wedge\ldots\wedge x\cdot y_{i_t}\mapsto y_1^{pk_1}\ldots y_s^{pk_s}y_{i_1}^{(p-1)}\ldots y_{i_t}^{(p-1)}\otimes x\cdot y_{i_1}\wedge\ldots\wedge x\cdot y_{i_t},\]
\[0\leq k_1, \ldots, k_s, 1\leq i_1<\ldots<i_t\leq s.\]  	
\end{rem}
The following lemma is a folklore.
	\begin{lm}\label{subcoalgebra generates}
	Let $A$ be a Hopf superalgebra and $B$ be a supersubcoalgebra of $A$. Set $\overline{B}=\{\overline{b}=b-\epsilon_A(b)\mid b\in B \}$. Then $\overline{B}$ and $B$ are right $A$-supercomodules with respect to the right conjugation coaction $\tau_{r-con}$.	
\end{lm}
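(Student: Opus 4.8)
The plan is to exploit the fact, recorded earlier in this section, that the right conjugation coaction $\tau_{r-con}$ already endows the entire Hopf superalgebra with the structure of a right $A$-supercomodule (this is the coordinate incarnation of the statement that $\mathrm{SSp}(A)$ is a right $\mathrm{SSp}(A)$-superscheme under right conjugation; for a general Hopf superalgebra it is the standard right adjoint coaction). Consequently, to prove the lemma it suffices to check that $B$ and $\overline{B}$ are $\mathbb{Z}_2$-graded subspaces of $A$ that are \emph{stable} under $\tau_{r-con}$, i.e. $\tau_{r-con}(B)\subseteq B\otimes A$ and $\tau_{r-con}(\overline{B})\subseteq \overline{B}\otimes A$. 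Once stability is established, the coassociativity and counit axioms for the restricted coaction are inherited verbatim from those on $A$, so no further verification of the comodule axioms is required. Note also that $\overline{B}$ is automatically $\mathbb{Z}_2$-graded, since $\epsilon_A$ vanishes on odd elements, so $\overline{b}=b$ for odd $b$ and $\overline{b}=b-\epsilon_A(b)1_A$ is even for even $b$.

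For $B$ I would first record that, $B$ being a supersubcoalgebra, $\Delta_A(B)\subseteq B\otimes B$, whence applying $\Delta_A$ twice and using closure at each step gives $\Delta_A^{(2)}(B)\subseteq B\otimes B\otimes B$. Writing $\Delta_A^{(2)}(b)=\sum b_{(1)}\otimes b_{(2)}\otimes b_{(3)}$ with all three legs in $B$, the defining formula
\[\tau_{r-con}(b)=\sum (-1)^{|b_{(1)}||b_{(2)}|}\, b_{(2)}\otimes S_A(b_{(1)})\,b_{(3)}\]
exhibits the first tensor leg as $b_{(2)}\in B$, so $\tau_{r-con}(b)\in B\otimes A$, proving the stability of $B$.

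For $\overline{B}$ the key computation is to substitute $b_{(2)}=\overline{b_{(2)}}+\epsilon_A(b_{(2)})\,1_A$ into the displayed formula. The term carrying $\overline{b_{(2)}}$ lands in $\overline{B}\otimes A$. In the remaining term the factor $\epsilon_A(b_{(2)})$ forces $b_{(2)}$ to be even, so the sign is trivial, and collapsing the middle leg via $(\mathrm{id}\otimes\epsilon_A\otimes\mathrm{id})\Delta_A^{(2)}=\Delta_A$ together with the antipode identity $\sum S_A(b_{(1)})b_{(2)}=\epsilon_A(b)\,1_A$ shows this term equals $\epsilon_A(b)\,(1_A\otimes 1_A)=\epsilon_A(b)\,\tau_{r-con}(1_A)$, where $\tau_{r-con}(1_A)=1_A\otimes 1_A$. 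Subtracting, and using $\overline{b}=b-\epsilon_A(b)\,1_A$, I obtain
\[\tau_{r-con}(\overline{b})=\sum (-1)^{|b_{(1)}||b_{(2)}|}\,\overline{b_{(2)}}\otimes S_A(b_{(1)})\,b_{(3)}\in\overline{B}\otimes A,\]
which is exactly the required stability. I expect the only delicate point to be the sign and Sweedler bookkeeping in this last reduction—specifically, confirming that the counit annihilates the odd part of the middle leg so that no stray signs survive—whereas the conceptual content, namely stability followed by inheritance of the comodule axioms from $A$, is immediate.
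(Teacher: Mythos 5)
Your proposal is correct and follows essentially the same route as the paper: the paper's proof is precisely the one-line computation $\tau_{r-con}(b)=\sum (-1)^{|b_{(1)}||b_{(2)}|}\overline{b_{(2)}}\otimes S_{A}(b_{(1)})b_{(3)}+\epsilon_{A}(b)\tau_{r-con}(1)$, with the first Sweedler leg lying in $B$ (resp.\ $\overline{B}$) because $B$ is a supersubcoalgebra. Your additional bookkeeping (the counit killing the odd part of the middle leg, the antipode identity collapsing the residual term) just makes explicit what the paper leaves implicit.
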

\begin{proof}
	For any $b\in B$ we have 
	\[\tau_{r-con}(b)=\sum (-1)^{|b_{(1)}||b_{(2)}|}b_{(2)}\otimes S_{\mathbb{G}}(b_{(1)})b_{(3)}= \]
	\[\sum (-1)^{|b_{(1)}||b_{(2)}|}\overline{b_{(2)}}\otimes S_{\mathbb{G}}(b_{(1)})b_{(3)}+\epsilon_{\mathbb{G}}(b)\tau_{r-con}(1).\]	
\end{proof}

	Let $\mathbb{G}$ be an algebraic supergroup, such that $\Bbbk[\mathbb{G}]\simeq \mathrm{Sym}(V)_d$, where $V$ is a supersubcoalgebra in $\Bbbk[\mathbb{G}]$ and $d\in \mathrm{Sym}(V)$ such that $\overline{d}\in \mathrm{Sym}(V)\overline{V}$. Let $x\in\mathfrak{g}_1$ with $[x, x]=0$. 
	\begin{lm}\label{V_x is a subcoalgebra}
		$V_x$ is a supersubcoalgebra in $\Bbbk[\widetilde{\mathbb{G}_x}]$, as well as in $\Bbbk[\mathrm{Cent}_{\mathbb{G}}(x)]$.	
	\end{lm}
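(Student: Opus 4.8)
The plan is to realize $V_x$ as a subobject of $\Bbbk[\widetilde{\mathbb{G}_x}]=\Bbbk[\mathbb{G}]_x$ and then transport the coalgebra structure through the Duflo--Serganova functor. By Lemma \ref{subcoalgebra generates}, $V$ is a subcomodule of $\Bbbk[\mathbb{G}]$ with respect to the right conjugation coaction, so the odd operator $x$ (acting through $\mathrm{Dist}(\mathbb{G})$) preserves $V$; write $x_V$ for its restriction. Then $V_x=\ker(x_V)/\mathrm{Im}(x_V)$ is defined, and the inclusion $\iota\colon V\hookrightarrow\Bbbk[\mathbb{G}]$, being a morphism of conjugation comodules, induces by functoriality (Lemma \ref{M_x is again supermodule w.r.t another G_x}) a map $\iota_x\colon V_x\to\Bbbk[\mathbb{G}]_x=\Bbbk[\widetilde{\mathbb{G}_x}]$. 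First I would prove that $\iota_x$ is injective; granting this, I would deduce the subcoalgebra property from the fact that the comultiplication restricts to a comodule morphism $V\to V\otimes V$ and that the Duflo--Serganova functor is monoidal (Lemma \ref{elementary prop-s}(1) and Remark \ref{Lemma 3.2}).

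The injectivity of $\iota_x$ is the crux, and it is where the hypotheses on $\Bbbk[\mathbb{G}]=\mathrm{Sym}(V)_d$ enter. The superalgebra $\mathrm{Sym}(V)$ carries its usual $\mathbb{N}$-grading with degree-one part $V$, and since $x$ preserves $V$ and acts as a superderivation, $x$ is homogeneous of degree zero; hence $\ker x$ and $\mathrm{Im}\,x$ are graded, with degree-one parts $\ker(x_V)$ and $\mathrm{Im}(x_V)$. Because $d$ is the group-like element at which one localizes (even, while $x$ is odd), one has $x\cdot d=0$, hence $x\cdot(f d^{-n})=(x\cdot f)d^{-n}$ for $f\in\mathrm{Sym}(V)$. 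Now suppose $v\in\ker(x_V)$ lies in $\ker\iota_x$, i.e. $v=x\cdot g$ for some $g\in\Bbbk[\mathbb{G}]$. Writing $g=f d^{-n}$ gives $v d^{n}=x\cdot f$ in $\mathrm{Sym}(V)_d$, and clearing the denominator produces an $m\ge 0$ with
\[v\, d^{\,n+m}=x\cdot(d^{m}f)\qquad\text{in }\mathrm{Sym}(V).\]
The condition $\overline d\in\mathrm{Sym}(V)\overline V$ (with $\epsilon(d)=1$) means $d=1+\overline d$ with $\overline d$ in the augmentation ideal $\bigoplus_{k\ge1}\mathrm{Sym}^k(V)$, so the degree-one component of $v\,d^{\,n+m}$ is exactly $v$. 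Comparing degree-one components and using that $x$ is graded yields $v=x\cdot h_1$, where $h_1\in V$ is the degree-one component of $d^{m}f$; thus $v\in\mathrm{Im}(x_V)$ and $\ker\iota_x=0$.

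With $\iota_x$ injective I would conclude as follows. Since multiplication on $\mathbb{G}$ is equivariant for conjugation (as used in Lemma \ref{one more G_x}), the comultiplication $\Delta$ is a morphism of conjugation comodules, and because $V$ is a supersubcoalgebra it restricts to a comodule morphism $\Delta|_V\colon V\to V\otimes V$. Applying the functor together with the natural isomorphism $(V\otimes V)_x\simeq V_x\otimes V_x$ gives $(\Delta|_V)_x\colon V_x\to V_x\otimes V_x$, and naturality of this isomorphism turns the commutative square relating $\iota$ and $\iota\otimes\iota$ into
\[\Delta_x\circ\iota_x=(\iota_x\otimes\iota_x)\circ(\Delta|_V)_x.\]
As $\iota_x$, and hence $\iota_x\otimes\iota_x$, is injective, this forces $\Delta_x\big(\iota_x(V_x)\big)\subseteq\iota_x(V_x)\otimes\iota_x(V_x)$; since the counit of $\Bbbk[\widetilde{\mathbb{G}_x}]$ restricts to $V_x$ automatically, $V_x$ is a supersubcoalgebra of $\Bbbk[\widetilde{\mathbb{G}_x}]$. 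The assertion for $\Bbbk[\mathrm{Cent}_{\mathbb{G}}(x)]$ then follows by pushing $V_x$ forward along the canonical Hopf superalgebra morphism $\Bbbk[\widetilde{\mathbb{G}_x}]\to\Bbbk[\mathrm{Cent}_{\mathbb{G}}(x)]$ of Proposition \ref{C_G(x) to another G_x}, the image of a supersubcoalgebra under a coalgebra morphism being again a supersubcoalgebra.

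The main obstacle I anticipate is the injectivity step, and within it the passage from an identity in $\mathrm{Sym}(V)_d$ to one in $\mathrm{Sym}(V)$: since $d=1+\overline d$ need not be a non-zero-divisor in $\mathrm{Sym}(V)$ (which has nilpotents coming from the odd part of $V$), one cannot simply cancel $d$, and it is essential to clear denominators first and only then compare degree-one homogeneous components. Everything after that is formal functoriality and monoidality of the Duflo--Serganova functor.
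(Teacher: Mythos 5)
Your overall architecture is actually close to the paper's: the paper also reduces the claim to showing that, for $v\in\ker(x_V)$, the element $\Delta_{\mathbb{G}}(v)\in V\otimes V$ lies in $V_x\otimes V_x$ modulo $\mathrm{Im}(x_{C\otimes C})$ where $C=\mathrm{Sym}(V)$ --- only it does this by an explicit computation in the basis adapted to the decomposition $V=V_x\oplus U\oplus x\cdot U$, rather than by invoking naturality and monoidality of the DS-functor as you do; and it deduces the statement for $\Bbbk[\mathrm{Cent}_{\mathbb{G}}(x)]$ from Proposition \ref{C_G(x) to another G_x} exactly as you do. Your part of the argument establishing the containment $\Delta_x(\iota_x(V_x))\subseteq\iota_x(V_x)\otimes\iota_x(V_x)$ is fine.

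The genuine gap is in your injectivity step, and it is twofold. First, $x\cdot d=0$ does not follow from $d$ being even while $x$ is odd: an odd superderivation sends even elements to odd elements, not to zero, and $d$ is not assumed group-like in $\Bbbk[\mathbb{G}]$. For $\mathbb{G}=\mathrm{GL}(m|n)$ and $x=e_{ij}$ one has $d=\det(T_{11})\det(T_{22})$ with $x\cdot d\neq 0$; this is precisely why the paper, \emph{after} this lemma and only in positive characteristic, replaces $d$ by $d^p$ so as to arrange $x\cdot d=0$. Without that, your identity $x\cdot(fd^{-n})=(x\cdot f)d^{-n}$ acquires an extra term involving $x\cdot d$ and the clean relation $v\,d^{n+m}=x\cdot(d^mf)$ is lost. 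Second, even granting that relation, the augmentation ideal of $\mathrm{Sym}(V)$ is the ideal generated by $\overline{V}=\{v-\epsilon(v)\}$, which is \emph{not} $\bigoplus_{k\geq 1}\mathrm{Sym}^k(V)$, because the counit does not vanish on $V$ (e.g.\ $\epsilon(t_{kk})=1$). In the $\mathrm{GL}(m|n)$ example $d$ is homogeneous of degree $m+n$ in $\mathrm{Sym}(V)$, so the degree-one component of $v\,d^{n+m}$ is $0$ rather than $v$, and comparing degree-one components yields $0=0$ and no information. Thus your argument does not prove $\ker\iota_x=0$. The route consistent with the paper is to identify $V_x$ with the complement of the free $\Bbbk[x]$-supersubmodule $F$ inside $V$ (Lemma \ref{free basis}), so that $\mathrm{Sym}(V)\simeq\mathrm{Sym}(V_x)\otimes\mathrm{Sym}(F)$ with $x$ acting only on the second factor and $V_x$ visibly surviving in $\mathrm{Sym}(V)_x$ by Corollary \ref{Sym(V)_x in positive char}; the passage through the localization at $d$ is then handled by the explicit description of $\Bbbk[\widetilde{\mathbb{G}_x}]$ given right after the lemma.
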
	
	\begin{proof}
		Let $C$ denote $\mathrm{Sym}(V)$. Let $v_1, \ldots, v_k$ be a homogeneous basis of the superspace $V_x$. For any $v\in V$, there is 
		\[\Delta_{\mathbb{G}}(v)\equiv \sum_{1\leq i, j\leq k}\alpha_{ij} v_i\otimes v_j+\sum_{1\leq i\leq k, 1\leq j'\leq t}\beta_{ij'} v_i\otimes y_{j'}+\]	
		\[\sum_{1\leq i\leq k, 1\leq j'\leq t}\gamma_{ij'} y_{j'}\otimes v_i + \sum_{1\leq j', j"\leq t}\delta_{j'j"} y_{j'}\otimes y_{j"}+\]
		\[\sum_{1\leq j', j"\leq t}\kappa_{j' j"}y_{j'}\otimes x\cdot y_{j"}+ \sum_{1\leq j', j"\leq t}\pi_{j' j"}x\cdot y_{j'}\otimes y_{j"} \pmod{\mathrm{Im}(x_{C\otimes C})}.\]
		If $v\in V_x$, then $\Delta_{\mathbb{G}}(v)\in \ker(x_{C\otimes C})$, that implies \[\beta_{ij'}=\gamma_{ij'}=\delta_{j' j"}= 0, \kappa_{j' j"}=(-1)^{|u_{j'}|}\pi_{j' j"},  1\leq i\leq k, 1\leq j', j"\leq t,\]
		that is
		\[\sum_{1\leq j', j"\leq t}\kappa_{j' j"}y_{j'}\otimes x\cdot y_{j"}+ \sum_{1\leq j', j"\leq t}\pi_{j' j"}x\cdot y_{j'}\otimes y_{j"}=x\cdot(\sum_{1\leq j', j"\leq t}\kappa_{j' j"}y_{j'}\otimes y_{j"}).\]
		The second statement follows by Proposition \ref{C_G(x) to another G_x}.
	\end{proof}
If $char\Bbbk=p>0$, then, without loss of generality, one can replace $d$ by $d^p$ and thus assume that $x\cdot d=0$.
Combining with Lemma \ref{subcoalgebra generates}, we obtain
\[\Bbbk[\widetilde{\mathbb{G}_x}]=\Bbbk[\mathbb{G}]_x\simeq (\mathrm{Sym}(V)_x)_d\simeq (\mathrm{Sym}(V_x)\otimes \mathrm{Sym}(U_0^p)\otimes \Lambda(\sum_{1\leq i\leq s} \Bbbk y_i^{(p-1)}\otimes x\cdot y_i))_d,\]
where $d$ is considered modulo $\mathrm{Im}(x_{\mathrm{Sym}(V)})_d$.
	\begin{pr}\label{coproduct}
		If $char\Bbbk=p>0$, then the coproduct in $\Bbbk[\widetilde{\mathbb{G}_x}]$ is determined as follows. Let $y_i\in U_0$ and $\Delta_{\mathbb{G}}(y_i)=\sum (y_i)_{(1)}\otimes (y_i)_{(2)}$, where
		all tensor factors are (up to a scalar muliple) basic elements of $V_x$, $U$ or $x\cdot U$. 
		There is
		\[(1) \ \Delta_{\widetilde{\mathbb{G}_x}}(y_i^{p-1} x\cdot y_i)=\sum_{(y_i)_{(1)}\in U_0, (y_i)_{(2)}\in U_0} (y_i)_{(1)}^{p-1}x\cdot (y_i)_{(1)}\otimes (y_i)_{(2)}^p+\]
		\[\sum_{(y_i)_{(1)}\in U_0, (y_i)_{(2)}\in (V_x)_0} (y_i)_{(1)}^{p-1}x\cdot (y_i)_{(1)}\otimes\underbrace{(\ldots (y_i)_{(2)}\ldots)}_{p \ \mbox{factors}}+\]
		\[\sum_{(y_i)_{(1)}\in U_0, (y_i)_{(2)}\in U_0} (y_i)_{(1)}^p\otimes (y_i)_{(2)}^{p-1} x\cdot (y_i)_{(2)}+ \]
		\[ \sum_{(y_i)_{(1)}\in (V_x)_0, (y_i)_{(2)}\in U_0} \underbrace{(\ldots (y_i)_{(1)}\ldots)}_{p \ \mbox{factors}}\otimes (y_i)_{(2)}^{p-1} x\cdot (y_i)_{(2)}. \]
		Besides, we have
		\[(2) \ \Delta_{\widetilde{\mathbb{G}_x}}(y_i^p)=\sum_{(y_i)_{(1)}, (y_i)_{(2)}\in (V_x)_0\sqcup U_0} (y_i)_{(1)}^p\otimes (y_i)_{(2)}^p .\] 	
	\end{pr}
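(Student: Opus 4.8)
The plan is to exploit that, by Lemma~\ref{one more G_x}, the comultiplication of $\Bbbk[\widetilde{\mathbb{G}_x}]=\Bbbk[\mathbb{G}]_x$ is the morphism induced on cohomology by $\Delta_{\mathbb{G}}$ via the K\"unneth isomorphism $(\Bbbk[\mathbb{G}]\otimes\Bbbk[\mathbb{G}])_x\simeq\Bbbk[\mathbb{G}]_x\otimes\Bbbk[\mathbb{G}]_x$ of Corollary~\ref{for complexes}. Since $\Delta_{\widetilde{\mathbb{G}_x}}$ is a superalgebra morphism and, by Corollary~\ref{Sym(V)_x in positive char} together with Remark~\ref{Cartier isomorphism}, the superalgebra $\Bbbk[\widetilde{\mathbb{G}_x}]$ is generated --- after replacing $d$ by $d^p$ so that $x\cdot d=0$ --- by $V_x$, by the $p$-th powers $y_i^p$ with $y_i\in U_0$, and by the classes $y_i^{p-1}x\cdot y_i$, it suffices to evaluate $\Delta_{\widetilde{\mathbb{G}_x}}$ on these three families. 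The values on $V_x$ are controlled by Lemma~\ref{V_x is a subcoalgebra}, so the remaining two families are exactly the assertions (1) and (2). Accordingly, I would compute $\Delta_{\mathbb{G}}$ on the cocycle representatives $y_i^p$ and $y_i^{p-1}x\cdot y_i$ and reduce the outcome modulo $\mathrm{Im}(x_{\Bbbk[\mathbb{G}]\otimes\Bbbk[\mathbb{G}]})$.

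For (2), write $\Delta_{\mathbb{G}}(y_i)=\sum (y_i)_{(1)}\otimes (y_i)_{(2)}$ with the tensor factors taken among the fixed basis of $V=V_x\oplus U\oplus x\cdot U$. As $\Delta_{\mathbb{G}}$ is an algebra map, $\Delta_{\mathbb{G}}(y_i^p)=\Delta_{\mathbb{G}}(y_i)^p$; since $p$ is odd and the algebra is supercommutative, any summand carrying an odd tensor factor squares to zero, while the mixed multinomial coefficients among the even--even summands are divisible by $p$. Hence the $p$-th power distributes over the even--even terms and $\Delta_{\mathbb{G}}(y_i^p)=\sum (y_i)_{(1)}^p\otimes (y_i)_{(2)}^p$, the sum now being over even--even factors. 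Passing to cohomology, any factor lying in $x\cdot U$ is removed, because $(x\cdot y_k)^p=x\cdot\bigl(y_k(x\cdot y_k)^{p-1}\bigr)$ is a coboundary by the Leibniz rule and $x^2=0$; what survives is the sum over $(V_x)_0\sqcup U_0$, which is (2).

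The core of (1) is a Cartier-type additivity that I would establish first: for even elements $u,v$ of a supercommutative superalgebra with a square-zero odd derivation $x$, and with $\gamma(w):=w^{p-1}(x\cdot w)$, one has $\gamma(u+v)\equiv\gamma(u)+\gamma(v)\pmod{\mathrm{Im}(x)}$. This rests on the explicit coboundary $\gamma(u+v)-\gamma(u)-\gamma(v)=x\cdot\bigl(\sum_{k=1}^{p-1}\tfrac1k\binom{p-1}{k-1}u^kv^{p-k}\bigr)$, whose coefficients are $p$-integral since $1\le k\le p-1$; the identity is checked over $\mathbb{Z}_{(p)}$, where $\gamma(w)=\tfrac1p\,x\cdot(w^p)$, and then reduced modulo $p$. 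Applying this identity inductively to the even elements $(y_i)_{(1)}\otimes (y_i)_{(2)}$ of $\Bbbk[\mathbb{G}]\otimes\Bbbk[\mathbb{G}]$, and using the coderivation identity $\Delta_{\mathbb{G}}(x\cdot y_i)=x\cdot\Delta_{\mathbb{G}}(y_i)$ recorded earlier, I get $\Delta_{\mathbb{G}}(y_i^{p-1}x\cdot y_i)=\gamma(\Delta_{\mathbb{G}}(y_i))\equiv\sum_\alpha\gamma\bigl((y_i)_{(1)}\otimes (y_i)_{(2)}\bigr)$ modulo coboundaries. The odd--odd summands vanish, since such a factor already squares to zero and $p\ge 3$, while each even--even summand $a\otimes b$ expands, after computing $x\cdot(a\otimes b)=(x\cdot a)\otimes b+a\otimes(x\cdot b)$, to $a^{p-1}(x\cdot a)\otimes b^p+a^p\otimes b^{p-1}(x\cdot b)$.

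Finally I would reduce these even--even contributions in cohomology exactly as for (2): a slot in $(V_x)_0$ annihilates the companion factor $w^{p-1}(x\cdot w)$ because $x\cdot w=0$ there, and a slot in $x\cdot U$ annihilates its term either through $x\cdot(x\cdot y_k)=0$ or through the coboundary $(x\cdot y_k)^p$. The survivors are precisely the four families of (1): a factor of the form $(y_i)_{(j)}^{p-1}x\cdot (y_i)_{(j)}$ forces $(y_i)_{(j)}\in U_0$, whereas its companion $p$-th power may lie in $U_0$, contributing an honest $p$-power, or in $(V_x)_0$, contributing the $p$-fold product written $\underbrace{(\dots)}_{p}$ in the statement. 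I expect the additivity lemma --- the genuinely positive-characteristic ingredient, which is the infinitesimal shadow of the Cartier isomorphism of Remark~\ref{Cartier isomorphism} --- to be the principal obstacle; granting it, the remainder is sign bookkeeping and the same cohomological reductions already used in (2).
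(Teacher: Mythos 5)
Your proposal is correct, and its overall skeleton (reduce to the generators $y_i^p$ and $y_i^{p-1}x\cdot y_i$ supplied by Corollary \ref{Sym(V)_x in positive char}, compute $\Delta_{\mathbb{G}}$ on these representatives, and discard the cross-terms in cohomology) matches the paper's. Where you genuinely diverge is in the mechanism for killing the cross-terms. The paper introduces the explicit acyclic, $x$-stable subcomplex $L=\mathrm{Sym}(V_x)\otimes \mathrm{Sym}(U_0^p)\otimes\mathrm{R}'_{(p)}(U_0)\oplus \mathrm{Sym}(V_x)\otimes\mathrm{K}^+(U_1)\otimes\mathrm{R}(U_0)$ with $C=C_x\oplus L$, expands $\Delta_{\mathbb{G}}(y_i^{p-1}x\cdot y_i)$ multinomially, checks term by term that everything not appearing in $S$ lands in $F=L\otimes C+C\otimes L$, and concludes from $F_x=0$ together with $\Delta_{\widetilde{\mathbb{G}_x}}(y_i^{p-1}x\cdot y_i)-S\in\ker(x_{C\otimes C})$. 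You instead prove the additivity $\gamma(u+v)\equiv\gamma(u)+\gamma(v)\pmod{\mathrm{Im}(x)}$ for $\gamma(w)=w^{p-1}(x\cdot w)$ by lifting to a torsion-free universal example over $\mathbb{Z}_{(p)}$, where $\gamma(w)=\tfrac1p x(w^p)$, and exhibiting the explicit $p$-integral coboundary $x\cdot\bigl(\sum_{k=1}^{p-1}\tfrac1k\binom{p-1}{k-1}u^kv^{p-k}\bigr)$; I checked the identity and the integrality, and they are right, as are your disposals of the odd--odd summands (which already satisfy $t^2=0$, so $t^{p-1}=0$ for $p\geq 3$) and of the slots in $x\cdot U_1$ via the coboundary $(x\cdot y_k)^p=x\cdot\bigl(y_k(x\cdot y_k)^{p-1}\bigr)$. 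Your route buys a cleaner, more conceptual explanation of \emph{why} $\gamma$ is additive modulo coboundaries --- it really is the infinitesimal Cartier phenomenon --- and it localizes the characteristic-$p$ input in a single reusable lemma; the paper's route buys something you use only implicitly: the decomposition $C\otimes C=(C_x\otimes C_x)\oplus F$ with $F_x=0$ is what guarantees that once $\Delta_{\mathbb{G}}(y_i^{p-1}x\cdot y_i)-S$ is known to be a coboundary, the element $S$, written in the chosen representatives, \emph{is} the class in $C_x\otimes C_x$ and not merely congruent to it. You should state this last identification explicitly (it follows from Corollary \ref{for complexes}, Remark \ref{graded complement} and Remark \ref{subcomplex of p-restricted de Rham}), but this is a matter of presentation rather than a gap.
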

	\begin{proof}
		Let $L$ denote the subcomplex 
		\[\mathrm{Sym}(V_x)\otimes \mathrm{Sym}(U_0^p)\otimes\mathrm{R}'_{(p)}(U_0)\oplus \mathrm{Sym}(V_x)\otimes\mathrm{K}^+(U_1)\otimes\mathrm{R}(U_0) ,\]
		where 
		\[\mathrm{K}^+(U_1)=\oplus_{1\leq q\leq t} \mathrm{Sym}(x\cdot U_1)\otimes \Lambda^q(U_1)\oplus \mathrm{Sym}(x\cdot U_1)^+\]
		and $\mathrm{Sym}(x\cdot U_1)^+$ consists of polynomials without constant term. It is clear that $C=C_x\oplus L$ and $L_x=0$.
		
		Let $S$ denote the right hand side of the first formula. We have
		\[\Delta_{\mathbb{G}}(y_i^{p-1} x\cdot y_i)=\]
		\[\sum \pm(\underbrace{\ldots (y_i)_{(1)}\ldots }_{(p-1) \ \mbox{factors}})x\cdot (y_i)_{(1)} \otimes \underbrace{(\ldots (y_i)_{(2)}\ldots)}_{p \ \mbox{factors}}+\]
		\[\sum\pm \underbrace{(\ldots (y_i)_{(1)}\ldots)}_{p \ \mbox{factors}} \otimes (\underbrace{\ldots (y_i)_{(2)}\ldots }_{(p-1) \ \mbox{factors}})x\cdot (y_i)_{(2)},\]
		where the factors in the left/right hand side  products are not necessary the same. One sees that a (nonzero) product 
		\[ \pm(\underbrace{\ldots (y_i)_{(1)}\ldots }_{(p-1) \ \mbox{factors}})x\cdot (y_i)_{(1)}\]
		belongs to the subcomplex $L$,  except the case when it is equal to  $(y_i)_{(1)}^{p-1}x\cdot (y_i)_{(1)}$, where $(y_i)_{(1)}\in U_0$.	
		Furthermore, in the product on the right 
		\[\underbrace{(\ldots (y_i)_{(2)}\ldots)}_{p \ \mbox{factors}}\]
		each factor $(y_i)_{(2)}$ belongs to $(V_x)_0\sqcup U_0\sqcup x\cdot U_1$.
		If at least one $(y_i)_{(2)}$ belongs to $x\cdot U_1$, then this product belongs to $L$. Further, if each $(y_i)_{(2)}$ belongs to $U_0$ and not all of them are the same, then again this product belongs to $L$. Finally, if not all $(y_i)_{(2)}$ belong to $(V_x)_0$, then this product belongs to $L$ as well. 
		To sum it up, each term in the first sum coincides with one of the terms from (1) or it belongs to $L\otimes C+C\otimes L$.
		
		Symmetrically, in the second sum each term coincides with one of the terms from (1) or it belongs to $C\otimes L+L\otimes C$. 
		
		Combining all together, we obtain that $\Delta_{\widetilde{\mathbb{G}_x}}(y_i^{p-1} x\cdot y_i)-S$ belongs to the subcomplex $F=L\otimes C+C\otimes L=L\otimes L\oplus L\otimes C_x\oplus C_x\otimes L$. 
		It remains to note that $\Delta_{\widetilde{\mathbb{G}_x}}(y_i^{p-1} x\cdot y_i)-S\in\ker(x_{C\otimes C})$ and $F_x=0$ by Lemma \ref{elementary prop-s}(1) and Remark \ref{subcomplex of p-restricted de Rham}. 
		
		The proof of the second formula is similar and we leave it for the reader. 
	\end{proof}
	By Remark \ref{trivial remark about I}, $\Bbbk[\mathrm{Cent}_{\mathbb{G}}(x)]\simeq (\mathrm{Sym}(V_x))\otimes \mathrm{Sym}(U))_d$, and as above, $d$ is considered modulo the superideal $I=\mathrm{Sym}(V)_d (x\cdot U)$. Set $W=V_x\cup S_{\mathrm{Cent}_{\mathbb{G}}(x)}(V_x)$.	
	\begin{pr}\label{G_x ?}
The superideal $J$ in $\Bbbk[\mathrm{Cent}_{\mathbb{G}}(x)]$, generated by $\overline{W}$, is a Hopf superideal. Moreover, $J$ defines a normal supersubgroup $\mathbb{H}\unlhd \mathrm{Cent}_{\mathbb{G}}(x)$, such that $\mathrm{Lie}(\mathbb{H})=[\mathfrak{g}, x]$.	
	\end{pr}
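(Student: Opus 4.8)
Write $A:=\Bbbk[\mathrm{Cent}_{\mathbb G}(x)]$. The plan is to verify the three defining conditions for $J$ to be a Hopf superideal, then to deduce normality of $\mathbb H$ from conjugation-stability of $J$, and finally to identify $\mathrm{Lie}(\mathbb H)$ with $[\mathfrak g,x]$ by a duality argument pairing $\mathfrak g$ with $V$. For the Hopf superideal conditions, $\epsilon(J)=0$ is immediate because each generator $\overline w=w-\epsilon(w)$ lies in $\ker\epsilon$. For the coideal condition I use Lemma \ref{V_x is a subcoalgebra}: since $V_x$ is a supersubcoalgebra of $A$, for $v\in V_x$ we have $\Delta(v)=\sum v_{(1)}\otimes v_{(2)}$ with $v_{(i)}\in V_x$, and the standard identity $\Delta(\overline v)=\overline v\otimes1+1\otimes\overline v+\sum\overline{v_{(1)}}\otimes\overline{v_{(2)}}$ shows $\Delta(\overline v)\in J\otimes A+A\otimes J$. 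As the antipode of a Hopf superalgebra is an anti-morphism of supercoalgebras, $S_{\mathrm{Cent}_{\mathbb G}(x)}(V_x)$ is again a supersubcoalgebra, so the same computation applies to $\overline{S(V_x)}$; hence $\Delta(\overline W)\subseteq J\otimes A+A\otimes J$, and this propagates to all of $J$ because $J\otimes A$ and $A\otimes J$ are ideals of $A\otimes A$. For $S(J)\subseteq J$ I use that $A$ is supercommutative, whence $S^2=\mathrm{id}$; then $S(\overline{V_x})=\overline{S(V_x)}\subseteq J$ and $S(\overline{S(V_x)})=\overline{V_x}\subseteq J$, so $S(\overline W)\subseteq J$ and the claim follows since $S$ is an anti-morphism.

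For normality I would show $\tau_{r-con}(J)\subseteq J\otimes A$, which is equivalent to $\mathbb H$ being a normal supersubgroup. By Lemma \ref{subcoalgebra generates} applied to the supersubcoalgebras $V_x$ and $S(V_x)$, both $\overline{V_x}$ and $\overline{S(V_x)}$ are right $A$-subcomodules with respect to the right conjugation coaction $\tau_{r-con}$; since $\tau_{r-con}$ is a superalgebra morphism and $J$ is the ideal generated by $\overline W$, stability propagates from the generators to $J$.

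The heart of the matter is the identification $\mathrm{Lie}(\mathbb H)=[\mathfrak g,x]$. A point superderivation kills $\mathfrak m^2$, and the generators of $J$ span $J$ modulo $\mathfrak m^2$, so $\mathrm{Lie}(\mathbb H)=\{y\in\mathrm{Cent}_{\mathfrak g}(x):y(\overline W)=0\}$. Applying such a $y$ to $\sum S(f_{(1)})f_{(2)}=\epsilon(f)1$ gives $y\circ S=-y$ on $\ker\epsilon$, so $y(S(V_x))=0$ is equivalent to $y(V_x)=0$, and thus $\mathrm{Lie}(\mathbb H)=\{y\in\mathrm{Cent}_{\mathfrak g}(x):y(V_x)=0\}$. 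I then identify $\mathfrak g$ with $\overline V^{*}$: the cotangent space $\mathfrak m/\mathfrak m^2$ of $\mathrm{Sym}(V)_d$ at $\epsilon$ equals $\overline V$ (after rewriting $\mathrm{Sym}(V)=\mathrm{Sym}(\overline V)$, localizing at $d$ with $\epsilon(d)\neq0$ being harmless), so the pairing $\mathfrak g\times\overline V\to\Bbbk$ is perfect. Using the decomposition $\overline V=\overline{V_x}\oplus U\oplus x\cdot U$, in which $v\mapsto x\cdot v$ maps $U$ isomorphically onto $x\cdot U$ and annihilates $\overline{V_x}\oplus x\cdot U$, together with the identity $[y,x](v)=y(x\cdot v)$ from the proof of Proposition \ref{C_G(x) to another G_x}, I obtain $\mathrm{Cent}_{\mathfrak g}(x)=(x\cdot U)^{\perp}$ and $[\mathfrak g,x]=\bigl(\ker(v\mapsto x\cdot v)\bigr)^{\perp}=(\overline{V_x}\oplus x\cdot U)^{\perp}$, the image of a transpose being the annihilator of the kernel. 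Since $\mathrm{Lie}(\mathbb H)=(x\cdot U)^{\perp}\cap(\overline{V_x})^{\perp}=(\overline{V_x}\oplus x\cdot U)^{\perp}$, the two subspaces coincide.

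The step I expect to be the main obstacle is this last one: it demands the perfectness of the pairing $\mathfrak g\times\overline V\to\Bbbk$ (so that the annihilator bookkeeping is legitimate) and careful tracking of parities and signs in $y\circ S=-y$ and in the transpose of $v\mapsto x\cdot v$, whereas the Hopf-superideal and normality parts follow formally from the supercoalgebra structure already recorded in Lemmas \ref{subcoalgebra generates} and \ref{V_x is a subcoalgebra}.
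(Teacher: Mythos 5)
Your proof is correct and follows essentially the same route as the paper: the Hopf-superideal and normality claims come from the supercoalgebra structure of $V_x$ (and of $S(V_x)$) together with Lemma \ref{subcoalgebra generates}, and the identification $\mathrm{Lie}(\mathbb{H})=[\mathfrak{g},x]$ comes from dualizing the $\Bbbk[x]$-module decomposition $\overline{V}=\overline{V_x}\oplus U\oplus x\cdot U$ against $\mathfrak{g}\simeq(\mathfrak{m}/\mathfrak{m}^2)^*\simeq\overline{V}^*$ via the identity $[y,x](v)=y(x\cdot v)$. The paper states this duality more tersely (simply writing $\mathfrak{g}=\overline{V}_x^*\oplus U^*\oplus(x\cdot U)^*$ with $[\mathfrak{g},x]=U^*=\mathrm{Lie}(\mathbb{H})$), whereas you spell out the annihilator bookkeeping and the reduction $y(S(V_x))=0\Leftrightarrow y(V_x)=0$; these are details, not a different argument.
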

\begin{proof}
	First two statements follow by 
	\[\Delta_{\mathrm{Cent}_{\mathbb{G}}(x)}(\overline{w})\subseteq \sum \overline{w_{(1)}}\otimes w_{(2)}+1\otimes\overline{w}, w\in W,  S_{\mathrm{Cent}_{\mathbb{G}}(x)}(\overline{W})\subseteq\overline{W}\]
	and by Lemma \ref{subcoalgebra generates}. 

Note that the maximal superideal $\mathfrak{m}=\ker\epsilon_{\mathrm{Cent}_{\mathbb{G}}(x)}$ is generated by the superspace $\overline{V}$. 
By Lemma \ref{subcoalgebra generates}, both $\overline{V}$ and $\mathfrak{m}$ are  the right supersubcomodules with respect to $\tau_{r-cong}$, hence $\mathfrak{g}^*\simeq \mathfrak{m}/\mathfrak{m}^2\simeq\overline{V}$ as a $\mathbb{G}$-supermodule. Arguing as in Lemma \ref{elementary prop-s}, we have a $\Bbbk[x]$-supermodule decomposition $\mathfrak{g}=\overline{V}_x^*\oplus U^*\oplus (x\cdot U)^*$, so that $[\mathfrak{g}, x]=U^*$. On the other hand, $\mathrm{Lie}(\mathrm{Cent}_{\mathbb{G}}(x))=\mathrm{Cent}_{\mathfrak{g}}(x)=\overline{V}_x^*\oplus U^*$ and $\mathrm{Lie}(\mathbb{H})=U^*$.    	
\end{proof}
\begin{cor}\label{G_x in char=0}
If $char\Bbbk =0$, then $\mathbb{G}_x\simeq \mathrm{Cent}_{\mathbb{G}}(x)/\mathbb{H}^0$.	
\end{cor}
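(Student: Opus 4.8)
The plan is to identify the smallest normal supersubgroup $\mathbb{R}$ appearing in the definition $\mathbb{G}_x = \mathrm{Cent}_{\mathbb{G}}(x)/\mathbb{R}$ with the identity component $\mathbb{H}^0$. Recall that $\mathbb{R}$ is, by definition, the smallest normal supersubgroup of $\mathrm{Cent}_{\mathbb{G}}(x)$ whose Lie superalgebra contains $[\mathfrak{g}, x]$, and that by Proposition \ref{G_x ?} the supergroup $\mathbb{H}$ is normal in $\mathrm{Cent}_{\mathbb{G}}(x)$ with $\mathrm{Lie}(\mathbb{H}) = [\mathfrak{g}, x]$. So the whole point is to compare the \emph{minimal} object $\mathbb{R}$ with the \emph{explicit} object $\mathbb{H}^0$.

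First I would record a few formal facts and then pin down $\mathrm{Lie}(\mathbb{R})$ exactly. Passing to the identity component does not change the Lie superalgebra, so $\mathrm{Lie}(\mathbb{H}^0) = \mathrm{Lie}(\mathbb{H}) = [\mathfrak{g}, x]$, and $\mathbb{H}^0$ is again normal in $\mathrm{Cent}_{\mathbb{G}}(x)$, since the identity component of a normal supersubgroup is characteristic, hence normal. In particular $[\mathfrak{g}, x] \subseteq \mathrm{Lie}(\mathbb{H}^0)$, so minimality of $\mathbb{R}$ yields $\mathbb{R} \subseteq \mathbb{H}^0$. The same minimality argument applied to $\mathbb{R}^0$ shows $\mathbb{R}$ is connected: $\mathbb{R}^0$ is normal with $\mathrm{Lie}(\mathbb{R}^0) = \mathrm{Lie}(\mathbb{R}) \supseteq [\mathfrak{g}, x]$, whence $\mathbb{R} \subseteq \mathbb{R}^0$. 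Combining the inclusion $\mathbb{R} \subseteq \mathbb{H}^0$ with the defining property then forces
\[ [\mathfrak{g}, x] \subseteq \mathrm{Lie}(\mathbb{R}) \subseteq \mathrm{Lie}(\mathbb{H}^0) = [\mathfrak{g}, x],\]
so that $\mathrm{Lie}(\mathbb{R}) = \mathrm{Lie}(\mathbb{H}^0)$.

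It remains to upgrade this equality of Lie superalgebras, together with the inclusion of connected supergroups, to an equality $\mathbb{R} = \mathbb{H}^0$, and this is where the characteristic-zero hypothesis enters. I would pass through the Harish-Chandra equivalence of Theorem \ref{equivalence}: writing $\Phi(\mathbb{R}) = (R, \mathrm{Lie}(\mathbb{R})_1)$ and $\Phi(\mathbb{H}^0) = (H^0, \mathrm{Lie}(\mathbb{H}^0)_1)$, the odd parts already coincide by the previous step, so it suffices to prove $R = H^0$ for the even reduced groups. Both are connected algebraic groups with $R \subseteq H^0$ and $\mathrm{Lie}(R) = \mathrm{Lie}(\mathbb{R})_0 = \mathrm{Lie}(\mathbb{H}^0)_0 = \mathrm{Lie}(H^0)$. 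In characteristic zero every algebraic group is smooth by Cartier's theorem, so $\dim R = \dim \mathrm{Lie}(R) = \dim \mathrm{Lie}(H^0) = \dim H^0$; a closed connected subgroup of the same dimension inside a connected group must be the whole group, giving $R = H^0$. Applying $\Phi^{-1}$ yields $\mathbb{R} = \mathbb{H}^0$, and therefore $\mathbb{G}_x = \mathrm{Cent}_{\mathbb{G}}(x)/\mathbb{R} = \mathrm{Cent}_{\mathbb{G}}(x)/\mathbb{H}^0$.

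I expect the only genuine obstacle to be this final passage from equality of Lie superalgebras to equality of supergroups. It is precisely the point at which positive characteristic fails: the $p$-restricted de Rham computations of the previous section (Lemma \ref{exactness of p-restricted de Rham}, Corollary \ref{Sym(V)_x in positive char}) exhibit extra generators of $\Bbbk[\widetilde{\mathbb{G}_x}]$ and show that $\mathrm{Lie}(\mathbb{R})$ no longer determines $\mathbb{R}$. Over $\Bbbk$ of characteristic zero the step rests entirely on smoothness, while everything else is formal bookkeeping with normality, identity components, and the Harish-Chandra dictionary.
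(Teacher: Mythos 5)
Your argument is correct and is essentially the intended one: the paper states this corollary without proof, and the natural justification is exactly your reduction via minimality of $\mathbb{R}$, the sandwich $[\mathfrak{g},x]\subseteq\mathrm{Lie}(\mathbb{R})\subseteq\mathrm{Lie}(\mathbb{H}^0)=[\mathfrak{g},x]$, and Cartier smoothness in characteristic zero to upgrade equality of Lie superalgebras of connected supergroups (compared through their Harish-Chandra pairs) to equality of supergroups. Your closing remark correctly identifies the smoothness step as the precise point that fails in characteristic $p$, where the paper instead obtains $\mathbb{G}_x\simeq\mathrm{Cent}_{\mathbb{G}}(x)/\mathbb{H}_1$ via the first Frobenius kernel.
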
	
If $char\Bbbk=p>0$, then even if $\mathbb{H}$ is connected, it is not necessary smallest normal supersubgroup of $\mathrm{Cent}_{\mathbb{G}}(x)$ with the Lie superalgebra to be equal to $[\mathfrak{g}, x]$. In the matter of fact, $\mathbb{H}_1=\mathbb{H}\cap(\mathrm{Cent}_{\mathbb{G}}(x))_1\unlhd \mathrm{Cent}_{\mathbb{G}}(x)$ and $\mathrm{Lie}(\mathbb{H}_1)=\mathrm{Lie}(\mathbb{H})$, where $\mathbb{L}_1$ denotes the first \emph{Frobenius kernel} of a supergroup $\mathbb{L}$ (cf. \cite{jan, zub4}).
Since $\mathbb{H}_1$ is an \emph{infinitesimal supergroup of hight 1} (cf. \cite{dg, zub1}),  Theorem \ref{equivalence} and \cite[Corollary 4.3(a), II, \S 7]{dg} imply
$\mathbb{G}_x\simeq \mathrm{Cent}_{\mathbb{G}}(x)/\mathbb{H}_1$.

	\section{Examples}
	
	\subsection{Minimal rank of $x$}
	
	Let $\mathbb{G}=\mathrm{GL}(m|n)$. 
	
	Recall that $\Bbbk[\mathrm{GL}(m|n)]=\Bbbk[t_{kl}\mid 1\leq k, l\leq m+n]_{d}$, where $|t_{kl}|=0$, provided $1\leq k, l\leq m$ or $m+1\leq k, l\leq m+n$, otherwise $|t_{kl}|=1$, and $d=\det((t_{kl})_{1\leq k, l\leq m})\det((t_{kl})_{m+1\leq k, l\leq m+n})$. The Hopf superalgebra structure of $\Bbbk[\mathbb{G}]$ is given by 
	\[\Delta_{\mathbb{G}}(t_{kl})=\sum_{1\leq s\leq m+n} t_{ks}\otimes t_{sl}, \ \epsilon_{\mathbb{G}}(t_{kl})=\delta_{kl}. \]
	Let $\mathbb{T}=\mathbb{T}(m|n)$ be the maximal torus of $\mathbb{G}$, consisting of all diagonal matrices.
	The superbialgebra $\Bbbk[t_{kl}\mid 1\leq k, l\leq m+n]$ is denoted by $C(m|n)$, or just by $C$.
	
	Let $W$ be the standard $\mathbb{G}$-supermodule of (super)dimension $m|n$ with the (homogeneous) basis $e_1, \ldots , e_{m+n}$, i.e. $|e_k|=|k|=0$ provided $1\leq k\leq m$, otherwise $|e_k|=|k|=1$, and 
	\[\tau_W(e_k)=\sum_{1\leq s\leq m+n} e_s\otimes t_{sk}, 1\leq k\leq m+n.\]
	We also denote $\mathbb{G}$ by $\mathrm{GL}(W)$.
	
	The Lie superalgebra $\mathrm{Lie}(\mathbb{G})=\mathfrak{g}$ is identified with $\mathfrak{gl}(m|n)=\mathrm{End}_{\Bbbk}(W)$, where $\mathrm{End}_{\Bbbk}(W)_{a}$ consists of all endomorphisms $y$, such that $y(W_b)\subseteq W_{a+b\pmod 2}, a, b=0, 1$. Note that if the basic elements $e_{ij}$ of $\mathfrak{g}$ are determined as
	$e_{ij}(t_{kl})=\delta_{ik}\delta_{jl}$, then they act on $W$ by the rule $e_{ij}(e_k)=(-1)^{(|i|+|j|)|i|}\delta_{kj} e_i$. In other words, $e_{ij}$ can be identified with
	the matrix $(-1)^{(|i|+|j|)|i|}E_{ij}\in\mathfrak{gl}(m|n)$.
	
	Set $x=e_{ij}$. Then
	\[(\star) \ x\cdot t_{kl}=\delta_{ik}t_{jl}-(-1)^{|t_{ki}||x|}\delta_{jl}t_{ki}, 1\leq k, l\leq m+n.\]
	\begin{rem}\label{another formula}
		The formula $(\star)$ can be reformulated as follows. Let $\bf T$ denote the \emph{generic matrix} $(t_{kl})_{1\leq k, l\leq m+n}$.	The matrix $\bf T$ can be regarded as an element of
		$\mathrm{GL}(m|n)(C(m|n))$. Set $x\cdot {\bf T}=(x\cdot t_{kl})_{1\leq k, l\leq m+n}$. Then $x\cdot {\bf T}=(-1)^{(|i|+|j|)|i|}[x, {\bf T}]$.	
	\end{rem} 
	From now on we assume that $1\leq i\leq m< j\leq m+n$. 
	In the notations of the previous section, $C$ is isomorphic to $\mathrm{Sym}(V)$, where $V=\sum_{1\leq k, l\leq m+n}\Bbbk t_{kl}$. We also have 
	\[U=\sum_{1\leq l\leq m+n}\Bbbk t_{il}+
	\sum_{1\leq k\neq i, j\leq m+n}\Bbbk t_{kj},\]
	\[x\cdot U=\sum_{1\leq l\neq j\leq m+n}\Bbbk t_{jl}+
	\sum_{1\leq k\neq i, j\leq m+n}\Bbbk t_{ki}+\Bbbk (t_{ii}-t_{jj})\]
	and 
	\[V_x= \sum_{1\leq k,l\neq i, j\leq m+n}\Bbbk t_{kl}.\] 
	In other words, $V$, regarded as a $\Bbbk[x]$-supermodule, is equal to $V_x\oplus\Bbbk[x]\cdot U$.

	By Remark \ref{trivial remark about I}, the defining (Hopf) superideal $I$ of $\mathrm{Cent}_{\mathbb{G}}(x)$ is generated by the elements
	$t_{jl}, l\neq j; \ t_{ki}, k\neq i, j; \ t_{ii}-t_{jj}$.  
	It is also clear that $\ker(x_W)=\sum_{t\neq j}\Bbbk e_t$ and $\mathrm{Im}(x_W)=\Bbbk e_i$, hence $W_x\simeq\sum _{t\neq i, j}\Bbbk e_t$.
	
	By Lemma \ref{V_x is a subcoalgebra}, there holds 
	\[\Delta_{\mathrm{Cent}_{\mathbb{G}}(x)}(t_{kl})=\sum_{1\leq s\neq i, j\leq m+n} t_{ks}\otimes t_{sl}, \ 1\leq k, l\neq i, j\leq m+n .\] 
	Let $d'$ denote $\det((t_{kl})_{1\leq k, l\neq i\leq m})\det((t_{kl})_{m+1\leq k, l\neq j\leq m+n})$. Then $d$, regarded as an element of $\Bbbk[\mathrm{Cent}_{\mathbb{G}}(x)]$,
	is equal to $d' t_{ii}t_{jj}=d' t_{ii}^2$. Moreover, $t_{ii}$ is a group like element and $\Bbbk[V_x]_{d'}$ is a Hopf supersubalgebra of $\Bbbk[\mathrm{Cent}_{\mathbb{G}}(x)]$.
	
	Indeed, the superalgebra embedding $\Bbbk[V_x]_{d'}\to \Bbbk[\mathrm{Cent}_{\mathbb{G}}(x)]$ is dual to the induced supergroup morphism $\mathrm{Cent}_{\mathbb{G}}(x)\to \mathrm{GL}(W_x)$, given by \[g=(g_{kl})_{1\leq k, l\leq m+n}\mapsto g=(g_{kl})_{1\leq k, l\neq i, j\leq m+n}.\] By Proposition \ref{G_x ?}, the kernel of this morphism coincides
	with $\mathbb{H}$, which consists of all matrices 
	$(g_{kl})\in\mathrm{Cent}_{\mathbb{G}}(x)$, such that $g_{kl}\neq 0$ if and only if there is
	\begin{enumerate}
		\item $k\neq l$, then $k=i$ or $l=j$;
		\item $k=l\neq i, j$, then $g_{kk}=1$. 
	\end{enumerate}   
It is clear that $\mathbb{H}$ is connected. Therefore, if $char\Bbbk=0$, then $\mathbb{G}_x\simeq \mathrm{GL}(W_x)\simeq\mathrm{GL}(m-1|n-1)$. By Corollary \ref{when G_x is a subgroup}, $\mathbb{G}_x$ is embedded into $\widetilde{\mathbb{G}_x}$ (see \cite[Theorem 1.2(1)]{asher}). 
	
	Note also that $\mathrm{Cent}_{\mathbb{G}}(x)\to\mathrm{GL}(W_x)$ is split. Indeed, set \[\mathbb{L}=\mathrm{Cent}_{\mathbb{G}}(\Bbbk e_i+\Bbbk e_j)\cap\mathrm{Stab}_{\mathbb{G}}(\sum_{1\leq t\neq i, j\leq m+n}\Bbbk e_t).\] Then $\mathbb{L}\leq\mathrm{Cent}_{\mathbb{G}}(x)$ and the induced morphism $\mathbb{L}\to\mathrm{GL}(W_x)$ is obviously an isomorphism. In other words, $\mathrm{Cent}_{\mathbb{G}}(x)\simeq\mathrm{GL}(W_x)\ltimes\mathbb{H}$.
Further, $\mathbb{H}$ is a semi-direct product of $\mathbb{G}_m$ and its unipotent radical $\mathbb{H}_u$. It is easy to see that $\mathrm{Z}(\mathbb{H}_u)(A)=\{I+e_{ij}\otimes a | a\in A_1\}$ for any superalgebra $A$, that is $\mathrm{Z}(\mathbb{H}_u)\simeq\mathbb{G}_a^-$ and $\mathbb{H}_u/\mathrm{Z}(\mathbb{H}_u)\simeq \mathbb{G}_a^{m+n-2}\times(\mathbb{G}_a^-)^{m+n-2}$.
	
	If $char\Bbbk=p>0$ and $\Bbbk$ is perfect, then $\mathbb{G}_x\simeq \mathrm{GL}(W_x)\ltimes \mathbb{H}/\mathbb{H}_1$, where $\mathbb{H}/\mathbb{H}_1\simeq\mathbb{G}_m\ltimes \mathbb{G}_a^{m+n-2}$.  
	\begin{pr}\label{DS for GL}
		Let $char\Bbbk =p>0$. We have $\widetilde{\mathbb{G}_x}\simeq \mathrm{GL}(W_x)\ltimes \mathbb{R}$, where $\mathbb{R}\simeq \mathbb{G}_m\ltimes\mathbb{R}_u$. In turn, $\mathbb{R}_u$ has a central series with the quotients $\mathbb{G}_a^-\times \mathbb{G}_a^{m+n-2}$ and $(\mathbb{G}_a^-)^{m+n-2}$.	
	\end{pr}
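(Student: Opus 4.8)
The plan is to read off the Hopf superalgebra $\Bbbk[\widetilde{\mathbb{G}_x}]$ from the Cartier computation and then recognize its constituents. By Corollary \ref{Sym(V)_x in positive char} and the displayed isomorphism preceding Proposition \ref{coproduct}, $\Bbbk[\widetilde{\mathbb{G}_x}]$ is generated, after inverting $d$, by three families: the coalgebra $V_x=\sum_{k,l\neq i,j}\Bbbk t_{kl}$; the $p$-th powers $y^p$ with $y\in U_0$; and the odd Cartier classes $c_y=y^{p-1}\,x\cdot y$, $y\in U_0$. I first record these explicitly. The set $U_0=\{t_{ii}\}\cup\{t_{il}: l\le m,\ l\neq i\}\cup\{t_{kj}: k>m,\ k\neq j\}$ has $m+n-1$ elements, with $x\cdot t_{ii}=t_{ji}$, $x\cdot t_{il}=t_{jl}$, $x\cdot t_{kj}=t_{ki}$ by $(\star)$. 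Applying formula $(2)$ of Proposition \ref{coproduct} to $y=t_{ii}$ shows that $g:=t_{ii}^p$ is group-like; moreover $t_{ii}-t_{jj}=-x\cdot t_{ij}\in\mathrm{Im}(x_C)$ and $(t_{ii}-t_{jj})^p$ is a coboundary of the Koszul complex $\mathrm{K}(U_1)$, so that $t_{jj}^p=g$ in $\Bbbk[\widetilde{\mathbb{G}_x}]$, an identity needed when differentiating the column generators.

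Next I would produce the Levi decomposition $\widetilde{\mathbb{G}_x}\simeq\mathrm{GL}(W_x)\ltimes\mathbb{R}$. By Lemma \ref{V_x is a subcoalgebra} the span $V_x$ is a subcoalgebra, and exactly as in the computation of $\Bbbk[\mathrm{Cent}_{\mathbb{G}}(x)]$ its coproduct is $\Delta(t_{kl})=\sum_{s\neq i,j}t_{ks}\otimes t_{sl}$; hence $\Bbbk[V_x]_{d'}\cong\Bbbk[\mathrm{GL}(W_x)]$ is a Hopf supersubalgebra of $\Bbbk[\widetilde{\mathbb{G}_x}]$ (and inverting $d$ amounts to inverting $d'$ together with $g$). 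This inclusion is dual to a projection $\pi:\widetilde{\mathbb{G}_x}\to\mathrm{GL}(W_x)$. For a splitting I would use the Levi subgroup $\mathbb{L}=\mathrm{Cent}_{\mathbb{G}}(\Bbbk e_i+\Bbbk e_j)\cap\mathrm{Stab}_{\mathbb{G}}(\sum_{t\neq i,j}\Bbbk e_t)\leq\mathrm{Cent}_{\mathbb{G}}(x)$ already introduced above, composed with $\mathbb{L}\hookrightarrow\mathrm{Cent}_{\mathbb{G}}(x)\to\widetilde{\mathbb{G}_x}$ of Proposition \ref{C_G(x) to another G_x}; that $\pi$ restricts to the identification $\mathbb{L}\simeq\mathrm{GL}(W_x)$ is immediate on coordinates. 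Setting $\mathbb{R}=\ker\pi$ gives the semidirect product. Inside $\mathbb{R}$ the group-like $g$ cuts out a torus $\mathbb{G}_m$, again split off by a one-parameter subgroup of the torus of $\mathbb{G}$, so $\mathbb{R}=\mathbb{G}_m\ltimes\mathbb{R}_u$ with $\mathbb{R}_u=\ker(\mathbb{R}\to\mathbb{G}_m)$.

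The heart of the argument is the structure of $\mathbb{R}_u$, whose coordinate Hopf superalgebra is obtained from $\Bbbk[\widetilde{\mathbb{G}_x}]$ by setting $\overline{V_x}$ and $g-1$ to zero; its generators are the even $t_{il}^p$ $(l\neq i)$, $t_{kj}^p$ $(k\neq j)$ and the odd $c_{ii},c_{il},c_{kj}$. I would compute all their coproducts from Proposition \ref{coproduct}. The even $p$-th powers and $c_{ii}$ come out primitive, while for the mixed classes one finds, after reducing modulo $\overline{V_x}$ and $g-1$,
\[\Delta(c_{il})=c_{il}\otimes 1+1\otimes c_{il}+c_{ii}\otimes t_{il}^p,\qquad \Delta(c_{kj})=c_{kj}\otimes 1+1\otimes c_{kj}+t_{kj}^p\otimes c_{ii};\]
the asymmetry and the appearance of $g$ in the unreduced formulas is exactly where $t_{jj}^p=g$ and the vanishing of the $L$-part (Remark \ref{subcomplex of p-restricted de Rham}) intervene. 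These formulas show that the odd classes $c_{il},c_{kj}$ span a \emph{central} supersubgroup $N\simeq(\mathbb{G}_a^-)^{m+n-2}$, whereas in $\mathbb{R}_u/N$ the remaining generators $c_{ii},t_{il}^p,t_{kj}^p$ become primitive, giving $\mathbb{R}_u/N\simeq\mathbb{G}_a^-\times\mathbb{G}_a^{m+n-2}$. Since the only non-additive terms are the quadratic cocycles valued in $N$, the series $\mathbb{R}_u\supset N\supset 1$ is central with the two stated quotients, and $\mathbb{R}_u$ is $2$-step nilpotent; the nontrivial weights of $g$ visible in the unreduced coproducts give the asserted $\mathbb{G}_m$-action.

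The main obstacle I anticipate is the coproduct bookkeeping of the Cartier classes. Proposition \ref{coproduct} is stated under the hypothesis that every tensor factor of $\Delta_{\mathbb{G}}(y)$ is a basis vector of $V_x$, $U$ or $x\cdot U$, which fails for the diagonal term $t_{kj}\otimes t_{jj}$ occurring in the column generators; one must first rewrite $t_{jj}=t_{ii}-(t_{ii}-t_{jj})$ and discard the $x\cdot U$-summand (which dies in cohomology) before the formula applies, and it is precisely this step that yields the central, rather than abelian, structure of $\mathbb{R}_u$. The remaining verifications — that $\Bbbk[V_x]_{d'}$ is a genuine Hopf supersubalgebra, that the two splittings exist, and that the listed generators are algebraically independent after localization — are routine given Lemmas \ref{V_x is a subcoalgebra}, \ref{elementary prop-s} and Corollary \ref{Sym(V)_x in positive char}.
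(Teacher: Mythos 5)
Your proposal is correct and follows essentially the same route as the paper: generators of $\Bbbk[\widetilde{\mathbb{G}_x}]$ from Corollary \ref{Sym(V)_x in positive char}, the Levi $\mathrm{GL}(W_x)$ split off via Lemma \ref{V_x is a subcoalgebra} and the subgroup $\mathbb{L}$, the coproducts of the Cartier classes from Proposition \ref{coproduct} (including the necessary rewriting $t_{jj}=t_{ii}-(t_{ii}-t_{jj})$), and then the filtration of $\mathbb{R}_u$ by the subgroup spanned by the mixed odd classes. The only divergence is cosmetic: you deduce centrality of $N$ from the cocycle shape of the coproducts (the non-primitive parts lying in $B^{+}\otimes B^{+}$ for the Hopf subalgebra $B=\Bbbk[c_{ii},t_{il}^p,t_{kj}^p]$), whereas the paper verifies it by an explicit computation of $\tau_{r-con}$ modulo the ideal $J'$; both are valid.
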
 
	\begin{proof}
		As it has been already observed, $\Bbbk[\widetilde{\mathbb{G}_x}]=\Bbbk[\mathbb{G}]_x\simeq (C_x)_{d^p}$.
		By Corollary \ref{Sym(V)_x in positive char}, $C_x$ is isomorphic to a polynomial $\mathrm{Sym}(V_x)\otimes \mathrm{Sym}(U_0^p)$-superalgebra, 
		freely generated by $m+n-1$ odd elements. More precisely, there is
		\[U_0=\sum_{1\leq l\leq m}\Bbbk t_{il}+\sum_{m+1\leq k\neq j\leq m+n} \Bbbk t_{kj},\]
		and the above mentioned odd generators are 
		\[t_{il}^{p-1}t_{jl},  t_{kj}^{p-1}t_{ki}, 1\leq l\leq m, m+1\leq k\neq j\leq m+n. \]
		By Lemma \ref{V_x is a subcoalgebra}, there holds 
		\[\Delta_{\widetilde{\mathbb{G}_x}}(t_{kl})=\sum_{1\leq s\neq i, j\leq m+n} t_{ks}\otimes t_{sl}, \ 1\leq k, l\neq i, j\leq m+n .\] 
		
		Since $d'^p$, and $t_{ii}^p =t_{jj}^p$ are group like elements, we have 
		\[\Bbbk[\widetilde{\mathbb{G}_x}]\simeq  \mathrm{Sym}(V_x)_{d'^p}\otimes \mathrm{Sym}(U_0^p)[t_{il}^{p-1}t_{jl},  t_{kj}^{p-1}t_{ki}]_{t_{ii}^{p}},\] where $\mathrm{Sym}(V_x)_{d'^p}\otimes 1\simeq\Bbbk[\mathrm{GL}(W_x)]$ is a Hopf supersubalgebra of $\Bbbk[\widetilde{\mathbb{G}_x }]$. In other words,
		the supergroup morphism $\mathrm{Cent}_{\mathbb{G}}(x)\to\mathrm{GL}(W_x)$ factors through $\widetilde{\mathbb{G}_x}$. Therefore, $\widetilde{\mathbb{G}_x}\simeq \mathrm{GL}(W_x)\ltimes \mathbb{R}$, where $\mathbb{R}=\ker(\widetilde{\mathbb{G}_x}\to\mathrm{G}(W_x))$. 
		
		By Proposition \ref{coproduct}, the coproduct of $\Bbbk[\widetilde{\mathbb{G}_x}]$ is determined on the rest generators as
		\[(1) \ t_{ii}^p\mapsto t_{ii}^p\otimes t_{ii}^p, \ t_{il}^p\mapsto \sum_{1\leq s\leq m}t_{is}^p\otimes t_{sl}^p,  \ t_{kj}^p\mapsto\sum_{m+1\leq s\leq m+n} t_{ks}^p\otimes t_{sj}^p,\]
		\[(2) \ t_{ii}^{p-1}t_{ji}\mapsto t_{ii}^p\otimes t_{ii}^{p-1}t_{ji}+ t_{ii}^{p-1}t_{ji}\otimes t_{ii}^p, \ t_{il}^{p-1}t_{jl}\mapsto t_{ii}^p\otimes t_{il}^{p-1}t_{jl}+\sum_{1\leq s\leq m} t_{is}^{p-1}t_{js}\otimes t_{sl}^p,\]
		\[(3) \  t_{kj}^{p-1}t_{ki}\mapsto t_{kj}^{p-1}t_{ki}\otimes t_{ii}^p+t_{kj}^p\otimes t_{ii}^{p-1}t_{ji}+\sum_{m+1\leq s\neq j\leq m+n} t_{ks}^p\otimes t_{sj}^{p-1}t_{si}, \]
		\[1\leq l\neq i\leq m, m+1\leq k\neq j\leq m+n.\]
		
		We have $\Bbbk[\mathbb{R}]\simeq\Bbbk[\widetilde{\mathbb{G}_x}]/I$, where the Hopf superideal $I$ is generated by the elements $t_{kl}-\delta_{kl}, 1\leq k, l\neq i, j\leq m+n$. 
		The formulas (1-3) imply 
		\[\Delta_{\widetilde{\mathbb{G}_x}}(t_{il}^p)\equiv t_{ii}^p\otimes t_{il}^p+t_{il}^p\otimes 1 \pmod{I\otimes \Bbbk[\widetilde{\mathbb{G}_x}]+\Bbbk[\widetilde{\mathbb{G}_x}]\otimes I}, \]
		\[\Delta_{\widetilde{\mathbb{G}_x}}(t_{kj}^p)\equiv t_{kj}^p\otimes t_{ii}^p+1\otimes t_{kj}^p \pmod{I\otimes \Bbbk[\widetilde{\mathbb{G}_x}]+\Bbbk[\widetilde{\mathbb{G}_x}]\otimes I},\]
		\[\Delta_{\widetilde{\mathbb{G}_x}}(t_{il}^{p-1}t_{jl})\equiv t_{ii}^p\otimes t_{il}^{p-1}t_{jl} +t_{ii}^{p-1}t_{ji}\otimes t_{il}^p+t_{il}^{p-1}t_{jl}\otimes 1\pmod{I\otimes \Bbbk[\widetilde{\mathbb{G}_x}]+\Bbbk[\widetilde{\mathbb{G}_x}]\otimes I}, \]
		\[\Delta_{\widetilde{\mathbb{G}_x}}(t_{kj}^{p-1}t_{ki})\equiv t_{kj}^{p-1}t_{ki}\otimes t_{ii}^p+ t_{kj}^p\otimes t_{ii}^{p-1}t_{ji}+ 1\otimes t_{kj}^{p-1}t_{ki} \pmod{I\otimes \Bbbk[\widetilde{\mathbb{G}_x}]+\Bbbk[\widetilde{\mathbb{G}_x}]\otimes I},\]
		\[\Delta_{\widetilde{\mathbb{G}_x}}(t_{ii}^{p-1}t_{ji})\equiv t_{ii}^p\otimes t_{ii}^{p-1}t_{ji} +t_{ii}^{p-1}t_{ji}\otimes t_{ii}^p\pmod{I\otimes \Bbbk[\widetilde{\mathbb{G}_x}]+\Bbbk[\widetilde{\mathbb{G}_x}]\otimes I}.\]
		Simplifying our notations, one sees that $\Bbbk[\mathbb{R}]\simeq \Bbbk[u_1, \ldots u_{m+n-1}, v_1, \ldots, v_{m+n-1}]_{u_1}$, where
		$u$-elements are even and $v$-elements are odd, and the Hopf superalgebra structure is given by 
		\[\Delta_{\mathbb{R}}(u_1)=u_1\otimes u_1, \ \Delta_{\mathbb{R}}(u_k)=u_1\otimes u_k +u_k\otimes 1, \ \Delta_{\mathbb{R}}(u_s)=u_s\otimes u_1+1\otimes u_s, \]
		\[\Delta_{\mathbb{R}}(v_1)=v_1\otimes u_1+u_1\otimes v_1,\]
		\[\Delta_{\mathbb{R}}(v_k)=u_1\otimes v_k +v_1\otimes u_k+v_k\otimes 1, \ \Delta_{\mathbb{R}}(v_s)=v_s\otimes u_1+u_s\otimes v_1+1\otimes v_s, \]
		\[2\leq k\leq m, m+1\leq s\leq m+n-1.\]
		It is clear that $\Bbbk[u_1]_{u_1}$ is a Hopf supersubalgebra in $\Bbbk[\mathbb{R}]$, i.e. there is a supergroup epimorphism $\mathbb{R}\to \mathbb{G}_m$, which is obviously split.
		Indeed, let $J$ denote the Hopf superideal, generated by all generators except $u_1$. Then the splitting morphism is dual to $\Bbbk[\mathbb{R}]\to \Bbbk[\mathbb{R}]/J\simeq\Bbbk[\mathbb{G}_m]$. Finally, the defining Hopf superideal of $\mathbb{S}=\ker(\mathbb{R}\to\mathbb{G}_m)$ is generated by $u_1-1$.  Therefore, $\Bbbk[\mathbb{S}]$ is isomorphic to  the polynomial superalgebra $\Bbbk[u_t, v_l\mid 2\leq t\leq m+n-1, 1\leq l\leq m+n-1]$, such that all even elements $u_t$ and the odd element $v_1$ are primitive, but
		\[\Delta_{\mathbb{S}}(v_k)=v_k\otimes 1+1\otimes v_k+v_1\otimes u_k, \ \Delta_{\mathbb{S}}(v_s)=v_s\otimes 1+1\otimes v_s+u_s\otimes v_1.\]
		In particular, $\mathbb{S}$ is not abelian and there is a supergroup epimorphism $\mathbb{S}\to \mathbb{G}_a^-\times \mathbb{G}_a^{m+n-2}$, that is dual to the Hopf superalgebra embedding $\Bbbk[u_t, v_1]\to\Bbbk[\mathbb{S}]$.  The kernel of the latter morphism is obviously isomorphic to $(\mathbb{G}_a^-)^{m+n-2}$.
		
		Finally, recall that the conjugation action of any supergroup on itself is uniquely defined by the (super)comodule map $\tau_{r-con}$. Let $J'$ denote the Hopf superideal of $\Bbbk[\mathbb{S}]$, generated by all $u_t$ and $v_1$. Then
		\[\tau_{r-con}(v_k)=1\otimes v_k+v_k\otimes 1-1\otimes v_k-v_1\otimes u_k+u_k\otimes v_1-1\otimes v_1u_k\equiv\]\[v_k\otimes 1\pmod{\Bbbk[\mathbb{S}]\otimes J'+J'\otimes\Bbbk[\mathbb{S}]},\]
		and similarly
		\[\tau_{r-con}(v_s)\equiv v_s\otimes 1 \pmod{\Bbbk[\mathbb{S}]\otimes J'+J'\otimes\Bbbk[\mathbb{S}]}.\]
		Proposition is proved.
	\end{proof}
	\begin{rem}\label{GL is reductive}
		The supergroup $\mathrm{GL}(m|n)$ is reductive, i.e.
		it does not contain nontrivial normal smooth connected unipotent supersubgroups. Actually, if $\mathbb{R}$ is a such supersubgroup, then, since $\mathrm{GL}(m|n)_{ev}\simeq \mathrm{GL}(m)\times\mathrm{GL}(n)$ is reductive, $\mathbb{R}_{ev}$ is trivial. Thus the Harish-Chandra pair of $\mathbb{R}$ has a form $(1, \mathfrak{h})$, where	
		$\mathfrak{h}$ is an odd superideal of $\mathfrak{g}(m|n)$, that implies $\mathfrak{h}=0$.
	\end{rem}		
	\begin{rem}\label{there is no isomorphism}
		Note that in general the supergroups $\mathrm{Cent}_{\mathbb{G}}(x)$ and $\widetilde{\mathbb{G}_x}$ are not isomorphic! Indeed, since $\mathrm{GL}(W_x)$ is reductive, then $\mathbb{H}_u$ should be mapped isomorphically onto $\mathbb{R}_u$. But $\mathrm{Z}(\mathbb{H}_u)$ is one dimensional, hence Proposition \ref{DS for GL} would infer $m+n\leq 3$.
	\end{rem}
\begin{rem}\label{embedding}
If $char\Bbbk=p>0$, then $\mathbb{G}_x$ is embedded into $\widetilde{\mathbb{G}_x}$ as well. In fact, it is easy to show that the kernel of $\mathrm{Cent}_{\mathbb{G}}(x)\to \widetilde{\mathbb{G}_x}$	coincides with $\mathbb{H}_1$.
\end{rem}	
	
	The above computations show that the supergroup $\widetilde{\mathbb{G}_x}$ may be significantly different from $\mathbb{G}_x$. This raises the natural question of how relevant this difference is? Is there a $\mathbb{G}$-supermodule $M$, such that $M_x$ is a faithful $\widetilde{\mathbb{G}_x}$-supermodule? Below we show that the answer is positive at least in the above particular case.  
	
	Consider the induced action of $\mathbb{G}$ on $\mathrm{Sym}(\Pi W)$. In this case $U=\Bbbk e_i +\Bbbk e_j$, and by Corollary \ref{Sym(V)_x in positive char} there is
	$\mathrm{Sym}(\Pi W)_x\simeq \mathrm{Sym}(\Pi W_x)\otimes\mathrm{Sym}(\Bbbk e_j^p)\otimes\Lambda(\Bbbk e_j^{p-1}e_i)$.  The supercomodule structure of $\mathrm{Sym}(\Pi W)_x$ is determined as
	\[e_l\mapsto \sum_{1\leq k\neq i, j\leq m+n} e_k\otimes t_{kl}, \ 1\leq l\neq i, j\leq m+n, \  e_j^p\mapsto e_j^p\otimes t_{ii}^p+\sum_{m+1\leq k\neq j\leq m+n} e_k^p\otimes t_{kj}^p ,\]
	\[e_j^{p-1}e_i\mapsto \sum_{m+1\leq k\neq j\leq m+n} e_k^p\otimes t_{kj}^{p-1}t_{ki}+e_j^{p-1}e_i\otimes t_{ii}^p .\]
	Symmetrically, consider the induced action of $\mathbb{G}$ on $\mathrm{Sym}(W^*)$. Let $\{e^*_k\}_{1\leq k\leq m+n}$ be a basis of $W^*$, that is dual to the standard one. 
	Then $x\cdot e^*_l=-\delta_{il} e_j^*$, hence $\mathrm{Sym}(W^*)_x\simeq \mathrm{Sym}((W_x)^*)\otimes \mathrm{Sym}(\Bbbk (e_i^*)^p)\otimes\Lambda((e_i^*)^{p-1}e_j^*)$. Moreover, the supercomodule structure of $\mathrm{Sym}(W^*)_x$ is determined as
	\[e_l^*\mapsto \sum_{1\leq k\neq i, j\leq m+n} (-1)^{|k|(|k|+|l|)}e^*_k\otimes S_{\widetilde{\mathbb{G}_x}}(t_{lk}), 1\leq l\neq i, j\leq m+n,\]
	\[(e_i^*)^p\mapsto (e_i^*)^p\otimes t_{ii}^{-p}+\sum_{1\leq k\neq i\leq m} (e_k^*)^p\otimes S_{\widetilde{\mathbb{G}_x}}(t_{ik}^p),\]
	\[ (e_i^*)^{p-1}e_j^*\mapsto\sum_{1\leq k\leq m} (e_k^*)^p\otimes S_{\widetilde{\mathbb{G}_x}}(t_{ik}^{p-1}t_{jk})+(e_i^*)^{p-1}e_j^*\otimes t_{ii}^{-p}. \]
	Set $M=\oplus_{0\leq t\leq p}(\mathrm{Sym}^t(W)\oplus \mathrm{Sym}^t(W^*))$. Our formulas infer that the $\widetilde{\mathbb{G}_x}$-supermodule 
	$M_x$ has a coefficient space, which generates the Hopf superalgebra $\Bbbk[\widetilde{\mathbb{G}_x}]$. In particular, $M_x$ is a faithful $\widetilde{\mathbb{G}_x}$-supermodule.
	
	\subsection{Maximal rank of $x$}
	
	Set $m=n$ and $x=\sum_{1\leq i\leq n} e_{i, n+i}$. If $A\in\mathfrak{gl}(m|n)$ is represented as
	\[A=\left(\begin{array}{cc}
		A_{11} & A_{12} \\
		A_{21} & A_{22} 
	\end{array}\right), A_{11}, A_{12}, A_{21}, A_{22}\in\mathfrak{gl}(n), \]
	then 
	\[[x, A]=\left(\begin{array}{cc}
		A_{21} & A_{11}-A_{22} \\
		0 & A_{21} 
	\end{array}\right).\]
	In particular, we have $\mathrm{Cent}_{\mathfrak{g}}(x)=[\mathfrak{g}, x]$ consists of all matrices $A$ with $A_{21}=0$ and $A_{11}=A_{22}$. Similarly, Remark \ref{trivial remark about I}
	and Remark \ref{another formula} infer that $\mathrm{Cent}_{\mathbb{G}}(x)$ consists of the same type of matrices. Moreover, since $\mathrm{Cent}_{\mathbb{G}}(x)$ is connected, the supergroup $\mathbb{G}_x$ is trivial, provided $char\Bbbk=0$. Otherwise, $\mathbb{G}_x\simeq \mathrm{GL}(n)$. It is also obvious that $W_x=0$. 
	
	Further, we have
	\[U=\sum_{1\leq k, l\leq n}\Bbbk t_{kl} +\sum_{1\leq k\leq n< l\leq 2n}\Bbbk t_{kl}, \]
	\[x\cdot U=\sum_{1\leq k, l\leq n}\Bbbk t_{k+n, l} +\sum_{1\leq k\leq n< l\leq 2n}\Bbbk (t_{k, l-n}-t_{k+n, l}), \ V_x=0.\]
	
	Assume again that $char\Bbbk =p>0$.
	By Corollary \ref{Sym(V)_x in positive char}, $C_x$ is isomorphic to $\Bbbk[ t_{kl}^p, t_{kl}^{p-1}t_{k+n, l} | 1\leq k, l\leq n]$. Since for any $1\leq k, l\leq n$ there is
	\[t_{k+n, l+n}^p=t_{kl}^p+(x\cdot t_{k, l+n})^p\equiv t_{kl}^p \pmod{\mathrm{Im}(x_C)},\] the group like element $d^p$ is congruent to $d'^{2p}$, where $d'=\det((t_{kl})_{1\leq k, l\leq n})$. We have $\Bbbk[\widetilde{\mathbb{G}_x}]\simeq (C_x)_{d'^{2p}}$ and 
	arguing as above, we have
	\[(4) \ \Delta_{\widetilde{\mathbb{G}_x}}(t_{kl}^{p-1}t_{k+n, l})=\sum_{1\leq s\leq n} t_{ks}^{p-1}t_{k+n, s}\otimes t_{sl}^p +\sum_{1\leq s\leq n} t_{ks}^p\otimes t_{sl}^{p-1}t_{s+n, l}, 1\leq k\leq n. \]
	One can immediately conclude that the map $t_{kl}\mapsto t_{kl}^p, t_{k, l+n}\mapsto t_{kl}^{p-1}t_{k+n, l}, 1\leq k, l\leq n$, induces the isomorphism $\Bbbk[\mathrm{Cent}_{\mathbb{G}}(x)]\to\Bbbk[\widetilde{\mathbb{G}_x}]$ of Hopf superalgebras.
	\begin{pr}\label{maximal rank}
		If $m=n$ and $x=\sum_{1\leq i\leq n} e_{i, i+n}$, then $\widetilde{\mathbb{G}_x}\simeq\mathrm{Cent}_{\mathbb{G}}(x)\simeq \mathrm{GL}(n)\ltimes (\mathbb{G}_a^-)^{n^2}$.	
	\end{pr}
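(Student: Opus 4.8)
The statement combines two isomorphisms, and the plan is to prove them in turn. The decomposition $\mathrm{Cent}_{\mathbb{G}}(x) \simeq \mathrm{GL}(n) \ltimes (\mathbb{G}_a^-)^{n^2}$ I would read off from the functor of points. By the matrix description obtained just before the statement (via Remark~\ref{trivial remark about I} and Remark~\ref{another formula}), for every $A \in \mathsf{SAlg}_{\Bbbk}$ the group $\mathrm{Cent}_{\mathbb{G}}(x)(A)$ consists of the invertible supermatrices $\bigl(\begin{smallmatrix} g & \eta \\ 0 & g \end{smallmatrix}\bigr)$ with $g \in \mathrm{GL}(n)(A_0)$ and $\eta \in M_n(A_1)$. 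Block multiplication yields the product rule $(g, \eta)(h, \zeta) = (gh,\, g\zeta + \eta h)$, so the subfunctor $\mathbb{N} = \{(1, \eta)\}$ is a purely odd abelian normal supersubgroup with $\mathbb{N}(A) \simeq (A_1)^{n^2}$, i.e.\ $\mathbb{N} \simeq (\mathbb{G}_a^-)^{n^2}$, while the diagonal copy $\{(g, 0)\} \simeq \mathrm{GL}(n)$ is a complement acting by conjugation $\eta \mapsto g\eta g^{-1}$. This is exactly the claimed semidirect product.

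For $\widetilde{\mathbb{G}_x} \simeq \mathrm{Cent}_{\mathbb{G}}(x)$ I would write down the explicit map of Hopf superalgebras singled out before the statement. Since $V_x = 0$, Remark~\ref{trivial remark about I} identifies $\Bbbk[\mathrm{Cent}_{\mathbb{G}}(x)]$ with $\Bbbk[t_{kl}, t_{k,l+n} \mid 1 \le k,l \le n]_{d'^2}$ ($t_{kl}$ even, $t_{k,l+n}$ odd), and Corollary~\ref{Sym(V)_x in positive char} identifies $\Bbbk[\widetilde{\mathbb{G}_x}]$ with $\Bbbk[t_{kl}^p,\, t_{kl}^{p-1}t_{k+n,l} \mid 1 \le k,l \le n]_{d'^{2p}}$. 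I then set $\phi(t_{kl}) = t_{kl}^p$ and $\phi(t_{k,l+n}) = t_{kl}^{p-1}t_{k+n,l}$. This respects parity and carries one free generating set bijectively onto the other; since $\det\bigl((t_{kl}^p)\bigr) = (\det(t_{kl}))^p = d'^{p}$ in characteristic $p$, the localizing element $d'^2$ goes to $d'^{2p}$, so $\phi$ is a well-defined isomorphism of superalgebras.

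What remains is to check that $\phi$ is a coalgebra morphism on generators. Reducing $\Delta_{\mathbb{G}}$ modulo the defining relations $t_{k+n,l} = 0$ and $t_{k+n,l+n} = t_{kl}$ of $\mathrm{Cent}_{\mathbb{G}}(x)$, I get $\Delta(t_{kl}) = \sum_{s=1}^n t_{ks} \otimes t_{sl}$ and $\Delta(t_{k,l+n}) = \sum_{s=1}^n \bigl(t_{ks} \otimes t_{s,l+n} + t_{k,s+n} \otimes t_{sl}\bigr)$ for $1 \le k,l \le n$. On the $\widetilde{\mathbb{G}_x}$ side, Proposition~\ref{coproduct}(2) (with $V_x = 0$, so that only the $U_0$-factors contribute) gives $\Delta_{\widetilde{\mathbb{G}_x}}(t_{kl}^p) = \sum_{s=1}^n t_{ks}^p \otimes t_{sl}^p$, and formula~$(4)$ is literally $(\phi \otimes \phi)\Delta(t_{k,l+n})$. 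Thus $\phi$ intertwines the comultiplications, and being compatible with counit and antipode as well, it is an isomorphism of Hopf superalgebras.

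I expect the coproduct compatibility to be the only delicate point. The subtlety is that the naive expansion of $\Delta_{\mathbb{G}}(t_{kl})^p$ collapses to $\sum_s t_{ks}^p \otimes t_{sl}^p$ only after the acyclic part of the complex --- the terms carrying a factor in $x \cdot U$ or an odd factor --- has been discarded in cohomology, which is precisely what Proposition~\ref{coproduct} encodes. Pinning down which terms survive and matching them against formula~$(4)$ is the bookkeeping on which the whole identification rests.
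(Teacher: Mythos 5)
Your proposal is correct and follows essentially the same route as the paper: the semidirect product structure is read off from the block-matrix description of $\mathrm{Cent}_{\mathbb{G}}(x)$, and the isomorphism $\Bbbk[\mathrm{Cent}_{\mathbb{G}}(x)]\to\Bbbk[\widetilde{\mathbb{G}_x}]$ is the very map $t_{kl}\mapsto t_{kl}^p$, $t_{k,l+n}\mapsto t_{kl}^{p-1}t_{k+n,l}$ that the paper exhibits, with compatibility of coproducts checked against Proposition \ref{coproduct} and formula (4) exactly as in the text.
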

	
	\subsection{Queer supergroup}
	
	Let $\mathbb{G}$ be a queer supergroup $\mathrm{Q}(n)$. Recall that $\mathrm{Q}(n)\leq\mathrm{GL}(n|n)$ and a matrix $A\in\mathrm{GL}(n|n)$ belongs to $\mathrm{Q}(n)$ if and only if
	$A_{11}=A_{22}, A_{12}=-A_{21}$. 
	Slightly modifying the pevious notations, we have \[\Bbbk[\mathrm{Q}(n)]=\Bbbk[s_{kl}, s'_{kl}| 1\leq k, l\leq n]_{d}, \ \mbox{where} \ d=\det((s_{kl})_{1\leq k, l\leq n}),\] subject to
	\[\Delta_{\mathbb{G}}(s_{kl})=\sum_{1\leq t\leq n} s_{kt}\otimes s_{tl}-\sum_{1\leq t\leq n} s'_{kt}\otimes s'_{tl}, \  \Delta_{\mathbb{G}}(s'_{kl})=\sum_{1\leq t\leq n} s'_{kt}\otimes s_{tl}+\sum_{1\leq t\leq n} s_{kt}\otimes s'_{tl},\]
	\[1\leq k, l\leq n.\]
	Besides, $|s_{kl}|=0, |s'_{kl}|=1, 1\leq k, l\leq n$. Following these notations, we consider the elements of $\mathbb{G}$ as \emph{couples} $((g_{kl})_{1\leq k, l\leq n}|(g'_{kl})_{1\leq k,l\leq n})$ of $n\times n$ matrices with even and odd entries respectively. 
	
	The Lie superalgebra $\mathfrak{q}(n)=\mathrm{Lie}(\mathbb{G})$ has a basis $e_{kl}, e'_{kl}$, such that 
	\[e_{kl}(s_{uv})=\delta_{ku}\delta_{lv}, \ e_{kl}(s'_{uv})=0, \ e'_{kl}(s'_{uv})=\delta_{ku}\delta_{lv}, \ e'_{kl}(s_{uv})=0, 1\leq k, l, u, v\leq n.  \]
	Note that each $e_{kl}$ corresponds to the matrix $E_{kl}+E_{k+n, l+n}$, as well as each $e'_{kl}$ corresponds to the matrix $E_{k+n, l}+E_{k, l+n}$. 
	
	Set $x=e'_{ij}, i\neq j$. Then \[x\cdot s_{kl}=-\delta_{ik}s'_{jl}-\delta_{jl}s'_{ki}, \ x\cdot s'_{kl}=\delta_{ik}s_{jl}-\delta_{jl}s_{ki}, 1\leq k, l\leq n. \]
		As above, let $V$ denote $\sum_{1\leq k, l\leq n}\Bbbk s_{kl} +\sum_{1\leq k, l\leq n}\Bbbk s'_{kl}$. Then
	\[U=\sum_{1\leq l\leq n}\Bbbk s_{il}+\sum_{1\leq l\leq n}\Bbbk s'_{il}+\sum_{1\leq k\neq i, j\leq n}\Bbbk s_{kj}+\sum_{1\leq k\neq i, j\leq n}\Bbbk s'_{kj},\]
	\[x\cdot U=\sum_{1\leq l\neq j\leq n}\Bbbk s'_{jl}+\sum_{1\leq l\neq j\leq n}\Bbbk s_{jl}+\sum_{1\leq k\neq i, j\leq n}\Bbbk s'_{ki}+\sum_{1\leq k\neq i, j\leq n}\Bbbk s_{ki}+\]
	\[\Bbbk(s'_{jj}+s'_{ii})+\Bbbk(s_{jj}-s_{ii}),\]
	\[V_x=\sum_{1\leq k,l\neq i, j\leq n}\Bbbk s_{kl}+\sum_{1\leq k,l\neq i, j\leq n}\Bbbk s'_{kl}.\]
	
	By Remark \ref{trivial remark about I}, $\mathrm{Cent}_{\mathbb{G}}(x)$ consists of all couples $((g_{kl})|(g'_{kl}))$, such that $g_{jl}=g_{ki}=g'_{jl}=g'_{ki}=0$ for any $1\leq l\neq j, k\neq i\leq n$ and $g_{ii}=g_{jj}, g'_{ii}=-g'_{jj}$. There is $W_x\simeq (\sum_{1\leq t\neq j, j+n\leq 2n}\Bbbk e_t)/(\Bbbk e_{i+n}+\Bbbk e_i)$. Applying  Lemma \ref{V_x is a subcoalgebra} or checking directly, we obtain the natural supergroup morphism
	\[\mathrm{Cent}_{\mathbb{G}}(x)\to \mathrm{GL}(W_x)\simeq\mathrm{GL}(n-1|n-1),\]
	given by $(g_{kl}|(g'_{kl}))\mapsto ((g_{kl})_{k, l\neq i, j}| (g'_{kl})_{k, l\neq i, j})$, that induces epimorphism onto $\mathrm{Q}(n-1)\leq\mathrm{GL}(n-1|n-1)$. 
	Proposition \ref{G_x ?} implies that  the supergroup $\mathbb{H}$ therein is the kernel of this morphism. More precisely, a couple $((g_{kl})|(g'_{kl}))\in\mathrm{Cent}_{\mathbb{G}}(x)$ belongs to $\mathbb{H}$ if (and only if) the following conditions hold :
	\begin{enumerate}
		\item if $k\neq l$ and $g_{kl}\neq 0$, then $k=i$ or $l=j$;
		\item if $k=l$, then $g_{kk}=1$ for $k\neq i, j$;
		\item if $k\neq l$ and $g'_{kl}\neq 0$, then $k=i$ or $j=l$;
		\item if $k=l$, then $g'_{kk}=0$ for $k\neq i, j$.
	\end{enumerate}
	Similarly to the case $\mathbb{G}=\mathrm{GL}(m|n)$, one can show that
	the induced morphism $\mathrm{Cent}_{\mathbb{G}}(x)\to\mathrm{Q}(n-1)$ is split, that is $\mathrm{Cent}_{\mathbb{G}}(x)\simeq\mathrm{Q}(n-1)\ltimes\mathbb{H}$. 
	
	Further, the supergroup
	$\mathbb{H}$ is isomorphic to a semi-direct product $\mathrm{Q}(1)\ltimes\mathbb{H}_u$, where $\mathrm{Q}(1)$ is identified with the supersubgroup of $\mathbb{H}$ consisting of all couples
	$((g_{kl})|(g'_{kl}))$, such that $g_{il}=g_{kj}=g'_{il}=g'_{kj}=0$ for any $1\leq l\neq i, k\neq j\leq n$.
	
	Further, one can show that $\mathrm{Z}(\mathbb{H}_u)$ consists of all couples $((g_{kl})|(g'_{kl}))\in\mathbb{H}_u$ with the only "non-diagonal" entries $g_{ij}$ and $g'_{ij}$ to be nonzero. Thus $\mathrm{Z}(\mathbb{H}_u)\simeq\mathbb{G}_a\times\mathbb{G}_a^-$ and $\mathbb{H}_u/\mathrm{Z}(\mathbb{H}_u)\simeq \mathbb{G}_a^{2n-4}\times (\mathbb{G}_a^-)^{2n-4}$.
	
	Finally, $\mathbb{G}_x\simeq\mathrm{Q}(n-1)$ if $char\Bbbk=0$, otherwise $\mathbb{G}_x\simeq\mathrm{Q}(n-1)\ltimes \mathbb{H}/\mathbb{H}_1$.
	
	Let $char\Bbbk=p>0$. Again, let $d'$ denote $\det((s_{kl})_{1\leq k, l\neq i, j\leq n})$. Then $d^p\equiv d'^p s_{ii}^{2p}\pmod{\mathrm{Im}(x_C)}$, where $C=\Bbbk[s_{kl}, s'_{kl}| 1\leq k, l\leq n]$. 
	Besides, $d'^p$ and $s_{ii}^p$ are group like elements in the superbialgebra $C_x$. 
	\begin{lm}\label{comultiplication in the queer case}
		The Hopf superalgebra structure of $\Bbbk[\widetilde{\mathbb{G}_x}]$ is given by		
		\[(1) \ \Delta_{\widetilde{\mathbb{G}_x}}(s_{kl})=\sum_{1\leq t\neq i, j\leq n} s_{kt}\otimes s_{tl}-\sum_{1\leq t\neq i, j\leq n} s'_{kt}\otimes s'_{tl},\]
		\[\Delta_{\widetilde{\mathbb{G}_x}}(s'_{kl})=\sum_{1\leq t\neq i, j\leq n} s'_{kt}\otimes s_{tl}+\sum_{1\leq t\neq i, j\leq n} s_{kt}\otimes s'_{tl}, \ 1\leq k, l\neq i, j\leq n;\]
		\[(2) \ \Delta_{\widetilde{\mathbb{G}_x}}(s_{ii}^p)=s_{ii}^p\otimes s_{ii}^p, \ \Delta_{\widetilde{\mathbb{G}_x}}(s_{ij}^p)=s_{ii}^p\otimes s_{ij}^p+s_{ij}^p\otimes s_{ii}^p+\sum_{1\leq t\neq i, j\leq n}s_{it}^p\otimes s_{tj}^p,\]
		\[\Delta_{\widetilde{\mathbb{G}_x}}(s_{il}^p)=\sum_{1\leq t\neq j\leq n}s_{it}^p\otimes s_{tl}^p, \ 1\leq l\neq i, j\leq n, \]
		\[\ \Delta_{\widetilde{\mathbb{G}_x}}(s_{kj}^p)=s_{kj}^p\otimes s_{ii}^p+\sum_{1\leq t\neq i, j\leq n} s_{kt}^p\otimes s_{tj}^p, 1\leq k\neq i, j\leq n;\]
		\[(3) \ \Delta_{\widetilde{\mathbb{G}_x}}(s_{il}^{p-1}s'_{jl})=\sum_{1\leq t\neq j\leq n} s_{it}^{p-1}s'_{jt}\otimes s_{tl}^p +s_{ii}^p\otimes s_{il}^{p-1}s'_{jl},\]\[ 1\leq l\neq i, j\leq n,\]
		\[\Delta_{\widetilde{\mathbb{G}_x}}(s_{ii}^{p-1}s'_{ji})=s_{ii}^{p-1}s'_{ji}\otimes s_{ii}^p +s_{ii}^p\otimes s_{ii}^{p-1}s'_{ji};\]
		\[(4) \ \Delta_{\widetilde{\mathbb{G}_x}}(s_{kj}^{p-1}s'_{ki})= s_{kj}^{p-1}s'_{ki}\otimes s_{ii}^p+s_{kj}^p\otimes s_{ii}^{p-1}s'_{ji}+\sum_{1\leq t\neq i, j\leq n}
		s_{kt}^p\otimes s_{tj}^{p-1}s'_{ti}, 1\leq k\neq i, j\leq n;	\]
		\[(5) \ \Delta_{\widetilde{\mathbb{G}_x}}(s_{ij}^{p-1}(s'_{jj}+s'_{ii}))=\sum_{1\leq t\neq j\leq n} s_{it}^{p-1}s'_{jt}\otimes s_{tj}^p+s_{ij}^{p-1}(s'_{jj}+s'_{ii})\otimes s_{ii}^p+s_{ii}^p\otimes s_{ij}^{p-1}(s'_{jj}+s'_{ii}).\]
	\end{lm}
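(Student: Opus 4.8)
The plan is to read off all five families of formulas from the structural description of $\Bbbk[\widetilde{\mathbb{G}_x}]=\Bbbk[\mathbb{G}]_x$ established in the previous section, specialized to $\mathbb{G}=\mathrm{Q}(n)$ and $x=e'_{ij}$. By Corollary \ref{Sym(V)_x in positive char}, as a superalgebra $\Bbbk[\widetilde{\mathbb{G}_x}]$ is (a localization of) $\mathrm{Sym}(V_x)\otimes\mathrm{Sym}(U_0^p)\otimes\Lambda(\sum_y \Bbbk\, y^{p-1}(x\cdot y))$, where $y$ ranges over the even basis of $U_0$. So the first step is to list the three families of algebra generators explicitly from the decomposition $V=V_x\oplus U\oplus x\cdot U$ recorded above: the $s_{kl},s'_{kl}$ with $k,l\neq i,j$ spanning $V_x$; the $p$-th powers $s_{il}^p$, $s_{kj}^p$, $s_{ii}^p$; and the Cartier elements. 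Since $x$ is \emph{odd} it reverses parity, so $x\cdot U_0\subseteq (x\cdot U)_1$ and $x\cdot U_1\subseteq (x\cdot U)_0$; hence the Cartier elements $y^{p-1}(x\cdot y)$ with $y\in U_0$ are exactly the odd products $s_{il}^{p-1}s'_{jl}$ ($l\neq j$), $s_{ij}^{p-1}(s'_{jj}+s'_{ii})$ and $s_{kj}^{p-1}s'_{ki}$ ($k\neq i,j$), matching the left-hand sides of (3)--(5).

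Formula (1) is then immediate from Lemma \ref{V_x is a subcoalgebra}: since $V_x$ is a subcoalgebra, the coproduct of $s_{kl}$ and $s'_{kl}$ for $k,l\neq i,j$ is the restriction of the $\mathrm{Q}(n)$-coproduct with the summation index $t$ restricted to $t\neq i,j$, the terms $t=i,j$ dying because their tensor factors $s_{it},s_{tj},s'_{it},\ldots$ lie in $U$ or $x\cdot U$.

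For (2)--(5) I would invoke Proposition \ref{coproduct}. For each relevant even generator $y$ I expand $\Delta_{\mathbb{G}}(y)=\sum_t s_{\bullet t}\otimes s_{t\bullet}-\sum_t s'_{\bullet t}\otimes s'_{t\bullet}$ and re-express each tensor factor in the chosen basis; the only factors needing rewriting are the diagonal ones, via $s_{jj}=s_{ii}+(s_{jj}-s_{ii})$ and $s'_{jj}=-s'_{ii}+(s'_{jj}+s'_{ii})$, whose parenthesized parts lie in $x\cdot U$. Proposition \ref{coproduct}(2) yields (2): only even--even simple tensors with both factors in $(V_x)_0\sqcup U_0$ survive, and since $s_{jj}-s_{ii}=x\cdot s'_{ij}\in\ker x$ one has $(s_{jj}-s_{ii})^p=x\cdot\!\bigl(s'_{ij}(s_{jj}-s_{ii})^{p-1}\bigr)\in\mathrm{Im}(x_C)$, so $s_{jj}^p\equiv s_{ii}^p$; this boundary relation (and the analogous $s_{ki}^p\in\mathrm{Im}(x_C)$ for $k\neq i,j$) produces the diagonal terms in $\Delta_{\widetilde{\mathbb{G}_x}}(s_{ij}^p)$ and $\Delta_{\widetilde{\mathbb{G}_x}}(s_{kj}^p)$. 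Proposition \ref{coproduct}(1) yields (3)--(5): a simple tensor $a\otimes b$ of $\Delta_{\mathbb{G}}(y)$ contributes $a^{p-1}(x\cdot a)\otimes b^p$ when $a\in U_0$ and $b\in U_0\sqcup V_x$, and symmetrically $a^p\otimes b^{p-1}(x\cdot b)$ when $b\in U_0$ and $a\in U_0\sqcup V_x$, while odd--odd tensors and tensors with a factor in $x\cdot U$ drop out in cohomology by Remark \ref{subcomplex of p-restricted de Rham}.

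The main obstacle is the bookkeeping forced by the oddness of $x$ together with the two defect directions $s_{jj}-s_{ii}$ and $s'_{jj}+s'_{ii}$ that span the exceptional lines of $x\cdot U$. Because $x$ interchanges parities, one must keep the inclusions $x\cdot U_0\subseteq (x\cdot U)_1$, $x\cdot U_1\subseteq (x\cdot U)_0$ straight when deciding which simple tensors survive, and must carefully compute $x\cdot s_{kl}$, $x\cdot s'_{kl}$ in each of the cases $l=i,j$ and $k=i,j$ to recognize a genuine Cartier generator. The delicate case is the diagonal generator $s_{ij}^{p-1}(s'_{jj}+s'_{ii})$ of (5): its underlying $y=s_{ij}$ is the unique $U_0$-element whose differential is the defect vector, so after splitting $s_{jj}$ the contributions from $t=i$, $t=j$ and $t\neq i,j$ all feed into the same expression and must be collected — using $s_{jj}^p\equiv s_{ii}^p$ — without double counting, exactly as in the $\mathrm{GL}(m|n)$ computation of Proposition \ref{DS for GL}. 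Verifying that the surviving terms reassemble into the compact forms (2)--(5) is the computational heart of the argument.
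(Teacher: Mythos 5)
Your proposal is correct and follows essentially the same route as the paper, whose entire proof is the instruction to combine Lemma \ref{V_x is a subcoalgebra} (for the $V_x$-part) with Proposition \ref{coproduct} (for the $p$-th powers and the Cartier generators); you have simply carried out the bookkeeping — identifying $U_0$, the defect vectors $s_{jj}-s_{ii}=x\cdot s'_{ij}$ and $s'_{jj}+s'_{ii}=-x\cdot s_{ij}$, and the boundary relations $s_{jj}^p\equiv s_{ii}^p$, $s_{ki}^p\in\mathrm{Im}(x_C)$ — that the paper leaves implicit. The details you supply (which tensor factors land in $x\cdot U$ and die, and how the diagonal terms reassemble) check out against the stated formulas.
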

	\begin{proof}
Use Lemma \ref{V_x is a subcoalgebra} and Proposition \ref{coproduct}. 		
	\end{proof}
In particular, this lemma implies that $\Bbbk[\widetilde{\mathbb{G}_x}]$ is the tensor
product of the Hopf supersubalgebra \[\Bbbk[s_{kl}, s'_{kl}|1\leq k, l\neq i, j\leq n]_{d'^p}\] and the supersubalgebra \[(\mathrm{Sym}(U_0^p)[s_{il}^{p-1}s'_{jl}, s_{kj}^{p-1}s'_{ki}, s_{ij}^{p-1}(s'_{jj}+s'_{ii}), 1\leq l\neq j\leq n, 1\leq k\neq i, j\leq n])_{s_{ii}^p}.\]

	Arguing as above, we have the supergroup epimorphism $\widetilde{\mathbb{G}_x}\to\mathrm{Q}(n-1)$, that is obviously split. Let $\mathbb{R}$ denote the kernel of it. 
	Then $\Bbbk[\mathbb{R}]\simeq \Bbbk[\widetilde{\mathbb{G}_x}]/I$, where the Hopf superideal $I$ is generated by the elements $s_{kl}-\delta_{kl}, s'_{kl}, 1\leq k,l\neq i, j\leq n$. 
	In other words, $\Bbbk[\mathbb{R}]$ is generated by the elements 
	\[s_{ii}^{-p}, s_{il}^p, s_{kj}^p, s_{it}^{p-1}s'_{jt}, s_{vj}^{p-1}s'_{vi}, s_{ij}^{p-1}(s'_{jj}+s'_{ii}),\]
	\[1\leq l\leq n, 1\leq k\neq i, j\leq n, 1\leq t\neq j\leq n, 1\leq v\neq i, j\leq n,\]
	subject to
	\[\Delta_{\mathbb{R}}(s_{ii}^p)=s_{ii}^p\otimes s_{ii}^p, \ \Delta_{\mathbb{R}}(s_{ij}^p)=s_{ii}^p\otimes s_{ij}^p+s_{ij}^p\otimes s_{ii}^p+\sum_{1\leq t\neq i, j\leq n}s_{it}^p\otimes s_{tj}^p, \]
	\[\Delta_{\mathbb{R}}(s_{il}^p)=s_{il}^p\otimes 1+s_{ii}^p\otimes s_{il}^p, 1\leq l\neq i, j\leq n; \]	
	\[ \Delta_{\mathbb{R}}(s_{kj}^p)= 1\otimes s_{kj}^p+s_{kj}^p\otimes s_{ii}^p, 1\leq k\neq i, j\leq n;\]
	\[\Delta_{\mathbb{R}}(s_{it}^{p-1}s'_{jt})=s_{it}^{p-1}s'_{jt}\otimes 1+ s_{ii}^p\otimes s_{it}^{p-1}s'_{jt} +s_{ii}^{p-1}s'_{ji}\otimes s_{it}^p,\]\[\Delta_{\mathbb{R}}(s_{ii}^{p-1}s'_{ji})=s_{ii}^{p-1}s'_{ji}\otimes s_{ii}^p +s_{ii}^p\otimes s_{ii}^{p-1}s'_{ji}, 1\leq t\neq i, j\leq n;\]
	\[ \Delta_{\mathbb{R}}(s_{vj}^{p-1}s'_{vi})= s_{vj}^{p-1}s'_{vi}\otimes s_{ii}^p+s_{vj}^p\otimes s_{ii}^{p-1}s'_{ji}+
	1\otimes s_{vj}^{p-1}s'_{vi}, 1\leq v\neq i, j\leq n;\]
	\[ \Delta_{\mathbb{R}}(s_{ij}^{p-1}(s'_{jj}+s'_{ii}))=\sum_{1\leq t\neq j\leq n} s_{it}^{p-1}s'_{jt}\otimes s_{tj}^p+s_{ij}^{p-1}(s'_{jj}+s'_{ii})\otimes s_{ii}^p+s_{ii}^p\otimes s_{ij}^{p-1}(s'_{jj}+s'_{ii}).\]
	As above, there is a split epimorphism $\mathbb{R}\to \mathbb{G}_m$. Let $\mathbb{M}$ denote its kernel. Then $\Bbbk[\mathbb{M}]\simeq \Bbbk[\mathbb{R}]/J$, where the Hopf superideal $J$ is generated by the element $s_{ii}^p-1$. Let $u_1, \ldots , u_{n-1}$ and $v_1, \ldots, v_{n-2}$ denote the elements $s_{ij}^p, s_{il}^p$ and  $s_{kj}^p$ accordingly, where $ 1\leq k, l\neq i, j\leq n$. Similarly, let $u'_1, \ldots, u'_{n-1}$ and $v'_1, \ldots, v'_{n-2}$ denote $s_{ii}^{p-1}s'_{ji}, s_{it}^{p-1}s'_{jt}$ and $s_{vj}^{p-1}s'_{vi}$ accordingly, where
	$1\leq t, v\neq i, j\leq n$. Finally, let $z$ denote $s_{ij}^{p-1}(s'_{jj}+s'_{ii})$. We have \[\Bbbk[\mathbb{M}]\simeq \Bbbk[u_k, v_l, u'_k, v'_l, z\mid 1\leq k\leq n-1, 1\leq l\leq n-2]\] subject to
	\[\Delta_{\mathbb{M}}(u_1)=u_1\otimes 1+1\otimes u_1+\sum_{2\leq k\leq n-1} u_k\otimes v_{k-1}, \]
	\[\Delta_{\mathbb{M}}(u_k)=u_k\otimes 1+1\otimes u_k, \ \Delta_{\mathbb{M}}(v_l)=v_l\otimes 1+1\otimes v_l,\] 
	\[\Delta_{\mathbb{M}}(u'_1)=u'_1\otimes 1+1\otimes u'_1, \ \Delta_{\mathbb{M}}(u'_k)=u'_k\otimes 1+1\otimes u'_k+u'_1\otimes u_k,\]
	\[\Delta_{\mathbb{M}}(v'_l)=v'_l\otimes 1+1\otimes v'_l+v_l\otimes u'_1, \ \Delta_{\mathbb{M}}(z)=z\otimes 1+1\otimes z+u'_1\otimes u_1+\sum_{2\leq k\leq n-1}u'_k\otimes v_{k-1},\]
	\[2\leq k\leq n-1, 1\leq l\leq n-2.\]
	Since $\Bbbk[u_k, v_l, u'_1 \mid 2\leq k\leq n-1, 1\leq l\leq n-2]$ is a Hopf supersubalgebra of $\Bbbk[\mathbb{M}]$, there is an epimorphism $\mathbb{M}\to \mathbb{G}_a^{2n-4}\times \mathbb{G}_a^-$ and its kernel is isomorphic to $\mathbb{G}_a\times (\mathbb{G}_a^-)^{2n-3}$. In particular, $\mathbb{M}=\mathbb{R}_u$ is the unipotent radical of $\mathbb{R}$.
	\begin{pr}\label{DS for Q}
		We have $\widetilde{\mathbb{G}_x}\simeq \mathrm{Q}(n-1)\ltimes \mathbb{R}$, where $\mathbb{R}\simeq\mathbb{G}_m\ltimes \mathbb{R}_u$. In turn, $\mathbb{R}_u$ has a central series with the quotients 	$\mathbb{G}_a^{2n-4}\times \mathbb{G}_a^-$ and $\mathbb{G}_a\times (\mathbb{G}_a^-)^{2n-3}$.
	\end{pr}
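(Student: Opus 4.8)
The plan is to assemble the split epimorphisms already produced in the discussion preceding the statement and then to verify the one non-formal point, namely that the indicated series is \emph{central}. First, the split epimorphism $\widetilde{\mathbb{G}_x}\to\mathrm{Q}(n-1)$, dual to the Hopf supersubalgebra $\Bbbk[s_{kl},s'_{kl}\mid 1\leq k,l\neq i,j\leq n]_{d'^p}$ of $\Bbbk[\widetilde{\mathbb{G}_x}]$, has kernel $\mathbb{R}$; being split, it yields $\widetilde{\mathbb{G}_x}\simeq\mathrm{Q}(n-1)\ltimes\mathbb{R}$. In the same way the split epimorphism $\mathbb{R}\to\mathbb{G}_m$ has kernel $\mathbb{M}=\mathbb{R}_u$ and gives $\mathbb{R}\simeq\mathbb{G}_m\ltimes\mathbb{R}_u$. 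This part is entirely parallel to the corresponding steps in the proof of Proposition \ref{DS for GL}, with only the extra odd coordinates $s'_{kl}$ carried along.

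It then remains to analyse $\mathbb{R}_u=\mathbb{M}$. I would take the two-step series $1\unlhd\mathbb{K}\unlhd\mathbb{R}_u$, where $\mathbb{K}=\ker(\mathbb{M}\to\mathbb{G}_a^{2n-4}\times\mathbb{G}_a^-)$ is the supersubgroup cut out by the vanishing of the top-quotient generators $u_k\ (2\leq k\leq n-1)$, $v_l\ (1\leq l\leq n-2)$ and $u'_1$. From the coproduct formulas for $\Bbbk[\mathbb{M}]$ one reads off that, modulo the Hopf superideal $J'$ they generate, the remaining generators $u_1,u'_k\ (2\leq k\leq n-1),v'_l,z$ all become primitive; hence $\Bbbk[\mathbb{K}]\simeq\Bbbk[\mathbb{M}]/J'$ is the coordinate superalgebra of $\mathbb{G}_a\times(\mathbb{G}_a^-)^{2n-3}$, while $\mathbb{R}_u/\mathbb{K}\simeq\mathbb{G}_a^{2n-4}\times\mathbb{G}_a^-$. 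These are exactly the two quotients asserted, and $\mathbb{R}_u/\mathbb{K}$ is abelian by primitivity, so the whole content of the statement reduces to the claim that $\mathbb{K}$ is central in $\mathbb{R}_u$.

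The centrality check is the substantive step, and I would carry it out precisely as in the last paragraph of the proof of Proposition \ref{DS for GL}. The conjugation action of $\mathbb{R}_u$ on itself is encoded by $\tau_{r-con}$, so it suffices to show $\tau_{r-con}(g)\equiv g\otimes 1\pmod{J'\otimes\Bbbk[\mathbb{M}]+\Bbbk[\mathbb{M}]\otimes J'}$ for each kernel generator $g\in\{u_1, u'_k, v'_l, z\}$. Expanding $\tau_{r-con}(g)=\sum(-1)^{|g_{(1)}||g_{(2)}|}g_{(2)}\otimes S(g_{(1)})g_{(3)}$ from the triple coproduct, the terms coming from the primitive part of $\Delta_{\mathbb{M}}(g)$ combine (after the antipode contributions $1\otimes S(g)$ and $1\otimes g$ cancel modulo $J'$) to leave $g\otimes 1$, whereas every cross term originating from the inhomogeneous summands of $\Delta_{\mathbb{M}}$ — namely $\sum u_k\otimes v_{k-1}$ in $\Delta_{\mathbb{M}}(u_1)$, the summand $u'_1\otimes u_k$ in $\Delta_{\mathbb{M}}(u'_k)$, the summand $v_l\otimes u'_1$ in $\Delta_{\mathbb{M}}(v'_l)$, and $u'_1\otimes u_1+\sum u'_k\otimes v_{k-1}$ in $\Delta_{\mathbb{M}}(z)$ — carries a tensor factor lying in $J'$ and therefore dies modulo the stated ideal.

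The main obstacle is the bookkeeping in this last computation, and within it the generator $z$ is the delicate case: its coproduct simultaneously mixes the $J'$-generator $u'_1$ with the kernel generator $u_1$ and pairs the kernel generators $u'_k$ against the $J'$-generators $v_{k-1}$, so one must track the antipode $S$ on the non-primitive elements together with the Koszul signs in order to confirm that all surviving cross terms indeed land in $J'\otimes\Bbbk[\mathbb{M}]+\Bbbk[\mathbb{M}]\otimes J'$. Once this is verified for the four families of generators, $\mathbb{K}$ is central in $\mathbb{R}_u$, the series $1\unlhd\mathbb{K}\unlhd\mathbb{R}_u$ is central with quotients $\mathbb{G}_a^{2n-4}\times\mathbb{G}_a^-$ and $\mathbb{G}_a\times(\mathbb{G}_a^-)^{2n-3}$, and together with the two semidirect-product decompositions this completes the proof.
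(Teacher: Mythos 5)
Your proposal is correct and follows essentially the same route as the paper: the paper's ``proof'' of Proposition \ref{DS for Q} is precisely the preceding discussion (the split epimorphisms $\widetilde{\mathbb{G}_x}\to\mathrm{Q}(n-1)$ and $\mathbb{R}\to\mathbb{G}_m$, the explicit coproducts on $\Bbbk[\mathbb{M}]$, and the epimorphism $\mathbb{M}\to\mathbb{G}_a^{2n-4}\times\mathbb{G}_a^-$ with kernel $\mathbb{G}_a\times(\mathbb{G}_a^-)^{2n-3}$), with the centrality of the kernel left to the same $\tau_{r-con}$ computation carried out explicitly in the proof of Proposition \ref{DS for GL}. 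You have correctly identified both the formal assembly and the one substantive verification (centrality of $\mathbb{K}$, including the delicate generator $z$), so nothing is missing.
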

	
	\subsection{Concluding remarks}
	
	All of the above examples have one common property. Namely, in the positive characteristic $\mathrm{Cent}_{\mathbb{G}}(x)$ and $\widetilde{\mathbb{G}_x}$ have subnormal series whose factors are identical up to a permutation.
	On the other hand, if $char \Bbbk=0$, it does not take place. Indeed, set $\mathbb{G}=\mathrm{GL}(m|n)$ and $x=e_{ij}$. The structure of $\mathrm{Cent}_{\mathbb{G}}(x)$ does not depend on $char \Bbbk$. But \cite[Theorem 1.2(1)]{asher} implies $\mathbb{R}\simeq \mathbb{G}_a^-$!
	
	Nevertheless, it turns out that in any characteristic $\mathrm{Cent}_{\mathbb{G}}(x)$ and $\widetilde{\mathbb{G}_x}$ are semi-direct products of the same reductive supergroup and some normal solvable supergroups. We think that the following hypothesis is quite plausible.
	\begin{hyp}
		If $\mathbb{G}$ is a {\bf quasi-reductive} supergroup in the sense of \cite{taiki, vserg}, then for any $x\in X$ the supergroup $\widetilde{\mathbb{G}_x}$ is a semi-direct product of a quasi-reductive supergroup and some normal smooth connected solvable supergroup (i.e., the solvable radical of $\mathbb{G}$). 	
	\end{hyp}

	\section{Injective $\mathbb{G}$-supermodules and DS-functor}
	
	In the paper \cite{DSfunctor} the main attention is paid to the (finite dimensional) projective modules over Lie superalgebras. However, in the category of supermodules over an algebraic supergroup (as in the category of modules over an algebraic group), the presence of projective objects imposes very strong restrictions on the supergroup (respectively, on the group).
	\begin{lm}\label{existence of projective}
		Let $\mathbb{G}$ be an algebraic supegroup. If there is a nonzero projective $\mathbb{G}$-supermodule, then any injective $\mathbb{G}$-supermodule is projective too.	
	\end{lm}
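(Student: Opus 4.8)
The plan is to reduce the statement to a single assertion about the regular representation and then isolate the real content there. Write $\Bbbk[\mathbb{G}]$ for the right regular $\mathbb{G}$-supermodule. By Remark \ref{a folklore fact}, any $\mathbb{G}$-supermodule $V$ admits a $\mathbb{G}$-equivariant embedding $\tau_V\colon V\hookrightarrow |V|\otimes\Bbbk[\mathbb{G}]\simeq \Bbbk[\mathbb{G}]^{\oplus\dim V_0}\oplus\Pi\Bbbk[\mathbb{G}]^{\oplus\dim V_1}$, so that $\Bbbk[\mathbb{G}]$, together with its parity shift $\Pi\Bbbk[\mathbb{G}]$, is an injective cogenerator. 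Consequently, if $V$ is injective the embedding $\tau_V$ splits and $V$ is a direct summand of a direct sum of copies of $\Bbbk[\mathbb{G}]$ and $\Pi\Bbbk[\mathbb{G}]$. Since direct sums and direct summands of projective supermodules are again projective, and since $\Pi\Bbbk[\mathbb{G}]$ is projective exactly when $\Bbbk[\mathbb{G}]$ is (by the parity identities for $\mathrm{Ext}$ recalled in Section 1), the whole lemma collapses to the single claim that \emph{the regular representation $\Bbbk[\mathbb{G}]$ is projective}; in fact this claim is equivalent to the conclusion, since $\Bbbk[\mathbb{G}]$ is itself injective.

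First I would record the one piece of positive machinery that is available, namely that projectivity is preserved under tensoring with a finite-dimensional supermodule. If $P$ is projective and $V$ is finite dimensional, then the antipode of $\Bbbk[\mathbb{G}]$ equips $V$ with a dual $V^*$ and the usual rigidity isomorphism gives a natural identification $\mathrm{Hom}_{\mathbb{G}}(P\otimes V,\,-)\simeq\mathrm{Hom}_{\mathbb{G}}(P,\,V^*\otimes-)$. The functor $V^*\otimes-$ is exact (tensoring over $\Bbbk$ on underlying superspaces is exact) and $\mathrm{Hom}_{\mathbb{G}}(P,-)$ is exact because $P$ is projective; hence the composite is exact and $P\otimes V$ is projective. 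This is the tool one hopes to leverage in order to manufacture the regular representation out of the given projective.

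The hard part is exactly the remaining claim: from the existence of \emph{one} nonzero projective $P$ one must deduce that $\Bbbk[\mathbb{G}]$ is projective. Here the tensor identity $P\otimes\Bbbk[\mathbb{G}]\simeq |P|\otimes\Bbbk[\mathbb{G}]$ is of no help, since it destroys the supermodule structure of $P$ and merely re-expresses the unknown. The correct route is to show that the existence of a nonzero projective forces $\Bbbk[\mathbb{G}]$ to carry a nonzero (left) integral $\lambda\in\Bbbk[\mathbb{G}]^*$, i.e. to be co-Frobenius. Concretely, I would pass to a simple subobject $S\subseteq\mathrm{soc}(P)$ (the comodule category is locally finite, so $\mathrm{soc}(P)\neq0$), use projectivity of $P$ to produce a finite-dimensional indecomposable summand of $\Bbbk[\mathbb{G}]$ that is simultaneously injective and projective, and then invoke the decomposition $\Bbbk[\mathbb{G}]\simeq\bigoplus_\lambda E_\lambda^{\oplus\dim S_\lambda}$ of the regular representation into indecomposable injectives $E_\lambda=E(S_\lambda)$. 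Equivalently, a nonzero integral yields a Maschke-type averaging that $\mathbb{G}$-equivariantly splits off each $E_\lambda$ as a projective. Once every $E_\lambda$ is seen to be projective, $\Bbbk[\mathbb{G}]$ is a direct sum of projectives, hence projective, and the reduction of the first paragraph concludes the proof. I expect producing the integral (equivalently, verifying the co-Frobenius property) from a single abstract projective to be the genuine obstacle; everything else is formal. As a sanity check, both hypothesis and conclusion are automatic for purely odd $\mathbb{G}$, where $\Bbbk[\mathbb{G}]\simeq\Lambda(\mathfrak{g}^*)$ is finite dimensional and self-injective, recovering the equivalence ``injective $=$ projective $=$ free'' quoted in Section 2.
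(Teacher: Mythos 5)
Your formal reductions are correct and match the first moves of any proof of this lemma: every injective $\mathbb{G}$-supermodule is a direct summand of a direct sum of copies of $\Bbbk[\mathbb{G}]$ and $\Pi\Bbbk[\mathbb{G}]$, so the statement is equivalent to the projectivity of the regular supermodule, and the one general tool available is that $P\otimes V$ is projective for $V$ finite dimensional. The problem is that you stop exactly where the proof has to begin. The implication ``there exists one nonzero projective $P$ $\Longrightarrow$ $\Bbbk[\mathbb{G}]$ is projective'' is the entire content of the lemma, and your treatment of it consists of naming the object you would like to produce (a finite-dimensional summand of $\Bbbk[\mathbb{G}]$ that is both injective and projective) and then conceding that you do not know how to produce it. No mechanism is offered by which projectivity of an abstract $P$ yields such a summand: projectivity splits surjections \emph{onto} $P$, whereas $\Bbbk[\mathbb{G}]$ enters your argument only as an injective cogenerator, i.e., through embeddings \emph{out of} $P$, and nothing in your text bridges that mismatch. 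Even the one concrete preliminary step available --- replacing $P$ by $P\otimes V^{*}$ for a nonzero finite-dimensional supersubmodule $V\leq P$, so that $\mathrm{Hom}_{\mathbb{G}}(\Bbbk,P\otimes V^{*})\simeq\mathrm{Hom}_{\mathbb{G}}(V,P)\neq 0$ and one may assume $P$ contains a trivial supersubmodule --- is absent, and even with it in place the heart of the matter (that this forces the injective hull of $\Bbbk$, hence every indecomposable injective, to be projective) still has to be argued.

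For comparison, the paper's own proof is the single instruction to superize \cite[Lemma 1]{don1}, and Donkin's Lemma 1 is verbatim the implication you have left open; so your proposal reproduces the formal part of that reduction and defers the nontrivial part to your expectation that an argument exists. Note also that routing the proof through integrals, as you suggest, risks circularity inside this paper: the equivalence between the existence of a nonzero projective and the existence of an integral is established only afterwards, in Proposition \ref{equivalent conditions to existence of projectives}, and its proof uses the present lemma as an input. To close the gap you must either carry out Donkin's argument directly in the super setting or import the co-Frobenius/integral machinery from an independent source (e.g., the cited results of Masuoka) rather than from the downstream proposition.
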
	
	\begin{proof}
		Just superize the arguments from \cite[Lemma 1]{don1}.	
	\end{proof}	
	\begin{pr}\label{equivalent conditions to existence of projectives}
		The following conditions are equivalent : 
		\begin{enumerate}
			\item There is a nozero projective $\mathbb{G}$-supermodule;
			\item $\mathbb{G}$ has an integral;
			\item $\mathbb{G}_{ev}$ has an integral;
			\item Any indecomposable injective $\mathbb{G}$-supermodule is finite dimensional. 
		\end{enumerate}	
	\end{pr}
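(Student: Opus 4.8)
The plan is to establish the equivalences $(1)\Leftrightarrow(2)\Leftrightarrow(4)$ by superizing the classical co-Frobenius dictionary for coordinate algebras of algebraic groups, and then to derive $(2)\Leftrightarrow(3)$ by peeling off the odd directions. Throughout I read an \emph{integral} as a nonzero morphism of right $\mathbb{G}$-supercomodules $\int\colon\Bbbk[\mathbb{G}]\to\Bbbk$ onto the trivial supercomodule, i.e. a nonzero right integral on the Hopf superalgebra $\Bbbk[\mathbb{G}]$; its space is at most one-dimensional. The whole proposition is the super-analogue of the statement ``$\Bbbk[\mathbb{G}]$ is co-Frobenius $\Leftrightarrow$ the category of supercomodules has a nonzero projective $\Leftrightarrow$ the indecomposable injectives are finite-dimensional'', and the available Lemma~\ref{existence of projective} is exactly the bridge between projectivity and injectivity that makes this circle close.

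I would record the elementary directions first. For $(2)\Rightarrow(1)$, recall that the right regular supercomodule $\Bbbk[\mathbb{G}]=\mathrm{ind}_1^{\mathbb{G}}\Bbbk$ is injective, and that for any supermodule $M$ the cofree object $M\otimes\Bbbk[\mathbb{G}]$ is injective; an integral $\int$ supplies a $\mathbb{G}$-equivariant retraction of the coaction $M\hookrightarrow M\otimes\Bbbk[\mathbb{G}]$ (built from $m\otimes f\mapsto \int(f)\,m$, corrected by the coaction and antipode so as to be equivariant), which simultaneously makes the cofree objects projective, so $\Bbbk[\mathbb{G}]$ is a nonzero projective. For $(4)\Rightarrow(1)$ the argument is immediate: take the finite-dimensional indecomposable injective $I(\Bbbk)$ covering the trivial supermodule; finite-dimensional duality carries it to a nonzero finite-dimensional projective $I(\Bbbk)^{*}$. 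The converse directions $(1)\Rightarrow(2)$ and $(1)\Rightarrow(4)$ are the deeper half of the co-Frobenius package: by Lemma~\ref{existence of projective} a nonzero projective forces injective${}={}$projective, so the category is quasi-Frobenius and the indecomposable injectives are the injective hulls $I(L)$ of the simples; the content is then the \emph{co-Frobenius finiteness} — that each such $I(L)$ is finite-dimensional and that $\Bbbk[\mathbb{G}]$ acquires a nonzero integral — which I would obtain by adapting the semiperfect-coalgebra arguments of the classical theory to the graded setting.

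For $(2)\Leftrightarrow(3)$ I would use the Harish-Chandra description (Theorem~\ref{equivalence}) to write $\Bbbk[\mathbb{G}]\cong\Bbbk[\mathbb{G}_{ev}]\otimes\Lambda(\mathfrak{g}_1^{*})$, with $\Lambda(\mathfrak{g}_1^{*})$ the coordinate superalgebra of a purely odd supergroup. By the results of Section~2 (Lemma~\ref{free basis} and the surrounding facts) the corresponding distribution superalgebra $\Lambda(\mathfrak{g}_1)$ is self-injective, so $\Lambda(\mathfrak{g}_1^{*})$ is finite-dimensional Frobenius and carries a nonzero integral $\int_{\Lambda}$ unconditionally, namely the top-degree functional. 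I would then verify that $\int$ is an integral on $\Bbbk[\mathbb{G}]$ precisely when it factors as $\int_{\mathbb{G}_{ev}}\otimes\int_{\Lambda}$ with $\int_{\mathbb{G}_{ev}}$ an integral on $\Bbbk[\mathbb{G}_{ev}]$: the finite-dimensional Frobenius factor neither creates nor destroys the co-Frobenius property, so $\mathbb{G}$ has an integral if and only if $\mathbb{G}_{ev}$ does.

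The main obstacle is the finiteness step inside $(1)\Rightarrow(4)$ (equivalently $(1)\Rightarrow(2)$): passing from ``injective${}={}$projective'' to ``the injective hulls of the simple supermodules are finite-dimensional'' is exactly the super co-Frobenius/semiperfectness input and is not formal — it is where the genuine representation-theoretic work sits, and where I expect to have to rework the classical proof rather than quote it. The secondary technical points are the sign and grading bookkeeping in the equivariant retraction of $(2)\Rightarrow(1)$ and in the factorization $\int=\int_{\mathbb{G}_{ev}}\otimes\int_{\Lambda}$, where the twist in the Harish-Chandra coproduct relating $\Bbbk[\mathbb{G}_{ev}]$ and $\Lambda(\mathfrak{g}_1^{*})$ must be tracked with care.
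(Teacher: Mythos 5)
Your overall architecture coincides with what the paper actually does: its entire proof is the two-line instruction to combine Lemma \ref{existence of projective} with the co-Frobenius theory for Hopf superalgebras in \cite[Lemma 7.3 and Proposition 7.5]{mas1}, and your reading of the statement as a super co-Frobenius dictionary --- with $(2)\Leftrightarrow(3)$ obtained from the tensor factorization $\Bbbk[\mathbb{G}]\simeq\Bbbk[\mathbb{G}_{ev}]\otimes\Lambda(\mathfrak{g}_1^{*})$ and the top-degree functional on the exterior factor --- is exactly the content of those citations. However, your mechanism for $(2)\Rightarrow(1)$ is wrong as stated. An equivariant retraction of the coaction $M\hookrightarrow M\otimes\Bbbk[\mathbb{G}]$ built by averaging against an integral $\lambda$ exists only when $\lambda(1)\neq 0$: composing the candidate retraction $m\otimes f\mapsto\sum m_{(0)}\,\lambda(S(m_{(1)})f)$ with the coaction returns $\lambda(1)\,m$, not $m$. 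If such a retraction existed for every $M$, every supercomodule would be a direct summand of a cofree (hence injective) one, i.e.\ $\Bbbk[\mathbb{G}]$ would be cosemisimple --- and the paper warns, immediately after this proposition, that having an integral is strictly weaker than linear reductivity (in characteristic $p$ the relevant condition is that $(G_{red})^0$ be a torus, where cosemisimplicity certainly fails). The correct route from a nonzero integral to a nonzero projective is the non-formal theorem that a co-Frobenius Hopf (super)algebra is projective as a (super)comodule over itself; it cannot be obtained from the splitting you describe.

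The second and larger problem is that the implication $(1)\Rightarrow(2)$ --- equivalently the finite-dimensionality of indecomposable injectives in $(4)$ --- which you correctly single out as the heart of the matter, is left entirely unproved. ``Adapting the semiperfect-coalgebra arguments to the graded setting'' is precisely the content of \cite[Proposition 7.5]{mas1}; without either carrying out that adaptation or citing it, the circle of implications does not close. So your proposal is an accurate map of where the difficulty sits and of the decomposition the paper implicitly relies on, but it neither supplies the hard step nor gets the easy step right.
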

	\begin{proof}
		Combine Lemma \ref{existence of projective} with \cite[Lemma 7.3 and Proposition 7.5]{mas1}	
	\end{proof}
	If $char\Bbbk=0$, then \cite[Proposition 1]{don1} implies that the first condition in Proposition \ref{equivalent conditions to existence of projectives} is equivalent to $\mathbb{G}_{ev}$ is \emph{linearly reductive}. But contrary to \cite[Theorem (iii)]{don1}, it is not equivalent to the linear reductivity of $\mathbb{G}$ (see the final remarks in \cite[Section 7.3]{mas1}). Further, let $char\Bbbk>0$. Without loss of generality, one can assume that $\Bbbk$ is algebraically closed. Then \cite[Theorem (ii) and Corollary in Section 2]{don1} implies that $G=\mathbb{G}_{ev}$ has an integral if and only if $(G_{red})^0$ is a torus.    
	
Thus, in the category of supermodules over an algebraic supergroup, the concept of injective supermodule is more essential than the concept of a projective one. 	
	\begin{lm}\label{an extension}
		Let $\mathbb{G}$ be an algebraic supergroup and $\mathbb{H}$ be its normal supersubgroup. If $\mathbb{G}$-supermodule $M$ is injective, then $M|_{\mathbb{H}}$ is an injective $\mathbb{H}$-supermodule and $M^{\mathbb{H}}$ is an injective $\mathbb{G}/\mathbb{H}$-supermodule. 	
	\end{lm}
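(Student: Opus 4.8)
The plan is to adapt the classical arguments of Jantzen and Donkin (cf. \cite{jan, don1}) to the super setting, working throughout in the abelian category $_{\mathbb{G}}\mathsf{Smod}$ of $\mathbb{G}$-supermodules with graded morphisms; by \cite[Proposition 3.1]{zub5} injectivity there coincides with injectivity in $_{\mathbb{G}}\mathsf{SMod}$, so nothing is lost. Since $\mathbb{H}$ is normal, the quotient supergroup $\mathbb{G}/\mathbb{H}$ exists and is again affine algebraic (for instance via the Harish-Chandra pair equivalence, Theorems \ref{equivalence} and \ref{short sequences}, or \cite{maszub}), so that $_{\mathbb{G}/\mathbb{H}}\mathsf{Smod}\simeq\mathsf{Smod}^{\Bbbk[\mathbb{G}/\mathbb{H}]}$ is a locally finite Grothendieck category. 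I would prove the two assertions by quite different mechanisms: the restriction statement by a direct-summand argument, and the invariants statement by a formal adjunction.

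For the restriction statement, I would first recall the standard embedding of a supermodule into a cofree one. The comodule structure map $\rho_M\colon M\to M\otimes\Bbbk[\mathbb{G}]$ is a morphism of $\mathbb{G}$-supermodules, where $\mathbb{G}$ acts on the target only through the right regular coaction on $\Bbbk[\mathbb{G}]$; writing $M_{triv}$ for the superspace $M$ with trivial action, $\rho_M$ is a monomorphism $M\hookrightarrow M_{triv}\otimes\Bbbk[\mathbb{G}]$ split over $\Bbbk$ by $\mathrm{id}\otimes\epsilon_{\mathbb{G}}$. If $M$ is injective, this mono splits $\mathbb{G}$-equivariantly, so $M$ is a direct summand of $M_{triv}\otimes\Bbbk[\mathbb{G}]$, and a fortiori an $\mathbb{H}$-summand after restriction. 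Thus it suffices to show that $(M_{triv}\otimes\Bbbk[\mathbb{G}])|_{\mathbb{H}}=M_{triv}\otimes(\Bbbk[\mathbb{G}]|_{\mathbb{H}})$ is an injective $\mathbb{H}$-supermodule. The key input is that the right regular $\mathbb{H}$-supermodule $\Bbbk[\mathbb{G}]|_{\mathbb{H}}$ is injective, i.e. the super analogue of the fact that $\Bbbk[\mathbb{G}]$ is a cofree (faithfully coflat) $\Bbbk[\mathbb{H}]$-comodule, which holds by the faithful flatness and quotient theory of \cite{maszub}. Granting this, tensoring an injective with the trivial superspace $M_{triv}$ remains injective because $_{\mathbb{H}}\mathsf{Smod}$ is locally finite, so arbitrary direct sums of injectives are injective; and a direct summand of an injective is injective. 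Note that normality of $\mathbb{H}$ is not used in this part.

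For the invariants statement I would argue purely formally. Inflation along the quotient morphism $\mathbb{G}\to\mathbb{G}/\mathbb{H}$ gives an exact functor $\mathrm{inf}\colon {}_{\mathbb{G}/\mathbb{H}}\mathsf{Smod}\to {}_{\mathbb{G}}\mathsf{Smod}$, namely corestriction of comodules along the inclusion $\Bbbk[\mathbb{G}/\mathbb{H}]=\Bbbk[\mathbb{G}]^{\Bbbk[\mathbb{H}]}\hookrightarrow\Bbbk[\mathbb{G}]$, which does not change the underlying superspace and is therefore exact. The fixed-point functor $(-)^{\mathbb{H}}$ is right adjoint to $\mathrm{inf}$: any $\mathbb{G}$-morphism out of an inflated module has image in the $\mathbb{H}$-invariants, yielding a natural isomorphism $\mathrm{Hom}_{_{\mathbb{G}}\mathsf{Smod}}(\mathrm{inf}\,N,M)\cong\mathrm{Hom}_{_{\mathbb{G}/\mathbb{H}}\mathsf{Smod}}(N,M^{\mathbb{H}})$. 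Since the right adjoint of an exact functor preserves injectives — concretely, $\mathrm{Hom}_{_{\mathbb{G}/\mathbb{H}}\mathsf{Smod}}(-,M^{\mathbb{H}})\cong\mathrm{Hom}_{_{\mathbb{G}}\mathsf{Smod}}(\mathrm{inf}(-),M)$ is exact once $M$ is injective — the $\mathbb{G}/\mathbb{H}$-supermodule $M^{\mathbb{H}}$ is injective.

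The only genuine obstacle is the technical input in the first part, the injectivity of $\Bbbk[\mathbb{G}]$ as an $\mathbb{H}$-supermodule; everything else is soft. In carrying out the details one must track the Koszul signs in the braided tensor product and check that the equivariant splitting of $\rho_M$ and the adjunction isomorphism respect the $\mathbb{Z}_2$-grading, so that they are genuine morphisms in the graded-morphism categories. The second part survives essentially verbatim from the non-super setting.
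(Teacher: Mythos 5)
Your proposal is correct, but the two halves compare differently with the paper's proof. For the restriction statement you and the paper ultimately lean on the same non-trivial input: the paper simply cites \cite[Theorem 5.2(i, iv) and Theorem 6.2]{zub4} (the quotient theory for supergroups), while you unwind this into the split embedding $M\hookrightarrow M_{triv}\otimes\Bbbk[\mathbb{G}]$ plus the injectivity of $\Bbbk[\mathbb{G}]|_{\mathbb{H}}$ --- which is exactly the content of those cited theorems, so your version is the same argument with the reduction to the cofree case made explicit (and you are right that normality is not needed here; the paper uses it only for the second half). For the invariants statement your route is genuinely different and in fact lighter: the paper restricts first, observes that injectivity of $M|_{\mathbb{H}}$ kills $\mathrm{H}^m(\mathbb{H},M)$ for $m>0$ and hence degenerates the Lyndon--Hochschild--Serre spectral sequence of \cite[Proposition 3.1(2)]{scalazub} to get $\mathrm{Ext}^n_{_{\mathbb{G}/\mathbb{H}}\mathsf{SMod}}(N,M^{\mathbb{H}})\simeq\mathrm{Ext}^n_{_{\mathbb{G}}\mathsf{SMod}}(N,M)$, whereas you invoke only the adjunction $\mathrm{Hom}_{_{\mathbb{G}}\mathsf{Smod}}(\mathrm{inf}\,N,M)\cong\mathrm{Hom}_{_{\mathbb{G}/\mathbb{H}}\mathsf{Smod}}(N,M^{\mathbb{H}})$ and the fact that a right adjoint of an exact functor preserves injectives. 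Your argument makes the second assertion logically independent of the first (the spectral-sequence proof needs $M|_{\mathbb{H}}$ injective as a hypothesis), at the cost of not producing the stronger $\mathrm{Ext}$-isomorphism in all degrees, which the paper reuses implicitly elsewhere. The only points to nail down in your write-up are that $M^{\mathbb{H}}$ is indeed a $\Bbbk[\mathbb{G}]^{\Bbbk[\mathbb{H}]}=\Bbbk[\mathbb{G}/\mathbb{H}]$-supercomodule (this is where normality enters, via Remark \ref{a folklore fact} and the identification of the quotient Hopf superalgebra) and that direct sums of injectives stay injective in the locally finite category $\mathsf{Smod}^{\Bbbk[\mathbb{H}]}$; both hold.
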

	\begin{proof}
		By \cite[Theorem 5.2(i, iv) and Theorem 6.2]{zub4} the $\mathbb{H}$-supermodule $M|_{\mathbb{H}}$ is injective, provided $\mathbb{G}$-supermodule $M$ is. Moreover, if
		$M|_{\mathbb{H}}$ is injective, then the standard spectral sequence \cite[Proposition 3.1 (2)]{scalazub} 
		\[\mathrm{E}_2^{n, m}=\mathrm{Ext}^n_{_{\mathbb{G}/\mathbb{H}}\mathsf{SMod}}(N, \mathrm{H}^m(\mathbb{H}, M))\Rightarrow\mathrm{Ext}^{n+m}_{_{\mathbb{G}}\mathsf{SMod}}(N, M) \]
		degenerates for any $\mathbb{G}/\mathbb{H}$-supermodule $N$. In other words, we have
		\[\mathrm{Ext}^n_{_{\mathbb{G}/\mathbb{H}}\mathsf{SMod}}(N, M^{\mathbb{H}})\simeq \mathrm{Ext}^{n}_{_{\mathbb{G}}\mathsf{SMod}}(N, M)\]
		for any $n\geq 0$.	Thus, if $M$ is injective as a $\mathbb{G}$-supermodule, then $M^{\mathbb{H}}$ is injective as a $\mathbb{G}/\mathbb{H}$-supermodule.
	\end{proof}
The Duflo-Serganova functor detects the property of a $\mathbb{G}$-supermodule to be injective. 
	\begin{pr}\label{injectives and DS-functor}
		For any injective $\mathbb{G}$-supermodule $M$ there is $X_M=0$. In particular, if $\mathbb{G}$ is regarded as a right $\mathbb{G}$-superscheme with respect to the right multiplication action, then $X_{\Bbbk[\mathbb{G}]}=0$. 
	\end{pr}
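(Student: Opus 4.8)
The plan is to localize the whole question at the purely odd rank-one subgroup cut out by $x$, where injectivity coincides with freeness, and where freeness is exactly the vanishing of $M_x$. Fix $0\neq x\in X$ (the element $0\in X$ only contributes $M_0=M$ and may be ignored). Following the proof of Lemma \ref{elementary prop-s}, I would take the purely odd supersubgroup $\mathbb{U}\simeq\mathbb{G}_a^-$ corresponding to the Harish-Chandra subpair $(1,\Bbbk x)$, so that $\mathrm{Dist}(\mathbb{U})\simeq\Bbbk[x]$ and the odd square-zero endomorphism $x_M$ is precisely the action of the canonical generator of $\mathrm{Dist}(\mathbb{U})$ on $M|_{\mathbb{U}}$. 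Applying Lemma \ref{free basis} with $r=1$, one may write $M|_{\mathbb{U}}=F\oplus G$ with $F$ free and $x\cdot G=0$; since $F_x=0$ while $G_x=G$, additivity (Lemma \ref{elementary prop-s}(3)) gives $M_x\simeq G$. Hence $M_x=0$ if and only if $M|_{\mathbb{U}}$ is free, which by the results recalled at the start of Section 2 is the same as $M|_{\mathbb{U}}$ being injective as a $\mathbb{U}$-supermodule.

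It then remains to check that restricting an injective $\mathbb{G}$-supermodule to $\mathbb{U}$ yields an injective $\mathbb{U}$-supermodule. This is the restriction assertion of Lemma \ref{an extension}, which in fact holds for an arbitrary closed supersubgroup: the normality hypothesis there is needed only for the statement about the fixed points $M^{\mathbb{H}}$, whereas the injectivity of $M|_{\mathbb{H}}$ is supplied by \cite[Theorem 5.2 and Theorem 6.2]{zub4}. For a self-contained justification I would argue that every injective $\mathbb{G}$-supermodule is a direct summand of a cofree comodule $\Bbbk[\mathbb{G}]\otimes W$ with $W$ a trivial supermodule; since $\Bbbk[\mathbb{G}]$ is faithfully coflat over $\Bbbk[\mathbb{U}]$, the restriction $\Bbbk[\mathbb{G}]|_{\mathbb{U}}$ is injective, and injectivity is stable under tensoring by the trivial $W$ and under passing to direct summands. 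In either form $M|_{\mathbb{U}}$ is injective, hence free, hence $M_x=0$ by the first paragraph.

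Since $x$ was an arbitrary nonzero element of $X$, this proves $X_M=0$. For the final assertion I would simply observe that $\Bbbk[\mathbb{G}]$ with its right regular action is the cofree $\mathbb{G}$-supercomodule, and in particular an injective $\mathbb{G}$-supermodule; the statement just proved then yields $X_{\Bbbk[\mathbb{G}]}=0$. The one point requiring genuine care, and the main obstacle, is the injectivity (equivalently, freeness over $\mathrm{Dist}(\mathbb{U})$) of the restriction to the possibly non-normal subgroup $\mathbb{U}$: this rests on the faithful coflatness of $\Bbbk[\mathbb{G}]$ over $\Bbbk[\mathbb{U}]$, everything else being the elementary cohomology-versus-freeness dictionary for $\Bbbk[x]$-supermodules from Section 2.
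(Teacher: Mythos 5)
Your argument is correct and follows essentially the same route as the paper: restrict to the purely odd supersubgroup $\mathbb{U}$ attached to the Harish-Chandra subpair $(1,\Bbbk x)$, observe that injectivity of $M|_{\mathbb{U}}$ forces freeness over $\mathrm{Dist}(\mathbb{U})\simeq\Bbbk[x]$ and hence $M_x=0$, the key input being that restriction to the (finite, non-normal) $\mathbb{U}$ preserves injectivity, which the paper derives from the affineness of the sheaf quotient $\mathbb{G}/\mathbb{U}$ together with \cite[Theorem 5.2(i, iv)]{zub4}. The only caveat is that your ``self-contained'' alternative via faithful coflatness of $\Bbbk[\mathbb{G}]$ over $\Bbbk[\mathbb{U}]$ is not really independent---over a field, coflatness of $\Bbbk[\mathbb{G}]$ as a $\Bbbk[\mathbb{U}]$-comodule is equivalent to the injectivity you are trying to prove and is exactly what the exactness of $\mathbb{U}$ supplies---but since you also invoke \cite{zub4} directly, the proof stands as written.
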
	
	\begin{proof}
		Let $\mathfrak{g}=\mathrm{Lie}(\mathbb{G})$ and $x\in\mathfrak{g}_1$, such that $[x, x]=0$. 
		Let $\mathbb{U}$ denote the purely odd unipotent supersubgroup of $\mathbb{G}$, that corresponds to the Harish-Chandra subpair $(1, \Bbbk x)$. Since $\mathbb{U}$ is finite, the sheaf quotient
		$\mathbb{G}/\mathbb{U}$ is affine (cf. \cite[Theorem 0.1]{zub1}), hence $\mathbb{U}$ is an (faithfully) exact supersubgroup of $\mathbb{G}$ and $M|_{\mathbb{U}}$ is an injective $\mathbb{U}$-supermodule (use again \cite[Theorem 5.2(i, iv)]{zub4}). Thus $M$ is a direct summand of a direct sum $\Bbbk[\mathbb{U}]^{\oplus I}\oplus\Pi\Bbbk[\mathbb{U}]^{\oplus J}$ for some
		(possibly infinite) sets of indices $I$ and $J$. It remains to note that $\Bbbk[\mathbb{U}]$ is isomorphic to a free $\mathrm{Dist}(\mathbb{U})$-supermodule of rank one, hence 
		$\Bbbk[\mathbb{U}]_x=0$.
	\end{proof}

	\begin{pr}\label{if G is purely odd}
		Let $\mathbb{G}$ be a purely odd (unipotent) supergroup and $M$ be a $\mathbb{G}$-supermodule. Then $M$ is injective if and only if $M=\varinjlim_{i\in I} M_i$, where each supersubmodule $M_i$ is finite dimensional and  $X_{M_i}=0$.  
	\end{pr}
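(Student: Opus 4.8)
The plan is to pass through the equivalence $_{\mathbb{G}}\mathsf{Smod}\simeq {}_{\mathrm{Dist}(\mathbb{G})}\mathsf{Smod}$ and to argue entirely with supermodules over $\mathrm{Dist}(\mathbb{G})\simeq\Lambda(\mathfrak{g})$, where $\mathfrak{g}=\mathrm{Lie}(\mathbb{G})$ is purely odd and abelian; in particular $[x,x]=0$ for every $x\in\mathfrak{g}$, so $X=\mathfrak{g}$. Recall that over $\Lambda(\mathfrak{g})$ the notions injective, projective and free coincide, hence injectivity of $M$ in $_{\mathbb{G}}\mathsf{Smod}$ means exactly that $M$ is a free $\Lambda(\mathfrak{g})$-supermodule, and the condition $X_{M_i}=0$ translates into $(M_i)_x=0$ for all $x\in\mathfrak{g}$. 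Everything then reduces to the following finite-dimensional criterion, which I would isolate as the main lemma: \emph{a finite-dimensional $\Lambda(\mathfrak{g})$-supermodule $N$ is free if and only if $N_x=0$ for every $x\in\mathfrak{g}$.}

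Granting this lemma, both implications are short. For necessity, if $M$ is injective it is free, say $M\simeq\bigoplus_{j\in J}\Lambda(\mathfrak{g})e_j$ (up to parity shifts), and writing $M$ as the directed union of the finite partial sums $\bigoplus_{j\in J'}\Lambda(\mathfrak{g})e_j$ over finite $J'\subseteq J$ exhibits $M=\varinjlim M_{J'}$ with each $M_{J'}$ finite-dimensional, free, hence injective; Proposition \ref{injectives and DS-functor} then gives $X_{M_{J'}}=0$. For sufficiency, each $M_i$ has $(M_i)_x=0$ for all $x$, so by the main lemma $M_i$ is free, hence injective; since the transition maps are the given inclusions, $M=\varinjlim_i M_i$ is a filtered colimit of injectives, and because $\Lambda(\mathfrak{g})$ is finite-dimensional, hence Noetherian, such a colimit is again injective. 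I would double-check this last closure property in the graded/super setting, but it is standard over a Noetherian base.

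The main obstacle is the lemma, which is an exterior-algebra avatar of Dade's Lemma: freeness over $\Lambda(\mathfrak{g})$ ought to be detected by freeness over every line $\Bbbk x\subseteq\mathfrak{g}$, and freeness over $\Lambda(\Bbbk x)\simeq\Bbbk[x]$ is precisely $N_x=0$ by the rank-one case of Lemma \ref{free basis}. Necessity is immediate, as $\Lambda(\mathfrak{g})$ is free over each $\Bbbk[x]$. For sufficiency I would induct on $r=\dim\mathfrak{g}$, the case $r=1$ being Lemma \ref{free basis}; by Remark \ref{extension invariant} (and since the DS functor commutes with base change) I may assume $\Bbbk$ algebraically closed. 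Restricting $N$ to $\Lambda(\mathfrak{h})$ for a hyperplane $\mathfrak{h}\subset\mathfrak{g}$ (say $\mathfrak{h}=\langle z_1,\dots,z_{r-1}\rangle$ with $\mathfrak{g}=\mathfrak{h}\oplus\Bbbk z_r$), and noting that $N_x$ depends only on the action of $x$, the inductive hypothesis makes $N$ free over $\Lambda(\mathfrak{h})$ for every hyperplane. The delicate point, where I expect the real work to be, is to upgrade ``free over every hyperplane together with $N_x=0$ for all $x$'' to freeness over the full $\Lambda(\mathfrak{g})$. For a square-zero operator one has $\dim N_x=\dim N-2\,\mathrm{rank}(x_N)$, so $N_x=0$ says $x_N$ has the maximal possible rank $\tfrac12\dim N$; running this over the generic linear combination $x=\sum_i\lambda_i z_i$ and combining maximal rank at all $x\neq 0$ with $\Lambda(\mathfrak{h})$-freeness, via the acyclic two-periodic complex $(N,z_r)$, should force the maximal free summand furnished by Lemma \ref{free basis} to exhaust $N$. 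Making this rank-variety argument precise over the infinite field $\Bbbk$ is the crux of the proof.
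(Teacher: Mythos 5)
Your reduction of the statement to a finite-dimensional detection lemma over $\Lambda(\mathfrak{g})$ --- pass to $\mathrm{Dist}(\mathbb{G})$, use that injective, projective and free coincide, handle direct limits separately, assume $\Bbbk$ algebraically closed via Remark \ref{extension invariant}, and induct on $\dim\mathfrak{g}$ with base case Lemma \ref{free basis} --- is exactly the paper's skeleton, and the soft parts (the necessity direction, closure of injectives under filtered colimits) are fine. But there is a genuine gap at precisely the point you flag as ``the crux'': you never actually show that $N_x=0$ for all $x\neq 0$ forces the non-free complement furnished by Lemma \ref{free basis} to vanish. Saying that maximal rank of $x_N$ at every $x$ ``should force'' freeness via a rank-variety argument over the generic linear combination is a statement of the goal, not an argument; as Remark \ref{not always injective} shows, the implication is delicate (it fails for infinite-dimensional modules), so the finite-dimensional mechanism must be exhibited explicitly.

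The paper's execution of this step is worth comparing with your plan. Instead of restricting to hyperplanes, it uses $M_{z_1}=0$ to get the exact sequence $0\to z_1M\to M\to z_1M\to 0$ and reduces, by induction applied to the $\Lambda(z_2,\dots,z_r)$-module $z_1M$, to showing $(z_1M)_z=0$ for $z$ in the span of $z_2,\dots,z_r$; since this only involves the action of $z_1$ and $z$, everything collapses to the case $r=2$. There the missing idea is concrete linear algebra: write $M=F\oplus G$ with $F$ maximal free and $(z_1z_2)G=0$; then $G$ is free over $\Bbbk[z_1]$ with generators $g_1,\dots,g_s$ and $z_2g_i=\sum_j\alpha_{ij}z_1g_j$. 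Taking an eigenvalue $\lambda$ of the matrix $(\alpha_{ji})$ with eigenvector $(\alpha_1,\dots,\alpha_s)$ and setting $g=\sum_i\alpha_ig_i$ gives $(\lambda z_1-z_2)g=0$ while $g\notin z_1G\supseteq(\lambda z_1-z_2)G$, contradicting $G_{\lambda z_1-z_2}=0$ unless $G=0$. This eigenvector construction (or a fully worked-out substitute for the Dade-type argument you allude to) is the content your proposal is missing; without it the sufficiency direction is not proved.
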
 
	\begin{proof}
		The part "if". As it has been alreay observed, any injective $\mathbb{G}$-supermodule is a free $\mathrm{Dist}(\mathbb{G})$-supermodule, hence it is the direct limit of finitely generated free
		$\mathrm{Dist}(\mathbb{G})$-supersubmodules, each of which satisfies the above conditions.
		
		The part "only if". Note that any direct limit of injective $\mathrm{Dist}(\mathbb{G})$-supermodules is again injective (use the \emph{bozonization} arguments as in \cite[Lemma 4.4]{zub3} and \cite[Chap.1, Ex.8]{cart-el}). Therefore, all we need is to prove our proposition for $M$ to be finite dimensional. By Remark \ref{extension invariant}, we also assume that $\Bbbk$ is algebraically closed.
		
		Consider a decomposition $\mathbb{G}=\mathbb{G}_a^-\times(\mathbb{G}_a^-)^{r-1}$ and denote the second factor by $\mathbb{N}$. Choose a basis $z_1, \ldots, z_r$ of $\mathfrak{g}$, such that $\mathrm{Lie}(\mathbb{N})=\sum_{2\leq i\leq r}\Bbbk z_i$. Since $M_{z_1}=0$, we have the exact sequence
		\[0\to z_1 M\to M\to z_1 M\to 0,\]
		where the rightmost map is $m\mapsto z_1 m, m\in M$. The supermodule $z_1 M$ has a natural structure of $\mathrm{Dist}(\mathbb{N})$-supermodule. If we will prove that for any $z\in\mathrm{Lie}(\mathbb{N})$ there is $(z_1 M)_z=0$, then by the induction on $r$ the $\mathrm{Dist}(\mathbb{N})$-supermodule $z_1 M$ is free. The latter easily infers that $M$ is a free $\mathrm{Dist}(\mathbb{G})$-supermodule, hence injective. 
		
		Now, without loss of generality, one can assume that $z=z_2$ and $r=2$. 
		As in Lemma \ref{free basis}, $M=F\oplus G$, where $F$ is free and $(z_1 z_2)G=0$. As a direct summand of $M$, $G$ satisfies the condition $X_{G}=0$. 
		As a $\Lambda(\Bbbk z_1)$-supermodule, $G$ is free. Let $g_1, \ldots , g_s$ be its free generators. The condition $(z_1 z_2)G=0$ implies $zG=z_1G$ for any $z\in\mathfrak{g}$.
		In particular, we have 
		\[z_2 g_i=\sum_{1\leq j\leq s}\alpha_{ij} z_1 g_j, 1\leq i\leq s.\]
		Let $\lambda$ be an eigenvalue of $A=(\alpha_{ji})_{1\leq i, j\leq s}$ and $(\alpha_1, \ldots , \alpha_s)$ be a coordinate vector of the corresponding eigenvector. Set $g=\sum_{1\leq i\leq s} \alpha_i g_i$. Then $(\lambda z_1-z_2)g=0$ but $g$ does not belong to $zG=z_1G$, a contradiction! 
	\end{proof}	
	\begin{rem}\label{not always injective}
		The only condition $X_M=0$ does not imply that $M$ is injective. In fact, let $\mathbb{G}=(\mathbb{G}_a^-)^2$, so that $\mathfrak{g}=\Bbbk z_1+\Bbbk z_2$. Let $M$ has a basis $v_i, w_i, 1\leq i\leq\infty$, such that $|v_i|=0, |w_i|=1, z_1 v_i=w_i, z_2 w_i=0, z_2v_i=w_i+w_{i+1}$ for any index $i$. Then $M$ is a $\mathrm{Dist}(\mathbb{G})$-supermodule, that satisfies 
		$(z_1 z_2)M=0$. In particular, $M$ does not contain any free supersubmodules. On the other hand, for any $z\in\mathfrak{g}$ there obviously holds $M_z=0$.
	\end{rem}
	Assume that the Lie superalgebra $\mathfrak{g}$ of $\mathbb{G}$ satisfies $[\mathfrak{g}_1, \mathfrak{g}_1]=0$.
	This condition is equivalent to the fact that the natural embedding $\mathbb{G}_{ev}\to\mathbb{G}$ has a left inverse $\pi : \mathbb{G}\to\mathbb{G}_{ev}$, or, to the fact that  $\mathbb{G}\simeq\mathbb{G}_{ev}\ltimes \mathbb{G}_{odd}$, where $\mathbb{G}_{odd}\simeq (\mathbb{G}_a^-)^{\dim\mathfrak{g}_1}$ is a purely odd normal unipotent supersubgroup of $\mathbb{G}$ (see \cite[Proposition 6.1]{zub-bov}). We call $\mathbb{G}$ to be \emph{split} (in \cite{zub-bov} it is called \emph{graded}). Observe that if $M$ is a $\mathbb{G}$-supermodule, then $M^{\mathbb{G}_{odd}}$ is naturally a supersubmodule of $M|_{\mathbb{G}_{ev}}$. More precisely, let $A$ denote the Hopf (super)subalgebra
	$\Bbbk[\mathbb{G}]^{\mathbb{G}_{odd}}$, that is isomorphic to $\Bbbk[\mathbb{G}_{ev}]$. Then Remark \ref{a folklore fact} infers $\tau_M(M^{\mathbb{G}_{odd}})\subseteq M^{\mathbb{G}_{odd}}\otimes A$ and the natural superalgebra morphism $A\to \overline{\Bbbk[\mathbb{G}]}\simeq\Bbbk[\mathbb{G}_{ev}]$ is an isomorphism. 
	
	The following lemma is a supergroup analog of \cite[Lemma 10.3]{DSfunctor}.
	\begin{lm}\label{converse of the above prop}
		Let $\mathbb{G}$ be a split supergroup. Then a $\mathbb{G}$-supermodule $M$ is injective if and only if $M$ is injective as a $\mathbb{G}_{odd}$-supermodule, and $M^{\mathbb{G}_{odd}}$ is injective as a $\mathbb{G}_{ev}$-supermodule. Moreover, in the latter case $M$ is isomorphic to  $\mathrm{ind}^{\mathbb{G}}_{\mathbb{G}_{ev}} M^{\mathbb{G}_{odd}}$.
	\end{lm}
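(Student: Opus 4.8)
The plan is to treat the two implications separately, with almost all of the substance concentrated in the converse and in the structural isomorphism. For the forward direction I would simply invoke Lemma \ref{an extension}. Since $\mathbb{G}$ is split, $\mathbb{G}_{odd}$ is a normal supersubgroup with $\mathbb{G}/\mathbb{G}_{odd}\simeq\mathbb{G}_{ev}$, so if $M$ is an injective $\mathbb{G}$-supermodule then Lemma \ref{an extension} gives at once that $M|_{\mathbb{G}_{odd}}$ is an injective $\mathbb{G}_{odd}$-supermodule and that $M^{\mathbb{G}_{odd}}$ is an injective $\mathbb{G}_{ev}$-supermodule.

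For the converse I would first record that $\mathrm{ind}^{\mathbb{G}}_{\mathbb{G}_{ev}}$ sends injectives to injectives. The restriction functor ${}_{\mathbb{G}}\mathsf{Smod}\to {}_{\mathbb{G}_{ev}}\mathsf{Smod}$ is exact and $\mathrm{ind}^{\mathbb{G}}_{\mathbb{G}_{ev}}$ is its right adjoint by Frobenius reciprocity; moreover the quotient $\mathbb{G}/\mathbb{G}_{ev}\simeq\mathbb{G}_{odd}$ is affine (indeed finite), so $\mathrm{ind}^{\mathbb{G}}_{\mathbb{G}_{ev}}$ is exact. A right adjoint of an exact functor preserves injectives, hence $\mathrm{ind}^{\mathbb{G}}_{\mathbb{G}_{ev}} M^{\mathbb{G}_{odd}}$ is an injective $\mathbb{G}$-supermodule as soon as $M^{\mathbb{G}_{odd}}$ is injective over $\mathbb{G}_{ev}$.

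The heart of the argument is the natural isomorphism $M\simeq\mathrm{ind}^{\mathbb{G}}_{\mathbb{G}_{ev}} M^{\mathbb{G}_{odd}}$, which I would establish under the sole hypothesis that $M|_{\mathbb{G}_{odd}}$ is injective, i.e. free over $\mathrm{Dist}(\mathbb{G}_{odd})\simeq\Lambda(\mathfrak{g}_1)$ (the freeness criterion used in Proposition \ref{if G is purely odd}). Fixing a homogeneous basis $z_1,\ldots,z_r$ of $\mathfrak{g}_1$, freeness identifies $M^{\mathbb{G}_{odd}}$ with the socle $(z_1\cdots z_r)M$, and the canonical surjection $m\mapsto(z_1\cdots z_r)m$ is $\mathbb{G}_{ev}$-equivariant up to the one-dimensional character by which $\mathbb{G}_{ev}$ acts on $\Lambda^{r}\mathfrak{g}_1$. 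Feeding this $\mathbb{G}_{ev}$-map $M\to M^{\mathbb{G}_{odd}}$ into Frobenius reciprocity produces a $\mathbb{G}$-morphism $M\to\mathrm{ind}^{\mathbb{G}}_{\mathbb{G}_{ev}} M^{\mathbb{G}_{odd}}$, and I would prove it is an isomorphism by restricting to $\mathbb{G}_{odd}$: there both sides are free $\mathrm{Dist}(\mathbb{G}_{odd})$-supermodules (using the semidirect decomposition $\mathrm{ind}^{\mathbb{G}}_{\mathbb{G}_{ev}} N|_{\mathbb{G}_{odd}}\simeq\Bbbk[\mathbb{G}_{odd}]\otimes N$) of the same finite rank, and the map is an isomorphism on socles, whence it is a $\mathrm{Dist}(\mathbb{G}_{odd})$-isomorphism and a fortiori a $\mathbb{G}$-isomorphism. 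Granting this, the ``if'' direction follows: if in addition $M^{\mathbb{G}_{odd}}$ is $\mathbb{G}_{ev}$-injective, the previous paragraph shows $\mathrm{ind}^{\mathbb{G}}_{\mathbb{G}_{ev}} M^{\mathbb{G}_{odd}}\simeq M$ is $\mathbb{G}$-injective.

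The main obstacle is precisely the $\mathbb{G}$-equivariance in this comparison isomorphism. Maps that are separately $\mathbb{G}_{odd}$- or $\mathbb{G}_{ev}$-equivariant are routine consequences of freeness; the difficulty is to combine the two into a single $\mathbb{G}$-linear map, which forces one to keep track of the twist supplied by the top of $\Bbbk[\mathbb{G}_{odd}]$ — namely the determinant character of $\mathbb{G}_{ev}$ on $\Lambda^{r}\mathfrak{g}_1$ together with the parity shift $\Pi^{r}$ coming from the integral of $\Lambda(\mathfrak{g}_1^{*})$. I expect the bookkeeping of this one-dimensional twist, and verifying that it is absorbed so that the comparison map is genuinely $\mathbb{G}$-equivariant, to be the only delicate point; reduction to an algebraically closed field via Remark \ref{extension invariant} together with the characterization of $\mathbb{G}_{odd}$-injectivity by freeness disposes of the remaining technicalities.
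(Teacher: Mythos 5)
Your forward direction and the observation that $\mathrm{ind}^{\mathbb{G}}_{\mathbb{G}_{ev}}$ preserves injectives match the paper. The gap is in the comparison isomorphism, and it is twofold. First, the map $\pi\colon m\mapsto (z_1\cdots z_r)m$ is not a retraction onto the socle $M^{\mathbb{G}_{odd}}=(z_1\cdots z_r)M$; it \emph{annihilates} that socle, since $(z_1\cdots z_r)^2=0$. Consequently the Frobenius adjoint $\widehat{\pi}\colon m\mapsto\sum\pi(m_{(1)})\otimes m_{(2)}$ vanishes on $M^{\mathbb{G}_{odd}}$ and can never be injective: already for $\mathbb{G}=\mathbb{G}_a^-$ and $M=\Bbbk[\mathbb{G}]$ (free of rank one with basis $e$, socle $\Bbbk ze$) one computes $\widehat{\pi}(ze)=0$, so your map has rank one on a two-dimensional module. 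The claim that it ``is an isomorphism on socles'' is exactly what fails.

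Second, and more fundamentally, the statement you propose to prove --- that $M\simeq\mathrm{ind}^{\mathbb{G}}_{\mathbb{G}_{ev}}M^{\mathbb{G}_{odd}}$ under the \emph{sole} hypothesis that $M|_{\mathbb{G}_{odd}}$ is free --- is false, so no choice of map can repair the argument in that generality. Take $\mathbb{G}=\mathbb{G}_a\times\mathbb{G}_a^-$ with $\mathfrak{g}_0=\Bbbk t$, $\mathfrak{g}_1=\Bbbk z$, and let $M$ be free over $\Lambda(z)$ on a homogeneous basis $u$ (even), $w$ (odd), with $t\cdot u=zw$ and $t$ acting by zero on $w$, $zu$, $zw$. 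Then $t^2=0$ and $[t,z]=0$, so this integrates to a $\mathbb{G}$-supermodule; $M|_{\mathbb{G}_{odd}}$ is free, but $M^{\mathbb{G}_{odd}}=\Bbbk zu\oplus\Bbbk zw$ is a \emph{trivial} $\mathbb{G}_a$-module, hence $\mathrm{ind}^{\mathbb{G}}_{\mathbb{G}_{ev}}M^{\mathbb{G}_{odd}}$ carries the trivial $\mathbb{G}_a$-action while $M$ does not. This is why the paper genuinely needs the second hypothesis at this stage: injectivity of $V=M^{\mathbb{G}_{odd}}$ over $\mathbb{G}_{ev}$ is used to produce a $\mathbb{G}_{ev}$-equivariant retraction $\pi\colon M\to V$ with $\pi|_V=\mathrm{id}_V$ (a splitting of the inclusion $V\hookrightarrow M|_{\mathbb{G}_{ev}}$), whose adjoint $\widehat{\pi}$ restricts on the socle to $\tau_M|_V$, visibly injective, and is then surjective by the identification $(\mathrm{ind}^{\mathbb{G}}_{\mathbb{G}_{ev}}V)^{\mathbb{G}_{odd}}\simeq V\otimes 1$ from \cite[Lemma 8.2, Remark 8.3]{zub2}. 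Using a true retraction also makes the determinant character of $\mathbb{G}_{ev}$ on $\Lambda^r\mathfrak{g}_1$ and the parity shift $\Pi^r$ disappear from the problem; the ``bookkeeping of the twist'' you flag as the delicate point is an artifact of having chosen the wrong map. (A minor further slip: you argue with modules ``of the same finite rank,'' but $M$ need not be finite dimensional; the socle argument, not rank counting, is what closes the proof.)
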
	
	\begin{proof}
		By Lemma \ref{an extension}, the only "if" part needs proof. Let $V$ denote $M^{\mathbb{G}_{odd}}$. By Frobenius reciprocity (cf. \cite{jan, zub5}), the induced supermodule $\mathrm{ind}^{\mathbb{G}}_{\mathbb{G}_{ev}} V$ is injective. It remains to show that $M\simeq \mathrm{ind}^{\mathbb{G}}_{\mathbb{G}_{ev}} V$.
		
		Since $V$ is an injective $\mathbb{G}_{ev}$-supermodule, there is a $\mathbb{G}_{ev}$-supermodule morphism $\pi : M\to V$, that is a left inverse of the natural embedding $V\to M$. 
		By Frobenius reciprocity, it induces the  $\mathbb{G}$-supermodule morphism $\widehat{\pi} : M\to \mathrm{ind}^{\mathbb{G}}_{\mathbb{G}_{ev}} V$ as
		\[m\mapsto \sum \pi(m_{(1)})\otimes m_{(2)}, m\in M, \tau_M(m)=\sum m_{(1)}\otimes m_{(2)}.\]
		We show that $\widehat{\pi}$ is the required isomorphism. As it has been already noted, both $M|_{\mathbb{G}_{odd}}$ and $(\mathrm{ind}^{\mathbb{G}}_{\mathbb{G}_{ev}} V)|_{\mathbb{G}_{odd}}$ are injective $\mathbb{G}_{odd}$-supermodules. Thus all we need is to prove that $\widehat{\pi}$ induces the isomorphism of their socles 
		$V\to (\mathrm{ind}^{\mathbb{G}}_{\mathbb{G}_{ev}} V)^{\mathbb{G}_{odd}}$. 
		Recall that $\tau_M(V)\subseteq V\otimes A$. Since $\pi|_V=\mathrm{id}_V$, $\widehat{\pi}|_{V}=\tau_M|_V$ is injective. 
		
		By \cite[Lemma 8.2]{zub2}, $\mathrm{ind}^{\mathbb{G}}_{\mathbb{G}_{ev}} V$ is isomorphic to $V\otimes\Bbbk[\mathbb{G}_{odd}]$ (as a superspace). Moreover, \cite[Remark 8.3]{zub2} infers
		that $(\mathrm{ind}^{\mathbb{G}}_{\mathbb{G}_{ev}} V)|_{\mathbb{G}_{odd}}$ is isomorphic to $V_{triv}\otimes\Bbbk[\mathbb{G}_{odd}]$ as a right (super)comodule. Thus
		$(\mathrm{ind}^{\mathbb{G}}_{\mathbb{G}_{ev}} V)^{\mathbb{G}_{odd}}$ can be identified with $V\otimes 1$. Even more precisely, the isomorphism from \cite[Lemma 8.2]{zub2} takes
		each $v\otimes 1\in V\otimes 1$ to $\tau_M(v)$, hence $\widehat{\pi}$ is surjective.  
	\end{proof}
	\begin{rem}\label{above lemma in char=0}
	If $\mathbb{G}_{ev}$ is linearly reductive, then the second codnition of Lemma \ref{converse of the above prop} is superfluous. 	
	\end{rem}
	From now on we assume that $\mathbb{G}$ is connected. It is equivalent to the fact that $G$ is connected, where $G$ denotes $\mathbb{G}_{ev}$, regarded as an algebraic group.
	\begin{lm}\label{invariants and Frobenius kernels}(compare with \cite[I.9.8]{jan})
	Let $char\Bbbk =p>0$. If $M$ is a $\mathbb{G}$-supermodule, then $\cap_{r\geq 1}M^{\mathbb{G}_r}=M^{\mathbb{G}}$. In particular, if $M$ and $M'$ are finite dimensional $\mathbb{G}$-supermodules, then there is $r_0\geq 1$, such that for any $r\geq r_0$ we have $M^{\mathbb{G}_r}=M^{\mathbb{G}}$ and $\mathrm{Hom}_{_{\mathbb{G}_r}\mathsf{SMod}}(M, M')=\mathrm{Hom}_{_{\mathbb{G}}\mathsf{SMod}}(M, M')$.	
	\end{lm}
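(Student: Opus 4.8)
The plan is to reduce the super statement to the corresponding statement for the ordinary connected group $G=\mathbb{G}_{ev}$, which is exactly \cite[I.9.8]{jan}, treating the odd directions separately through the Harish-Chandra pair $(G,\mathfrak{g}_1)$ of $\mathbb{G}$. First I would record the elementary inclusions and reinterpret invariants via distributions. Writing $\mathfrak{m}=\ker\epsilon_{\mathbb{G}}$ for the augmentation superideal and $\mathrm{Dist}(\mathbb{G})=\bigcup_n(\Bbbk[\mathbb{G}]/\mathfrak{m}^{n+1})^*$, the defining superideal $I_r$ of the Frobenius kernel $\mathbb{G}_r$ is generated by the $p^r$-th powers $f^{p^r}$, $f\in\mathfrak{m}$ (odd generators contributing nothing, since they square to zero), so that $I_r\subseteq\mathfrak{m}^{p^r}$ and, from $f^{p^{r+1}}=(f^{p^r})^p$, also $I_{r+1}\subseteq I_r$. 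Hence every distribution of order $\le n$ annihilates $I_r$ as soon as $p^r>n$, which gives the increasing union $\mathrm{Dist}(\mathbb{G})=\bigcup_{r\ge 1}\mathrm{Dist}(\mathbb{G}_r)$. Since each $\mathbb{G}_r$ is finite and infinitesimal, $\mathrm{Dist}(\mathbb{G}_r)=\Bbbk[\mathbb{G}_r]^*$ and $M^{\mathbb{G}_r}=M^{\mathrm{Dist}(\mathbb{G}_r)}$; passing to the union of annihilators then yields
\[\bigcap_{r\ge 1}M^{\mathbb{G}_r}=\bigcap_{r\ge 1}M^{\mathrm{Dist}(\mathbb{G}_r)}=M^{\mathrm{Dist}(\mathbb{G})}.\]

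It therefore remains to identify $M^{\mathrm{Dist}(\mathbb{G})}$ with $M^{\mathbb{G}}$, and this is the step where connectedness is indispensable; I regard it as the main obstacle. My plan is to split it along the Harish-Chandra pair. By Theorem \ref{equivalence} the supergroup $\mathbb{G}$ is recovered from $(G,\mathfrak{g}_1)$, and $\mathrm{Dist}(\mathbb{G})$ is generated as a superalgebra by $\mathrm{Dist}(G)$ and $\mathfrak{g}_1$; moreover for $r\ge 1$ the kernel $\mathbb{G}_r$ has even part $G_r$ and already contains the whole height-one odd part $\mathfrak{g}_1$. Evaluating on dual supernumbers, a homogeneous $m$ is fixed by the odd one-parameter subgroup $1+\xi x$ (with $x\in\mathfrak{g}_1$, $\xi\in A_1$) if and only if $x_M m=0$, so in both the $\mathbb{G}$- and the $\mathbb{G}_r$-case the odd part of invariance is the single subspace $\bigcap_{x\in\mathfrak{g}_1}\ker(x_M)$. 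Thus $M^{\mathbb{G}}=M^{G}\cap\bigcap_{x}\ker(x_M)$ and $M^{\mathbb{G}_r}=M^{G_r}\cap\bigcap_{x}\ker(x_M)$. Intersecting over $r$ and invoking the classical identity $\bigcap_r M^{G_r}=M^{G}$ for the connected group $G=\mathbb{G}_{ev}$ (\cite[I.9.8]{jan}) gives
\[\bigcap_{r\ge 1}M^{\mathbb{G}_r}=\Big(\bigcap_{r}M^{G_r}\Big)\cap\bigcap_{x}\ker(x_M)=M^{G}\cap\bigcap_{x}\ker(x_M)=M^{\mathbb{G}}.\]
The delicate point hidden in the even case is the assertion $M^{G}=M^{\mathrm{Dist}(G)}$ for connected $G$, i.e. that a function annihilated by every distribution is constant; I would either quote it from \cite{jan} or recover it from $\bigcap_n\mathfrak{m}^n=0$ together with connectedness.

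Finally, the "in particular" clause follows by a finiteness argument. For finite-dimensional $M$ the subspaces $M^{\mathbb{G}_1}\supseteq M^{\mathbb{G}_2}\supseteq\cdots$ form a descending chain inside $M$, hence stabilize at some $r_0$, and their common value is $\bigcap_r M^{\mathbb{G}_r}=M^{\mathbb{G}}$ by the first part; so $M^{\mathbb{G}_r}=M^{\mathbb{G}}$ for all $r\ge r_0$. For the Hom-statement I would apply this to the finite-dimensional $\mathbb{G}$-supermodule $N=\mathrm{Hom}_{\Bbbk}(M,M')\simeq M^*\otimes M'$ with the conjugation action, using the identifications $\mathrm{Hom}_{_{\mathbb{G}}\mathsf{SMod}}(M,M')=N^{\mathbb{G}}$ and $\mathrm{Hom}_{_{\mathbb{G}_r}\mathsf{SMod}}(M,M')=N^{\mathbb{G}_r}$ (here one takes all of $N^{\mathbb{G}}$, in both parities, since the morphisms are ungraded), and then enlarging $r_0$ so that it works simultaneously for $M$ and for $N$.
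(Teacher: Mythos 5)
Your proof is correct in substance but follows a genuinely different route from the paper's. The paper argues entirely on the Hopf-superalgebra side: membership of $m$ in $\bigcap_r M^{\mathbb{G}_r}$ is translated into $\tau_M(m)-m\otimes 1\in M\otimes\bigcap_r\mathfrak{M}^{p^r}$, where $\mathfrak{M}=\ker\epsilon_{\mathbb{G}}$, and then the superized Krull-intersection theorem for connected supergroups (\cite[Proposition 3.4]{gr-zub}) gives $\bigcap_r\mathfrak{M}^{p^r}=0$, hence $\tau_M(m)=m\otimes 1$. You instead separate even and odd directions via the Harish-Chandra pair and reduce the even part to the classical statement for the connected group $G=\mathbb{G}_{ev}$; the odd contribution to invariance is the same subspace $\bigcap_{x\in\mathfrak{g}_1}\ker(x_M)$ for $\mathbb{G}$ and for every $\mathbb{G}_r$, so it drops out of the intersection. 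This buys you the ability to quote \cite{jan} directly instead of its super analogue, at the price of having to justify the splittings $M^{\mathbb{G}}=M^{G}\cap\bigcap_{x}\ker(x_M)$ and $M^{\mathbb{G}_r}=M^{G_r}\cap\bigcap_{x}\ker(x_M)$. That is the one step you should tighten: the dual-supernumbers computation only yields the easy inclusion ($\mathbb{G}$-invariance implies $G$-invariance and $\mathfrak{g}_1m=0$); for the converse you need either the Harish-Chandra equivalence at the level of supermodules, or, more directly, the observation that $\mathrm{Stab}_{\mathbb{G}}(m)$ is a closed supersubgroup whose Harish-Chandra pair is all of $(G,\mathfrak{g}_1)$ and which therefore equals $\mathbb{G}$ by Theorem \ref{equivalence}; for $\mathbb{G}_r$ the decomposition $\mathrm{Dist}(\mathbb{G}_r)=\Lambda(\mathfrak{g}_1)\,\mathrm{Dist}(G_r)$ does the job. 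Your opening paragraph on $\mathrm{Dist}(\mathbb{G})=\bigcup_r\mathrm{Dist}(\mathbb{G}_r)$ is correct but only its $\mathbb{G}_r$-content is actually used in the final chain of equalities. The treatment of the ``in particular'' clause (stabilizing descending chain, then $N=M^*\otimes M'$ with the conjugation action, taken in both parities) is fine and fills in what the paper leaves as obvious.
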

	\begin{proof}
It is clear that $M^{\mathbb{G}}\subseteq \cap_{r\geq 1}M^{\mathbb{G}_r}$. Conversely, an element $m\in M$ belongs to $\cap_{r\geq 1}M^{\mathbb{G}_r}$ if and only if 	
\[\tau_M(m)-m\otimes 1\in\cap_{r\geq 1} M\otimes \Bbbk[\mathbb{G}]\mathfrak{M}_0^{p^r}= \cap_{r\geq 1}M\otimes \Bbbk[\mathbb{G}]\mathfrak{M}^{p^r}=M\otimes \cap_{r\geq 1}\mathfrak{M}^{p^r},\]	
where $\mathfrak{M}=\ker\epsilon_{\mathbb{G}}$. Since $\mathbb{G}$ is connected, \cite[Proposition 3.4]{gr-zub} infers $\cap_{r\geq 1}\mathfrak{M}^{p^r}=0$. The second and third statements are now obvious.
	\end{proof}

If $L$ is a simple $\mathbb{G}$-supermodule, then its \emph{multiplicity} in a $\mathbb{G}$-supermodule $M$ is defined as follows.  Let
$M=\varinjlim M_i$, where $\{M_i\}_{i\in I}$ is a direct system of finite dimensional $\mathbb{G}$-supersubmodules of $M$, then $[M : L]=\sup_{i\in I} [M_i : L]$. 

Assume that the isomorphism classes of simple $\mathbb{G}$-supermodules are indexed by an interval finite poset $\Lambda$, say $\{L(\lambda)\mid\lambda\in\Lambda \}$. Let $M$ be a $\mathbb{G}$-supermodule and $\Omega\subseteq\Lambda$.
Then there is the largest $\mathbb{G}$-supersubmodule $N$ of $M$, such that $[M : L(\lambda)]\neq 0$ implies $\lambda\in\Omega$. We denote this supersubmodule by $O_{\Omega}(M)$.
For any $\mathbb{G}$-supermodule $M$ we determine $W(M)=\{\lambda\in\Lambda| [M : L(\lambda)]\neq 0\}$.

A $\mathbb{G}$-supermodule $M$ is said to be \emph{weakly restricted}, if for any finite dimensional $\mathbb{G}$-supermodule $N$, $\dim \mathrm{Hom}_{_{\mathbb{G}}\mathsf{SMod}}(N, M)
<\infty$. Note that an injective hull $I(\lambda)$ of the simple supermodule $L(\lambda)$ is weakly restricted, since 
\[\dim \mathrm{Hom}_{_{\mathbb{G}}\mathsf{SMod}}(N, I(\lambda))=[N : L(\lambda)]+[N : \Pi L(\lambda)]<\infty.\]
Further, a $\mathbb{G}$-supermodule $M$ is said to be \emph{restricted}, if for any finitely generated ideal $\Omega$ in $\Lambda$ there is $\dim O_{\Omega}(M)<\infty$. 
The second property obviously implies the first one.  Indeed, it is easy to see that $\mathrm{Hom}_{_{\mathbb{G}}\mathsf{SMod}}(N, M)=\mathrm{Hom}_{_{\mathbb{G}}\mathsf{SMod}}(N, O_{\Omega_N}(M))$, where $\Omega_N$ is the ideal generated by $W(N)$.

From now on we assume that $\mathfrak{g}_1=\mathfrak{g}_-\oplus\mathfrak{g}_+$, where
both $\mathfrak{g}_-$ and $\mathfrak{g}_+$ are $G$-invariant abelian supersubalgebras. Then $(G, \mathfrak{g}_-)$ and $(G, \mathfrak{g}_+)$ are Harish-Chandra subpairs of $(G, \mathfrak{g}_1)$. The corresponding supersubgroups are denoted by $\mathbb{P}_-$ and $\mathbb{P}_+$ respectively. They are so-called \emph{distinguished parabolic} supersubgroups
(see \cite[Section 5]{taiki}). 

Note that both $\mathbb{P}_-$ and $\mathbb{P}_+$ are split, so that $\mathbb{P}_{\pm}\simeq \mathbb{G}_{ev}\ltimes \mathbb{U}_{\pm}$, where $\mathbb{U}_{\pm}$ are purely odd normal supersubgroups of $\mathbb{P}^{\pm}$, those are corresponding to the Harish-Chandra subpairs $(1, \mathfrak{g}_{\pm})$. 

	\begin{tr}\label{char free Theorem 10.4}(compare with \cite[Theorem 10.4]{DSfunctor})
If $char\Bbbk>0$, then a weakly restricted $\mathbb{G}$-supermodule $M$ is injective if and only if the following conditions hold : 
\begin{enumerate}
	\item For any $r\geq 1$ the $\mathbb{G}_r$-supermodule $M|_{\mathbb{G}_r}$ is a direct sum of finite dimensional $\mathbb{G}_r$-supersubmodules;
	\item $M|_{\mathbb{P}_-}$ and $M|_{\mathbb{P}_+}$ are injective.
\end{enumerate}
If $char\Bbbk=0$ and $G$ is linearly reductive, then $M$ is injective if and only if it is a direct sum of finite dimensional $\mathbb{G}$-supersubmodules $M_i, i\in I$, each of which
satisfies $X_{M_i}=0$.
	\end{tr}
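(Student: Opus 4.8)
The plan is to prove both directions by a common device: reduce injectivity over $\mathbb{G}$ to injectivity over the two distinguished parabolics $\mathbb{P}_\pm$, which are \emph{split} and hence governed by Lemma \ref{converse of the above prop}, and to insert the Frobenius-kernel hypothesis (1) only where the even infinitesimal directions genuinely obstruct, i.e. in positive characteristic where $\mathbb{G}_{ev}$ is not linearly reductive. The two characteristic cases then differ only in how the odd directions are accessed: directly by (2) in characteristic $p$, and via the DS-vanishing $X_{M_i}=0$ in characteristic $0$.

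I would first settle the ``only if'' directions. Each Frobenius kernel $\mathbb{G}_r$ is a finite normal supersubgroup, so Lemma \ref{an extension} makes $M|_{\mathbb{G}_r}$ injective; since $\Bbbk[\mathbb{G}_r]$ is a finite-dimensional Hopf superalgebra, its injective comodules are direct sums of finite-dimensional indecomposables, which is (1). The inclusions $\mathbb{P}_\pm\hookrightarrow\mathbb{G}$ are exact, because $\mathbb{G}/\mathbb{P}_\pm$ is purely odd and therefore affine (cf. \cite[Theorem 0.1]{zub1} and \cite[Theorem 5.2]{zub4}), so restriction preserves injectivity and yields (2). In the characteristic-zero, linearly reductive case, $\mathbb{G}_{ev}$ has an integral, so by Proposition \ref{equivalent conditions to existence of projectives}(4) every indecomposable injective is finite-dimensional; thus $M$ decomposes into finite-dimensional injective summands $M_i$, and $X_{M_i}=0$ by Proposition \ref{injectives and DS-functor}.

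For the ``if'' direction I would dispose of the odd directions first. In characteristic $0$, since each $\mathfrak{g}_\pm$ is abelian every element is square-zero, so applying Proposition \ref{if G is purely odd} to the purely odd $\mathbb{U}_\pm$, the hypothesis $X_{M_i}=0$ forces $M_i|_{\mathbb{U}_\pm}$ to be $\mathbb{U}_\pm$-injective; combined with the $\mathbb{G}_{ev}$-injectivity of $M_i^{\mathbb{U}_\pm}$ (automatic under linear reductivity, Remark \ref{above lemma in char=0}), Lemma \ref{converse of the above prop} gives that $M_i|_{\mathbb{P}_\pm}$ is injective. In characteristic $p$ the injectivity of $M|_{\mathbb{P}_\pm}$ is hypothesis (2) itself. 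Either way we are reduced to a single assertion: a weakly restricted $M$ with $M|_{\mathbb{P}_-}$ and $M|_{\mathbb{P}_+}$ injective is injective over $\mathbb{G}$.

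This last reduction is where I expect the real difficulty to lie. The idea is to exploit the triangular decomposition $\mathrm{Dist}(\mathbb{G})=\Lambda(\mathfrak{g}_-)\otimes\mathrm{Dist}(\mathbb{G}_{ev})\otimes\Lambda(\mathfrak{g}_+)$ arising from $\mathfrak{g}_1=\mathfrak{g}_-\oplus\mathfrak{g}_+$: injectivity of $M$ over $\mathbb{G}$ relative to $\mathbb{G}_{ev}$ is measured by a Koszul-type complex built on $\mathfrak{g}_\pm$, and the two \emph{opposite} unipotent radicals $\mathbb{U}_\pm$ should supply the vanishing of all higher obstructions once $M|_{\mathbb{P}_\pm}$ are injective. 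Concretely I would try to show that the unit $M\to\mathrm{ind}^{\mathbb{G}}_{\mathbb{P}_+}M$ is a split $\mathbb{G}$-embedding by producing the retraction out of the invariants $M^{\mathbb{U}_-}$, using that $\mathbb{U}_-$ and $\mathbb{U}_+$ together generate the odd part of $\mathbb{G}$ over $\mathbb{G}_{ev}$; equivalently, I would run the Hochschild--Serre spectral sequences for $\mathbb{U}_\pm\trianglelefteq\mathbb{P}_\pm$ and match them against the relative $(\mathbb{G},\mathbb{G}_{ev})$-cohomology. The delicate point, and the reason (2) alone cannot suffice in characteristic $p$, is precisely the phenomenon recorded in Remark \ref{not always injective}: vanishing of the DS-functor no longer forces injectivity, so the even Frobenius layers must be fed in separately through hypothesis (1) and Lemma \ref{invariants and Frobenius kernels} (giving $M^{\mathbb{G}}=\bigcap_r M^{\mathbb{G}_r}$ and stabilization of the relevant $\mathrm{Hom}$-spaces), after which weak restrictedness upgrades the resulting vanishing of $\mathrm{Ext}^1_{\mathbb{G}}(L,M)$ against all simple $L$ to genuine injectivity of $M$.
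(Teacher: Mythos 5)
Your ``only if'' direction is correct and matches the paper: restriction along the exact inclusions $\mathbb{P}_\pm\hookrightarrow\mathbb{G}$ and $\mathbb{G}_r\hookrightarrow\mathbb{G}$ preserves injectivity, $\mathbb{G}_r$ has an integral so its injectives decompose into finite-dimensional pieces, and in characteristic zero Proposition \ref{equivalent conditions to existence of projectives}(4) plus Proposition \ref{injectives and DS-functor} give the stated decomposition with $X_{M_i}=0$. Your framing of the ``if'' direction is also sound as far as it goes: Lemma \ref{invariants and Frobenius kernels} and the exactness of $\varprojlim$ on finite-dimensional spaces (which is where weak restrictedness actually enters --- not, as you suggest at the end, in upgrading $\mathrm{Ext}^1$-vanishing to injectivity) reduce everything to finite-dimensional $\mathbb{G}_r$-supermodules, and Lemma \ref{converse of the above prop} together with Proposition \ref{if G is purely odd} handles the passage between $\mathbb{U}_\pm$, $\mathbb{P}_\pm$ and, in characteristic zero, the hypothesis $X_{M_i}=0$.

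However, there is a genuine gap at exactly the point you yourself flag as ``where the real difficulty lies.'' The assertion that a finite-dimensional $\mathbb{G}_r$-supermodule which is projective over both $\mathbb{P}^+_r$ and $\mathbb{P}^-_r$ is projective over $\mathbb{G}_r$ is never proved: you offer two candidate strategies (splitting the unit $M\to\mathrm{ind}^{\mathbb{G}}_{\mathbb{P}_+}M$ via a retraction built from $M^{\mathbb{U}_-}$, or comparing Hochschild--Serre spectral sequences with relative $(\mathbb{G},\mathbb{G}_{ev})$-cohomology), but you execute neither, and it is not evident that either goes through without essentially reconstructing the missing computation. The paper's mechanism is concrete and different: writing $M\simeq\mathrm{coind}^{\mathbb{P}^{\pm}_r}_{\mathbb{G}_{ev,r}}M_{\pm}$ with $M_{\pm}=M/\mathrm{Dist}(\mathbb{U}^{\pm}_r)^+M$, one builds for a pair $M,N$ with $M|_{\mathbb{P}^+_r}$ and $N|_{\mathbb{P}^-_r}$ projective an explicit $\mathbb{G}_r$-morphism $\phi:\mathrm{coind}^{\mathbb{G}_r}_{\mathbb{G}_{ev,r}}\,M_+\otimes N_-\to M\otimes N$, proves it is an isomorphism by filtering $M\otimes N$ along the triangular decomposition $\mathrm{Dist}(\mathbb{G}_r)\simeq\mathrm{Dist}(\mathbb{U}^+_r)\otimes\mathrm{Dist}(\mathbb{U}^-_r)\otimes\mathrm{Dist}(\mathbb{G}_{ev,r})$ and inducting on filtration degree using $[\mathfrak{g}_+,\mathfrak{g}_-]\subseteq\mathfrak{g}_0$, and then sets $N=M^*$: since $M\otimes M^*$ is then coinduced from a projective $\mathbb{G}_{ev,r}$-module it is projective, and $M$, being a direct summand of $M\otimes M^*\otimes M$, is projective. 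Without this tensor identity (or a genuine substitute), your reduction stops one step short of the theorem; the same gap affects your characteristic-zero argument, since after reducing each $M_i$ to the statement ``projective over both parabolics implies projective'' you would still need this step, whereas the paper closes that case by citing the equivalence with the category $\mathfrak{F}(\mathfrak{g})$ and \cite[Theorem 10.4]{DSfunctor} directly.
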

	\begin{proof}
Part "only if". The sheaf quotients $\mathbb{G}/\mathbb{P}^{\pm}\simeq \mathbb{U}^{\mp}$ are affine, hence (2) follows. Similarly, since each Frobenius kernel $\mathbb{G}_r$ is a normal supersubgroup, $M|_{\mathbb{G}_r}$ is injective. Furthermore, each $\mathbb{G}_r$, being an infinitesimal supergroup, has an integral (cf. \cite{zub-fer}). Therefore, Proposition \ref{equivalent conditions to existence of projectives}(4) infers that $M|_{\mathbb{G}_r}$ is a direct sum of finite dimensional indecomposable injective supermodules. 
Analogously, the condition "only if" in the case $char\Bbbk=0$ follows by Proposition \ref{equivalent conditions to existence of projectives}(4) and  Proposition \ref{injectives and DS-functor}.

Part "if". Let $char\Bbbk >0$. Recall that if $V$ is a finite dimensional $\mathbb{G}$-supermodule, then $\mathrm{Hom}_{_{\mathbb{G}}\mathsf{SMod}}(V, M)$ is a finite dimensional superspace.    
Further,  $\varprojlim$ is exact on the inverse systems of finite dimensional vector (super)spaces (cf. \cite[Proposition 7]{roos}), hence all we need is to prove that for any embedding $V\to W$ of finite dimensional $\mathbb{G}$-supermodules, the induced superspace morphism 
$\mathrm{Hom}_{_{\mathbb{G}}\mathsf{SMod}}(W, M)\to \mathrm{Hom}_{_{\mathbb{G}}\mathsf{SMod}}(V, M)$ is surjective. 

By Lemma \ref{invariants and Frobenius kernels} it remains to show that 
$\mathrm{Hom}_{_{\mathbb{G}_r}\mathsf{SMod}}(W, M)\to \mathrm{Hom}_{_{\mathbb{G}_r}\mathsf{SMod}}(V, M)$ is surjective for sufficiently large positive integer $r$. On the other hand, $M|_{\mathbb{G}_r}$ is a direct sum
of finite dimensional supersubmodules. Moreover, since $M|_{\mathbb{P}^{+}_r}$ and $M|_{\mathbb{P}^{-}_r}$ are injective, each of these summands is injective being restricted to both $\mathbb{P}^{+}_r$ and $\mathbb{P}^{-}_r$. Summing all up, one has to show that a finite dimensional $\mathbb{G}_r$-supermodule $M$ is injective, or equivalently, projective, provided
$M|_{\mathbb{P}^{+}_r}$ and $M|_{\mathbb{P}^{-}_r}$ are injective/projective.  Due to this remark, we work with finite dimensional (projective) supermodules only. 

Using \cite[Lemma 14]{fer-zub}, one can reformulate Lemma \ref{converse of the above prop} for a projective $\mathbb{P}^{\pm}_r$-supermodule $M$ as follows : \[M\simeq\mathrm{coind}^{\mathbb{P}^{\pm}_r}_{\mathbb{G}_{ev, r}} M_{\mathrm{Dist}(U^{\pm}_r)}=\mathrm{Dist}(\mathbb{P}^{\pm}_r)\otimes_{\mathrm{Dist}(\mathbb{G}_{ev, r})} M_{\mathrm{Dist}(U^{\pm}_r)},\] where $M_{\pm}=M_{\mathrm{Dist}(U^{\pm}_r)}=M/\mathrm{Dist}(U^{\pm}_r)^+ M$ are projective $\mathbb{G}_{ev, r}$-(super)modules. 
In particular, each $M_{\pm}$ can be identified with a direct summand of the $\mathbb{G}_{ev, r}$-supermodule $M$ and the above isomorphisms
are induced by the splitting morphisms $M_{\pm}\to M$. Further, for any couple of (finite dimensional) $\mathbb{G}_r$-supermodules $M$ and $N$, such that
$M|_{\mathbb{P}_r^+}$ and $N|_{\mathbb{P}_r^-}$ are projective, we have the induced morphism $\phi : \mathrm{coind}^{\mathbb{G}_r}_{\mathbb{G}_{ev, r}} M_{+}\otimes N_{-}\to M\otimes N$ of $\mathbb{G}_r$-supermodules.  

Indeed, recall that $\mathrm{Dist}(\mathbb{G}_r)\simeq \mathrm{Dist}(\mathbb{U}^+_r)\otimes \mathrm{Dist}(\mathbb{U}^-_r)\otimes \mathrm{Dist}(\mathbb{G}_{ev, r})$, and this isomorphism is induced by the product of the superalgebra $\mathrm{Dist}(\mathbb{G}_r)$. Then
\[\phi(u^+\otimes u^-\otimes m\otimes n)=\sum (-1)^{|u^-_{(2)}||m|+|u^+_{(2)}|(|m|+|u^-_{(1)}|)} u^+_{(1)}u^-_{(1)}m\otimes u^+_{(2)}u^-_{(2)}n,  \]	
\[u^+\in\mathrm{Dist}(U^+_r), \ u^-\in\mathrm{Dist}(U^-_r), \ m\in M_{+}, \ n\in N_{-}.\]
Note that $u^-_{(2)}n=\epsilon_{\mathrm{Dist}(\mathbb{U}^-_r)}(u^-_{(2)})n$, hence
\[\phi(u^+\otimes u^-\otimes m\otimes n)=\sum (-1)^{|u^+_{(2)}|(|m|+|u^-|)} u^+_{(1)}u^-m\otimes u^+_{(2)}n.\]
Recall that $\mathrm{Dist}(\mathbb{U}^{\pm}_r)\simeq\Lambda(\mathfrak{g}_{\pm})$. Thus $M\otimes N$ has a natural superspace filtration :
\[(M\otimes N)_t=\sum_{0\leq k+s\leq t} \Lambda^k(\mathfrak{g}_-)M_+\otimes \Lambda^s(\mathfrak{g}_+)M_-, \ 0\leq t\leq \dim\mathfrak{g}_- +\dim\mathfrak{g}_+.\] 
Without loss of generality, one can assume that $u^-\in \Lambda^k(\mathfrak{g}_-)\setminus 0$ and $u^+\in \Lambda^s(\mathfrak{g}_+)\setminus 0$.  Since $[\mathfrak{g}_+, \mathfrak{g}_-]\subseteq\mathfrak{g}_0$ and $[\mathfrak{g}_0, \mathfrak{g}_{\pm}]\subseteq\mathfrak{g}_{\pm}$, the induction on $k$ shows that 
\[\phi(u^+\otimes u^-\otimes m\otimes n)\equiv u^- m\otimes u^+ n \pmod{(M\otimes N)_{k+s-1}},\]
that in turn implies that $\phi$ is surjective. Comparing dimensions, one concludes that $\phi$ is an isomorphism, whence $M\otimes N$ is a projective $\mathbb{G}_r$-supermodule.
It remains to set $N=M^*$ and use the final argument from \cite[Theorem 10.4]{DSfunctor}. 

If $char\Bbbk=0$, then the full subcategory of finite diemnsional $\mathbb{G}$-supermodules is equivalent to the category $\mathfrak{F}(\mathfrak{g})$ from \cite{DSfunctor} and we can refer directly to \cite[Theorem 10.4]{DSfunctor}.  
\end{proof}
In conclusion, we will show that if $\mathbb{G}$ is quasi-reductive, then any finite direct sum of indecomposable injective $\mathbb{G}$-supermodules is restricted. 

First, recall some standard facts from the representation theory of $G$. Fix a maximal torus $T\leq G$ and choose a Borel subgroup $B\leq G$ containing $T$. Then $B=T\ltimes U$, where $U=B_u$ is the unipotent radical of $B$. 

Let $X(T)$ denote the \emph{character group} of $T$. The choice of the
Borel subgroup $B$ determines a system of positive roots $R^+$ of the set of roots $R\subseteq X(T)$. Thus we denote $B$ by $B^+$ and $U$ by $U^+$ respectively. The subset $R^+$ determines a partial order on
$X(T)$ by the rule $\mu\leq_{ev}\lambda$ if $\lambda-\mu\in \mathbb{Z}_{\geq 0}R^+$ . The system $R^- =-R^+$ of negative roots corresponds to the \emph{opposite} Borel subgroup $B^-$. 

Each $\lambda\in X(T)$ corresponds to the one dimensional $B^-$-module $\Bbbk_{\lambda}$ and in turn, to the induced $G$-module $ind^G_{B^-}\Bbbk_{\lambda}\simeq \mathrm{H}^0(G/B^{-}, \Bbbk_{\lambda})$. It is also denoted by $\mathrm{H}_{ev}^0(\lambda)$ and if it is nonzero, then it is said to be a \emph{costandard module} of the \emph{highest weight} $\lambda$. In the latter case $\lambda$ is also called 
\emph{dominant}. The subset of dominant weights is denoted by $X(T)^+$. 
\begin{pr}\label{well known facts}(cf. \cite[Proposition II.2.2 and Corollary II.2.3]{jan})
	Let $\lambda$ be a dominant weight. Then the following hold :
	\begin{enumerate}
		\item If $\mathrm{H}_{ev}^0(\lambda)_{\mu}\neq 0$, then $\mu\leq_{ev}\lambda$ and $\mathrm{H}_{ev}^0(\lambda)_{\lambda}=\mathrm{H}_{ev}^0(\lambda)^{U^+}$ is one dimensional;
		\item The socle of $\mathrm{H}_{ev}^0(\lambda)$ is a simple $G$-module, denoted by $L_{ev}(\lambda)$;
		\item Any simple $G$-module is isomorphic to $L_{ev}(\lambda)$ for some $\lambda\in X(T)^+$;
		\item Any composition quotient of $\mathrm{H}_{ev}^0(\lambda)$, different from its socle, has a form $L_{ev}(\mu)$ with $\mu <_{ev}\lambda$; 
	\end{enumerate}	
\end{pr}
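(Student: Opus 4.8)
The plan is to realize $\mathrm{H}^0_{ev}(\lambda)=\mathrm{ind}^G_{B^-}\Bbbk_\lambda$ concretely as a space of global sections and to deduce all four assertions from a single weight computation on the open Bruhat cell, together with the elementary fact that a unipotent group acting on a nonzero finite dimensional space has nonzero fixed points. I regard (1) as the technical heart and would prove it first. The big cell $\Omega=U^+B^-/B^-$ is isomorphic to $U^+$ and is dense in $G/B^-$, so restriction of sections yields a $T$-equivariant injection $\mathrm{H}^0_{ev}(\lambda)\hookrightarrow\Bbbk[U^+]$, the $T$-action being twisted by $\lambda$ coming from the fibre of $\mathcal{L}_\lambda$ over the base point. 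Now $\Bbbk[U^+]$ is a polynomial algebra on which $T$ acts, by conjugation, with weights in $-\mathbb{Z}_{\geq 0}R^+$; hence every weight of $\mathrm{H}^0_{ev}(\lambda)$ lies in $\lambda-\mathbb{Z}_{\geq 0}R^+$, that is $\leq_{ev}\lambda$. The constant function $1$ is the image of a highest weight vector and spans the weight-$\lambda$ space, which is therefore one dimensional; and since the $U^+$-invariant functions on $U^+$ are precisely the constants, this same line equals $\mathrm{H}^0_{ev}(\lambda)^{U^+}$. This gives (1).

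For (2) I would note first that $\mathrm{H}^0_{ev}(\lambda)$ is finite dimensional, being the global sections of a line bundle on the projective variety $G/B^-$. Any nonzero $G$-submodule $N$ then has $N^{U^+}\neq 0$ because $U^+$ is unipotent, and $N^{U^+}\subseteq\mathrm{H}^0_{ev}(\lambda)^{U^+}$, the single line produced in (1). Thus every nonzero submodule contains this line, so any two nonzero submodules meet nontrivially; in particular there is a unique minimal submodule and the socle is simple. Denote it $L_{ev}(\lambda)$.

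Statement (3) I would obtain from Frobenius reciprocity $\mathrm{Hom}_G(L,\mathrm{H}^0_{ev}(\mu))\simeq\mathrm{Hom}_{B^-}(L|_{B^-},\Bbbk_\mu)$. Given a simple $L$, the coinvariants $L_{U^-}$ are nonzero (equivalently $(L^*)^{U^-}\neq 0$, since $U^-$ is unipotent), so some $T$-weight $\mu$ occurs in $L_{U^-}$, furnishing a nonzero $B^-$-map $L\to\Bbbk_\mu$ and hence a nonzero, necessarily injective, $G$-map $L\to\mathrm{H}^0_{ev}(\mu)$. Its nonvanishing forces $\mathrm{H}^0_{ev}(\mu)\neq 0$, so $\mu$ is dominant, and the image must be the simple socle from (2); therefore $L\simeq L_{ev}(\mu)$.

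Finally (4) is a multiplicity count: by (1) the weight $\lambda$ occurs in $\mathrm{H}^0_{ev}(\lambda)$ with multiplicity one and already inside the socle $L_{ev}(\lambda)$, so no further composition factor can carry $\lambda$ as a weight. Since each composition factor has the form $L_{ev}(\mu)$ with $\mu$ a weight of $\mathrm{H}^0_{ev}(\lambda)$, part (1) gives $\mu\leq_{ev}\lambda$, and the preceding remark excludes $\mu=\lambda$ for factors other than the socle, leaving $\mu<_{ev}\lambda$. The one genuinely delicate point in the whole argument is the big-cell analysis underlying (1); everything else is formal, so I expect that to be where care is needed, in particular in pinning down the $\lambda$-twist and the direction of the weight inequality in the chosen conventions.
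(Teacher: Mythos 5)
Your proof is correct and is essentially the standard argument the paper relies on: the paper gives no proof of this proposition, merely citing Jantzen (Proposition II.2.2 and Corollary II.2.3), and your big-cell embedding $\mathrm{H}^0_{ev}(\lambda)\hookrightarrow\Bbbk[U^+]\otimes\lambda$, the Kolchin fixed-point argument for the socle, Frobenius reciprocity for the classification, and the multiplicity-one count for (4) reproduce exactly that source's reasoning. The only step worth making explicit is that in (4) you use that $\mu$ is itself a weight of $L_{ev}(\mu)$, which follows since $L_{ev}(\mu)\supseteq \mathrm{H}^0_{ev}(\mu)^{U^+}=\mathrm{H}^0_{ev}(\mu)_{\mu}$ by (1) and (2).
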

Now we can build a fragment of representation theory of $\mathbb{G}$. Note that $(B^+, \mathfrak{g}_+)$ is a Harish-Chandra subpair of $(G, \mathfrak{g}_1)$, that determines a Borel
supersubgroup $\mathbb{B}^+$. Symmetrically, we have the opposite Borel supersubgroup $\mathbb{B}^-$. The root system $R$ can be extended to the root system $\Delta$, such that
\[\mathfrak{g}_-=\oplus_{\gamma\in\Delta^-_1}(\mathfrak{g}_-)_{\gamma}, \ \mathfrak{g}_0=\oplus_{\gamma\in R} (\mathfrak{g}_0)_{\gamma}, \ \mathfrak{g}_+=\oplus_{\gamma\in\Delta^+_1}(\mathfrak{g}_+)_{\gamma}, \]
where $\Delta_0=\Delta_0^-\sqcup\Delta_0^+, \Delta_0^{\pm}=R^{\pm}$. The aforementioned partial order can be extended as $\mu\leq\lambda$ if $\lambda-\mu\in\mathbb{Z}_{\geq 0}\Delta^+$.
Set $\mathrm{H}^0(\lambda)=\mathrm{ind}^{\mathbb{G}}_{\mathbb{B}^-}\Bbbk_{\lambda}$.
\begin{pr}\label{well known superfacts}(cf. \cite[Proposition 4.11, Proposition 4.15 and Proposition 5.4]{taiki})
	\begin{enumerate}
		\item $\mathrm{H}^0(\lambda)\neq 0$ if and only if $\mathrm{H}_{ev}^0(\lambda)\neq 0$;
	\item If $\mathrm{H}^0(\lambda)_{\mu}\neq 0$, then $\mu\leq\lambda$ and $\mathrm{H}^0(\lambda)_{\lambda}=\mathrm{H}^0(\lambda)^{\mathbb{U}^+}$ is one dimensional;
	\item The socle of $\mathrm{H}^0(\lambda)$ is a simple $G$-module, denoted by $L(\lambda)$; 
	\item Any simple $G$-module is isomorphic to $L(\lambda)$ or $\Pi L(\lambda)$ for some $\lambda\in X(T)^+$;
	\item Any composition quotient of $\mathrm{H}^0(\lambda)$, different from its socle,  has a form $L(\mu)$ or $\Pi L(\mu)$, with
	$\mu <\lambda$. 
\end{enumerate}		
\end{pr}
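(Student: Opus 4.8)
The plan is to reduce every assertion to the even statements of Proposition~\ref{well known facts} by computing $\mathrm{H}^0(\lambda)$ through induction in stages along $\mathbb{B}^-\leq\mathbb{P}_-\leq\mathbb{G}$. First I would use transitivity of induction (Frobenius reciprocity, cf. \cite{jan, zub5}) to write
\[\mathrm{H}^0(\lambda)=\mathrm{ind}^{\mathbb{G}}_{\mathbb{B}^-}\Bbbk_\lambda\simeq\mathrm{ind}^{\mathbb{G}}_{\mathbb{P}_-}\mathrm{ind}^{\mathbb{P}_-}_{\mathbb{B}^-}\Bbbk_\lambda.\]
Since $\mathbb{P}_-=G\ltimes\mathbb{U}_-$ and $\mathbb{B}^-=B^-\ltimes\mathbb{U}_-$ share the normal purely odd subgroup $\mathbb{U}_-$, on which $\Bbbk_\lambda$ is inflated from $B^-$, the quotient $\mathbb{P}_-/\mathbb{B}^-$ is canonically $G/B^-$, and base change identifies $\mathrm{ind}^{\mathbb{P}_-}_{\mathbb{B}^-}\Bbbk_\lambda$ with the inflation to $\mathbb{P}_-$ of $\mathrm{ind}^G_{B^-}\Bbbk_\lambda=\mathrm{H}^0_{ev}(\lambda)$ (trivial $\mathbb{U}_-$-action). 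Next, because $\mathfrak{g}_1=\mathfrak{g}_-\oplus\mathfrak{g}_+$ the multiplication map gives a superscheme isomorphism $\mathbb{G}\simeq\mathbb{U}_+\times\mathbb{P}_-$, so $\mathbb{G}/\mathbb{P}_-\simeq\mathbb{U}_+$ is affine and $\mathrm{ind}^{\mathbb{G}}_{\mathbb{P}_-}$ is exact; the tensor identity (in the form of \cite[Lemma 8.2]{zub2}, as used in Lemma~\ref{converse of the above prop}) then yields, as $G$-supermodules,
\[\mathrm{H}^0(\lambda)|_{G}\simeq\mathrm{H}^0_{ev}(\lambda)\otimes\Bbbk[\mathbb{U}_+]=\mathrm{H}^0_{ev}(\lambda)\otimes\Lambda(\mathfrak{g}_+^*).\]
In particular $\mathrm{H}^0(\lambda)$ is finite dimensional of dimension $2^{\dim\mathfrak{g}_+}\dim\mathrm{H}^0_{ev}(\lambda)$, which gives (1).

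For (2), the displayed isomorphism yields the $T$-character factorization $\mathrm{ch}\,\mathrm{H}^0(\lambda)=\mathrm{ch}\,\mathrm{H}^0_{ev}(\lambda)\cdot\prod_{\gamma\in\Delta^+_1}(1+e^{-\gamma})$, since the weights of $\mathfrak{g}_+^*$ are $-\gamma$, $\gamma\in\Delta^+_1$. Combining $\mu\leq_{ev}\lambda$ from Proposition~\ref{well known facts}(1) with the weights in $-\mathbb{Z}_{\geq0}\Delta^+_1$ of $\Lambda(\mathfrak{g}_+^*)$ shows every weight of $\mathrm{H}^0(\lambda)$ is $\leq\lambda$, and that $\lambda$ occurs with multiplicity one, contributed solely by $\mathrm{H}^0_{ev}(\lambda)_\lambda\otimes\Lambda^0$. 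The identification $\mathrm{H}^0(\lambda)_\lambda=\mathrm{H}^0(\lambda)^{\mathbb{U}^+}$ then follows exactly as in the classical case: the weight-$\lambda$ line is $\mathbb{U}^+$-fixed by maximality of $\lambda$, and conversely the invariants are computed in stages as $(\mathrm{H}^0(\lambda)^{\mathbb{U}_+})^{U^+}$, which, being concentrated in maximal weights, lies in weight $\lambda$.

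Given (2), parts (3)--(5) follow by the standard highest weight machinery, copied from Proposition~\ref{well known facts} with parities tracked. For (3), any nonzero $\mathbb{G}$-supersubmodule $N\subseteq\mathrm{H}^0(\lambda)$ is a finite dimensional module over the unipotent supergroup $\mathbb{U}^+$, hence $N^{\mathbb{U}^+}\neq0$; as $N^{\mathbb{U}^+}\subseteq\mathrm{H}^0(\lambda)^{\mathbb{U}^+}=\Bbbk v_\lambda$, where $v_\lambda$ spans $\mathrm{H}^0(\lambda)_\lambda$, every nonzero submodule contains $v_\lambda$, so the socle is essential and simple, defining $L(\lambda)$. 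For (4), a simple $\mathbb{G}$-supermodule $L$ has $L^{\mathbb{U}^+}\neq0$ with a dominant highest weight $\lambda$ (apply the even theory to the $G$-module $L^{\mathbb{U}_+}$), and Frobenius reciprocity $\mathrm{Hom}_{\mathbb{G}}(L,\mathrm{H}^0(\lambda))=\mathrm{Hom}_{\mathbb{B}^-}(L,\Bbbk_\lambda)\neq0$ embeds $L$ into $\mathrm{H}^0(\lambda)$, whence $L\simeq L(\lambda)$ up to $\Pi$. For (5), each composition factor $L(\mu)$ or $\Pi L(\mu)$ has highest weight $\mu\leq\lambda$, and since the unique weight-$\lambda$ vector lies in the socle $L(\lambda)$, any factor other than the socle satisfies $\mu<\lambda$.

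I expect the main obstacle to be the two reduction steps of the first paragraph: justifying the base-change identification $\mathrm{ind}^{\mathbb{P}_-}_{\mathbb{B}^-}\Bbbk_\lambda\simeq\mathrm{infl}^{\mathbb{P}_-}_G\mathrm{H}^0_{ev}(\lambda)$ through the purely odd normal subgroup $\mathbb{U}_-$, and establishing the tensor identity and exactness for $\mathrm{ind}^{\mathbb{G}}_{\mathbb{P}_-}$ from the affine purely odd quotient $\mathbb{G}/\mathbb{P}_-\simeq\mathbb{U}_+$ in a way that keeps precise control of the $G$-action (and hence of the weights, with no determinant twist entering). The subsidiary point that the $\mathbb{U}^+$-invariants of $\mathrm{H}^0(\lambda)$ are concentrated in weight $\lambda$ also requires the staged invariant computation above rather than a one-line appeal.
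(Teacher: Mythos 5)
The paper offers no argument for this proposition at all---it is imported from Shibata \cite[Propositions 4.11, 4.15, 5.4]{taiki}---but your reconstruction is correct and runs on exactly the machinery the paper builds around the statement: your first reduction, $\mathrm{H}^0(\lambda)\simeq \mathrm{ind}^{\mathbb{G}}_{\mathbb{P}^-}\mathrm{H}^0_{ev}(\lambda)$ via induction in stages through $\mathbb{B}^-\leq\mathbb{P}^-$, is literally Lemma \ref{induced via P}, and your second, $\mathrm{H}^0(\lambda)\simeq \mathrm{H}^0_{ev}(\lambda)\otimes\Bbbk[\mathbb{U}^+]$ with $\mathrm{H}^0(\lambda)^{\mathbb{U}^+}\simeq\mathrm{H}^0_{ev}(\lambda)$, is the same appeal to \cite[Lemma 8.2 and Remark 8.3]{zub2} that the paper makes in Lemma \ref{Ext is trivial}, after which (1)--(5) follow from Proposition \ref{well known facts} by the standard highest-weight arguments you describe. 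The one step you assert rather than prove is the nonvanishing of $\mathrm{Hom}_{\mathbb{B}^-}(L,\Bbbk_\lambda)$ in (4); it is supplied by the routine observation that $\lambda$ is the unique maximal weight of $L$, so $\bigoplus_{\mu\neq\lambda}L_\mu$ is a $\mathbb{B}^-$-supersubmodule with quotient $\Bbbk_\lambda$, and this does not affect correctness.
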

Proposition \ref{well known superfacts}(1) shows that the simple $\mathbb{G}$-supermodules are indexed by the poset $\Lambda=X(T)^+\times\mathbb{Z}_2$, where $\Pi^a L(\lambda)\leftrightarrow (\lambda, a)$ and $(\mu, a)\leq (\lambda, b)$ if $\mu\leq \lambda$. It is well known that $X(T)^+$ is interval finite with respect to $\leq_{ev}$. But it is not at all obvious that $\Lambda$ is. In what follows we just assume that it is the case. 
\begin{lm}\label{induced via P}
Let $\lambda$ be a diminant weight. Then $\mathrm{H}^0(\lambda)\simeq \mathrm{ind}^{\mathbb{G}}_{\mathbb{P}^-} \mathrm{H}_{ev}^0(\lambda)$. 	
\end{lm}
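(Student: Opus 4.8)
The plan is to induce in stages through the chain $\mathbb{B}^-\leq\mathbb{P}^-\leq\mathbb{G}$ and thereby reduce the statement to a purely even computation on the Levi $\mathbb{G}_{ev}=G$. Recall from the construction of the distinguished parabolics that $\mathbb{P}^-\simeq\mathbb{G}_{ev}\ltimes\mathbb{U}_-$, and, since $\mathbb{B}^-$ is the Borel supersubgroup attached to the Harish-Chandra pair $(B^-,\mathfrak{g}_-)$, also $\mathbb{B}^-\simeq B^-\ltimes\mathbb{U}_-$; in particular the purely odd normal supersubgroup $\mathbb{U}_-$ (corresponding to $(1,\mathfrak{g}_-)$) lies in $\mathbb{B}^-$ and is normal in $\mathbb{P}^-$, and $\Bbbk_\lambda$, regarded as a $\mathbb{B}^-$-supermodule, has $\mathbb{U}_-$ acting trivially. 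By transitivity of induction (the super-analogue of the standard fact, cf. \cite{jan, zub5}),
\[\mathrm{ind}^{\mathbb{G}}_{\mathbb{B}^-}\Bbbk_\lambda\simeq\mathrm{ind}^{\mathbb{G}}_{\mathbb{P}^-}\bigl(\mathrm{ind}^{\mathbb{P}^-}_{\mathbb{B}^-}\Bbbk_\lambda\bigr),\]
so it suffices to produce a $\mathbb{P}^-$-equivariant isomorphism $\mathrm{ind}^{\mathbb{P}^-}_{\mathbb{B}^-}\Bbbk_\lambda\simeq\mathrm{H}^0_{ev}(\lambda)$, where on the right $\mathrm{H}^0_{ev}(\lambda)$ is inflated along $\mathbb{P}^-\to\mathbb{P}^-/\mathbb{U}_-=G$.

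The heart of the argument is to show that $\mathbb{U}_-$ acts trivially on $N:=\mathrm{ind}^{\mathbb{P}^-}_{\mathbb{B}^-}\Bbbk_\lambda$. I would verify this on the comodule level, using the split structure $\Bbbk[\mathbb{P}^-]\simeq\Bbbk[G]\otimes\Lambda(\mathfrak{g}_-^*)$. Informally, writing an element of $N$ as a right-$\mathbb{B}^-$-equivariant function $f$ on $\mathbb{P}^-$ with values in $\Bbbk_\lambda$, left translation by $u\in\mathbb{U}_-$ can be absorbed into the right action: for a point $g=hv$ with $h\in G$ and $v\in\mathbb{U}_-$, normality of $\mathbb{U}_-$ gives $u^{-1}g=h\,w$ with $w\in\mathbb{U}_-\subseteq\mathbb{B}^-$, whence $f(u^{-1}g)=\lambda(w)^{-1}f(h)=f(h)=f(g)$ because $\lambda$ is trivial on $\mathbb{U}_-$. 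Equivalently, this reflects the fact that the flag superscheme $\mathbb{P}^-/\mathbb{B}^-$ is purely even and equals $G/B^-$, so the bundle defining $N$ has no odd directions. This is the step I expect to be the main obstacle, since it must be made precise in the comodule formalism of Sections~1 and the induction machinery rather than by the naive functor-of-points manipulation above.

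Granting that $\mathbb{U}_-$ acts trivially on $N$, I would conclude with the standard identification of the $\mathbb{M}$-invariants of an induced module with induction over the quotient when $\mathbb{M}\trianglelefteq\mathbb{K}$, $\mathbb{M}\leq\mathbb{H}\leq\mathbb{K}$ and $\mathbb{M}$ acts trivially on the inducing module; as in Lemma \ref{an extension} this uses the normality of $\mathbb{U}_-$ together with the affineness of the quotient $\mathbb{P}^-/\mathbb{U}_-\simeq G$. Applied with $\mathbb{K}=\mathbb{P}^-$, $\mathbb{H}=\mathbb{B}^-$, $\mathbb{M}=\mathbb{U}_-$, it yields
\[N=N^{\mathbb{U}_-}\simeq\mathrm{ind}^{\mathbb{P}^-/\mathbb{U}_-}_{\mathbb{B}^-/\mathbb{U}_-}\Bbbk_\lambda=\mathrm{ind}^{G}_{B^-}\Bbbk_\lambda=\mathrm{H}^0_{ev}(\lambda),\]
naturally as $\mathbb{P}^-$-supermodules. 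Substituting this into the transitivity isomorphism of the first paragraph gives $\mathrm{H}^0(\lambda)=\mathrm{ind}^{\mathbb{G}}_{\mathbb{B}^-}\Bbbk_\lambda\simeq\mathrm{ind}^{\mathbb{G}}_{\mathbb{P}^-}\mathrm{H}^0_{ev}(\lambda)$, as required. As a consistency check one may compare weights using Proposition \ref{well known superfacts} together with the fact that $\mathbb{G}/\mathbb{P}^-$ is the purely odd affine superscheme $\mathbb{U}_+$, so that passing from $\mathrm{H}^0_{ev}(\lambda)$ to $\mathrm{ind}^{\mathbb{G}}_{\mathbb{P}^-}\mathrm{H}^0_{ev}(\lambda)$ introduces exactly the odd directions of $\Lambda(\mathfrak{g}_+^*)$ expected of $\mathrm{H}^0(\lambda)$.
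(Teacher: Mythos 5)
Your argument is correct and follows essentially the same route as the paper: induction in stages through $\mathbb{B}^-\leq\mathbb{P}^-\leq\mathbb{G}$, reducing everything to the identification $\mathrm{ind}^{\mathbb{P}^-}_{\mathbb{B}^-}\Bbbk_{\lambda}\simeq \mathrm{H}^0_{ev}(\lambda)$. The only difference is that the paper disposes of that middle step by citing \cite[Lemma 10.4]{zub2}, whereas you prove it directly; the step you flag as the main obstacle is in fact unproblematic, since the splittings $\mathbb{P}^-\simeq G\ltimes\mathbb{U}^-$ and $\mathbb{B}^-\simeq B^-\ltimes\mathbb{U}^-$ give a unique factorization of $A$-points for every $A\in\mathsf{SAlg}_{\Bbbk}$, which legitimizes your functor-of-points computation that $\mathbb{U}^-$ acts trivially, and the identity $(\mathrm{ind}^{\mathbb{P}^-}_{\mathbb{B}^-}\Bbbk_{\lambda})^{\mathbb{U}^-}\simeq\mathrm{ind}^{G}_{B^-}\Bbbk_{\lambda}$ then follows by uniqueness of right adjoints exactly as in the purely even case.
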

\begin{proof}
Since $\mathbb{P}^-\simeq G\ltimes\mathbb{U}^-$ and $\mathbb{B}^-\simeq B^-\ltimes\mathbb{U}^-$, \cite[Lemma 10.4]{zub2} infers 
$\mathrm{ind}^{\mathbb{P}^-}_{\mathbb{B}^-}\Bbbk_{\lambda}\simeq \mathrm{H}_{ev}^0(\lambda)$, where the latter module is regarded as a $\mathbb{P}^-$-supermodule via
the natural supergroup morphism $\mathbb{P}^-\to G$. Our statement follows by the transitivity of the induction functor. 	
\end{proof}
Recall that $\mathrm{V}_{ev}(\lambda)=\mathrm{H}_{ev}^0(-w_0\lambda)^*$ is called the \emph{universal highest weight module} of weight $\lambda$, or \emph{Weyl module} of highest weight $\lambda$ (cf. \cite[II.2.13]{jan}). Set $\mathrm{V}(\lambda)=\mathrm{coind}^{\mathbb{G}}_{\mathbb{P}^+} \mathrm{V}_{ev}(\lambda)$ and call it the \emph{Weyl or standard supermodule}
of highest weight $\lambda$. 
\begin{pr}\label{properties of Weyl}
	We have 
\begin{enumerate}
	\item The top of $\mathrm{V}(\lambda)$ is isomorphic to $L(\lambda)$;
	\item Any composition quotient of the radical of $\mathrm{V}(\lambda)$ has a form $L(\mu)$ or $\Pi L(\mu)$, with $\mu<\lambda$;
	\item If $M$ is a finite dimensional $\mathbb{G}$-supermodule, that satisfies the conditions (1) and (2), then there is a surjective $\mathbb{G}$-supermodule morphism 
	$\mathrm{V}(\lambda)\to M$. 
\end{enumerate}	
\end{pr}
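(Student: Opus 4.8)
The plan is to exploit the explicit structure of $\mathrm{V}(\lambda)$ coming from the splitting $\mathbb{P}^+\simeq\mathbb{G}_{ev}\ltimes\mathbb{U}^+$ together with the triangular factorization $\mathrm{Dist}(\mathbb{G})\simeq\mathrm{Dist}(\mathbb{U}^-)\otimes\mathrm{Dist}(\mathbb{P}^+)$ already used in the proof of Theorem \ref{char free Theorem 10.4}. Since $\mathbb{G}/\mathbb{P}^+\simeq\mathbb{U}^-$ is purely odd and finite, the coinduction $\mathrm{V}(\lambda)=\mathrm{coind}^{\mathbb{G}}_{\mathbb{P}^+}\mathrm{V}_{ev}(\lambda)=\mathrm{Dist}(\mathbb{G})\otimes_{\mathrm{Dist}(\mathbb{P}^+)}\mathrm{V}_{ev}(\lambda)$ is finite dimensional and, as a superspace, isomorphic to $\Lambda(\mathfrak{g}_-)\otimes\mathrm{V}_{ev}(\lambda)$, where $\mathrm{V}_{ev}(\lambda)$ is inflated to $\mathbb{P}^+$ through the projection $\mathbb{P}^+\to\mathbb{G}_{ev}$. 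First I would record the weights: the weights of the purely even module $\mathrm{V}_{ev}(\lambda)$ are $\leq_{ev}\lambda$ with $\lambda$ occurring once and evenly, while $\Lambda(\mathfrak{g}_-)$ has weights in $\mathbb{Z}_{\geq 0}\Delta^-_1$, hence $\leq 0$ for the order $\leq$. Therefore every weight of $\mathrm{V}(\lambda)$ is $\leq\lambda$, and the $\lambda$-weight space is one dimensional and even, spanned by $1\otimes v_\lambda$ for a highest weight vector $v_\lambda$ of $\mathrm{V}_{ev}(\lambda)$.

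For (1) and (2) I would argue at the level of composition factors. Because $\mathrm{V}_{ev}(\lambda)$ is generated over $\mathrm{Dist}(\mathbb{G}_{ev})$ by $v_\lambda$, the supermodule $\mathrm{V}(\lambda)$ is generated over $\mathrm{Dist}(\mathbb{G})$ by $1\otimes v_\lambda$; thus $\mathrm{V}(\lambda)$ is a highest weight supermodule, and its top is a simple quotient onto which $1\otimes v_\lambda$ maps nontrivially. That top therefore has highest weight $\lambda$ and even parity, forcing it to be $L(\lambda)$, which is (1). By the weight analysis above, each composition factor of $\mathrm{V}(\lambda)$ has highest weight $\leq\lambda$, hence is of the form $L(\mu)$ or $\Pi L(\mu)$ with $\mu\leq\lambda$ by Proposition \ref{well known superfacts}. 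Since the $\lambda$-weight space is one dimensional and even, $L(\lambda)$ occurs exactly once and $\Pi L(\lambda)$ not at all; as that unique copy is the top, every composition factor of $\mathrm{rad}\,\mathrm{V}(\lambda)$ satisfies $\mu<\lambda$, which is (2).

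For (3) I would use the adjunction defining coinduction, $\mathrm{Hom}_{\mathbb{G}}(\mathrm{V}(\lambda),M)\simeq\mathrm{Hom}_{\mathbb{P}^+}(\mathrm{V}_{ev}(\lambda),M|_{\mathbb{P}^+})$, whose right-hand side equals $\mathrm{Hom}_{\mathbb{G}_{ev}}(\mathrm{V}_{ev}(\lambda),M^{\mathbb{U}^+})$ because $\mathbb{U}^+$ acts trivially on the inflated $\mathrm{V}_{ev}(\lambda)$. The hypotheses (1) and (2) make $\lambda$ a maximal weight of $M$ with $\dim M_\lambda=1$, since the factors below the top have strictly smaller highest weight. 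Maximality of $\lambda$ under $\leq$ forces any $0\neq m_\lambda\in M_\lambda$ to be annihilated by every positive odd root element of $\mathfrak{g}_+$ and, crucially in characteristic $p$, by all divided powers $e_\alpha^{(n)}$ for $\alpha\in R^+$ — each of these would produce a weight $>\lambda$ — so that $m_\lambda\in(M^{\mathbb{U}^+})^{U^+}$ and has weight $\lambda$. The universal highest weight property of the Weyl module $\mathrm{V}_{ev}(\lambda)$ (cf. \cite[II.2.13]{jan}) then yields a $\mathbb{G}_{ev}$-morphism $\mathrm{V}_{ev}(\lambda)\to M^{\mathbb{U}^+}$ with $v_\lambda\mapsto m_\lambda$, whose adjoint is a $\mathbb{G}$-morphism $\phi\colon\mathrm{V}(\lambda)\to M$ satisfying $\phi(1\otimes v_\lambda)=m_\lambda$. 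Finally, since $(\mathrm{rad}\,M)_\lambda=0$, the composite $\mathrm{V}(\lambda)\xrightarrow{\phi}M\to M/\mathrm{rad}\,M\simeq L(\lambda)$ is nonzero, hence surjective; as $M$ is finite dimensional this gives $\mathrm{Im}\,\phi+\mathrm{rad}\,M=M$, so $\mathrm{Im}\,\phi=M$.

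The routine portions are the weight bookkeeping and the identification $\mathrm{V}(\lambda)\simeq\Lambda(\mathfrak{g}_-)\otimes\mathrm{V}_{ev}(\lambda)$, which rests on the factorization of $\mathrm{Dist}(\mathbb{G})$ and on Lemma \ref{induced via P}. I expect the main obstacle to be the correct handling of invariance in positive characteristic in step (3): one must pass from annihilation by the Lie superalgebra to genuine invariance under the unipotent groups, and for the even radical $U^+$ this requires the maximal weight vector to be killed by all of $\mathrm{Dist}(U^+)^+$, i.e. by the divided powers, not merely by $\mathfrak{u}^+$. Maximality of $\lambda$ under $\leq_{ev}$ supplies exactly this, but it is precisely the point where the positive-characteristic argument departs from the characteristic-zero one and must be stated with care.
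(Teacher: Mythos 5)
Your proof is correct and follows essentially the same route as the paper: the identification $\mathrm{V}(\lambda)\simeq\Lambda(\mathfrak{g}_-)\otimes\mathrm{V}_{ev}(\lambda)$, (co)Frobenius reciprocity reducing everything to $\mathbb{G}_{ev}$-homomorphisms out of the even Weyl module via \cite[II.2.13]{jan}, and a Nakayama-type argument for surjectivity in (3). The only cosmetic difference is in (1), where the paper computes $\mathrm{Hom}(\mathrm{V}(\lambda),\Pi^a L(\mu))$ directly by Frobenius reciprocity and \cite[Lemma II.2.13(a)]{jan}, while you run the standard highest-weight-module argument; both rest on the same observation that the $\lambda$-weight space is one dimensional and even.
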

\begin{proof}
Let $\Pi^a L(\mu), a=0, 1,$ be a summand of the top of $\mathrm{V}(\lambda)$. 
By (co)Frobenius reciprocity, we have
\[\mathrm{Hom}_{_{\mathbb{G}}\mathsf{Smod}}(\mathrm{V}(\lambda), \Pi^a L(\mu))\simeq \mathrm{Hom}_{_{\mathbb{P}^+}\mathsf{Smod}}(\mathrm{V}_{ev}(\lambda), \Pi^a L(\mu))\neq 0,\]
and \cite[Lemma II.2.13(a)]{jan}, combined with Proposition \ref{well known superfacts}(2), infers that $\lambda=\mu$ and $a=0$. The second statement 
is obvious due the isomorphism of $T$-supermodules $\mathrm{V}(\lambda)\simeq \Lambda(\mathfrak{g}_-)\otimes \mathrm{V}_{ev}(\lambda)$.	

To prove (3) we note that by (superized) Nakayama's lemma, $M$ is generated by a $\mathbb{B}^+$-stable vector of weight $\lambda$. Then use again (co)Frobenius reciprocity
and \cite[Lemma II.2.13(b)]{jan}.  
\end{proof} 
\begin{lm}\label{Ext is trivial}
For any dominant weights $\lambda$ and $\mu$, we have
\begin{enumerate}
	\item $\mathrm{Ext}^n_{_{\mathbb{G}}\mathsf{SMod}}(\mathrm{V}(\lambda), \mathrm{H}^0(\mu))=0$, whenever $n> 0$;
	\item $\mathrm{Ext}_{_{\mathbb{G}}\mathsf{SMod}}^0(\mathrm{V}(\lambda), \mathrm{H}^0(\mu))=\mathrm{Hom}_{_{\mathbb{G}}\mathsf{SMod}}(\mathrm{V}(\lambda), \mathrm{H}^0(\mu))\neq 0$ if and only if $\lambda=\mu$. Moreover, in the latter case
$\mathrm{Hom}_{_{\mathbb{G}}\mathsf{SMod}}(\mathrm{V}(\lambda), \mathrm{H}^0(\lambda))$ is the even one dimensional superspace. 	 
\end{enumerate}	
\end{lm}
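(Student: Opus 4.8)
The plan is to reduce the whole computation to the corresponding, and entirely classical, statement for the reductive even group $G=\mathbb{G}_{ev}$, namely
\[
\mathrm{Ext}^n_{_{G}\mathsf{SMod}}(\mathrm{V}_{ev}(\lambda), \mathrm{H}_{ev}^0(\mu))\simeq\begin{cases}\Bbbk & n=0,\ \lambda=\mu,\\ 0 & \text{otherwise,}\end{cases}
\]
the degree–zero term being even one dimensional (this is \cite[II.4.13]{jan}). The descent from $\mathbb{G}$ to $G$ proceeds in two homological steps, exploiting the two presentations $\mathrm{H}^0(\mu)\simeq\mathrm{ind}^{\mathbb{G}}_{\mathbb{P}^-}\mathrm{H}_{ev}^0(\mu)$ of Lemma \ref{induced via P} and $\mathrm{V}(\lambda)=\mathrm{coind}^{\mathbb{G}}_{\mathbb{P}^+}\mathrm{V}_{ev}(\lambda)$ of the definition.

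First I would transfer the problem from $\mathbb{G}$ to $\mathbb{P}^-$. As noted in the proof of Theorem \ref{char free Theorem 10.4}, the sheaf quotient $\mathbb{G}/\mathbb{P}^-\simeq\mathbb{U}^+$ is affine, so $\mathrm{ind}^{\mathbb{G}}_{\mathbb{P}^-}$ is exact; being right adjoint to the exact restriction functor it also preserves injectives. Hence, resolving $\mathrm{H}_{ev}^0(\mu)$ injectively over $\mathbb{P}^-$ and applying $\mathrm{ind}^{\mathbb{G}}_{\mathbb{P}^-}$ produces an injective resolution of $\mathrm{H}^0(\mu)$ over $\mathbb{G}$, and Frobenius reciprocity upgrades to an isomorphism of derived functors
\[
\mathrm{Ext}^n_{_{\mathbb{G}}\mathsf{SMod}}(\mathrm{V}(\lambda),\mathrm{H}^0(\mu))\simeq\mathrm{Ext}^n_{_{\mathbb{P}^-}\mathsf{SMod}}(\mathrm{V}(\lambda)|_{\mathbb{P}^-},\mathrm{H}_{ev}^0(\mu)),\qquad n\geq 0.
\]

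Next I would descend from $\mathbb{P}^-=\mathbb{G}_{ev}\ltimes\mathbb{U}^-$ to $\mathbb{G}_{ev}$. The structural input is that $\mathrm{V}(\lambda)|_{\mathbb{U}^-}$ is a free $\mathrm{Dist}(\mathbb{U}^-)$-supermodule: the triangular decomposition $\mathrm{Dist}(\mathbb{G})\simeq\mathrm{Dist}(\mathbb{U}^-)\otimes\mathrm{Dist}(\mathbb{P}^+)$ coming from $\mathfrak{g}_1=\mathfrak{g}_-\oplus\mathfrak{g}_+$ shows that $\mathrm{V}(\lambda)=\mathrm{Dist}(\mathbb{G})\otimes_{\mathrm{Dist}(\mathbb{P}^+)}\mathrm{V}_{ev}(\lambda)$ restricts to $\mathrm{Dist}(\mathbb{U}^-)\otimes\mathrm{V}_{ev}(\lambda)$, in accordance with the isomorphism $\mathrm{V}(\lambda)\simeq\Lambda(\mathfrak{g}_-)\otimes\mathrm{V}_{ev}(\lambda)$ recorded in the proof of Proposition \ref{properties of Weyl}. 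Writing $A=\mathrm{V}(\lambda)|_{\mathbb{P}^-}$ and using $\mathrm{Ext}^n_{_{\mathbb{P}^-}\mathsf{SMod}}(A,\mathrm{H}_{ev}^0(\mu))\simeq\mathrm{H}^n(\mathbb{P}^-,A^*\otimes\mathrm{H}_{ev}^0(\mu))$, I would feed this into the Lyndon--Hochschild--Serre spectral sequence for $\mathbb{U}^-\unlhd\mathbb{P}^-$ (the super analogue of \cite[I.6.6]{jan}). Since $\mathbb{U}^-$ acts trivially on $\mathrm{H}_{ev}^0(\mu)$, the inner term is $\mathrm{H}^q(\mathbb{U}^-,A^*)\otimes\mathrm{H}_{ev}^0(\mu)$; and because $\mathrm{Dist}(\mathbb{U}^-)\simeq\Lambda(\mathfrak{g}_-)$ is a Frobenius superalgebra, the dual $A^*$ of the free module $A$ is again free, hence injective, so $\mathrm{H}^q(\mathbb{U}^-,A^*)=0$ for $q>0$ and $\mathrm{H}^0(\mathbb{U}^-,A^*)=(A^*)^{\mathbb{U}^-}=(A_{\mathbb{U}^-})^*\simeq\mathrm{V}_{ev}(\lambda)^*$ as $\mathbb{G}_{ev}$-supermodules (the coinvariants being the bottom layer $\Lambda^0(\mathfrak{g}_-)\otimes\mathrm{V}_{ev}(\lambda)$). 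The spectral sequence collapses onto its $q=0$ row, giving
\[
\mathrm{Ext}^n_{_{\mathbb{P}^-}\mathsf{SMod}}(\mathrm{V}(\lambda)|_{\mathbb{P}^-},\mathrm{H}_{ev}^0(\mu))\simeq\mathrm{Ext}^n_{_{\mathbb{G}_{ev}}\mathsf{SMod}}(\mathrm{V}_{ev}(\lambda),\mathrm{H}_{ev}^0(\mu)).
\]
Chaining the two isomorphisms and invoking \cite[II.4.13]{jan} yields both (1) and (2); the parity assertion follows because every isomorphism above is parity preserving and the even $\mathrm{Hom}$ is one dimensional and even.

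The steps demanding the most care are the two reductions rather than any new idea. In Step~1 one must be sure that the higher induction from $\mathbb{P}^-$ genuinely vanishes in the super setting, i.e. that affineness of $\mathbb{G}/\mathbb{P}^-$ forces exactness of $\mathrm{ind}^{\mathbb{G}}_{\mathbb{P}^-}$ (the same mechanism used for the parabolics in the proof of Theorem \ref{char free Theorem 10.4}). In Step~2 the main obstacle is the exact bookkeeping: verifying freeness of $\mathrm{V}(\lambda)|_{\mathbb{U}^-}$ over $\mathrm{Dist}(\mathbb{U}^-)$, the Frobenius property of $\Lambda(\mathfrak{g}_-)$ that transports freeness to the dual, and above all the identification of the $\mathbb{U}^-$-(co)invariants of $\mathrm{V}(\lambda)$ with $\mathrm{V}_{ev}(\lambda)$ \emph{as a $\mathbb{G}_{ev}$-supermodule and with the correct parity}. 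Once these are settled, the collapse of the spectral sequence and the appeal to the even result are routine.
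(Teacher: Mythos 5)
Your argument is correct, and it is the mirror image of the one the paper uses. The paper unwinds the \emph{first} variable: by the generalized Frobenius reciprocity for coinduction (arguing as in \cite[Proposition 8.6]{zub3}) it passes from $\mathrm{Ext}^n_{_{\mathbb{G}}\mathsf{SMod}}(\mathrm{V}(\lambda),\mathrm{H}^0(\mu))$ to $\mathrm{Ext}^n_{_{\mathbb{P}^+}\mathsf{SMod}}(\mathrm{V}_{ev}(\lambda),\mathrm{H}^0(\mu)|_{\mathbb{P}^+})$, and then collapses the spectral sequence for $\mathbb{U}^+\unlhd\mathbb{P}^+$ using the tensor decomposition $\mathrm{H}^0(\mu)\simeq\mathrm{H}^0_{ev}(\mu)\otimes\Bbbk[\mathbb{U}^+]$ from \cite[Lemma 8.2, Remark 8.3]{zub2}, i.e.\ the $\mathbb{U}^+$-\emph{injectivity of the costandard object}. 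You instead unwind the \emph{second} variable through $\mathbb{P}^-$ via Lemma \ref{induced via P} and exactness of $\mathrm{ind}^{\mathbb{G}}_{\mathbb{P}^-}$, and then collapse the spectral sequence for $\mathbb{U}^-\unlhd\mathbb{P}^-$ using the $\mathbb{U}^-$-\emph{freeness of the standard object} $\mathrm{V}(\lambda)\simeq\Lambda(\mathfrak{g}_-)\otimes\mathrm{V}_{ev}(\lambda)$. Both routes land on \cite[Proposition II.4.13]{jan}, and all the inputs you need (affineness of $\mathbb{G}/\mathbb{P}^{\pm}$, the super Lyndon--Hochschild--Serre sequence of \cite[Proposition 3.1(2)]{scalazub}, self-injectivity of $\Lambda(\mathfrak{g}_-)$) are already established or used elsewhere in the paper, so the proof is sound. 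The only cost of your version is some extra bookkeeping that the paper avoids: you must dualize to rewrite $\mathrm{Ext}_{\mathbb{P}^-}$ as group cohomology of $A^*\otimes\mathrm{H}^0_{ev}(\mu)$ (legitimate since $\mathrm{V}(\lambda)$ is finite dimensional) and identify $(A^*)^{\mathbb{U}^-}\simeq(A_{\mathbb{U}^-})^*\simeq\mathrm{V}_{ev}(\lambda)^*$ as a $G$-module, whereas the paper's reduction through $\mathbb{P}^+$ keeps $\mathrm{V}_{ev}(\lambda)$ fixed in the first slot throughout and only has to compute $\mathrm{H}^0(\mu)^{\mathbb{U}^+}$.
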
	
\begin{proof}
Arguing as in \cite[Proposition 8.6]{zub3}, we obtain 
\[\mathrm{Ext}^n_{_{\mathbb{G}}\mathsf{SMod}}(\mathrm{V}(\lambda), \mathrm{H}^0(\mu))\simeq \mathrm{Ext}^n_{_{\mathbb{P}^+}\mathsf{SMod}}(\mathrm{V}_{ev}(\lambda), \mathrm{H}^0(\mu)|_{\mathbb{P}^+})\simeq \mathrm{Ext}^n_{_{G}\mathsf{mod}}(\mathrm{V}_{ev}(\lambda), \mathrm{H}^0(\mu)^{\mathbb{U}^+}).\]	
By \cite[Lemma 8.2 and Remark 8.3]{zub2}, $\mathrm{H}^0(\mu)\simeq \mathrm{H}_{ev}^0(\mu)\otimes \Bbbk[\mathbb{U}^+]$ and $\mathrm{H}^0(\mu)^{\mathbb{U}^+}\simeq \mathrm{H}_{ev}^0(\mu)$
(as a $G$-module). Then both statements follow by \cite[Proposition II.4.13]{jan}. 
\end{proof}
Following \cite{jan}, we say that a $\mathbb{G}$-supermodule $M$ has a \emph{good filtration} if $M$ has an ascending chain
\[0=M(0)\subseteq M(1)\subseteq \ldots M(i)\subseteq \ldots\]
of $\mathbb{G}$-supersubmodules, such that $\cup_{i\geq 0} M_i=M$ and each $M_i/M_{i-1}$ is isomorphic to some $\Pi^a \mathrm{H}^0(\lambda_i), (\lambda_i, a)\in\Lambda$. 
The standard long exact sequence arguments and Proposition \ref{Ext is trivial} imply that $\mathrm{Ext}^n_{_{\mathbb{G}}\mathsf{SMod}}(\Pi^a \mathrm{V}(\lambda), M)=0$, provided $n> 0$ and $M$ has a good filtration. The converse is also true for so-called \emph{bounded} supermodules (cf. \cite{zub5}). More precisely, a $\mathbb{G}$-supermodule is called \emph{bounded}, if
$\dim \mathrm{Hom}_{_{\mathbb{G}}\mathsf{SMod}}(\Pi^a\mathrm{V}(\pi), M)<\infty$ for any $(\pi, a)\in\Lambda$ and the poset 
\[\hat{M}=\{(\pi, a)\in\Lambda\mid \dim \mathrm{Hom}_{_{\mathbb{G}}\mathsf{SMod}}(\Pi^a\mathrm{V}(\pi), M)\neq 0\}=\]\[\{\pi\in X(T)^+\mid \dim \mathrm{Hom}_{_{\mathbb{G}}\mathsf{SMod}}(\mathrm{V}(\pi), M)\neq 0\}\times\mathbb{Z}_2\]
does not contain any infinite descending chains.
\begin{tr}\label{good filtration}(see \cite{jan, zub5} for more details)
A bounded $\mathbb{G}$-supermodule $M$ has a good filtration if and only if $\mathrm{Ext}^1_{_{\mathbb{G}}\mathsf{SMod}}(\mathrm{V}(\lambda), M)=0$ for any dominant weight $\lambda$.	
\end{tr}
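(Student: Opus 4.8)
The plan is to treat the two implications separately, the reverse (``only if'') being short and the forward (``if'') carrying the real content. For the implication from the existence of a good filtration to the vanishing of $\mathrm{Ext}^1$, I would argue exactly as in the paragraph preceding the theorem. Each finite stage $M(i)$ of the good filtration is assembled from the $\Pi^a\mathrm{H}^0(\lambda_j)$, so Lemma \ref{Ext is trivial}(1) together with the long exact sequence in $\mathrm{Ext}^{\bullet}_{_{\mathbb{G}}\mathsf{SMod}}(\mathrm{V}(\lambda), \ )$ gives $\mathrm{Ext}^1_{_{\mathbb{G}}\mathsf{SMod}}(\mathrm{V}(\lambda), M(i))=0$ for every $i$ and every dominant $\lambda$. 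Since $\mathrm{V}(\lambda)$ is finite dimensional, $\mathrm{Ext}^{\bullet}_{_{\mathbb{G}}\mathsf{SMod}}(\mathrm{V}(\lambda), \ )$ commutes with the direct limit $M=\varinjlim M(i)$, whence $\mathrm{Ext}^1_{_{\mathbb{G}}\mathsf{SMod}}(\mathrm{V}(\lambda), M)=0$.

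For the converse I would build the filtration from the bottom, one costandard layer at a time, using the boundedness of $M$ to guarantee both termination at each step and exhaustion in the limit. Since $M\neq 0$, the poset $\hat{M}$ is nonempty and, by hypothesis, contains no infinite descending chain, so I may choose a minimal element; after a parity shift assume it is $(\lambda, 0)$, i.e. $\lambda$ is minimal among dominant weights with $\mathrm{Hom}_{_{\mathbb{G}}\mathsf{SMod}}(\mathrm{V}(\lambda), M)\neq 0$. Let $\Omega\subseteq\Lambda$ be the order ideal $\{(\mu, b)\mid \mu\leq\lambda\}$ and set $N=O_{\Omega}(M)$. The crucial claim is that $N$ is a finite direct sum of copies of $\mathrm{H}^0(\lambda)$ and $\Pi\mathrm{H}^0(\lambda)$, with the number of summands equal to $\dim\mathrm{Hom}_{_{\mathbb{G}}\mathsf{SMod}}(\mathrm{V}(\lambda), M)<\infty$. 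To see this I would first note that any summand $L(\nu)$ (or $\Pi L(\nu)$) of $\mathrm{soc}(N)$ admits a nonzero map $\mathrm{V}(\nu)\to M$, so $\nu\in\hat{M}$; combined with $\nu\leq\lambda$ and the minimality of $\lambda$, this forces $\nu=\lambda$, so $\mathrm{soc}(N)$ is $L(\lambda)/\Pi L(\lambda)$-isotypic (Proposition \ref{well known superfacts}(2),(5)). Since $\lambda$ is maximal in $\Omega$, inside the truncated category of supermodules with weights in $\Omega$ the module $\mathrm{H}^0(\lambda)$ is the injective hull of $L(\lambda)$; the vanishing of $\mathrm{Ext}^1_{_{\mathbb{G}}\mathsf{SMod}}(\mathrm{V}(\mu), M)$ for all $\mu\leq\lambda$ shows that $N$ is injective in this truncated category and hence splits as the asserted sum, the multiplicity being read off from Lemma \ref{Ext is trivial}(2).

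Having produced the bottom layer $N$, I would pass to $M/N$. From the long exact sequence and the already-proved implication applied to the good-filtered module $N$ one gets $\mathrm{Ext}^1_{_{\mathbb{G}}\mathsf{SMod}}(\mathrm{V}(\mu), M/N)=0$ for all dominant $\mu$, while the finiteness of the Hom-multiplicities and $\widehat{M/N}=\hat{M}\setminus\{(\lambda, 0),(\lambda, 1)\}$ show that $M/N$ is again bounded with a strictly smaller invariant. Iterating, the absence of infinite descending chains guarantees that every composition factor of $M$ is reached after finitely many steps, and boundedness ensures that the resulting ascending chain of preimages in $M$ exhausts $M$; its successive quotients are the $N$'s, each a sum of (parity-shifted) costandard modules, which is precisely a good filtration.

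The main obstacle I expect is the crucial claim of the second paragraph: that the bottom piece $N$ genuinely decomposes as a direct sum of costandard modules. Making this rigorous requires the truncated highest-weight-category formalism together with the super refinements, namely keeping track of the parity functor $\Pi$ in $\Lambda=X(T)^+\times\mathbb{Z}_2$ and reducing the relevant injectivity and $\mathrm{Ext}$ statements to the purely even reductive group $G$ via the distinguished parabolic $\mathbb{P}^-$, exactly as in Lemma \ref{induced via P} and in the proof of Lemma \ref{Ext is trivial}, where the classical good-filtration theory of \cite{jan} is available. The exhaustion step in the limit, though routine, also rests essentially on boundedness and would need to be spelled out.
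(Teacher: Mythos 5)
Your proof is correct and follows the same skeleton as the paper's: the ``only if'' direction is exactly the long-exact-sequence argument already given in the paragraph preceding the theorem, and the ``if'' direction proceeds by choosing a minimal element of $\hat{M}$, splitting off a costandard bottom layer, and exhausting. The one genuine difference lies in how the layer is extracted: the paper peels off a \emph{single} copy of $\Pi^a\mathrm{H}^0(\lambda)$ at a time (citing Jantzen's Lemma II.4.15) and records that $\dim\mathrm{Hom}(\mathrm{V}(\lambda), -)$ drops by one, whereas you remove the whole truncation $N=O_{\Omega}(M)$ in one stroke and claim it is a finite direct sum of parity shifts of $\mathrm{H}^0(\lambda)$. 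Your crucial claim does hold, and can be closed without invoking the full truncated-category formalism: by minimality of $\lambda$ the socle of $N$ is $L(\lambda)$-isotypic, so $N$ embeds into $\bigoplus_i\Pi^{a_i}O_{\Omega}(I(\lambda))=\bigoplus_i\Pi^{a_i}\mathrm{H}^0(\lambda)$ (as in Corollary \ref{universality of H^0}); the cokernel $C$ then has all composition factors of weight $\nu<\lambda$, and any simple in its socle would yield $0\neq\mathrm{Hom}(\mathrm{V}(\nu),C)$, which is impossible because $\mathrm{Hom}(\mathrm{V}(\nu),\bigoplus\mathrm{H}^0(\lambda))=0$ by Lemma \ref{Ext is trivial}(2) while $\mathrm{Ext}^1(\mathrm{V}(\nu),N)$ is a quotient of $\mathrm{Hom}(\mathrm{V}(\nu),M/N)$, which vanishes by the maximality of $O_{\Omega}(M)$ together with $\mathrm{Ext}^1(\mathrm{V}(\nu),M)=0$. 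Your packaging buys a cleaner inductive step (the vanishing of $\mathrm{Ext}^{\geq 1}(\mathrm{V}(\mu),N)$ and the removal of $(\lambda,\cdot)$ from $\hat{M}$ are immediate once $N$ is a sum of costandards) at the price of the direct-sum decomposition, which you correctly identify as the point needing work; the paper's one-at-a-time version hides the same work inside the reference to Jantzen. Both treatments leave the final exhaustion of $M$ by the ascending chain of preimages at the level of ``standard arguments'', where boundedness and the absence of infinite descending chains in $\Lambda$ are indeed what is needed.
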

\begin{proof}
Choose a minimal element $(\lambda, a)\in \hat{M}$. Arguing as in \cite[Lemma II.4.15]{jan}, one can derive that $M$ contains a supersubmodule that is isomorphic to $\Pi^a\mathrm{H}(\lambda)$. Since $\mathrm{Hom}_{_{\mathbb{G}}\mathsf{SMod}}(\mathrm{V}(\lambda), \ )$  is exact on supermodules with good filtration, we have
\[\dim \mathrm{Hom}_{_{\mathbb{G}}\mathsf{SMod}}(\mathrm{V}(\lambda), M/\Pi^a\mathrm{H}(\lambda))= \dim \mathrm{Hom}_{_{\mathbb{G}}\mathsf{SMod}}(\mathrm{V}(\lambda), M)-1,\]	
and the standard exhaustion arguments conclude the proof. 
\end{proof}
\begin{cor}\label{injectives are bounded}
The injective hull $I(\lambda)$ of the simple supermodule $L(\lambda)$ is bounded, hence it has a good filtration. In particular, it is restricted. 	
\end{cor}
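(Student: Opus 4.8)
The plan is to verify, in order, that $I(\lambda)$ is \emph{bounded}, that it consequently admits a good filtration, and that it is therefore restricted; boundedness is the substantive step, after which the other two are nearly automatic. Recall that an injective hull is weakly restricted, so that $\dim\mathrm{Hom}_{_{\mathbb{G}}\mathsf{SMod}}(N, I(\lambda))=[N:L(\lambda)]+[N:\Pi L(\lambda)]$ for every finite-dimensional $\mathbb{G}$-supermodule $N$. First I would record that each $\mathrm{V}(\pi)$ is finite-dimensional, by the $T$-supermodule isomorphism $\mathrm{V}(\pi)\simeq\Lambda(\mathfrak{g}_-)\otimes\mathrm{V}_{ev}(\pi)$ noted in the proof of Proposition \ref{properties of Weyl}. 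Applying the formula above with $N=\Pi^a\mathrm{V}(\pi)$ then gives $\dim\mathrm{Hom}_{_{\mathbb{G}}\mathsf{SMod}}(\Pi^a\mathrm{V}(\pi), I(\lambda))<\infty$, which is the first condition in the definition of boundedness.

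For the second condition I must show that $\widehat{I(\lambda)}$ has no infinite descending chain. The same formula gives $\dim\mathrm{Hom}_{_{\mathbb{G}}\mathsf{SMod}}(\mathrm{V}(\pi), I(\lambda))=[\mathrm{V}(\pi):L(\lambda)]+[\mathrm{V}(\pi):\Pi L(\lambda)]$, which is nonzero only when $L(\lambda)$ or $\Pi L(\lambda)$ is a composition factor of $\mathrm{V}(\pi)$. By Proposition \ref{properties of Weyl}(1),(2) the top of $\mathrm{V}(\pi)$ is $L(\pi)$ and every other composition factor has highest weight strictly below $\pi$, so such an occurrence forces $\lambda\leq\pi$. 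Hence $\widehat{I(\lambda)}\subseteq\{(\pi,a)\in\Lambda\mid \pi\geq\lambda\}$, and any descending chain in $\widehat{I(\lambda)}$ with top $(\pi_1,a_1)$ is contained in the interval $[(\lambda,0),(\pi_1,a_1)]$, which is finite by our standing assumption that $\Lambda$ is interval finite. Thus $I(\lambda)$ is bounded. Since $I(\lambda)$ is injective in $\mathsf{SMod}$ by \cite[Proposition 3.1]{zub5}, we have $\mathrm{Ext}^1_{_{\mathbb{G}}\mathsf{SMod}}(\mathrm{V}(\mu), I(\lambda))=0$ for every dominant $\mu$, and Theorem \ref{good filtration} produces a good filtration of $I(\lambda)$.

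Finally, to see that $I(\lambda)$ is restricted, fix a finitely generated ideal $\Omega\subseteq\Lambda$ generated by $(\nu_1,b_1),\ldots,(\nu_k,b_k)$. I would first argue that $\Omega$ is finite: its weights lie in $\bigcup_j\{\sigma\in X(T)^+\mid \sigma\leq\nu_j\}$, and the set of dominant weights below a fixed weight is finite (here using that $\Delta^+$ contains only finitely many odd roots), while the $\mathbb{Z}_2$-factor is finite. Then I would truncate the good filtration of $I(\lambda)$ to $\Omega$: since $\Omega$ is down-closed, each section $\mathrm{H}^0(\mu)$ with $\mu\in\Omega$ has all of its composition factors in $\Omega$, and the standard truncation property of good filtrations (cf. \cite{jan, zub5}) identifies the sum of these sections with $O_\Omega(I(\lambda))$, equipped with a good filtration satisfying $(O_\Omega(I(\lambda)):\mathrm{H}^0(\mu))=(I(\lambda):\mathrm{H}^0(\mu))=\dim\mathrm{Hom}_{_{\mathbb{G}}\mathsf{SMod}}(\mathrm{V}(\mu), I(\lambda))<\infty$ for $\mu\in\Omega$. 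As only finitely many $\mu\in\Omega$ occur and each $\mathrm{H}^0(\mu)$ is finite-dimensional, $\dim O_\Omega(I(\lambda))<\infty$, so $I(\lambda)$ is restricted. The main obstacle is precisely this last step: the finiteness of $O_\Omega(I(\lambda))$ rests on the finiteness of the ideal $\Omega$ together with the truncation lemma for good filtrations, whereas the boundedness and good-filtration assertions are comparatively formal consequences of injectivity, weak restrictedness, and interval finiteness of $\Lambda$.
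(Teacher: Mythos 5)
Your treatment of boundedness and of the good filtration is correct and coincides with the paper's: finite-dimensionality of each $\mathrm{V}(\pi)$ gives $\dim\mathrm{Hom}_{_{\mathbb{G}}\mathsf{SMod}}(\Pi^a\mathrm{V}(\pi), I(\lambda))<\infty$, the containment $\widehat{I(\lambda)}\subseteq\{(\pi,a)\mid\pi\geq\lambda\}$ together with interval finiteness of $\Lambda$ rules out infinite descending chains, and injectivity plus Theorem \ref{good filtration} yields the good filtration. The gap is in the restrictedness step. You assert that a finitely generated ideal $\Omega\subseteq\Lambda$ is itself finite because ``the set of dominant weights below a fixed weight is finite.'' That is false under the paper's hypotheses: the paper assumes only that $\Lambda$ is \emph{interval} finite, which does not make principal ideals finite. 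Concretely, for $\mathbb{G}=\mathrm{P}(n)$ one has $-(\epsilon_1+\epsilon_2)\in\Delta_1^+$, so for any dominant $\lambda=\sum a_i\epsilon_i$ the weights $\mu=\lambda+k(\epsilon_1+\epsilon_2)$, $k\geq 0$, are all dominant and satisfy $\mu\leq\lambda$; thus $\{\mu\in X(T)^+\mid \mu\leq\lambda\}$ is infinite. Finiteness of $\Delta_1^+$ does not help, since elements of $\mathbb{Z}_{\geq 0}\Delta^+$ may involve each odd root with arbitrary multiplicity. As you yourself flag this finiteness of $\Omega$ as the crux of the last step, the argument as written does not close.

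The repair is short and is exactly what the paper does: the weights $\lambda_i$ occurring in a good filtration of $I(\lambda)$ all satisfy $\lambda_i\geq\lambda$ (you established this while proving boundedness), so the sections contributing to $O_{\Omega}(I(\lambda))$ have weights in $\Omega\cap\{\mu\geq\lambda\}\subseteq\bigcup_{1\leq k\leq t}[\lambda,\pi_k]$, which \emph{is} finite by interval finiteness. Combined with the finiteness of each good filtration multiplicity (weak restrictedness of $I(\lambda)$) and of each $\dim\mathrm{H}^0(\mu)$, this gives $\dim O_{\Omega}(I(\lambda))<\infty$. The paper implements this by setting $M_i=O_{\Omega}(I(\lambda))\cap I(\lambda)_i$ and observing that $M_i/M_{i-1}$ is either $0$ or all of $\Pi^{a_i}\mathrm{H}^0(\lambda_i)$ according to whether $\lambda_i\in\Omega$; your truncation language is fine, but the finiteness must be drawn from $\bigcup_k[\lambda,\pi_k]$ rather than from $\Omega$ itself.
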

\begin{proof}
We have $\dim\mathrm{Hom}_{\mathbb{G}}(\mathrm{V}(\pi), I(\lambda))=[\mathrm{V}(\pi) : L(\lambda)]+[\mathrm{V}(\pi) : \Pi L(\lambda)]<\infty$. Moreover, if
$\mathrm{Hom}_{_{\mathbb{G}}\mathsf{SMod}}(\mathrm{V}(\pi), I(\lambda))\neq 0$, then $\pi\geq \lambda$ any descending chain in $\hat{I(\lambda)}$, starting from $\pi$, is contained in the finite interval 
$[\lambda, \pi]$.  	

Observe that $\lambda$ is the smallest weight in $\hat{I(\lambda)}$. Let $\Omega$ be an ideal in $\Lambda$, finitely generated by the elements $\pi_1, \ldots, \pi_t$. Let  $M=O_{\Omega}(I(\lambda))\neq 0$. Fix a good filtration
$\{I(\lambda)_i\}_{i\geq 0}$ of $I(\lambda)$. Set $M_i=M\cap I(\lambda)_i, i\geq 0$. Since $M/M_i=O_{\Omega}(I(\lambda)/M_i)$ and $M_i/M_{i-1}\subseteq \Pi^{a_i}\mathrm{H}^0(\lambda_i)$, $M_i/M_{i-1}\neq 0$ if and only if $M_i/M_{i-1}=\Pi^{a_i}\mathrm{H}^0(\lambda_i)$ if and only if $\lambda_i$ belongs to $\Omega$. Thus \[\dim O_{\Omega}(I(\lambda))= \sum_{\lambda_i\in\Omega}\dim \mathrm{H}^0(\lambda_i)\leq \sum_{\lambda_i\in\cup_{1\leq k\leq t}[\lambda, \pi_k]}\dim \mathrm{H}^0(\lambda_i)<\infty .\] 
\end{proof}
\begin{cor}\label{universality of H^0}
If $M$ is a finite dimensional $\mathbb{G}$-supermodule, that satisfies the conditions (3) and (5) of Proposition \ref{well known superfacts}, then there is an
injective $\mathbb{G}$-supermodule morphism $M\to \mathrm{H}^0(\lambda)$.	
\end{cor}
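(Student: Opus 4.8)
The plan is to realise both $M$ and $\mathrm{H}^0(\lambda)$ as submodules of a single injective hull and then to force $M$ inside $\mathrm{H}^0(\lambda)$. First I would take $I(\lambda)$, the injective hull of $L(\lambda)$ (available by Corollary \ref{injectives are bounded}), and exploit condition (3), namely that $\mathrm{soc}(M)=L(\lambda)$ is simple. By injectivity of $I(\lambda)$ the socle inclusion $L(\lambda)\hookrightarrow I(\lambda)$ extends to a morphism $f\colon M\to I(\lambda)$, and since $f$ is injective on the essential simple socle of the finite-dimensional $M$, its kernel is zero; thus $M\hookrightarrow I(\lambda)$, and likewise $\mathrm{H}^0(\lambda)\hookrightarrow I(\lambda)$ through its socle. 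As $\mathrm{soc}(I(\lambda))=L(\lambda)$ is the unique minimal submodule, $\mathrm{soc}(M)$ coincides with the common copy of $L(\lambda)$ lying in $\mathrm{H}^0(\lambda)$. Writing $Q=I(\lambda)/\mathrm{H}^0(\lambda)$, it then suffices to show that the composite $M\to I(\lambda)\to Q$ vanishes, i.e. that $M\subseteq\mathrm{H}^0(\lambda)$.

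The key step is the vanishing $\mathrm{Hom}_{_{\mathbb{G}}\mathsf{SMod}}(\Pi^a L(\nu),Q)=0$ for all $\nu<\lambda$ and $a\in\mathbb{Z}_2$. Since $\mathrm{soc}(I(\lambda))=L(\lambda)$, for $\nu<\lambda$ we have $\mathrm{Hom}(\Pi^a L(\nu),I(\lambda))=0$; applying $\mathrm{Hom}(\Pi^a L(\nu),-)$ to $0\to\mathrm{H}^0(\lambda)\to I(\lambda)\to Q\to0$ and using $\mathrm{Ext}^1(\,\cdot\,,I(\lambda))=0$ identifies $\mathrm{Hom}(\Pi^a L(\nu),Q)$ with a subspace of $\mathrm{Ext}^1_{_{\mathbb{G}}\mathsf{SMod}}(\Pi^a L(\nu),\mathrm{H}^0(\lambda))$, so it suffices to prove this $\mathrm{Ext}^1$ vanishes, and by the parity compatibility $\mathrm{Ext}^\bullet(\Pi^a M,\Pi^b N)\simeq\Pi^{a+b}\mathrm{Ext}^\bullet(M,N)$ noted earlier I may drop the $\Pi^a$. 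To obtain $\mathrm{Ext}^1(L(\nu),\mathrm{H}^0(\lambda))=0$ I resolve $L(\nu)$ by its Weyl cover: Proposition \ref{properties of Weyl} yields $0\to K\to\mathrm{V}(\nu)\to L(\nu)\to0$ with every composition factor of $K$ labelled by a weight strictly below $\nu$, hence $<\lambda$. The long exact sequence together with $\mathrm{Ext}^1_{_{\mathbb{G}}\mathsf{SMod}}(\mathrm{V}(\nu),\mathrm{H}^0(\lambda))=0$ (Lemma \ref{Ext is trivial}(1)) exhibits $\mathrm{Ext}^1(L(\nu),\mathrm{H}^0(\lambda))$ as a quotient of $\mathrm{Hom}(K,\mathrm{H}^0(\lambda))$; but $\mathrm{soc}\,\mathrm{H}^0(\lambda)=L(\lambda)$ (Proposition \ref{well known superfacts}) while $\lambda$ is not a label of $K$, so $\mathrm{Hom}(K,\mathrm{H}^0(\lambda))=0$, and the vanishing follows.

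With the claim in hand I would conclude as follows. Because $\mathrm{soc}(M)\subseteq M\cap\mathrm{H}^0(\lambda)$, the quotient $M/(M\cap\mathrm{H}^0(\lambda))$ is a quotient of $M$ in which the socle factor $L(\lambda)$ has been removed, so by condition (5) all of its composition factors are labelled by weights $<\lambda$; on the other hand it embeds into $Q$. If it were nonzero it would possess a simple socle $\Pi^a L(\nu)$ with $\nu<\lambda$, giving a nonzero element of $\mathrm{Hom}_{_{\mathbb{G}}\mathsf{SMod}}(\Pi^a L(\nu),Q)$ and contradicting the vanishing above. Hence $M\cap\mathrm{H}^0(\lambda)=M$, i.e. $M\subseteq\mathrm{H}^0(\lambda)$, and this inclusion is the desired injective $\mathbb{G}$-supermodule morphism. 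I expect the genuine obstacle to be precisely the $\mathrm{Ext}^1$-vanishing of the second paragraph, which is the highest-weight (linkage-type) input forcing $\mathrm{H}^0(\lambda)$ to be the largest submodule of $I(\lambda)$ with composition factors $\leq\lambda$; the remaining ingredients (existence and injectivity of $M\hookrightarrow I(\lambda)$, and the $\mathbb{Z}_2$-parity bookkeeping) are routine given the machinery already developed.
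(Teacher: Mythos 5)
Your proof is correct, and the first step (extending the socle inclusion $L(\lambda)\hookrightarrow I(\lambda)$ to an embedding $M\hookrightarrow I(\lambda)$) is exactly the paper's. But for the decisive second step the two arguments diverge. The paper simply applies the operator $O_{\Omega}$ for $\Omega$ the ideal generated by $\lambda$: by condition (5) one has $O_{\Omega}(M)=M$, and the computation inside the proof of Corollary \ref{injectives are bounded} — which rests on the good filtration of $I(\lambda)$ and the fact that the only section $\Pi^{a}\mathrm{H}^0(\mu)$ with $\mu\leq\lambda$ is a single copy of $\mathrm{H}^0(\lambda)$ — gives $O_{\Omega}(I(\lambda))=\mathrm{H}^0(\lambda)$, whence $M=O_\Omega(M)\subseteq O_\Omega(I(\lambda))=\mathrm{H}^0(\lambda)$. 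You instead prove the same underlying fact (that $\mathrm{H}^0(\lambda)$ is the largest supersubmodule of $I(\lambda)$ whose composition factors have labels $\leq\lambda$) by a direct homological computation: $\mathrm{Hom}(K,\mathrm{H}^0(\lambda))=0$ because $\mathrm{soc}\,\mathrm{H}^0(\lambda)=L(\lambda)$ while $K$ only involves labels $<\lambda$, hence $\mathrm{Ext}^1_{_{\mathbb{G}}\mathsf{SMod}}(\Pi^aL(\nu),\mathrm{H}^0(\lambda))=0$ for $\nu<\lambda$ by Lemma \ref{Ext is trivial}(1) and Proposition \ref{properties of Weyl}, and therefore $\mathrm{Hom}(\Pi^aL(\nu),I(\lambda)/\mathrm{H}^0(\lambda))=0$, which kills the image of $M$ in the quotient. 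Your route has the advantage of bypassing Corollary \ref{injectives are bounded} entirely (it needs neither the good filtration of $I(\lambda)$ nor the boundedness discussion, only the injective hull and the standard Weyl/costandard Ext-orthogonality), at the cost of being longer; the paper's version is shorter but only because that machinery has already been set up. All the parity bookkeeping and the socle identifications in your write-up check out.
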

\begin{proof}
The supermodule $M$ can be embedded into $I(\lambda)$, where $L(\lambda)$ is the socle of $M$. It remains to note that $O_{\Omega}(M)=M$ and $O_{\Omega}(I(\lambda))=\mathrm{H}^0(\lambda)$, where
$\Omega$ is the ideal generated by $\lambda$.	
\end{proof}
The first statement of Theorem \ref{good filtration} and Corollary \ref{universality of H^0} say that the category of $\mathbb{G}$-supermodules is a \emph{highest weight } category in the sense of \cite{CPS}.

For any root $\alpha\in\Delta$, let $\mathbb{U}_{\alpha}$ denote the root unipotent (super)subgroup, such that $\mathrm{Lie}(\mathbb{U}_{\alpha})=\mathfrak{g}_{\alpha}$.  If $\alpha\in\Delta_0$, then set $\mathbb{U}_{\alpha, r}=\mathbb{U}_{\alpha}\cap\mathbb{G}_r, r\geq 1$. Due \cite[Main Theorem]{CPS2} the first statement of Theorem \ref{char free Theorem 10.4} can be reformulated as follows.
\begin{cor}\label{one more about Thorem 10.4}
A weakly restricted $\mathbb{G}$-supermodule $M$ is injective if and only if the following conditions hold :
\begin{enumerate}
	\item For any $r\geq 1$ the $\mathbb{G}_r$-supermodule $M|_{\mathbb{G}_r}$ is a direct sum of finite dimensional $\mathbb{G}_r$-supersubmodules;
	\item $M|_{\mathbb{U}_{\alpha}}$ is injective for any $\alpha\in\Delta_1$;
	\item $M^{\mathbb{U}^{\pm}}|_{U_{\alpha, r}}$ is injective for any $\alpha\in\Delta_0$ and $r\geq 1$.
\end{enumerate}	
\end{cor}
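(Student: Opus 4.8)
The plan is to read off the corollary from the positive-characteristic assertion of Theorem~\ref{char free Theorem 10.4}, whose first condition coincides verbatim with condition (1) here; hence only the clause ``$M|_{\mathbb{P}^-}$ and $M|_{\mathbb{P}^+}$ are injective'' has to be rephrased in terms of root supersubgroups. First I would use that the distinguished parabolics are split, $\mathbb{P}^{\pm}\simeq\mathbb{G}_{ev}\ltimes\mathbb{U}^{\pm}$ with $\mathbb{U}^{\pm}$ purely odd, and apply Lemma~\ref{converse of the above prop} for each sign: $M|_{\mathbb{P}^{\pm}}$ is injective if and only if $M|_{\mathbb{U}^{\pm}}$ is injective and $M^{\mathbb{U}^{\pm}}$ is injective as a $\mathbb{G}_{ev}=G$-supermodule. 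This separates the parabolic condition into a purely odd part and a purely even part, which I would then match against conditions (2) and (3) respectively.

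For the even part I would invoke \cite[Main Theorem]{CPS2}: for the reductive group $G$, injectivity of the rational $G$-module $M^{\mathbb{U}^{\pm}}$ is detected on the Frobenius kernels and, in turn, on the even root subgroup kernels $\mathbb{U}_{\alpha,r}$, $\alpha\in\Delta_0=R$. Thus ``$M^{\mathbb{U}^{\pm}}$ injective over $G$'' is equivalent to ``$M^{\mathbb{U}^{\pm}}|_{\mathbb{U}_{\alpha,r}}$ injective for all $\alpha\in\Delta_0$ and $r\geq 1$'', which is precisely condition (3). Condition (1), which already governs the structure of $M$ over each $\mathbb{G}_r$, is what legitimises the passage between $G$-injectivity and injectivity over the family $\{\mathbb{U}_{\alpha,r}\}$.

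For the odd part I would use that $\mathbb{U}^{\pm}$ is a purely odd abelian unipotent supersubgroup that factors as the direct product $\prod_{\alpha\in\Delta_1^{\pm}}\mathbb{U}_{\alpha}$ of its root supersubgroups, with $\mathrm{Dist}(\mathbb{U}^{\pm})\simeq\bigotimes_{\alpha}\Lambda(\mathfrak{g}_{\alpha})$, and that $\mathbb{U}^{\pm}\leq\mathbb{G}_1$. Hence condition (1) with $r=1$ already exhibits $M|_{\mathbb{U}^{\pm}}$ as a direct sum of finite-dimensional supersubmodules, so (since $\mathrm{Dist}(\mathbb{U}^{\pm})$ is a local self-injective superalgebra) injectivity may be tested on each finite-dimensional summand; recall that over a purely odd supergroup injective, projective and free coincide. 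I would then show that such a finite-dimensional piece is free over $\mathrm{Dist}(\mathbb{U}^{\pm})$ exactly when it is free over each $\Lambda(\mathfrak{g}_{\alpha})$, that is, when $M|_{\mathbb{U}_{\alpha}}$ is injective for every $\alpha\in\Delta_1^{\pm}$; ranging over both signs yields condition (2).

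The hard part will be this last detection for the odd part, namely excluding the pathology of Remark~\ref{not always injective}, where a module is free over every coordinate $\mathbb{G}_a^-$ yet fails to be free over the ambient abelian purely odd supergroup. The resolution is to bring in the torus: because $M$ is a $\mathbb{G}$-supermodule, $T\leq G$ acts compatibly on $M$ and on $\mathfrak{g}^{\pm}$, so the rank (support) variety of $M|_{\mathbb{U}^{\pm}}$ is a closed conical $T$-stable subvariety of $\mathfrak{g}_{\pm}$, and $T$ acts there with the nonzero weights $\alpha\in\Delta_1^{\pm}$; such a variety is a union of the weight subspaces $(\mathfrak{g}_{\pm})_{\alpha}$, so it is trivial precisely when it meets no root direction, i.e. precisely when $M|_{\mathbb{U}_{\alpha}}$ is free for each $\alpha$. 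It is exactly the $T$-equivariance that collapses Dade-type detection from all cyclic shifted subalgebras down to the finitely many root supersubgroups, and condition (1) is what reduces the support-variety argument to the finite-dimensional setting where it is valid.
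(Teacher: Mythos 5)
Your skeleton is exactly the paper's: feed conditions (1)--(3) into Theorem~\ref{char free Theorem 10.4} by splitting the parabolic condition with Lemma~\ref{converse of the above prop} into an odd part (injectivity of $M|_{\mathbb{U}^{\pm}}$) and an even part (injectivity of $M^{\mathbb{U}^{\pm}}$), handling the even part by \cite[Main Theorem]{CPS2} and the odd part by detection on root supersubgroups via Proposition~\ref{if G is purely odd}. Two remarks. First, on the even part the paper is slightly more economical than you: it only deduces that $M^{\mathbb{U}^{\pm}}$ is an injective $TG_r$-module, hence $G_r$-module, for each $r$, and then concludes that $M|_{\mathbb{P}^{\pm}_r}$ is injective --- which is all that the proof of Theorem~\ref{char free Theorem 10.4} actually consumes; your stronger claim that conditions (1) and (3) give injectivity of $M^{\mathbb{U}^{\pm}}$ over the full group $G$ is not needed and would require a separate (delicate) passage from Frobenius kernels to $G$. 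Second, and to your credit, you correctly isolate the real content of the odd part: Proposition~\ref{if G is purely odd} demands $X_{M_i}=0$, i.e.\ $(M_i)_x=0$ for \emph{every} square-zero odd $x\in\mathfrak{g}_{\pm}$, whereas condition (2) only controls the root directions, and Remark~\ref{not always injective} shows this gap is not vacuous; the paper dispatches this in one sentence. Your $T$-equivariant rank-variety argument is the right bridge, but your intermediate claim that a closed conical $T$-stable subvariety of $\mathfrak{g}_{\pm}$ must be a union of weight subspaces is false in general (e.g.\ a $T$-stable quadric cone through points with three distinct weights). What is true, and suffices, is that any such variety other than $\{0\}$ contains a nonzero vector of some root space: pick $0\neq v=\sum v_{\alpha}$ in the variety, choose a weight $\alpha$ with $v_\alpha\ne 0$ that is a vertex of the convex hull of the weights occurring in $v$, and contract along a suitable cocharacter, using closedness and conicality to land on $v_{\alpha}$. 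With that repair (and the base change to $\overline{\Bbbk}$ permitted by Remark~\ref{extension invariant}), your argument is a correct and fuller version of the paper's proof.
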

\begin{proof}
If $\alpha\in\Delta_1$, then $\mathbb{U}_{\alpha}\leq\mathbb{G}_r$ and (1) and (2)  (combined with Proposition \ref{if G is purely odd}) imply that
$M|_{\mathbb{U}^{\pm}}$ are injective. Similarly, (1), (3) and \cite[Main Theorem]{CPS2} imply that both $M^{\mathbb{U}^{\pm}}$ are injective $TG_r$-modules, hence 
injective $G_r$-modules. By Lemma \ref{converse of the above prop}, $M|_{\mathbb{P}^{\pm}_r}$ are injective.  
\end{proof}
The typical examples of quasi-reductive supergroups, to which we can apply Theorem \ref{char free Theorem 10.4}, are $\mathrm{GL}(m|n), \mathrm{SL}(m|n), m\neq n$, and $\mathrm{P}(n)$. 
Due Corollary \ref{injectives are bounded}, all we need is to show that for suitable choice of the system of positive roots, the induced partial order on the corresponding set of dominant weights is interval finite. 
   
The first case was considered in detail in \cite{zub5}. The case of supergroup $\mathrm{SL}(m|n)$ is close to the first one and we leave it for the reader.

Let $\mathbb{G}=\mathrm{P}(n)\leq\mathrm{GL}(n|n)$. Then $G\simeq \mathrm{GL}(n)$ and we fix a maximal torus $T$ consisting of all diagonal matrices from $\mathrm{P}(n)$. 
The distinguished parabolic supersubgroup $\mathbb{P}^-$ is described in \cite[Example 5.10]{taiki}. The opposite distinguished parabolic supersubgroup $\mathbb{P}^+$ consists of all matrices
\[\left(\begin{array}{cc}
	X & 0 \\
	Z & (X^t)^{-1} 
\end{array}\right), X\in\mathrm{GL}(n), Z\in\mathrm{M}(n), X^t Z=-Z^t X.\]
The root system of $\mathrm{P}(n)$ is $\Delta=\Delta_0\sqcup\Delta_1$, where
\[\Delta_0=\{\pm(\epsilon_i-\epsilon_j)| 1\leq i< j\leq n \},\]
\[\Delta_1=\{\pm(\epsilon_i+\epsilon_j), 2\epsilon_k | 1\leq i< j\leq n, 1\leq k\leq n\},\]
and $\epsilon_1, \ldots, \epsilon_n$ is the standard basis of $X(T)\simeq\mathbb{Z}^n$. 
We also have
\[\Delta^+_0=\{\epsilon_i-\epsilon_j | 1\leq i< j\leq n\} \ \mbox{and} \ \Delta^+_1=\{-(\epsilon_i+\epsilon_j) | 1\leq i< j\leq n\}.\]
Note that $\Delta^-\neq -\Delta^+$. 
\begin{pr}
The set of dominant weights $\Lambda$ is an interval finite poset with respect to the order $\leq$ determined by $\Delta^+$. In particular, the category $_{\mathrm{P}(n)}\mathsf{Smod}$ is a highest weight one. 	
\end{pr}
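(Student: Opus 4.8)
The plan is to reduce the statement to a finiteness assertion about the set of dominant weights $X(T)^+$ and then to control that set by a pair of linear functionals that are nonnegative on all positive roots. Since the order on $\Lambda=X(T)^+\times\mathbb{Z}_2$ depends only on the first coordinate, an interval $[(\mu,a),(\lambda,b)]$ equals $[\mu,\lambda]\times\mathbb{Z}_2$, where $[\mu,\lambda]=\{\nu\in X(T)^+\mid\mu\le\nu\le\lambda\}$; so up to the finite factor $\mathbb{Z}_2$ it suffices to prove that $[\mu,\lambda]$ is finite for all dominant $\mu,\lambda$. By Proposition \ref{well known superfacts}(1) the dominant weights of $\mathbb{G}$ coincide with those of $G\simeq\mathrm{GL}(n)$, i.e. with the decreasing sequences $\nu=(\nu_1,\dots,\nu_n)\in\mathbb{Z}^n$ satisfying $\nu_1\ge\nu_2\ge\dots\ge\nu_n$.

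First I would introduce the two functionals $s(\nu)=\nu_1+\dots+\nu_n$ and $f(\nu)=\nu_1-\nu_2-\dots-\nu_n$, and check on the generators of $\Delta^+$ that both $-s$ and $f$ are nonnegative on every positive root. Indeed $-s$ vanishes on each $\epsilon_i-\epsilon_j$ and equals $2$ on each odd positive root $-(\epsilon_i+\epsilon_j)$, while $f$ takes the value $0$ or $2$ on every even positive root and again $0$ or $2$ on every odd positive root. Consequently both functionals are monotone along the order: if $\nu-\mu\in\mathbb{Z}_{\ge 0}\Delta^+$, then $s(\mu)\ge s(\nu)$ and $f(\mu)\le f(\nu)$.

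Then, for $\mu\le\nu\le\lambda$ in $X(T)^+$, I would combine $s(\nu)\le s(\mu)$ with $f(\nu)\le f(\lambda)$ to obtain $2\nu_1=f(\nu)+s(\nu)\le f(\lambda)+s(\mu)$, so that $\nu_1$ is bounded above by a constant $C$ depending only on $\mu,\lambda$. Dominance forces $\nu_k\le\nu_1\le C$ for all $k$, and then $s(\nu)\ge s(\lambda)$ together with $\sum_{k<n}\nu_k\le(n-1)C$ gives $\nu_n\ge s(\lambda)-(n-1)C$. Hence every coordinate $\nu_k$ lies in the fixed finite range $[\,s(\lambda)-(n-1)C,\,C\,]$, and since $\nu\in\mathbb{Z}^n$ there are only finitely many such $\nu$. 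Thus $[\mu,\lambda]$ is finite and $\Lambda$ is interval finite. The remaining clause, that $_{\mathrm{P}(n)}\mathsf{Smod}$ is a highest weight category, then follows from Corollary \ref{injectives are bounded} (interval finiteness makes each injective hull bounded, hence endowed with a good filtration) together with the universality recorded after Corollary \ref{universality of H^0}.

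The main obstacle, and the only place the periplectic peculiarity genuinely enters, is the choice of odd positive roots $\Delta^+_1=\{-(\epsilon_i+\epsilon_j)\}$, which is \emph{not} stable under $\nu\mapsto-\nu$ (here $\Delta^-\neq-\Delta^+$) and points toward decreasing coordinate sum. This is exactly why $s$ alone cannot bound the weights and why the second functional $f$ is needed; equivalently, the real content of the argument is that the cone $\mathbb{R}_{\ge 0}\Delta^+$ remains \emph{pointed}, so that the box $(\mu+\mathbb{R}_{\ge 0}\Delta^+)\cap(\lambda-\mathbb{R}_{\ge 0}\Delta^+)$ is bounded. I expect the verification of nonnegativity of $f$ on the odd positive roots to be the crux of the whole computation.
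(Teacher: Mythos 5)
Your proof is correct and is essentially the paper's argument in a different dress: your functional $f(\nu)=2\nu_1-s(\nu)$ turns your upper bound $2\nu_1\le f(\lambda)+s(\mu)$ into exactly the paper's inequality $b_1\le a_1+l(\mu,\lambda)$ with $l(\mu,\lambda)=\frac{s(\mu)-s(\lambda)}{2}$, and both proofs then squeeze the remaining coordinates of $\nu$ into a finite box via dominance. The only cosmetic difference is the lower bound on $\nu_n$: the paper reads $b_n\ge a_n$ directly off the semigroup generators $\epsilon_i-\epsilon_{i+1}$, $-(\epsilon_1+\epsilon_2)$, while you derive a slightly weaker but equally sufficient bound from $s$ alone.
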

\begin{proof}	
	For any character $\lambda=\sum_{1\leq i\leq n}a_i\epsilon_i$, let $|\lambda|$ denote the \emph{norm} $\sum_{1\leq i\leq n}a_i$ of $\lambda$.
If $\mu\leq\lambda$, then $l(\mu, \lambda)=\frac{|\mu|-|\lambda|}{2}\geq 0$ is the number of roots from  $\Delta^+_1$ appearing in $\lambda-\mu$. Besides, $l(\mu, \pi)+l(\pi, \lambda)=l(\mu, \lambda)$ for any $\pi\in [\mu, \lambda]$. 

The additive semigroup $\mathbb{Z}_{\geq 0}\Delta^+$ is generated by the roots $\epsilon_i-\epsilon_{i+1}, -(\epsilon_1+\epsilon_2), 1\leq i\leq n$. Therefore, if $\mu, \pi$ and $\lambda$ are dominant weights and $\pi\in [\mu, \lambda]$, then \[a_1+l(\mu, \lambda)\geq a_1+l(\pi, \lambda)\geq b_1\geq\ldots\geq b_n\geq a_n, \]
where $\lambda=\sum_{1\leq i\leq n} a_i\epsilon_i, \pi=\sum_{1\leq i\leq n} b_i\epsilon_i$.
\end{proof}

\end{document}